\newtheorem{propo}{Proposition}[section]
\newtheorem{theor}[propo]{Theorem}
\newtheorem{lemma}[propo]{Lemma}
\theoremstyle{definition}
\newtheorem{defin}[propo]{Definition}
\newtheorem{examp}[propo]{Example}
\theoremstyle{remark}
\newtheorem{remar}[propo]{Remark}
\newtheorem{conve}[propo]{Convention}
\numberwithin{equation}{section}
\renewcommand{\_}[1]{_{(#1)}}
\newcommand{\aaaU }{\Omega }
\newcommand{\actl }{\boldsymbol{\cdot}}
\newcommand{\ad }{\mathrm{ad}}
\newcommand{\al }{\alpha }
\newcommand{\antip }{S}
\newcommand{\Aut }{\mathrm{Aut}}
\newcommand{\barcX }{\overline{\cX }}
\newcommand{\bE }{\bar{E}}
\newcommand{\bF }{\bar{F}}
\newcommand{\bfun }[1]{b^{#1}}
\newcommand{\bnd }{b}
\newcommand{\cC }{\mathcal{C}}
\newcommand{\cF }{\mathcal{F}}
\newcommand{\cG }{\mathcal{G}}
\newcommand{\charUz}{\Hom (\cU ^0,\fieenz )}
\newcommand{\cI }{\mathcal{I}}
\newcommand{\coal }{\delta }
\newcommand{\copr }{\varDelta }
\newcommand{\coun }{\varepsilon }
\newcommand{\cU }{\mathcal{U}}
\newcommand{\cV }{\mathcal{V}}
\newcommand{\cX }{\mathcal{X}}
\newcommand{\derK }{\partial ^K}
\newcommand{\derL }{\partial ^L}
\newcommand{\End }{\mathrm{End}}
\newcommand{\fch }[1]{\mathrm{ch}\,#1}
\newcommand{\fie }{\Bbbk }
\newcommand{\Fie }{\bar{\fie }}
\newcommand{\fiee }{\mathbb{K}}
\newcommand{\fieenz }{\fiee ^\times }
\newcommand{\fienz }{\fie ^\times }
\newcommand{\Fienz }{\Fie ^\times }
\newcommand{\fin }{\mathrm{fin}}
\newcommand{\Frac}{\mathrm{Frac}}
\newcommand{\HCmap }[1]{\theta ^{#1}}
\newcommand{\Hom }{\mathrm{Hom}}
\newcommand{\id}{\operatorname{id}}
\newcommand{\im}{\operatorname{Im}}
\newcommand{\lact }{\boldsymbol{.}}
\newcommand{\lag }{\mathfrak{g}}
\newcommand{\LT }{T}
\newcommand{\maxspec }{\mathrm{maxspec}\,}
\newcommand{\NA }[1]{\mathfrak{B}(#1)}
\newcommand{\ndN }{\mathbb{N}}
\newcommand{\ndZ }{\mathbb{Z}}
\newcommand{\Ob }{\mathrm{Ob}}
\newcommand{\op }{^\mathrm{op}}
\newcommand{\ot }{\otimes }
\newcommand{\PF }{P}
\newcommand{\qfact }[2]{(#1)^!_{#2}}
\newcommand{\qnum }[2]{(#1)_{#2}}
\newcommand{\rfl }{r}
\newcommand{\rhomap }[1]{\rho ^{#1}}
\newcommand{\rsC}{\mathcal{R}}
\newcommand{\s }{\sigma }
\newcommand{\sdot }{\dot{\sigma }}
\newcommand{\Shf }{\mathrm{Sh}}
\newcommand{\sHp }{\eta }
\newcommand{\To }{\mathbb{T}}
\newcommand{\ula }{\underline{a}}
\newcommand{\ulm }{\underline{m}}
\newcommand{\Uz }{\cU ^0}
\newcommand{\VT }{t}
\newcommand{\VTM }{\hat{T}}
\newcommand{\Wg }{\mathcal{W}}
\newcommand{\YD }[1][ ]{Yetter--Drinfel'd#1}
\newcommand{\ydZI }{ {}_{\fie \ndZ ^I}^{\fie \ndZ ^I}\mathcal{YD}}
\newcommand{\ZIch }[1]{\zeta ^{#1}}
\newcommand{\ZIdual }{\widehat{\ndZ ^I}}
\title{
Drinfel'd doubles and Shapovalov determinants}
\author{I.~Heckenberger}
\thanks{I.\,H.~is supported by the German Research Foundation (DFG) via
a Heisenberg fellowship}
\address{Istv\'an Heckenberger,
Universit\"at zu K\"oln,
Mathematisches Institut,
Weyertal 86--90,
D-50931 K\"oln, Germany}
\email{i.heckenberger@googlemail.com}
\author{H.~Yamane}
\address{Hiroyuki Yamane,
Department of Pure and Applied Mathematics,
Graduate School of Information Science
and Technology, Osaka University, Toyonaka 560-0043,
Japan}
\email{yamane@ist.osaka-u.ac.jp}
\begin{document}

\maketitle

\begin{abstract}
  The Shapovalov determinant for a class of pointed Hopf
  algebras is calculated, including quantized enveloping algebras,
  Lusztig's small quantum groups, and quantized Lie superalgebras.
  Our main tools are root systems, Weyl groupoids, and Lusztig type
  isomorphisms. We elaborate powerful novel techniques
  for the algebras at
  roots of unity, and pass to the general case using
  a density argument.

  Key words: Hopf algebra, Nichols algebra, quantum group,
  representation

  MSC: 16W30; 17B37, 81R50
\end{abstract}

\section{Introduction}

We study finite-dimensional representations of a large class
of Hopf algebras $U(\chi )$, where $\chi $ is a bicharacter on
$\ndZ ^I$ for some finite index set $I$. These algebras emerged from
a program of Andruskiewitsch and Schneider to classify pointed Hopf
algebras \cite{a-AndrSchn98}, \cite{p-Heck07b}.
Prominent examples are
quantized enveloping algebras of semisimple Lie algebras,
where the deformation parameter is not a root of $1$,
and Lusztig's (finite-dimensional) small quantum groups,
see Sect.~\ref{sec:Uqg}.
Other relevant examples are quantized enveloping algebras of Lie
superalgebras, see \cite{a-KhorTol91} and \cite{a-Yam99,a-Yam99e},
and Drinfeld doubles of bosonizations of
Nichols algebras of diagonal type classified in \cite{p-Heck06b}.

Our main combinatorial tools towards the study of representations
are the root system and the Weyl
groupoid associated to $\chi $. For quantized enveloping algebras
of semisimple Lie algebras
the Weyl groupoid is nothing but the Weyl group of the Lie algebra.
The main concern of this paper is the determination of the Shapovalov
determinants for all algebras $U(\chi )$ with finite root system.
We obtain a natural analog of
Shapovalov's original formula (for complex semisimple
Lie algebras) as a product of linear factors.
For our approach we need that
$\chi (\beta ,\beta )\not=1$ for all positive roots $\beta $.
This assumption is fulfilled for the special cases mentioned above.

The generality of our setting forces us to understand the
representation theory of algebras $U(\chi )$, where many values of
$\chi $ are roots of $1$. We turn this bondage into a promising
leading principle of our approach.
We concentrate first on those bicharacters, which take values in the
set of roots of $1$. In this case the positive and negative parts of
$U(\chi )$ are finite-dimensional algebras. For these algebras we
develop special techniques,
which are very different from the usual ones
for semisimple Lie algebras,
based on reflections in the Weyl groupoid,
With these techniques we are able to
characterize easily the irreducibility of Verma modules. The
characterization leads quickly to a formula for the Shapovalov
determinants of Verma modules by a variant of the usual density
argument. In a next step we extend our results to more general
bicharacters by a new density argument. This is possible because of
our good knowledge of the root system of bicharacters.

The history of Shapovalov determinants started with Shapovalov's work
\cite{a-Shapov72}, where he defined a bilinear form on Verma modules
and determinants on homogeneous subspaces to obtain information
on the reducibility of Verma modules. These structures have been
generalized by Kac and Kazhdan \cite{a-KK79} to symmetrizable
Kac-Moody algebras and by Kac \cite{inb-Kac79} \cite{inp-Kac86}
to Lie superalgebras with symmetrizable Cartan matrix.
Shapovalov determinants have been calculated for
quantized enveloping algebras by de Concini and Kac \cite{inp-dCK90}
and for quantized Kac-Moody algebras by Joseph \cite{b-Joseph}.
For Lusztig's small quantum groups
Kumar and Letzter \cite{a-KumLetz97}
factorize the Shapovalov determinants under the assumption that
the deformation parameter $q$
has prime order and the base field is a cyclotomic field.
Shapovalov determinants have been considered recently in
various contexts, see for example \cite{a-BrunKles02},
\cite{inp-GorSerg05}, \cite{a-Gorel06},
\cite{a-Alek05}, \cite{a-Hill08}.
Our approach yields in particular an entirely new proof of the
formula of de Concini and Kac.
In Sect.~\ref{sec:Uqg} we improve Kumar's and Letzter's result
by allowing arbitrary base fields and arbitrary orders of $q$.

The paper is organized as follows. In Sect.~\ref{sec:prelims} the
axioms of Cartan schemes, Weyl groupoids, and root systems are
recalled. The Weyl groupoid of a bicharacter fits into this framework.
Besides collecting the most important facts we introduce a character
$\rho ^\chi $ on $\ndZ ^I$ which will play a similar role as
the linear form $2\rho $ on the root lattice.
In Sect.~\ref{sec:DD} the definition and properties of the Drinfeld
doubles $U(\chi )$ are recalled. Sect.~\ref{sec:Lusztig} deals with
Lusztig type isomorphisms between two (usually different)
Drinfeld doubles. With Thm.~\ref{th:PBWtau}
we establish a Lusztig type PBW basis of these algebras.
Moreover, in Thm.~\ref{th:EErel} we develop important properties for
$q$-commutators of root vectors. In Sect.~\ref{sec:Verma} we start to
study Verma modules and special maps between them.
Prop.~\ref{pr:VTMiso}
gives a criterion for bijectivity of such maps, and Prop.~\ref{pr:M=L}
identifies irreducible Verma modules.
In Sect.~\ref{sec:shapdet} we study Shapovalov determinants following
the approach in \cite{b-Joseph}. Here our main result is
Thm~\ref{th:Shapdet}, which gives a formula for the Shapovalov
determinant of $U(\chi )$, where all values of $\chi $ are roots of
$1$, and the root system of $\chi $ is finite. Then we pass to more
general bicharacters: Thm.~\ref{th:Shapdet2} states a similar result
for bicharacters with finite root system. We conclude the paper with
the adaptation of our formulas to quantized enveloping algebras and
Lusztig's small quantum groups
in Sect.~\ref{sec:Uqg}, and with some commutative
algebra in the Appendix.

\section{Preliminaries}
\label{sec:prelims}

Let $\fie $ be a field and $\fienz =\fie \setminus \{0\}$.
For all $n\in \ndN _0$ and $q\in \fienz $
let
$$\qnum nq=\sum _{j=0}^{n-1}q^j,\qquad \qfact nq=\prod _{j=1}^n\qnum jq,$$
where $\qfact 0q=1$.
For any finite set $I$ let
$\{\al _i\,|\,i\in I\}$ be the standard basis of the free
$\ndZ $-module $\ndZ ^I$.

\subsection{Cartan schemes, Weyl groupoids, and root systems}
\label{ssec:CS}

The combinatorics of a Drinfel'd double of a Nichols algebra of diagonal type
is controlled to a large extent by its Weyl groupoid.
We use the language developed in
\cite{p-CH08}. Substantial part of the theory was obtained first in
\cite{a-HeckYam08}. We recall the most important definitions and facts.

Let $I$ be a non-empty finite set.
By \cite[\S 1.1]{b-Kac90} a generalized Cartan matrix
$C=(c_{ij})_{i,j\in I}$
is a matrix in $\ndZ ^{I\times I}$ such that
\begin{enumerate}
  \item[(M1)] $c_{ii}=2$ and $c_{jk}\le 0$ for all $i,j,k\in I$ with
    $j\not=k$,
  \item[(M2)] if $i,j\in I$ and $c_{ij}=0$, then $c_{ji}=0$.
\end{enumerate}

\begin{defin} \label{de:CS}
  Let $I$ be a non-empty finite set,
  $A$ a non-empty set, $\rfl _i : A \to A$ a map for all $i\in I$,
  and $C^a=(c^a_{jk})_{j,k \in I}$ a generalized Cartan matrix
  in $\ndZ ^{I \times I}$ for all $a\in A$. The quadruple
  \[ \cC = \cC (I,A,(\rfl _i)_{i \in I}, (C^a)_{a \in A})\]
  is called a \textit{Cartan scheme} if
  \begin{enumerate}
  \item[(C1)] $\rfl _i^2 = \id$ for all $i \in I$,
  \item[(C2)] $c^a_{ij} = c^{\rfl _i(a)}_{ij}$ for all $a\in A$ and
    $i,j\in I$.
  \end{enumerate}
\end{defin}

\begin{examp}
  Let $A=\{a\}$ be a set with a single element, and let $C$ be a
  generalized Cartan matrix. Then $r_i=\id $ for all $i\in I$, and
  $\cC $ becomes a Cartan scheme.
\end{examp}

  One says that a Cartan scheme $\cC $ is \textit{connected}, if
  the group $\langle \rfl _i\,|\,i\in I\rangle \subset \Aut (A)$ acts
  transitively on $A$, that is, if
  for all $a,b\in A$ with $a\not=b$ there exist $n\in \ndN $
  and $i_1,i_2,\ldots ,i_n\in I$ such that $b=r_{i_n}\cdots r_{i_2}
  r_{i_1}(a)$.
  Two Cartan schemes $\cC =\cC (I,A,(\rfl _i)_{i\in I},(C^a)_{a\in A})$
  and $\cC '=\cC '(I',A',$
  $(\rfl '_i)_{i\in I'},({C'}^a)_{a\in A'})$
  are called
  \textit{equivalent}, if there are bijections $\varphi _0:I\to I'$
  and $\varphi _1:A\to A'$ such that
  \begin{align}\label{eq:equivCS}
    \varphi _1(\rfl _i(a))=\rfl '_{\varphi _0(i)}(\varphi _1(a)),
    \qquad
    c^{\varphi _1(a)}_{\varphi _0(i) \varphi _0(j)}=c^a_{i j}
  \end{align}
  for all $i,j\in I$ and $a\in A$.

  Let $\cC = \cC (I,A,(\rfl _i)_{i \in I}, (C^a)_{a \in A})$ be a
  Cartan scheme. For all $i \in I$ and $a \in A$ define $\s _i^a \in
  \Aut(\ndZ ^I)$ by
  \begin{align}
    \s _i^a (\al _j) = \al _j - c_{ij}^a\al _i \qquad
    \text{for all $j \in I$.}
    \label{eq:sia}
  \end{align}
  This map is a reflection. 
  The \textit{Weyl groupoid of} $\cC $
  is the category $\Wg (\cC )$ such that $\Ob (\Wg (\cC ))=A$ and
  the morphisms are generated by the maps
  $\s _i^a\in \Hom (a,\rfl _i(a))$ with $i\in I$, $a\in A$.
  Formally, for $a,b\in A$ the set $\Hom (a,b)$ consists of the triples
  $(b,f,a)$, where
  \[ f=\s _{i_n}^{\rfl _{i_{n-1}}\cdots \rfl _{i_1}(a)}\cdots
    \s _{i_2}^{\rfl _{i_1}(a)}\s _{i_1}^a \]
  and $b=\rfl _{i_n}\cdots \rfl _{i_2}\rfl _{i_1}(a)$ for some
  $n\in \ndN _0$ and $i_1,\ldots ,i_n\in I$.
  The composition is induced by the group structure of $\Aut (\ndZ ^I)$:
  \[ (a_3,f_2,a_2)\circ (a_2,f_1,a_1) = (a_3,f_2f_1, a_1)\]
  for all $(a_3,f_2,a_2),(a_2,f_1,a_1)\in \Hom (\Wg (\cC ))$.
  By abuse of notation we will write
  $f\in \Hom (a,b)$ instead of $(b,f,a)\in \Hom (a,b)$.
 
  The cardinality of $I$ is termed the \textit{rank of} $\Wg (\cC )$.
  A Cartan scheme is called \textit{connected} if its Weyl groupoid
  is connected.

Recall that a groupoid is a category such that all morphisms are
isomorphisms.
The Weyl groupoid $\Wg (\cC )$ of a Cartan scheme $\cC $ is a groupoid,
see \cite{p-CH08}. For all $i\in I$ and $a\in A$
the inverse of $\s _i^a$ is $\s _i^{r_i(a)}$.
If $\cC $ and $\cC '$ are equivalent Cartan schemes, then $\Wg (\cC )$ and
$\Wg (\cC ')$ are isomorphic groupoids.

A groupoid $G$ is called \textit{connected},
if for each $a,b\in \Ob (G)$ the class $\Hom (a,b)$ is non-empty.
Hence $\Wg (\cC )$ is a connected groupoid if and only if $\cC $ is a
connected Cartan scheme.

\begin{defin} \label{de:RSC}
  Let $\cC =\cC (I,A,(\rfl _i)_{i\in I},(C^a)_{a\in A})$ be a Cartan
  scheme. For all $a\in A$ let $R^a\subset \ndZ ^I$, and define
  $m_{i,j}^a= |R^a \cap (\ndN_0\al _i + \ndN_0\al _j)|$ for all $i,j\in
  I$ and $a\in A$. We say that
  \[ \rsC = \rsC (\cC , (R^a)_{a\in A}) \]
  is a \textit{root system of type} $\cC $, if it satisfies the following
  axioms.
  \begin{enumerate}
    \item[(R1)]
      $R^a=R^a_+\cup - R^a_+$, where $R^a_+=R^a\cap \ndN_0^I$, for all
      $a\in A$.
    \item[(R2)]
      $R^a\cap \ndZ \al _i=\{\al _i,-\al _i\}$ for all $i\in I$, $a\in A$.
    \item[(R3)]
      $\s _i^a(R^a) = R^{\rfl _i(a)}$ for all $i\in I$, $a\in A$.
    \item[(R4)]
      If $i,j\in I$ and $a\in A$ such that $i\not=j$ and $m_{i,j}^a$ is
      finite, then
      $(\rfl _i\rfl _j)^{m_{i,j}^a}(a)=a$.
  \end{enumerate}
  If $\rsC $ is a root system of type $\cC $, then
  $\Wg (\rsC )=\Wg (\cC )$ is the \textit{Weyl groupoid of} $\rsC $.
  Further, $\rsC $ is called \textit{connected}, if $\cC $ is a connected
  Cartan scheme.
  If $\rsC =\rsC (\cC ,(R^a)_{a\in A})$ is a root system of type $\cC $
  and $\rsC '=\rsC '(\cC ',({R'}^a_{a\in A'}))$ is a root system of
  type $\cC '$, then we say that $\rsC $ and $\rsC '$ are \textit{equivalent},
  if $\cC $ and $\cC '$ are equivalent Cartan schemes given by maps $\varphi
  _0:I\to I'$, $\varphi _1:A\to A'$ as in Def.~\ref{de:CS}, and if
  the map $\varphi _0^*:\ndZ^I\to \ndZ^{I'}$ given by
  $\varphi _0^*(\al _i)=\al _{\varphi _0(i)}$ satisfies
  $\varphi _0^*(R^a)={R'}^{\varphi _1(a)}$ for all $a\in A$.
\end{defin}

There exist many interesting examples of root systems of type $\cC $ related
to semisimple Lie algebras, Lie superalgebras and Nichols algebras of diagonal
type, respectively. For further details and results we refer to
\cite{a-HeckYam08} and \cite{p-CH08}.

\begin{conve}\label{con:uind}
  In connection with Cartan schemes $\cC $,
  upper indices usually refer to elements of $A$.
  Often, these indices will be omitted if they are uniquely determined
  by the context. In particular,
  for any $w,w'\in \Hom (\Wg (\cC ))$ and $a\in A$,
  the notation
  $1_aw$ and $w'1_a$ means that $w\in \Hom (\underline{\,\,},a)$ and
  $w'\in \Hom (a,\underline{\,\,})$, respectively.
\end{conve}

A fundamental result about Weyl groupoids is the following theorem.

\begin{theor}\cite[Thm.\,1]{a-HeckYam08}\label{th:Coxgr}
  Let $\cC =\cC (I,A,(\rfl _i)_{i\in I},(C^a)_{a\in A})$
  be a Cartan scheme and $\rsC =\rsC (\cC ,(R^a)_{a\in A})$ a root system
  of type $\cC $.
  Let $\Wg $ be the abstract
  groupoid with $\Ob (\Wg )=A$ such that $\Hom (\Wg )$ is
  generated by abstract morphisms $s_i^a\in \Hom (a,\rfl _i(a))$,
  where $i\in I$ and $a\in A$, satisfying the relations
  \begin{align*}
    s_i s_i 1_a=1_a,\quad (s_j s_k)^{m_{j,k}^a}1_a=1_a,
    \qquad a\in A,\,i,j,k\in I,\, j\not=k,
  \end{align*}
  see Conv.~\ref{con:uind}.
  Here $1_a$ is the identity of the object $a$,
  and $(s_j s_k)^\infty 1_a$ is understood to be
  $1_a$. The functor $\Wg \to \Wg (\rsC )$, which is
  the identity on the objects, and on the set of
  morphisms is given by
  $s _i^a\mapsto \s_i^a$ for all $i\in I$, $a\in A$,
  is an isomorphism of groupoids.
\end{theor}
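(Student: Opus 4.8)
The plan is to check that $s_i^a \mapsto \sigma_i^a$ extends to a well-defined functor $\Wg \to \Wg(\rsC)$ and then to prove that it is bijective on each set $\Hom(a,b)$. Surjectivity is built into the definition of $\Wg(\rsC)$, whose morphisms are by construction words in the $\sigma_i^a$, so the entire content lies in well-definedness and in injectivity.

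For well-definedness I would verify that the $\sigma_i^a$ satisfy the defining relations. The relation $s_i s_i 1_a = 1_a$ maps to $\sigma_i^{r_i(a)}\sigma_i^a = \id$, which holds because $\sigma_i^a$ is a reflection with inverse $\sigma_i^{r_i(a)}$. For the braid relations $(s_j s_k)^{m_{j,k}^a} 1_a = 1_a$ I would pass to the rank-two sub-root-system $R^a \cap (\ndN_0 \alpha_j + \ndN_0 \alpha_k)$: by hypothesis $m_{j,k}^a$ is finite, axiom (R4) gives $(r_j r_k)^{m_{j,k}^a}(a) = a$ so that $(\sigma_j \sigma_k)^{m_{j,k}^a} \in \Hom(a,a)$, and the standard analysis of a finite rank-two (dihedral) root system, in which $\sigma_j \sigma_k$ acts as a rotation of order $m_{j,k}^a$, shows this endomorphism fixes every root and hence equals $\id$.

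The core is injectivity, for which I would introduce a length function modelled on the number of inverted roots. For $w \in \Hom(a,b)$ put $\ell(w) = |\{\beta \in R^a_+ : w(\beta) \in -R^b_+\}|$; this is finite and well defined by (R1)--(R3). The key step is the \emph{length lemma}: for $i \in I$ one has $\ell(\sigma_i^b w) = \ell(w) + 1$ when $w^{-1}(\alpha_i) \in R^a_+$, and $\ell(\sigma_i^b w) = \ell(w) - 1$ otherwise, which follows by tracking the single root $\alpha_i$ under the reflection $\sigma_i^b$ (the reflection changes the sign of $\pm\alpha_i$ only). This identifies $\ell(w)$ with the minimal length of an expression of $w$ in the generators, so that length-minimal expressions are reduced. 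A \emph{Matsumoto--Tits} type argument then finishes the proof: the length lemma yields an exchange condition, from which one deduces that any two reduced expressions of the same morphism are related by the braid relations alone, and that any expression can be shortened to a reduced one using only the defining relations. Consequently a word in the $s_i^a$ representing the identity of $\Wg(\rsC)$ already equals $1_a$ in $\Wg$, which gives injectivity.

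I expect the Matsumoto--Tits step to be the main obstacle. It is the groupoid analog of the presentation theorem for Coxeter groups, and two features make it more delicate than the classical case: the Cartan matrices $C^a$, and hence the integers $m_{j,k}^a$, vary with the object $a$, so the rank-two reductions must be carried out fibrewise while carefully bookkeeping the base points along each word; and the exchange condition must be reformulated so that deletions respect the source and target objects. Establishing that reduced words differ only by braid moves rests essentially on the finiteness of the $m_{j,k}^a$ and on the good behaviour of $\ell$ under the reflections, and this is where the substantive combinatorial work is concentrated.
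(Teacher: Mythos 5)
The paper does not prove this statement at all: it is quoted verbatim from the reference [HY08] (``A generalization of Coxeter groups, root systems, and Matsumoto's theorem''), and the proof there follows exactly the route you describe --- the length function counting inverted roots, the length lemma derived from the fact that $\sigma_i^a$ permutes $R^a_+\setminus\{\alpha_i\}$, and a groupoid version of the exchange condition and of the Matsumoto--Tits theorem, with the object-dependent matrices $C^a$ and exponents $m_{j,k}^a$ handled by the fibrewise bookkeeping you indicate. Your outline is therefore the right strategy and correctly locates the substantive work in the Matsumoto--Tits step, which you describe but do not carry out.
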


If $\cC $ is a Cartan scheme,
then the Weyl groupoid $\Wg (\cC )$ admits a length function $\ell :\Wg (\cC
)\to \ndN _0$ such that
\begin{align}
  \ell (w)=\min \{k\in \ndN _0\,|\,\exists i_1,\dots ,i_k\in I,a\in A:
  w=\s _{i_1}\cdots \s _{i_k}1_a\}
  \label{eq:ell}
\end{align}
for all $w\in \Wg (\cC )$.
If there exists a root system of type $\cC $,
then $\ell $ has very similar properties to the well-known length function
for Weyl groups, see \cite{a-HeckYam08}.

\begin{lemma}
  Let $\cC $ be a Cartan scheme and $\rsC $ a root system of type $\cC $.
  Let $a\in A$. Then $-c^a_{ij}=\max\{m\in \ndN _0\,|\,\al _j+m\al _i\in
  R^a_+\}$ for all $i,j\in I$ with $i\not=j$.
  \label{le:cm}
\end{lemma}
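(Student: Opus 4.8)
The plan is to bound the set $M=\{m\in\ndN_0\mid \al_j+m\al_i\in R^a_+\}$ on both sides by $-c^a_{ij}$. First I would note that for $m\ge 0$ the element $\al_j+m\al_i$ lies in $\ndN_0^I$, so $\al_j+m\al_i\in R^a_+$ is equivalent to $\al_j+m\al_i\in R^a$; thus $M$ records exactly the nonnegative part of the $\al_i$-string through $\al_j$, and it is nonempty since $\al_j\in R^a$ by (R2). It therefore suffices to show that $-c^a_{ij}\in M$ and that $m\notin M$ whenever $m>-c^a_{ij}$.

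For the upper bound I would exploit that the reflection $\s_i^a$ does not stabilise $R^a$ but, by (R3), carries it bijectively to $R^{\rfl_i(a)}$. Computing with \eqref{eq:sia} gives $\s_i^a(\al_j+m\al_i)=\al_j-(c^a_{ij}+m)\al_i$. If $m>-c^a_{ij}$, then $c^a_{ij}+m>0$, so this vector has coefficient $+1$ at $\al_j$ and a strictly negative coefficient at $\al_i$ (here $i\neq j$ is essential, so these are distinct basis vectors). By (R1) every element of $R^{\rfl_i(a)}$ lies in $\ndN_0^I$ or in $-\ndN_0^I$, so a mixed-sign vector cannot be a root; since $\s_i^a$ maps $R^a$ bijectively onto $R^{\rfl_i(a)}$, I conclude $\al_j+m\al_i\notin R^a$. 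In particular $M\subseteq\{0,1,\dots,-c^a_{ij}\}$ is finite with $\max M\le -c^a_{ij}$.

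For the reverse inequality I cannot use $\s_i^a(\al_j)=\al_j-c^a_{ij}\al_i$ directly, because \eqref{eq:sia} places that element in $R^{\rfl_i(a)}$ rather than in $R^a$. Instead I would apply the reflection at the neighbouring object: by (C2) one has $c^{\rfl_i(a)}_{ij}=c^a_{ij}$, hence $\s_i^{\rfl_i(a)}(\al_j)=\al_j-c^a_{ij}\al_i$, while by (C1) together with (R3) the map $\s_i^{\rfl_i(a)}$ sends $R^{\rfl_i(a)}$ to $R^a$. Since $\al_j\in R^{\rfl_i(a)}$ by (R2), it follows that $\al_j-c^a_{ij}\al_i\in R^a$, and as $-c^a_{ij}\ge 0$ by (M1) this root lies in $R^a_+$; thus $-c^a_{ij}\in M$. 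Combining the two bounds yields $\max M=-c^a_{ij}$. The one genuinely delicate point—and the reason for the $R^a$-versus-$R^{\rfl_i(a)}$ bookkeeping throughout—is that, unlike in classical Weyl group theory, no single reflection stabilises a fixed $R^a$; the argument goes through only because (C1) and (C2) let me \emph{match} the Cartan entry $c^a_{ij}$ across the two objects $a$ and $\rfl_i(a)$.
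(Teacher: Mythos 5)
Your proof is correct and follows essentially the same route as the paper's: the lower bound comes from applying $\s_i^{\rfl_i(a)}$ to $\al_j\in R^{\rfl_i(a)}$ using (C1), (C2), (R2), (R3), and the upper bound from applying $\s_i^a$ to $\al_j+m\al_i$ and invoking (R1), (R3) (you phrase this contrapositively via mixed signs, the paper directly reads off positivity of the image, but it is the same argument). The bookkeeping between $R^a$ and $R^{\rfl_i(a)}$ that you highlight is exactly the point the paper's one-line citations of (C2) and (R3) are silently handling.
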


\begin{proof}
  By (C2) and (R3), $\s _i^{r_i(a)}(\al _j)=\al _j-c^a_{ij}\al _i\in R^a_+$.
  Hence $-c^a_{ij}\le \max\{m\in \ndN _0\,|\,\al _j+m\al _i\in R^a_+\}$.
  On the other hand, if $\al _j+m\al _i\in R^a_+$, then $\s _i^a(\al _j+m\al
  _i)=\al _j+(-c^a_{ij}-m)\al _i\in R^{r_i(a)}_+$ by (R3) and (R1), and hence
  $m\le -c^a_{ij}$. This proves the lemma.
\end{proof}

Let $\cC $ be a Cartan scheme and $\rsC $ a root system of type
$\cC $. We say that $\rsC $ is \textit{finite}, if $R^a$ is finite for
all $a\in A$.
The following lemmata are well-known for traditional root systems.

\begin{lemma} \cite[Lemma 2.11]{p-CH08}
  Let $\cC $ be a connected Cartan scheme and $\rsC $
  a root system of type $\cC $. The following are equivalent.
  \begin{enumerate}
    \item $\rsC $ is finite.
    \item $R^a$ is finite for at least one $a\in A$.
    \item $\Wg (\rsC )$ is finite.
  \end{enumerate}
  \label{le:Rfincond}
\end{lemma}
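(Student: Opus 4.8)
The plan is to establish the easy equivalence $(1)\Leftrightarrow(2)$ first, using only connectedness and axiom (R3), and then to link this common finiteness condition to $(3)$ through the inversion-set interpretation of the length function $\ell$ from \eqref{eq:ell}. The implication $(1)\Rightarrow(2)$ is trivial. For $(2)\Rightarrow(1)$, suppose $R^a$ is finite for some fixed $a\in A$ and let $b\in A$ be arbitrary. Since $\cC $ is connected there is $w\in \Hom(a,b)$; writing $w=\s _{i_n}\cdots \s _{i_1}1_a$ and applying (R3) to each factor along the chain $a,\rfl _{i_1}(a),\dots ,b$ yields $w(R^a)=R^b$. As $w\in \Aut(\ndZ ^I)$ is injective, $|R^b|=|R^a|<\infty $. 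Hence every $R^b$ is finite and $(1)$ holds.

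For the remaining equivalence I would fix $a\in A$, write $\Hom(a,-)=\bigcup _{b\in A}\Hom(a,b)$, and study the map sending $w\in \Hom(a,b)$ to the set $N(w)=R^a_+\cap w^{-1}(-R^b_+)\subseteq R^a_+$ of positive roots at $a$ that $w$ carries to negative roots. The two facts I would borrow from the length-function theory of \cite{a-HeckYam08} are that $\ell (w)=|N(w)|$ and that $N$ is injective on $\Hom(a,-)$; these are the groupoid analogues of the classical statement that a Weyl group element is determined by its inversion set. Granting them, $(1)\Rightarrow(3)$ is immediate: if $R^a_+$ is finite then $N$ takes at most $2^{|R^a_+|}$ values, so $\Hom(a,-)$ is finite; connectedness forces $\Hom(a,b)\ne \emptyset $ for every $b$, whence $|A|\le |\Hom(a,-)|<\infty $, and since in a connected groupoid each $\Hom(b,c)$ has the cardinality of the vertex group $\Hom(a,a)\subseteq \Hom(a,-)$, the full morphism set $\Hom(\Wg (\rsC ))$ is finite.

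For $(3)\Rightarrow(1)$ I would prove $R^a_+=\bigcup _{w\in \Hom(a,-)}N(w)$, so that finiteness of $\Wg (\rsC )$ presents $R^a_+$ as a finite union of finite sets. The inclusion $\supseteq $ is clear; the content is that every $\beta \in R^a_+$ is inverted by some morphism out of $a$. I would argue by induction on the height $\sum _i n_i$ of $\beta =\sum _i n_i\al _i$: if $\beta =\al _i$ then $\s _i^a(\beta )=-\al _i$ gives $\beta \in N(\s _i^a)$; otherwise $\beta \notin \ndZ \al _i$ by (R2), one selects $i\in I$ with $\s _i^a(\beta )\in R^{\rfl _i(a)}_+$ of strictly smaller height (the coefficient bookkeeping of \eqref{eq:sia} changes only the $\al _i$-component), applies the inductive hypothesis at $\rfl _i(a)$ to $\s _i^a(\beta )$, and prepends $\s _i^a$. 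The main obstacle is precisely this height-reduction step: unlike the classical situation there is no invariant symmetric bilinear form guaranteeing a height-decreasing reflection, so the existence of a suitable $i$ must be extracted from the combinatorics of the root system and the properties of $\ell $ in \cite{a-HeckYam08} rather than from a positivity argument. With that in hand the three statements are equivalent.
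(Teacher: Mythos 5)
The paper offers no proof of this lemma --- it is imported verbatim from \cite[Lemma 2.11]{p-CH08} --- so there is no in-paper argument to measure you against; I will judge the proposal on its own. Your $(1)\Leftrightarrow(2)$ is correct, and your $(1)\Rightarrow(3)$ is sound: once one grants $\ell(w)=|N(w)|$ from \cite{a-HeckYam08}, every $w\in \Hom (a,\underline{\,\,})$ has length at most $|R^a_+|$, and finitely many words of bounded length in the generators $\s _i$ exhaust all such morphisms (you do not even need injectivity of $N$ for this step); connectedness then bounds $|A|$ and the remaining hom-sets exactly as you say.

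The gap is where you place it, in $(3)\Rightarrow(1)$, and it is genuine rather than a routine verification. Your identity $R^a_+=\bigcup _w N(w)$ amounts to the assertion that every positive root is real, i.e.\ of the form $w^{-1}(\al _i)$, and your height induction needs, for every non-simple $\beta =\sum _j n_j\al _j\in R^a_+$, an index $i$ with $\sum _j c^a_{ij}n_j>0$. Classically this comes from positivity of the invariant bilinear form; here no such form exists, and the rank-two example with $c^a_{12}=c^a_{21}=-2$ (where $\sum _j c^a_{1j}n_j=0$ for $\beta =\al _1+\al _2$) shows the step is not a consequence of the axioms alone --- any proof of it must use finiteness of $\Wg (\rsC )$ in an essential way, which threatens to make the induction circular. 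The standard repair bypasses realness of roots entirely via the longest element: since $\Wg (\rsC )$ is finite, choose $w\in \Hom (b,a)$ of maximal length among all morphisms. Then $\ell (\s _i^a w)<\ell (w)$ for every $i\in I$, which by the exchange-type results of \cite{a-HeckYam08} forces $w^{-1}(\al _i)\in -R^b_+$ for all $i$; by linearity and (R1), (R3) this gives $w^{-1}(R^a_+)\subseteq R^b\cap (-\ndN _0^I)=-R^b_+$, hence $N(w^{-1})=R^a_+$ and $|R^a_+|=\ell (w^{-1})=\ell (w)<\infty $. (Lemma~\ref{le:longestw} records precisely this longest element in the finite case.) With that substitution for your height descent, the three implications close.
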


\begin{lemma} \cite[Cor.\,5]{a-HeckYam08}
  Let $\cC $ be a connected Cartan scheme and $\rsC $ a finite root system
  of type $\cC $. Then for all $a\in A$ there exist unique elements $b\in A$
  and $w\in \Hom (b,a)$ such that $|R^a_+|=\ell (w)\ge \ell (w')$
  for all $w'\in \Hom (b',a')$, $a',b'\in A$.
  \label{le:longestw}
\end{lemma}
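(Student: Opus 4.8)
The plan is to transfer the classical construction of the longest element of a finite Weyl group to the groupoid setting; the crucial tool is the root-theoretic description of $\ell$ that is part of its ``Weyl-group-like'' behaviour, see \cite{a-HeckYam08}. Writing $R^a_-:=-R^a_+$, this description states that a morphism $w\in\Hom(b,a)$, regarded as an automorphism of $\ndZ^I$ with $w(R^b)=R^a$ by (R3), has length
\[ \ell(w)=|\{\beta\in R^b_+\,|\,w(\beta)\in R^a_-\}|. \]
In particular $\ell(w)\le|R^b_+|$, with equality if and only if $w(R^b_+)=R^a_-$. Moreover, since each $\s_i^c$ maps $R^c$ bijectively onto $R^{\rfl_i(c)}$ and $R^c=R^c_+\sqcup(-R^c_+)$, the cardinality $|R^c_+|$ is invariant under the generators $\rfl_i$ and hence, by connectedness, independent of $c\in A$; I use this to compare lengths across homsets. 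I also record the standard increment rule: for $w\in\Hom(b,a)$ one has $\ell(w\circ\s_i^{\rfl_i(b)})=\ell(w)+1$ precisely when $w(\al_i)\in R^a_+$, and $\ell(w)-1$ otherwise.

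For existence, Lemma~\ref{le:Rfincond} shows that $\Wg(\rsC)$ is finite, so the morphisms with target $a$ form a finite nonempty set (it contains $1_a$). Let $w_0\in\Hom(b,a)$ be one of maximal length. For each $i\in I$ the morphism $w_0\circ\s_i^{\rfl_i(b)}$ also has target $a$, so its length is at most $\ell(w_0)$; by the increment rule this forces $w_0(\al_i)\in R^a_-$ for every $i$. Now any $\beta\in R^b_+$ is a non-negative integral combination $\beta=\sum_i n_i\al_i$, $n_i\in\ndN_0$, so $w_0(\beta)=\sum_i n_i\,w_0(\al_i)\in-\ndN_0^I$; as $w_0(\beta)\in R^a$ is nonzero, (R1) gives $w_0(\beta)\in R^a_-$. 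Hence $w_0(R^b_+)=R^a_-$ and $\ell(w_0)=|R^b_+|=|R^a_+|$. Since every $w'\in\Hom(b',a')$ satisfies $\ell(w')\le|R^{b'}_+|=|R^a_+|$, the pair $(b,w_0)$ is as required.

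For uniqueness, let $w_0\in\Hom(b,a)$ and $w_0'\in\Hom(b',a)$ both have length $|R^a_+|$. By the equality case, $w_0(R^b_+)=R^a_-=w_0'(R^{b'}_+)$. Set $u:=(w_0')^{-1}\circ w_0\in\Hom(b,b')$; then $u(R^b_+)=(w_0')^{-1}(R^a_-)=R^{b'}_+$, so the length formula gives $\ell(u)=0$. By \eqref{eq:ell} a morphism of length $0$ is an identity, and since $u\in\Hom(b,b')$ this yields $b=b'$ and $u=1_b$, i.e.\ $w_0=w_0'$.

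The main obstacle is careful bookkeeping rather than a new idea: one must track source and target objects through every composition and inverse. The argument leans entirely on the two cited properties of $\ell$ — the root-counting formula and the $\pm1$ increment rule; granting these, the passage from ``all simple roots go negative'' to ``all positive roots go negative'' and the identification of a length-$0$ morphism with an identity proceed exactly as for ordinary Weyl groups.
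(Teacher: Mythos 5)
The paper offers no proof of this lemma at all --- it is quoted verbatim from \cite[Cor.\,5]{a-HeckYam08} --- so there is nothing internal to compare against; your argument is a correct, self-contained reconstruction along the standard lines one would expect that reference to follow. The existence step (maximality forces every $w_0(\al_i)$ negative, hence every positive root goes negative, hence $\ell(w_0)=|R^b_+|$) and the uniqueness step (two longest elements with common target differ by a length-zero morphism, which must be an identity) are both sound, and the bookkeeping of sources and targets is handled correctly, including the observation that $|R^c_+|$ is constant on the connected component. The one thing to be clear-eyed about is that the entire weight of the proof rests on the two imported facts --- the root-counting formula $\ell(w)=|\{\beta\in R^b_+\,|\,w(\beta)\in -R^a_+\}|$ and the $\pm 1$ increment rule --- which are genuine theorems of \cite{a-HeckYam08} (consequences of their Matsumoto-type analysis) rather than formal consequences of axioms (R1)--(R4); granting those, your derivation is complete.
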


\subsection{The Weyl groupoid of a bicharacter}
\label{ssec:Weylgroupoid}

Let $I$ be a non-empty finite set.
Recall that a bicharacter on $\ndZ ^I$ with values in $\fienz $ is a map
$\chi :\ndZ ^I\times \ndZ ^I\to \fienz $ such that
\begin{align}
	\chi (a+b,c)=&\chi (a,c)\chi (b,c),&
	\chi (c,a+b)=&\chi (c,a)\chi (c,b)
	\label{eq:bichar}
\end{align}
for all $a,b,c\in \ndZ ^I$.
Then $\chi (0,a)=\chi (a,0)=1$ for all $a\in \ndZ ^I$.
Let $\cX $ be the set of bicharacters on $\ndZ ^I$.  
If $\chi \in \cX $, then
\begin{align}
  \label{eq:chiop}
  \chi \op : & \ndZ ^I\times \ndZ ^I\to \fienz ,&
  \chi \op (a,b)=&\,\chi (b,a),\\
  \label{eq:chiinv}
  \chi ^{-1} : & \ndZ ^I\times \ndZ ^I\to \fienz ,&
  \chi ^{-1}(a,b)=&\,\chi (a,b)^{-1},
  \intertext{and for all $w\in \Aut _\ndZ (\ndZ ^I)$ the map}
  \label{eq:w*chi}
  w^*\chi : & \ndZ ^I\times \ndZ ^I\to \fienz ,&
  w^*\chi (a,b)=&\,\chi (w^{-1}(a),w^{-1}(b)),
\end{align}
are bicharacters on $\ndZ ^I$. The equation
\begin{align}
	\label{eq:w*functor}
	(ww')^*\chi =w^*(w'{}^*\chi )
\end{align}
holds for all $w,w'\in \Aut _{\ndZ }(\ndZ ^I)$ and all $\chi \in \cX $.

\begin{defin}\label{de:Cartan}
	Let $\chi \in \cX $, $p\in I$, and
	$q_{ij}=\chi (\al _i,\al _j)$ for all $i,j\in I$.
  We say that $\chi $ is $p$-\textit{finite}, if for all $j\in I$ there exists
  $m\in \ndN _0$ such that $\qnum{m+1}{q_{pp}}=0$ or
  $q_{pp}^m q_{pj}q_{jp}=1$.

  Assume that $\chi $ is $p$-finite.
  Let $c_{p p}^\chi =2$, and for all $j\in I\setminus \{p\}$ let
	$$ c_{pj}^\chi =-\min \{m\in \ndN _0 \,|\,
	(m+1)_{q_{pp}}(q_{pp}^m q_{pj} q_{jp}-1)=0\}. $$
  If $\chi $ is $i$-finite for all $i\in I$, then the matrix
  $C^\chi =(c_{ij}^\chi )_{i,j\in I}$ is called the
	\textit{Cartan matrix} associated to $\chi $.
  It is a generalized Cartan matrix, see Sect.~\ref{ssec:CS}.
\end{defin}

For all $p\in I$ and $\chi \in \cX $, where $\chi $ is $p$-finite, let
$\s _p^\chi \in \Aut _\ndZ (\ndZ ^I)$,
\begin{align*}
  \s _p^\chi (\al _j)=\al _j-c_{pj}^\chi \al _p
  \quad \text{for all $j\in I$.}
\end{align*}
Towards the definition of the Weyl groupoid of a
bicharacter, we define bijections $r_p:\cX \to \cX $
for all $p\in I$. Namely, let
\begin{align*}
  r_p: \cX \to \cX,\quad
  r_p(\chi )=
  \begin{cases}
    (\s _p^\chi )^*\chi & \text{if $\chi $ is $p$-finite,}\\
    \chi & \text{otherwise.}
  \end{cases}
\end{align*}
Let $p\in I$, $\chi \in \cX $,
$q_{ij}=\chi (\al _i,\al _j)$ for all $i,j\in I$.
If $\chi $ is $p$-finite, then
\begin{equation}
  \begin{aligned}
    r_p(\chi )(\al _p,\al _p)=&q_{p p}, &
    r_p(\chi )(\al _p,\al _j)=&q_{p j}^{-1}q_{p p}^{c_{pj}^\chi },\\
    r_p(\chi )(\al _i,\al _p)=&q_{i p}^{-1}q_{p p}^{c_{pi}^\chi },&
    r_p(\chi )(\al _i,\al _j)=&q_{i j} q_{i p}^{-c_{p j}^\chi }
    q_{p j}^{-c_{p i}^\chi } q_{p p}^{c_{pi}^\chi c_{p j}^\chi }
  \end{aligned}
  \label{eq:rpchi}
\end{equation}
for all $i,j\in I\setminus \{p\}$. 
It is a small exercise to check that then
$(\s _p^\chi )^*\chi $ is $p$-finite, and
\begin{align}\label{eq:rp2}
  c_{pj}^{r_p(\chi )}=c_{pj}^\chi \quad \text{for all $j\in I$},
  \qquad r_p^2(\chi )=\chi .
\end{align}
The reflections $r_p$, $p\in I$, generate a subgroup
\begin{align*}
  \cG =\langle r_p\,|\, p\in I\rangle
\end{align*}
of the group of bijections of the set $\cX $. For all $\chi \in \cX $ let
$\cG (\chi )$ denote the $\cG $-orbit of $\chi $ under the action of $\cG $. 

Let $\chi \in \cX $ such that $\chi '$ is $p$-finite for all
$\chi '\in \cG (\chi )$ and $p\in I$.
By Eq.~\eqref{eq:rp2} we obtain that
$$\cC (\chi )=
\cC (I,\cG (\chi ),(r_p)_{p\in I},
(C^{\chi '})_{\chi '\in \cG (\chi )})$$
is a connected Cartan scheme.
The Weyl groupoid of $\chi $ is then the Weyl groupoid of the Cartan scheme
$\cC (\chi )$ and is denoted by $\Wg (\chi )$. Clearly,
$\cC (\chi )=\cC (\chi ')$ and $\Wg (\chi )=\Wg (\chi ')$
for all $\chi '\in \cG (\chi )$.

\begin{examp}\label{ex:Cartan}
  Let $C=(c_{i j})_{i,j\in I}$ be a generalized Cartan matrix. Let
  $\chi \in \cX $, $q_{ij}=\chi (\al _i,\al _j)$ for all $i,j\in I$, and
  assume that $q_{ii}^{c_{ij}}=q_{ij}q_{ji}$ for all $i,j\in I$, and that
  $\qnum{m+1}{q_{ii}}\not=0$ for all $i\in I$ and $m\in \ndN _0$ with
  $m<\max \{-c_{ij}\,|\,j\in I\setminus \{i\}\}$.
  (The latter is not an essential assumption, since if it fails, then one can
  replace $C$ by another generalized Cartan matrix $\tilde{C}$,
  such that $\chi $ has this property with respect to $\tilde{C}$.)
  One says that $\chi $ is of \textit{Cartan type}.
  Then $\chi $ is $i$-finite for all $i\in I$,
  and $c_{ij}^\chi =c_{ij}$ for all $i,j\in I$. Eq.~\eqref{eq:rpchi} gives that
  \begin{align*}
    r_p(\chi )(\al _i,\al _i)=&q_{ii}=\chi (\al _i,\al _i),\\
    r_p(\chi )(\al _i,\al _j)\,r_p(\chi )(\al _j,\al _i)=&q_{ij}q_{ji}=
    r_p(\chi )(\al _i,\al _i)^{c_{pi}}
  \end{align*}
  for all $p,i,j\in I$.
  Hence $r_p(\chi )$ is again of Cartan type with the same Cartan matrix $C$.
  Thus $\chi '$ is $i$-finite for all $\chi '\in \cG (\chi )$ and
  $i\in I$.

  Let $C=(c_{ij})_{i,j\in I}$ be a symmetrizable generalized Cartan matrix,
  and for all $i\in I$ let $d_i\in \ndN $ such that
  $d_ic_{ij}=d_jc_{ji}$ for all $i,j\in I$.
  Let $q\in \fienz $ such that $\qnum{m+1}{q^{2d_i}}\not=0$
  for all $m\in \ndN _0$
  with $m<-c_{ij}$ for some $j\in I$.
  Define $\chi \in \cX $ by
  $\chi (\al _i,\al _j)=q^{d_ic_{ij}}$.
  Then $\chi $ is of Cartan type, hence $\chi $ is $p$-finite for all $p\in I$.
  Eq.~\eqref{eq:rpchi} implies that $r_p(\chi )=\chi $ for all $p\in I$,
  and hence $\cG (\chi )$ consists of precisely one element.
  In this case the Weyl groupoid $\Wg (\chi )$ is a group, which is precisely
  the Weyl group associated to the generalized Cartan matrix $C$.
  We will study this example in Sect.~\ref{sec:Uqg} under the
  assumption that $C$ is of finite type.
\end{examp}

\subsection{Roots}
\label{ssec:roots}

Let $\chi \in \cX $.
There exists a canonical root system of type $\cC (\chi)$ which we describe in
this subsection.
It is based on the construction of a restricted PBW basis
of Nichols algebras of diagonal type. Nichols algebras are braided
Hopf algebras defined by a universal property.
More details can be found in
\cite{inp-AndrSchn02} on braided Hopf algebras and Nichols algebras,
in \cite{a-Khar99} on the PBW
basis, and in \cite{a-Heck06a} on the root system.

Let $V\in \ydZI $ be a $|I|$-dimensional \YD module
of diagonal type. Let $\coal :V\to \fie \ndZ ^I\ot V$ and
$\actl :\fie \ndZ ^I\ot V\to V$ denote the left coaction and the left
action of $\fie \ndZ ^I$ on $V$, respectively. Fix a basis
$\{x_i\,|\,i\in I\}$ of $V$, elements $g_i$, where $i\in I$, and a
matrix $(q_{ij})_{i,j\in I}\in (\fienz)^{I\times I}$, such that
\[ \coal (x_i)=g_i\ot x_i,\quad g_i\actl x_j=q_{ij}x_j \quad
\text{for all $i,j\in I$.} \]
Assume that $\chi (\al _i,\al _j)=q_{ij}$ for all $i,j\in I$.
For all $\al \in \ndZ ^I$ define the ``bound function''
\begin{align}
  \bfun \chi (\al )=&
  \begin{cases}
    \min \{ m\in \ndN \,|\,
    \qnum{m}{\chi (\al ,\al )}=0\} &
    \text{if $\qnum{m}{\chi (\al ,\al )}=0$}\\
    & \text{for some $m\in \ndN $,}\\
    \infty & \text{otherwise}.
  \end{cases}
  \label{eq:height}
\end{align}
If $p\in I$ such that $\chi $ is $p$-finite, then
\begin{align}
  \bfun{r_p(\chi )}(\s _p^\chi (\al ))=\bfun \chi (\al )\quad
  \text{for all $\al \in \ndZ ^I$}
  \label{eq:hghtrpchi}
\end{align}
by Eq.~\eqref{eq:w*chi}.

The tensor algebra $T(V)$ admits a universal braided Hopf algebra quotient
$\NA V$, called the \textit{Nichols algebra of} $V$.
As an algebra, $\NA V$ has a unique $\ndZ ^I$-grading
\begin{align}
  \NA V=\oplus _{\al \in \ndZ ^I}\NA V _\al 
  \label{eq:NAVgrading}
\end{align}
such that $\deg x_i=\al _i$ for all $i\in I$.
This is also a coalgebra grading.
There exists a totally ordered index set $(L,\le )$ and a family
$(y_l)_{l\in L}$ of $\ndZ ^I$-homogeneous elements $y_l\in \NA V$ such that
the set
\begin{equation}
\begin{aligned}
  \{ y_{l_1}^{m_1}y_{l_2}^{m_2}\cdots y_{l_k}^{m_k}\,|\,
  &k\ge 0,\,l_1,\dots ,l_k\in L,\,l_1>l_2>\cdots >l_k,\\
  &m_i\in \ndN ,\,m_i<\bfun \chi (\deg y_{l_i})
  \quad \text{for all $i\in I$}\}
\end{aligned}
  \label{eq:PBWbasis}
\end{equation}
forms a vector space basis of $\NA V$.
The set
\begin{align}\label{eq:roots}
  R^\chi _+=\{\deg y_l\,|\,l\in L\}\subset \ndZ ^I
\end{align}
depends on the matrix
$(q_{ij})_{i,j\in I}$, but not on the choice of
the basis $\{x_i\,|\,i\in I\}$, the set $L$,
and the elements $g_i$, $i\in I$, and $y_l$, $l\in L$. Let
\[ R^\chi =R^\chi _+\cup -R^\chi _+. \]

\begin{theor}\cite[Thm.\,3.13]{p-Heck07b}
  \label{th:rschi}
  Let $\chi \in \cX $ such that $\chi '$ is $p$-finite for all $p\in
  I$.
  $\chi '\in \cG (\chi )$. Then
  $\rsC (\chi )=\rsC (\cC (\chi ), (R^{\chi '})_{\chi
  '\in \cG (\chi )})$ is a root system of type $\cC (\chi )$.
\end{theor}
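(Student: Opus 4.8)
The plan is to verify the four axioms (R1)--(R4) of Definition~\ref{de:RSC} for the family $(R^{\chi'})_{\chi'\in\cG(\chi)}$ over the Cartan scheme $\cC(\chi)$, checking them for each fixed $\chi'\in\cG(\chi)$. Axioms (R1) and (R2) follow from the PBW basis~\eqref{eq:PBWbasis}. For (R1): the algebra $\NA{V}$ is $\ndN_0^I$-graded with $\NA{V}_0=\fie$, and every PBW generator $y_l$ is homogeneous of nonzero degree, so $R^{\chi'}_+\subset\ndN_0^I\setminus\{0\}$; together with the definition $R^{\chi'}=R^{\chi'}_+\cup -R^{\chi'}_+$ this gives $R^{\chi'}\cap\ndN_0^I=R^{\chi'}_+$. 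For (R2): in a Nichols algebra of diagonal type one has $\NA{V}_{m\al_i}=\fie x_i^m$, so the only PBW generator whose degree lies in $\ndN_0\al_i$ is $x_i$ itself (of degree $\al_i$, nonzero since $\bfun{\chi'}(\al_i)\ge 2$); hence no proper multiple of $\al_i$ is a root and $R^{\chi'}\cap\ndZ\al_i=\{\al_i,-\al_i\}$.

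The heart of the matter, and the step I expect to be the main obstacle, is (R3): the identity $\s_p^{\chi'}(R^{\chi'})=R^{r_p(\chi')}$ for all $p\in I$. My plan is to use the reflection theory for Nichols algebras of diagonal type. When $\chi'$ is $p$-finite one constructs a reflection relating the Nichols algebras with braidings $\chi'$ and $r_p(\chi')$, whose effect on the $\ndZ^I$-grading is precisely the linear map $\s_p^{\chi'}$: it flips the unique root $\al_p$ to $-\al_p$ and carries $R^{\chi'}_+\setminus\{\al_p\}$ bijectively onto $R^{r_p(\chi')}_+\setminus\{\al_p\}$. The essential compatibility is supplied by~\eqref{eq:hghtrpchi}, which gives $\bfun{r_p(\chi')}(\s_p^{\chi'}(\al))=\bfun{\chi'}(\al)$: the PBW height of a root is preserved by $\s_p^{\chi'}$, so the reflected generators again obey the exponent bounds of~\eqref{eq:PBWbasis} and assemble into a genuine PBW basis of the reflected Nichols algebra. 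The genuinely hard input is the construction of this reflection correspondence, which requires an analysis of skew-derivations and an explicit isomorphism of the two Nichols algebras rather than a formal manipulation of the combinatorial data; the invariance~\eqref{eq:rp2} of the Cartan entries keeps the bookkeeping consistent.

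For (R4), fix $i\neq j$ and $a=\chi'$ with $m_{i,j}^a$ finite. I would restrict to the rank-two situation: $R^{\chi'}\cap(\ndN_0\al_i+\ndN_0\al_j)$ is the positive root set attached to $x_i,x_j$, of finite cardinality $m_{i,j}^a$ by hypothesis. Iterating (R3) along the alternating sequence of reflections at $i$ and $j$, each reflection permutes these finitely many roots up to sign and turns exactly one of them negative; after $m_{i,j}^a$ reflections all positive rank-two roots have been exhausted, so the alternating product returns the object to $a$, that is $(r_ir_j)^{m_{i,j}^a}(a)=a$. Here Lemma~\ref{le:cm} identifies the Cartan entries $c^{\chi'}_{ij},c^{\chi'}_{ji}$ governing $\s_i^{\chi'},\s_j^{\chi'}$ directly in terms of these roots, so the rank-two combinatorics are completely determined by $R^{\chi'}\cap(\ndN_0\al_i+\ndN_0\al_j)$.

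In summary, (R1) and (R2) are immediate from the PBW basis and (R4) reduces to a rank-two count once (R3) is in hand, so essentially all of the content sits in the reflection property (R3), which rests on the reflection functors for Nichols algebras of diagonal type.
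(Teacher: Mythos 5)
The paper does not prove this theorem: it is imported verbatim from \cite[Thm.\,3.13]{p-Heck07b}, so there is no internal proof to compare your argument against. Judged on its own terms, your outline has the right architecture and matches what the cited reference does: (R1) and (R2) are indeed immediate from the PBW basis \eqref{eq:PBWbasis} and the fact that $\NA V_{m\al _i}=\fie x_i^m$, and you are right that essentially all of the content sits in (R3), which is exactly the reflection theory (the Lusztig-type isomorphisms of Sect.~\ref{sec:Lusztig} and their Nichols-algebra precursors) together with the height invariance \eqref{eq:hghtrpchi}. Deferring (R3) to that theory is an accurate diagnosis of where the work is, but it does mean your text is an outline rather than a proof -- (R3) \emph{is} the theorem.

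There is, however, a concrete gap in your (R4) step. Exhausting the $m^a_{i,j}$ positive roots in $\ndN _0\al _i+\ndN _0\al _j$ by alternating reflections controls the composite linear map $w=\s _{i_{2m}}\cdots \s _{i_1}$ only on the sublattice $\ndZ \al _i+\ndZ \al _j$ (and even there one still has to rule out the swap $\al _i\leftrightarrow \al _j$, e.g.\ by a determinant count). But (R4) asserts that the \emph{object} returns: $(\rfl _i\rfl _j)^{m^a_{i,j}}(a)=a$, and here $a=\chi '$ is a bicharacter on all of $\ndZ ^I$ with $(\rfl _i\rfl _j)^{m}(a)=w^*a$. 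A priori $w$ could move $\al _k$ for $k\notin \{i,j\}$ by a nonzero element of $\ndZ \al _i+\ndZ \al _j$, in which case $w^*a=a$ would not follow. One must additionally prove $w(\al _k)=\al _k$ for all $k$; this can be extracted from (R1)--(R3) applied to the intermediate objects (both $w(\al _k)-\al _k$ and $w^{-1}(\al _k)-\al _k$ lie in $\ndN _0\al _i+\ndN _0\al _j$ because $w(\al _k)$ and $w^{-1}(\al _k)$ are roots with $k$-th coordinate $1$, and these two differences sum to zero once $w$ is known to be the identity on the plane), but it is a genuine extra step, not a consequence of the rank-two root count alone. Your sentence ``so the alternating product returns the object to $a$'' papers over exactly this point.
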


Roots with finite bounds often play a distinguished role.
For all $\chi \in \cX $ let
\begin{align}
  R^\chi _{+\fin }=\{\beta \in R^\chi _+\,|\,
  \bfun \chi (\beta )<\infty \},\quad
  R^\chi _{+\infty }=R^\chi _+\setminus R^\chi _{+\fin }.
  \label{eq:R+fin}
\end{align}

We will use several finiteness properties of bicharacters.
\begin{align}
  \label{eq:X1}
  \cX _1=\{\chi \in \cX \,|\,&
  \text{$\chi $ is $p$-finite for all $p\in I$}\},\\
  \cX _2=\{\chi \in \cX \,|\,&
  \text{$\chi '$ is $p$-finite for all $\chi '\in \cG (\chi )$, $p\in I$}\},\\
  \cX _3=\{\chi \in \cX \,|\,&
  \text{$R^\chi $ is finite}\},\\
  \cX _4=\{\chi \in \cX \,|\,&
  \text{$R^\chi $ is finite, $R^\chi _+=R^\chi _{+\fin }$}\},\\
  \label{eq:X5}
  \cX _5=\{\chi \in \cX _4\,|\,&
  \text{$\chi (\al ,\al )\not=1$ for all $\al \in R^\chi _+$}\}.
\end{align}
Clearly, $\cX _i\supset \cX _j$ for $1\le i<j\le 5$.
By Eq.~\eqref{eq:height}, $\chi \in \cX _5$ if and only if
$R^\chi $ is finite and $\chi (\al ,\al )$ is a root of $1$ different
from $1$ for all $\al \in R^\chi _+$.

\begin{lemma} \label{le:equalrs}
  Let $\chi ,\chi '\in \cX _2$.

  (i) If $R^\chi _+=R^{\chi '}_+$, then
  $C^{w^*\chi }=C^{w^*\chi '}$ for all $w\in \Hom (\chi ,\underline{\,\,})
  \subset \Hom (\Wg (\chi ))$.

  (ii) Assume that $\chi ,\chi '\in \cX _3$.
  If
  $C^{w^*\chi }=C^{w^*\chi '}$ for all $w\in \Hom (\chi ,\underline{\,\,})
  \subset \Hom (\Wg (\chi ))$, then
  $R^\chi _+=R^{\chi '}_+$.
\end{lemma}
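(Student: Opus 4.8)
The plan rests on a single observation: by Lemma~\ref{le:cm} the off-diagonal entries of the Cartan matrix of any bicharacter $\psi\in\cX _2$ are determined by its root system, since $c^\psi _{ij}=-\max\{m\in\ndN _0\,|\,\al _j+m\al _i\in R^\psi _+\}$ for $i\neq j$ and $c^\psi _{ii}=2$. Hence $R^\psi _+$ determines $C^\psi $, and in particular $R^\psi _+=R^{\psi '}_+$ forces $\s ^\psi _p=\s ^{\psi '}_p$ for all $p\in I$.

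For part (i) I would induct on the length of $w$. Writing $w=\s ^{\chi _{n-1}}_{i_n}\cdots\s ^{\chi _0}_{i_1}$ with $\chi _0=\chi $ and $\chi _k=r_{i_k}(\chi _{k-1})$, the functoriality \eqref{eq:w*functor} together with $r_p(\psi )=(\s ^\psi _p)^*\psi $ gives $w^*\chi =\chi _n$, the target of $w$. Starting from $R^\chi _+=R^{\chi '}_+$, the observation above yields $\s ^\chi _{i_1}=\s ^{\chi '}_{i_1}$, so by (R3) $R^{r_{i_1}(\chi )}=\s ^\chi _{i_1}(R^\chi )=\s ^{\chi '}_{i_1}(R^{\chi '})=R^{r_{i_1}(\chi ')}$ and thus $R^{r_{i_1}(\chi )}_+=R^{r_{i_1}(\chi ')}_+$. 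Iterating along the word (the reflections continuing to agree at each step precisely because the root sets do), one reaches $R^{\chi _n}_+=R^{\chi '_n}_+$, where $\chi '_n=r_{i_n}\cdots r_{i_1}(\chi ')=w^*\chi '$. Since $R^{w^*\chi }_+=R^{w^*\chi '}_+$, the observation gives $C^{w^*\chi }=C^{w^*\chi '}$.

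For part (ii) the content is the converse reconstruction of the roots from the Cartan data, and here finiteness of the root system is essential. Taking $w=\id$ in the hypothesis gives $C^\chi =C^{\chi '}$, so $\s ^\chi _p=\s ^{\chi '}_p$ and $r_p(\chi ')=(\s ^\chi _p)^*\chi '$ for all $p$. I would first record that the hypothesis of (ii) is stable under each reflection: given $v\in\Hom (r_i(\chi ),\underline{\,\,})\subset\Hom (\Wg (\chi ))$, set $w=v\,\s ^\chi _i\in\Hom (\chi ,\underline{\,\,})$; then $w^*\chi =v^*(r_i(\chi ))$ and, using $\s ^\chi _i=\s ^{\chi '}_i$, also $w^*\chi '=v^*(r_i(\chi '))$, so the assumption $C^{w^*\chi }=C^{w^*\chi '}$ says exactly that $r_i(\chi ),r_i(\chi ')$ again satisfy the hypothesis (they stay in $\cX _3$ since $\s ^\chi _i$ maps $R^\chi $ bijectively onto $R^{r_i(\chi )}$). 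With this stability I would prove, by induction on the height $\operatorname{ht}(\beta )$ (the sum of coordinates), the stronger claim that $\beta \in R^\psi _+\Leftrightarrow\beta \in R^{\psi '}_+$ for every pair $(\psi ,\psi ')$ obeying the hypotheses of (ii). If $\operatorname{ht}(\beta )=1$ then $\beta =\al _i$ lies in both by (R2). If $\beta $ is a non-simple positive root, then—because $R^\psi $ is finite—there is $i\in I$ with $\beta _1:=\s ^\psi _i(\beta )\in R^{r_i(\psi )}_+$ and $\operatorname{ht}(\beta _1)<\operatorname{ht}(\beta )$ (see \cite{p-CH08,a-HeckYam08}). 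By the stability step and the inductive hypothesis for $(r_i(\psi ),r_i(\psi '))$ one gets $\beta _1\in R^{r_i(\psi ')}_+$; since $C^{r_i(\psi )}=C^{r_i(\psi ')}$ the reflections agree, so $\beta =\s ^{r_i(\psi )}_i(\beta _1)=\s ^{r_i(\psi ')}_i(\beta _1)\in R^{\psi '}$ by (R3), whence $\beta \in R^{\psi '}_+$ as its coordinates are non-negative. The reverse implication is symmetric, and specialising to $(\chi ,\chi ')$ yields $R^\chi _+=R^{\chi '}_+$.

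The main obstacle is the height-lowering input in part (ii): this is exactly where the finiteness assumption $\chi ,\chi '\in\cX _3$ becomes unavoidable. In an infinite root system a positive root fixed by all reflections cannot be reduced to a simple root, so the existence of a height-lowering reflection fails and the induction would break down; part (i), by contrast, needs no finiteness and follows purely from Lemma~\ref{le:cm} and axiom (R3).
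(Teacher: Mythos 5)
Your proposal is correct. Part (i) is essentially the paper's argument: the same induction along the word, with Lemma~\ref{le:cm} converting equality of positive root sets into equality of Cartan matrices and (R3) propagating the equality of root sets through each reflection. Part (ii) reaches the same conclusion by a mildly different route: the paper quotes \cite[Prop.\,2.12]{p-CH08}, which says that for a finite root system every root has the form $w^{-1}(\al _i)$ with $w\in \Hom (\chi ,\underline{\,\,})$, and then concludes at once because the hypothesis forces the two groupoids to act on $\ndZ ^I$ by the same automorphisms; you instead re-derive that reality statement inline, by a height induction on individual positive roots together with the observation that the hypothesis of (ii) is stable under each reflection $r_i$. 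The two arguments rest on the same finiteness input (existence of a height-lowering simple reflection for every non-simple positive root, which is exactly what underlies the cited proposition), so the difference is one of packaging: the paper's version is shorter because it outsources the reality of roots, while yours is self-contained modulo that one combinatorial fact and makes explicit why $\cX _3$ is needed. Both are valid.
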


\begin{proof}
  By Thm.~\ref{th:rschi}, $\rsC (\chi )$ is a root system of type $\cC
  (\chi )$.

  (i) Assume that $R^\chi _+=R^{\chi '}_+$.
  Then $C^{\chi }=C^{\chi '}$ by Lemma~\ref{le:cm}.
  Therefore $\s _i^\chi =\s _i^{\chi '}$ in $\Aut (\ndZ ^I)$.
  Since $\chi ,\chi '\in \cX _2$, by induction
  it follows that $\s_{i_1}\cdots \s _{i_k}^\chi =\s _{i_1}\cdots
  \s _{i_k}^{\chi '}$ in $\Aut (\ndZ ^I)$ and
  $C^{(\s_{i_1}\cdots \s _{i_k}^\chi )^*\chi }=
  C^{(\s_{i_1}\cdots \s _{i_k}^{\chi '})^*\chi '}$
  for all $k\in \ndN _0$ and $i_1,\dots ,i_k\in I$.
  Hence
  $C^{w^*\chi }=C^{w^*\chi '}$ for all $w\in \Hom (\chi ,\underline{\,\,})
  \subset \Hom (\Wg (\chi ))$.

  (ii) Since $\chi \in \cX _3$,
  $R^\chi =\{w^{-1}(\al _i)\,|\,w\in \Hom
  (\chi ,\underline{\,\,})\subset \Hom (\Wg (\chi ))\}$
  by \cite[Prop.\,2.12]{p-CH08}. By assumption on the Cartan matrices,
  $\s_{i_1}\cdots \s _{i_k}^\chi =\s _{i_1}\cdots
  \s _{i_k}^{\chi '}$ in $\Aut (\ndZ ^I)$
  for all $k\in \ndN _0$ and $i_1,\dots,i_k\in I$.
  Hence $R^\chi =R^{\chi '}$, and the
  lemma holds by (R1).
\end{proof}

For our study of Drinfel'd doubles we will use an analog of the sum of
fundamental weights, commonly known as $\rho $. More precisely, we
define a character version of the linear form $(2\rho ,\cdot )$,
where $(\cdot ,\cdot )$ is the usual bilinear form on the
weight lattice.

Let $\ZIdual =\Hom (\ndZ ^I,\fienz )$
denote the group of characters of $\ndZ ^I$ with values in
$\fienz $.

\begin{defin}\label{de:rhomap}
  Let $\chi \in \cX $. Let $\rhomap \chi \in \widehat{\ndZ ^I}$
  such that
\[ \rhomap \chi (\al _i)=\chi (\al _i,\al _i) \quad \text{for all $i\in I$}.
\]
\end{defin}

\begin{lemma}
  Let $\chi \in \cX $, $p\in I$, and $\bnd =\bfun \chi (\al _p)$.
  Assume that
  $\bnd <\infty $. Then $\chi $ is $p$-finite and
  \[ \chi (\al _p,\beta )^{\bnd -1}\chi (\beta ,\al _p)^{\bnd -1}=
  \frac{\rhomap{r_p(\chi )}(\s _p^\chi (\beta ))}{\rhomap\chi (\beta )} \]
  for all $\beta \in \ndZ ^I$.
  \label{le:rho}
\end{lemma}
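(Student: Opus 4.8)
The plan is to handle the two assertions separately, beginning with the easy claim that $\chi $ is $p$-finite and then reducing the displayed identity to a check on generators. Since $\bnd =\bfun \chi (\al _p)<\infty $, Eq.~\eqref{eq:height} tells us that $\qnum{\bnd }{q_{pp}}=0$, where $q_{pp}=\chi (\al _p,\al _p)$; as $\qnum{m}{1}=m$ this forces $q_{pp}\neq 1$, so $q_{pp}$ is a root of unity of order $\bnd $ and $q_{pp}^{\bnd }=1$. Taking $m=\bnd -1$ in Def.~\ref{de:Cartan} then yields $\qnum{m+1}{q_{pp}}=\qnum{\bnd }{q_{pp}}=0$ for every $j\in I$, so $\chi $ is $p$-finite. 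I will use $q_{pp}^{\bnd }=1$ repeatedly to reduce exponents of $q_{pp}$ modulo $\bnd $.

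For the formula, the first observation is that, as functions of $\beta $, both sides are characters $\ndZ ^I\to \fienz $: the left-hand side because $\chi $ is a bicharacter, and the right-hand side because $\s _p^\chi \in \Aut _\ndZ (\ndZ ^I)$ and $\rhomap{r_p(\chi )},\rhomap \chi $ are characters. It therefore suffices to verify the identity on each generator $\beta =\al _j$. For $\beta =\al _p$ I would compute the left side as $q_{pp}^{2(\bnd -1)}$ and, using $\s _p^\chi (\al _p)=-\al _p$ together with $r_p(\chi )(\al _p,\al _p)=q_{pp}$ from Eq.~\eqref{eq:rpchi}, the right side as $q_{pp}^{-2}$; these coincide because $q_{pp}^{\bnd }=1$.

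The case $\beta =\al _j$ with $j\neq p$ is where the real work lies. Abbreviating $c=c_{pj}^\chi $, $q=q_{pp}$, and $t=q_{pj}q_{jp}$, the left side is $t^{\bnd -1}$; expanding $\s _p^\chi (\al _j)=\al _j-c\al _p$ and inserting the expressions for $r_p(\chi )(\al _j,\al _j)$ and $r_p(\chi )(\al _p,\al _p)$ from Eq.~\eqref{eq:rpchi}, the factor $\rhomap \chi (\al _j)=q_{jj}$ cancels and the right side collapses to $t^{-c}q^{c^2-c}$. So the whole lemma comes down to verifying $t^{-c}q^{c^2-c}=t^{\bnd -1}$.

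To close this I would unwind the definition of the Cartan entry: with $m_0=-c$ the product $\qnum{m_0+1}{q}(q^{m_0}t-1)$ vanishes, so either $q^{-c}t=1$ or $\qnum{1-c}{q}=0$. In the first case I substitute $t=q^c$ and both sides reduce to $q^{-c}$ via $q^{\bnd }=1$. In the second case I expect the main subtlety: one must use the \emph{minimality} of $m_0$, not merely the vanishing, together with the bound $m_0+1\le \bnd $ (valid since $\qnum{\bnd }{q}=0$ shows that $m=\bnd -1$ already satisfies the defining condition) to conclude $1-c=\bnd $, i.e.\ $c=1-\bnd $; then $q^{c^2-c}=q^{\bnd (\bnd -1)}=1$ and both sides equal $t^{\bnd -1}$. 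Pinning down $c=1-\bnd $ from the minimality is the one step that requires care; everything else is bicharacter bookkeeping.
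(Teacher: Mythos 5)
Your argument is essentially the paper's proof: both sides are characters of $\ndZ ^I$, so one checks the identity on the generators $\al _j$, using $\chi (\al _p,\al _p)^{\bnd }=1$ and the dichotomy built into the definition of $c_{pj}^\chi $ (either $q_{pp}^{c_{pj}^\chi }=q_{pj}q_{jp}$, or minimality forces $c_{pj}^\chi =1-\bnd $); the computations in both cases match the paper's. One side remark is inaccurate but harmless: $\qnum{\bnd }{q_{pp}}=0$ does \emph{not} force $q_{pp}\neq 1$, since in positive characteristic one can have $q_{pp}=1$ and $\bnd =\mathrm{char}\,\fie $ (the paper notes exactly this in the proof of Lemma~\ref{le:hwvector}); however, the only facts you actually use downstream --- that $q_{pp}^{\bnd }=1$ and that the minimal $m$ with $\qnum{m+1}{q_{pp}}=0$ is $\bnd -1$ --- remain valid in that case, so the proof goes through.
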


\begin{proof}
  Define $\xi _1,\xi _2:\ndZ ^I\to \fienz $ by
  \[ \xi _1(\beta )=\chi (\al _p,\beta )^{\bnd -1}\chi (\beta ,\al
  _p)^{\bnd -1},
  \qquad
  \xi _2(\beta )=\frac{\rhomap{r_p(\chi )}(\s _p^\chi (\beta ))}{
  \rhomap\chi (\beta )}. \]
  Then $\xi _1,\xi _2\in \ZIdual$.
  Thus it suffices to prove that $\xi _1(\al _j)=\xi _2(\al _j)$
  for all $j\in I$.
  Let $q_{jk}=\chi (\al _j,\al _k)$
  for all $j,k\in I$. Then
  $q_{pp}^\bnd =1$ since
  $\qnum{\bnd }{q_{pp}}=0$. Moreover,
  \begin{align*}
    \rhomap{r_p(\chi )}(\al _j)=&r_p(\chi )(\al _j,\al _j)\\
    =&\chi (\al _j-c_{pj}^\chi \al _p,\al _j-c_{pj}^\chi \al _p)
    =q_{jj}(q_{pj}q_{jp})^{-c_{pj}^\chi }q_{pp}^{c_{pj}^\chi c_{pj}^\chi }
  \end{align*}
  for all $j\in I$.
  
  By assumption, $\chi $ is $p$-finite, and hence
  for all $j\in I\setminus \{p\}$ we have
  $q_{pp}^{c_{pj}^\chi }=q_{pj}q_{jp}$ or
  $q_{pp}^{1-c_{pj}^\chi }=1$.
  Let $j\in I$. If $q_{pp}^{c_{pj}^\chi }=q_{pj}q_{jp}$, then
  \begin{gather*}
    \rhomap{r_p(\chi )}(\al _j)=q_{jj},\\
    \xi _2(\al _j)=q_{jj}^{-1}\rhomap{r_p(\chi )}(\s _p^\chi (\al _j))=
    q_{jj}^{-1}\rhomap{r_p(\chi )}(\al _j-c_{pj}^\chi \al _p)
    =q_{pp}^{-c_{pj}^\chi},\\
    \xi _1(\al _j)=q_{pj}^{\bnd -1} q_{jp}^{\bnd -1}=q_{pp}^{(\bnd -1)c_{pj}^\chi }
    =q_{pp}^{-c_{pj}^\chi },
  \end{gather*}
  and hence $\xi _1(\al _j)=\xi _2(\al _j)$. Otherwise $\bnd =1-c_{pj}^\chi $,
  $q_{pp}^{c_{pj}^\chi }=q_{pp}$, and
  then
  \begin{gather*}
    \rhomap{r_p(\chi )}(\al _p)=q_{pp},\quad
    \rhomap{r_p(\chi )}(\al _j)=q_{jj}(q_{pj}q_{jp})^{-c_{pj}^\chi }q_{pp},\\
    \xi _2(\al _j)=q_{jj}^{-1}\rhomap{r_p(\chi )}(\s _p^\chi (\al _j))=
    q_{jj}^{-1}\rhomap{r_p(\chi )}(\al _j-c_{pj}^\chi \al _p)
    =(q_{pj}q_{jp})^{-c_{pj}^\chi },\\
    \xi _1(\al _j)=q_{pj}^{\bnd -1} q_{jp}^{\bnd -1}
    =(q_{pj}q_{jp})^{-c_{pj}^\chi }.
  \end{gather*}
  Hence $\xi _1(\al _j)=\xi _2(\al _j)$ also in this case.
  This proves the lemma.
\end{proof}

\section{Multiparameter Drinfel'd doubles}
\label{sec:DD}

In this paper we study Verma modules for a class of Hopf algebras
introduced in \cite{p-Heck07b}. This class
contains multiparameter quantizations of semisimple Lie algebras
and basic classical Lie superalgebras.
The precise definition is given in
Eq.~\eqref{eq:Uchi}. It uses the Drinfel'd
double construction and the theory of Nichols algebras.

The Drinfel'd double \cite[Sect.\,3.2]{b-Joseph}
can be defined via a skew-Hopf pairing of two
Hopf algebras or as the quotient of a free associative algebra by a
certain ideal, see also Rem.~\ref{re:ideal}.
The first approach is more technical, but also more powerful. We
present here the second definition. For proofs see \cite{p-Heck07b}.

Let $I$ be a non-empty finite set,
$\chi $ a bicharacter on $\ndZ ^I$ with values in $\fienz $, and
$q_{i j}=\chi (\al _i,\al _j)$ for all $i,j\in I$.
Let $\cU (\chi )$ be the unital associative $\fie $-algebra with generators
$K_i$, $K_i^{-1}$, $L_i$, $L_i^{-1}$, $E_i$, and $F_i$, where $i\in I$,
and defining relations
\begin{align}
  XY= YX \quad & \makebox[0pt][l]{for all $X,Y\in \{K_i,K_i^{-1},
  L_i,L_i^{-1}\,|\,i\in I\}$,}
  \label{eq:KLrel}\\
  K_iK_i^{-1}=&\,1, & L_iL_i^{-1}=&\,1,
  \label{eq:KKrel}
\\
  K_iE_jK_i^{-1}=&\,q_{ij}E_j, & L_iE_jL_i^{-1}=&\,q_{ji}^{-1}E_j,
  \label{eq:KErel}
\\
  K_iF_jK_i^{-1}=&\,q_{ij}^{-1}F_j, & L_iF_jL_i^{-1}=&\,q_{ji}F_j,
  \label{eq:KFrel}\\
  E_iF_j&\makebox[0pt][l]{$-F_jE_i=\delta _{i,j}(K_i-L_i)$,}
  \label{eq:EFrel}
\end{align}
where $i,j\in I$, and $\delta _{i,j}$ denotes Kronecker's $\delta $.
The algebra $\cU (\chi )$ can be given a Hopf algebra
structure in many different ways. We will use the unique Hopf algebra structure
determined by
\begin{gather}
  \left\{
  \begin{aligned}
    \coun (K_i)=&\,1,\quad \coun (E_i)=0, &
    \coun (L_i)=&\,1,\quad \coun (F_i)=0, \\
    \copr (K_i)=&\,K_i\otimes K_i,&
    \copr (L_i)=&\,L_i\otimes L_i,\\
    \copr (K_i^{-1})=&\,K_i^{-1}\otimes K_i^{-1},&
    \copr (L_i^{-1})=&\,L_i^{-1}\otimes L_i^{-1},\\
    \copr (E_i)=&\,E_i\otimes 1+K_i\otimes E_i,&
    \copr (F_i)=&\,1\otimes F_i+F_i\otimes L_i
  \end{aligned}
  \right.
  \label{eq:coprcU}
\end{gather}
for all $i\in I$.

Let $\cU ^{+0},\cU ^{-0}$, and $\Uz $ denote the commutative cocommutative
Hopf subalgebras of $\cU (\chi )$ generated by
$\{K_i,K_i^{-1}\,|\,i\in I\}$,
$\{L_i,L_i^{-1}\,|\,i\in I\}$, and
$\{K_i,K_i^{-1},L_i,L_i^{-1}\,|\,i\in I\}$, respectively.
They are isomorphic to the ring of Laurent polynomials in $|I|$, $|I|$, and
$2|I|$ variables, respectively, in the natural way.
For any $\al =\sum _{i\in I}m_i\al _i\in \ndZ ^I$ let $K_\al =\prod _{i\in I}
K_i^{m_i}$ and $L_\al =\prod _{i\in I}L_i^{m_i}$.

Let $\cU ^+(\chi )$, $\cV ^+(\chi )$, $\cU ^-(\chi )$, and $\cV ^-(\chi )$
denote the subalgebras of $\cU (\chi )$ generated by
$\{E_i\,|\,i\in I\}$, $\{E_i,K_i,K_i^{-1}\,|\,i\in I\}$,
$\{F_i\,|\,i\in I\}$, and $\{F_i,L_i,L_i^{-1}\,|\,i\in I\}$,
respectively. Then $\cV ^+(\chi )$ and $\cV ^-(\chi )$ are Hopf subalgebras of
$\cU (\chi )$.

The algebra $\cU (\chi )$ admits a unique $\ndZ ^I$-grading
\begin{equation}
\begin{gathered}
  \cU (\chi )=\oplus _{\beta \in \ndZ ^I}\cU (\chi )_\beta ,\\
  1\in \cU (\chi )_0,\quad
  \cU (\chi )_\beta \cU (\chi )_\gamma \subset
  \cU (\chi )_{\beta +\gamma } \quad
  \text{for all $\beta ,\gamma \in \ndZ ^I$,}
\end{gathered}
  \label{eq:Zngrading}
\end{equation}
such that $K_i,K_i^{-1},L_i,L_i^{-1}\in \cU (\chi )_0$,
$E_i\in \cU (\chi )_{\al _i}$, and
$F_i\in \cU (\chi )_{-\al _i}$ for all $i\in I$.
Let
\[ \ndN _0^I=\Big\{\sum _{i\in I}a_i\al _i\,|\,a_i\in \ndN _0\Big\}\subset
   \ndZ ^I,
\]
and for any subspace $\cU '\subset \cU (\chi )$ and any $\beta \in \ndZ ^I$
let $\cU '_\beta =\cU '\cap \cU (\chi )_\beta $. Then
\begin{align*}
  \cU ^+(\chi )=&\oplus _{\beta \in \ndN _0^I}\cU ^+(\chi )_\beta ,&
  \cU ^-(\chi )=&\oplus _{\beta \in \ndN _0^I}\cU ^-(\chi )_{-\beta }.
\end{align*}

For all $\beta \in \ndZ ^I$ let
\begin{align}
  |\beta |=\sum _{i\in I} a_i\in \ndZ ,\quad \text{ where }
  \beta =\sum _{i\in I}a_i\al _i.
  \label{eq:abs}
\end{align}
The decomposition
\begin{equation}
	\cU (\chi )=\oplus _{m\in \ndZ }\cU (\chi )_m,\quad
	\text{where}\quad
	\cU (\chi )_m=\oplus _{\beta :|\beta |=m}\cU (\chi )_\beta ,
  \label{eq:Zgrading}
\end{equation}
gives a $\ndZ $-grading of $\cU (\chi )$ called the \textit{standard
grading}.

\begin{propo}\label{pr:algiso}
  Let $\chi \in \cX $.
  
	(1) Let $\ula =(a_i\,|\,i\in I)\in (\fienz )^I$.
	Then there exists a unique algebra automorphism
	$\varphi _{\ula }$ of $\cU (\chi )$ such that
\begin{equation}
	\varphi _{\ula }(K_i)=K_i,\,\,
	\varphi _{\ula }(L_i)=L_i,\,\,
	\varphi _{\ula }(E_i)=a_iE_i,\,\,
	\varphi _{\ula }(F_i)=a_i^{-1}F_i.
  \label{eq:cUauto1}
\end{equation}


(2) There is a unique algebra antiautomorphism
$\aaaU $ of $\cU (\chi )$
such that
\begin{align}
	\aaaU (K_i)=&K_i,& \aaaU (L_i)=&L_i,&
	\aaaU (E_i)=&F_i,& \aaaU (F_i)=&E_i.
	\label{eq:cUantiauto}
\end{align}
It satisfies the relation $\aaaU ^2=\id $.
\end{propo}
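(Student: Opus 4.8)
The plan is to use the universal property of the presentation of $\cU (\chi )$ by the generators $K_i,K_i^{-1},L_i,L_i^{-1},E_i,F_i$ subject to the relations \eqref{eq:KLrel}--\eqref{eq:EFrel}. Uniqueness in both parts is immediate: these elements generate $\cU (\chi )$ as an algebra, so an algebra homomorphism or antihomomorphism is determined by its values on them. For existence it suffices to check that the prescribed images of the generators satisfy the defining relations---for (1) the relations themselves, and for (2) the relations read in $\cU (\chi )\op $, i.e.\ with all products reversed.

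For (1), I would set $\varphi _{\ula }(K_i^{-1})=K_i^{-1}$ and $\varphi _{\ula }(L_i^{-1})=L_i^{-1}$ and run through \eqref{eq:KLrel}--\eqref{eq:EFrel}. Relations \eqref{eq:KLrel} and \eqref{eq:KKrel} hold trivially since the $K$'s and $L$'s are fixed. In \eqref{eq:KErel} and \eqref{eq:KFrel} the scalar $a_j^{\pm 1}$ commutes past the fixed $K_i,L_i$ and reproduces the same relation. The one computation worth recording is \eqref{eq:EFrel}: for $i=j$ the images give $a_ia_i^{-1}(E_iF_i-F_iE_i)=K_i-L_i$, while for $i\not=j$ both sides vanish. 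Invertibility then follows formally: $\varphi _{\ula }\circ \varphi _{\ula ^{-1}}$ and $\varphi _{\ula ^{-1}}\circ \varphi _{\ula }$ fix every generator, where $\ula ^{-1}=(a_i^{-1})_{i\in I}$, hence both equal $\id $ and $\varphi _{\ula }$ is an automorphism.

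For (2), I would regard $\aaaU $ as an algebra homomorphism $\cU (\chi )\to \cU (\chi )\op $ and verify the defining relations after reversing the order of all products. The commutation relations \eqref{eq:KLrel}, \eqref{eq:KKrel} are preserved because they involve only the fixed generators $K_i,L_i$. The key bookkeeping is that the swap $E_i\leftrightarrow F_i$ combined with order reversal interchanges \eqref{eq:KErel} with \eqref{eq:KFrel}: applying $\aaaU $ to $K_iE_jK_i^{-1}=q_{ij}E_j$ and reversing yields $K_i^{-1}F_jK_i=q_{ij}F_j$, which is equivalent to \eqref{eq:KFrel}, and symmetrically \eqref{eq:KFrel} is carried to \eqref{eq:KErel}. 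Relation \eqref{eq:EFrel} is sent to itself, since $\aaaU (E_iF_j-F_jE_i)=E_jF_i-F_iE_j$ after reversal while $K_i-L_i$ is fixed, and for $i=j$ this recovers \eqref{eq:EFrel}. Thus $\aaaU $ extends to an algebra antiendomorphism. Finally $\aaaU ^2$ is an algebra endomorphism fixing every generator, so $\aaaU ^2=\id $; in particular $\aaaU $ is bijective, hence an antiautomorphism.

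The argument is essentially routine verification, so I do not expect a genuine obstacle. The only points demanding care are the handling of the inverse generators $K_i^{-1},L_i^{-1}$ in the free presentation---one must assign them the consistent values and use that $K_i,L_i$ become invertible---and, in part (2), keeping straight which relation is carried to which under the combined effect of the $E\leftrightarrow F$ swap and the reversal of products.
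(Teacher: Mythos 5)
Your verification is correct. The paper states Prop.~\ref{pr:algiso} without proof (the defining relations \eqref{eq:KLrel}--\eqref{eq:EFrel} are taken from \cite{p-Heck07b}, where such routine checks are carried out), and your argument --- uniqueness from the generators, existence by checking the relations on the prescribed images, with the one genuine bookkeeping point being that the swap $E_i\leftrightarrow F_i$ together with order reversal interchanges \eqref{eq:KErel} and \eqref{eq:KFrel} and preserves \eqref{eq:EFrel}, and invertibility from $\varphi_{\ula}\varphi_{\ula^{-1}}=\id$ and $\aaaU^2=\id$ on generators --- is exactly the standard verification being omitted.
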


\begin{lemma}\label{le:commEFi}
	For all $i\in I$ there exist unique linear maps
	$\derK _i,\derL _i\in \End _\fie (\cU ^+(\chi ))$ such that
	\begin{align*}
		[E,F_i]=\derK _i(E)K_i-L_i\derL _i(E)\quad
		\text{for all $E\in \cU ^+(\chi )$.}
	\end{align*}
	The maps $\derK _i,\derL _i\in \End _\fie (\cU ^+(\chi ))$ are
	skew-derivations.
	More precisely,
	\begin{gather}
	  \derK _i(1)=\derL _i(1)=0,\quad 
	  \derK _i(E_j)=\derL _i(E_j)=\delta _{i,j},\label{eq:derKL1}\\
	\begin{aligned}
		\derK _i(EE')=&\derK _i(E)(K_i\lact E')+E\derK _i(E'),\\
		\derL _i(EE')=&\derL _i(E)E'+(L_i^{-1}\lact E)\derL _i(E')
	\end{aligned}
		\label{eq:derKL2}
	\end{gather}
	for all $i,j\in I$ and $E,E'\in \cU ^+(\chi )$.
\end{lemma}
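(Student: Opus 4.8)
The plan is to establish existence and uniqueness of $\derK _i,\derL _i$ separately, and then read off the skew-derivation formulas as a by-product of the existence argument. The essential structural input is the triangular decomposition $\cU (\chi )=\cU ^-(\chi )\,\Uz \,\cU ^+(\chi )$, i.e.\ that multiplication gives a linear isomorphism $\cU ^-(\chi )\ot \Uz \ot \cU ^+(\chi )\to \cU (\chi )$ (see \cite{p-Heck07b}). I use throughout that conjugation realizes the action $\lact $ on homogeneous $E\in \cU ^+(\chi )_\beta $: by \eqref{eq:KErel} one has $K_iEK_i^{-1}=\chi (\al _i,\beta )E=K_i\lact E$ and $L_i^{-1}EL_i=\chi (\beta ,\al _i)E=L_i^{-1}\lact E$, whence $K_iE=(K_i\lact E)K_i$, $EK_i=K_i(K_i^{-1}\lact E)$, and $EL_i=L_i(L_i^{-1}\lact E)$.

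For uniqueness, suppose $[E,F_i]=aK_i-L_ib=a'K_i-L_ib'$ with $a,b,a',b'\in \cU ^+(\chi )$, and set $a''=a-a'$, $b''=b-b'$, so that $a''K_i=L_ib''$. By the commutation rules above, $a''K_i=K_i(K_i^{-1}\lact a'')$ is the image of $1\ot K_i\ot (K_i^{-1}\lact a'')$, while $L_ib''$ is the image of $1\ot L_i\ot b''$, under the triangular multiplication map. Since $K_i$ and $L_i$ are distinct monomials in the Laurent polynomial algebra $\Uz $, the subspaces $1\ot \fie K_i\ot \cU ^+(\chi )$ and $1\ot \fie L_i\ot \cU ^+(\chi )$ meet only in $0$; hence $a''=b''=0$, i.e.\ $a=a'$ and $b=b'$. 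In particular the two subspaces $\cU ^+(\chi )K_i$ and $L_i\cU ^+(\chi )$ of $\cU (\chi )$ form a direct sum.

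For existence, let $V_i\subset \cU ^+(\chi )$ be the set of $E$ with $[E,F_i]\in \cU ^+(\chi )K_i\oplus L_i\cU ^+(\chi )$. This is a linear subspace; by \eqref{eq:EFrel} it contains $1$ (with zero components) and each $E_j$, since $[E_j,F_i]=\delta _{i,j}(K_i-L_i)$. It is closed under products: the Leibniz rule $[E'E'',F_i]=E'[E'',F_i]+[E',F_i]E''$ combined with $K_iE''=(K_i\lact E'')K_i$ and $E'L_i=L_i(L_i^{-1}\lact E')$ rewrites $[E'E'',F_i]$ as an element of $\cU ^+(\chi )K_i\oplus L_i\cU ^+(\chi )$ whenever $E',E''\in V_i$. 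As $\cU ^+(\chi )$ is generated as an algebra by the $E_j$, we conclude $V_i=\cU ^+(\chi )$. Defining $\derK _i(E)$ and $\derL _i(E)$ as the unique elements of $\cU ^+(\chi )$ with $[E,F_i]=\derK _i(E)K_i-L_i\derL _i(E)$ then yields linear maps, being the components of a linear map into a direct sum. Finally, tracking the $\cU ^+(\chi )K_i$- and $L_i\cU ^+(\chi )$-components of the product computation gives exactly \eqref{eq:derKL2}, and the base cases $E=1,E_j$ give \eqref{eq:derKL1}.

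The only genuinely delicate point is the directness of $\cU ^+(\chi )K_i\oplus L_i\cU ^+(\chi )$, which is where the triangular decomposition and the linear independence of $K_i$ and $L_i$ in $\Uz $ are needed; once this is in place, both the inductive existence argument and the derivation of the Leibniz rules reduce to routine manipulations with the defining relations \eqref{eq:KErel}--\eqref{eq:EFrel}.
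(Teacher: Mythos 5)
Your argument is correct. The paper itself gives no proof of this lemma, deferring (like the rest of Sect.~\ref{sec:DD}) to \cite{p-Heck07b}; there the maps $\derK _i,\derL _i$ are obtained from the skew-Hopf pairing and the coproduct of $\cU ^+(\chi )$, and the commutator identity is then verified, whereas you reverse the logic and build the maps directly out of the commutator. Your route needs exactly one nontrivial input, which you correctly isolate: the injectivity of the multiplication map $\Uz \ot \cU ^+(\chi )\to \cU (\chi )$, i.e.\ the triangular decomposition of the \emph{unquotiented} double $\cU (\chi )$ (not merely of $U(\chi )$ as in Prop.~\ref{pr:tridec}), which yields the directness of $\cU ^+(\chi )K_i\oplus L_i\cU ^+(\chi )$ because $K_i$ and $L_i$ are linearly independent in $\Uz $. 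Granting that, the uniqueness argument, the induction over products of the generators $E_j$, and the extraction of the two Leibniz rules from $[E'E'',F_i]=E'[E'',F_i]+[E',F_i]E''$ combined with $K_iE''=(K_i\lact E'')K_i$ and $E'L_i=L_i(L_i^{-1}\lact E')$ are all carried out correctly, and the base cases $E=1$ and $E=E_j$ give \eqref{eq:derKL1} as claimed.
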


Let $\cI ^+(\chi )$ be the unique maximal ideal of $\cU ^+(\chi )$ such that
$\cI ^+(\chi )\subset \ker \varepsilon $
and $\derK _i(\cI ^+(\chi ))\subset \cI ^+(\chi )$ for all $i\in I$.
Equivalently,
$\cI ^+(\chi )$ is the unique maximal ideal of $\cU ^+(\chi )$ such that
$\cI ^+(\chi )\subset \varepsilon $ and $\derL _i(\cI ^+(\chi ))\subset \cI
^+(\chi )$ for all $i\in I$, see \cite[Prop.\,5.4]{p-Heck07b}.
Let $\cI ^-(\chi )=\aaaU (\cI ^+(\chi ))$.
Let
\begin{align*}
  U^+(\chi )=&\cU ^+(\chi )/\cI ^+(\chi ), &
  U^-(\chi )=&\cU ^-(\chi )/\cI ^-(\chi ),\\
  V^+(\chi )=&\cV ^+(\chi )/\cI ^+(\chi )\cU ^{+0},&
  V^-(\chi )=&\cV ^-(\chi )/\cI ^-(\chi )\cU ^{-0},
\end{align*}
and
\begin{align}
  U(\chi )=\cU (\chi )/(\cI ^+(\chi ),\cI ^-(\chi )).
  \label{eq:Uchi}
\end{align}
The canonical inclusions
$\cU ^\pm (\chi )\subset \cU (\chi )$, $\Uz \subset \cU (\chi )$
induce maps
\[ \iota _+:U^+(\chi )\to U(\chi ),\quad
  \iota _0:\Uz \to U(\chi ),\quad
  \iota _-:U^-(\chi )\to U(\chi ).
\]

\begin{remar}\label{re:ideal}
  (i) The vector space $V=\oplus _{i\in I}\fie E_i$ is a \YD module
  over the group algebra
  $\fie \ndZ ^I\simeq \fie [K_i,K_i^{-1}\,|\,i\in I]\subset \Uz $,
  where the left action $\actl :\fie \ndZ ^I\otimes V\to V$
  and the left coaction $\coal :V\to \fie \ndZ ^I\ot V$
  are defined by
  \begin{align*}
    K_i \actl E_j=q_{ij}E_j,\qquad \coal (E_i)=K_i\ot E_i
  \end{align*}
  for all $i,j\in I$.
  The algebra $U^+(\chi )$ is commonly known as the
  \textit{Nichols algebra} of the \YD module $V$.

  (ii) There are various descriptions of the ideal $\cI ^+(\chi )$,
  see \textit{e.\,g.} \cite{inp-AndrSchn02}. In case of
  quantized enveloping algebras, see Sect.~\ref{sec:Uqg}, Serre
  relations generate the ideal $\cI ^+(\chi )$. A more general case is
  studied by Angiono \cite{p-Angi08}. For quantized Lie superalgebras
  the defining relations are determined in \cite{a-Yam99,a-Yam99e}.
  It is in general an open problem to give a nice set of generators of
  $\cI ^+(\chi )$, see \cite[Question\,5.9]{inp-Andr02}.
\end{remar}

\begin{propo} (Triangular decomposition)
  The map
  \[ \mathrm{m}(\iota _-\ot \iota _0\ot \iota _+):
  U^-(\chi )\ot \Uz \ot U^+(\chi )\to U(\chi ) \]
  is an isomorphism of $\ndZ ^I$-graded vector spaces, where
  $\mathrm{m}$ denotes the multiplication map.
  \label{pr:tridec}
\end{propo}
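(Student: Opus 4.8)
The plan is to lift the statement to the ambient algebra $\cU (\chi )$ and then to read off the quotient. First I would invoke the triangular decomposition of $\cU (\chi )$ itself, namely that the multiplication map $\cU ^-(\chi )\ot \Uz \ot \cU ^+(\chi )\to \cU (\chi )$ is an isomorphism of $\ndZ ^I$-graded vector spaces; this is the defining feature of the Drinfel'd double and is recorded in \cite{p-Heck07b}. It reduces the problem to understanding how the two-sided ideal $(\cI ^+(\chi ),\cI ^-(\chi ))$ of $\cU (\chi )$ sits inside this decomposition.

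The core step is to show that $S^+:=\cU ^-(\chi )\Uz \cI ^+(\chi )$ is already a \emph{two-sided} ideal of $\cU (\chi )$, and, symmetrically, that $S^-:=\cI ^-(\chi )\Uz \cU ^+(\chi )$ is one. Closure under left multiplication by the generators is routine: $F_j$ and the $K_j^{\pm 1},L_j^{\pm 1}$ preserve the left factor $\cU ^-(\chi )$ up to the diagonal rescaling of \eqref{eq:KErel}--\eqref{eq:KFrel}, while $E_j$ is commuted to the right past $\cU ^-(\chi )$ and $\Uz $ (the commutator $[E_j,\cU ^-(\chi )]$ landing in $\cU ^-(\chi )\Uz $ by an easy induction on length from \eqref{eq:EFrel}), after which the surviving copy of $E_j$ is absorbed into $\cI ^+(\chi )$ because the latter is an ideal of $\cU ^+(\chi )$. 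Closure under right multiplication by $E_j$ and by $K_j^{\pm 1},L_j^{\pm 1}$ is equally direct, using $\cI ^+(\chi )E_j\subseteq \cI ^+(\chi )$ and the grading.

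The one delicate point---and the place where the defining property of $\cI ^+(\chi )$ is indispensable---is right multiplication by $F_j$. For $a\in \cI ^+(\chi )$ one uses Lemma~\ref{le:commEFi} in the form
\[ aF_j=F_ja+\derK _j(a)K_j-L_j\derL _j(a), \]
and here the hypothesis that $\cI ^+(\chi )$ is stable under all $\derK _j$ and $\derL _j$ guarantees $\derK _j(a),\derL _j(a)\in \cI ^+(\chi )$. Moving the resulting $F_j$, $K_j$, $L_j$ into the appropriate factors then puts every term back into $\cU ^-(\chi )\Uz \cI ^+(\chi )=S^+$. This is the main obstacle: without $\partial $-stability one would only obtain a left ideal. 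Applying the antiautomorphism $\aaaU $, which exchanges $\cU ^+(\chi )\leftrightarrow \cU ^-(\chi )$, fixes $\Uz $, and sends $\cI ^+(\chi )$ to $\cI ^-(\chi )$, transports the two-sidedness of $S^+$ to that of $S^-$.

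Finally I would identify $(\cI ^+(\chi ),\cI ^-(\chi ))=S^++S^-$: the inclusion $\supseteq $ holds since the $S^\pm $ are ideals containing $\cI ^\pm (\chi )$, and $\subseteq $ since $S^+$ and $S^-$ are contained in the ideals generated by $\cI ^+(\chi )$ and $\cI ^-(\chi )$ respectively. Under the triangular decomposition of $\cU (\chi )$ the subspace $S^++S^-$ corresponds to $\cU ^-(\chi )\ot \Uz \ot \cI ^+(\chi )+\cI ^-(\chi )\ot \Uz \ot \cU ^+(\chi )$, so that, working over the field $\fie $,
\[ U(\chi )\cong \bigl(\cU ^-(\chi )/\cI ^-(\chi )\bigr)\ot \Uz \ot \bigl(\cU ^+(\chi )/\cI ^+(\chi )\bigr)=U^-(\chi )\ot \Uz \ot U^+(\chi ), \]
the quotient of a tensor product by a sum of such coordinate subspaces being the tensor product of the quotients. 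Every identification respects the $\ndZ ^I$-grading, since each of $\cU ^\pm (\chi )$, $\Uz $, $\cI ^\pm (\chi )$ is graded, and the composite isomorphism is exactly the descent of the multiplication map, i.e.\ $\mathrm{m}(\iota _-\ot \iota _0\ot \iota _+)$.
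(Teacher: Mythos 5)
Your argument is correct. The paper itself gives no proof of this proposition --- it is recalled from \cite{p-Heck07b}, to which the section defers for all proofs --- so there is nothing to match step by step; but the route you take (triangular decomposition of the ambient algebra $\cU (\chi )$, two-sidedness of $S^+=\cU ^-(\chi )\Uz \cI ^+(\chi )$ and $S^-=\aaaU (S^+)$, identification of $(\cI ^+(\chi ),\cI ^-(\chi ))$ with $S^++S^-$, and passage to the quotient) is the standard one, and you correctly isolate the only delicate point, namely that right multiplication by $F_j$ preserves $S^+$ precisely because $\cI ^+(\chi )$ is stable under $\derK _j$ and $\derL _j$ via Lemma~\ref{le:commEFi}. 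One point you should make explicit rather than leave under the phrase ``the grading'': to commute $K_j^{\pm 1},L_j^{\pm 1}$ (including the $K_j$ and $L_j$ produced by Lemma~\ref{le:commEFi}) past elements of $\cI ^+(\chi )$ and land back inside $\cI ^+(\chi )$, you need $\cI ^+(\chi )$ itself to be $\ndZ ^I$-homogeneous; this follows from its maximality, since the span of the homogeneous components of $\cI ^+(\chi )$ is again an ideal contained in $\ker \coun $ and stable under the (homogeneous) skew-derivations. With that remark supplied, every step --- including the final isomorphism of a quotient of a triple tensor product by coordinate subspaces with the tensor product of the quotients, valid over a field --- is sound and compatible with the $\ndZ ^I$-grading.
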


Following the convention in \cite[Sect.\,3.2.1]{b-Joseph},
a skew-Hopf pairing
$\sHp :A\times B\to \fie $, $(x,y)\mapsto \sHp (x,y)$,
of two Hopf algebras
$A$, $B$ is a bilinear map satisfying the equations
\begin{align}
  \label{eq:sHp1}
  \sHp (1,y)=&\,\coun (y),& \sHp (x,1)=&\,\coun (x),\\
  \label{eq:sHp2}
  \sHp (xx',y)=&\,\sHp (x',y_{(1)})\sHp (x,y_{(2)}),&
  \sHp (x,yy')=&\,\sHp (x_{(1)},y)\sHp (x_{(2)},y'),\\
  \label{eq:sHp3}
  &\qquad 
  \makebox[0pt][l]{$\sHp (\antip (x),y)=\sHp (x,\antip ^{-1}(y))$}
\end{align}
for all $x,x'\in A$ and $y,y'\in B$.

\begin{propo}
  \label{pr:sHpdef}
  (i) There exists a unique skew-Hopf pairing $\sHp $ of $\cV ^+(\chi )$ and
  $\cV ^-(\chi )$ such that for all $i,j\in I$ one has
  \begin{align*}
    \sHp (E_i,F_j)=-\delta _{i,j},\quad
    \sHp (E_i,L_j)=0,\quad
    \sHp (K_i,F_j)=0,\quad
    \sHp (K_i,L_j)=q_{ij}.
  \end{align*}

  (ii) The skew-Hopf pairing $\sHp $ satisfies the equations
  \begin{align*}
    \sHp (EK,FL)=\sHp (E,F)\sHp (K,L)
  \end{align*}
  for all $E\in \cU ^+(\chi )$,
  $F\in \cU ^-(\chi )$, $K\in \cU ^{+0}$, and $L\in \cU ^{-0}$.

  (iii) If $\beta ,\gamma \in \ndN _0^I$ with $\beta \not=\gamma $,
  $E\in \cU ^+(\chi )_\beta $, $F\in \cU ^-(\chi )_{-\gamma }$,
  then $\sHp (E,F)=0$.

  (iv) The restriction of $\sHp $ to
  $\cU ^+(\chi )\times \cU ^-(\chi )$
  induces a non-degenerate pairing
  $\sHp :U ^+(\chi )\times U^-(\chi )\to \fie $.
\end{propo}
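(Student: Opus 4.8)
The plan is to dispatch (i)--(iii) by the standard machinery for pairings of bosonizations and to concentrate the real work on the non-degeneracy claim (iv). First I would settle uniqueness in (i). Since $\cV ^+(\chi )$ is generated as an algebra by the $E_i$, $K_i$, $K_i^{-1}$ and $\cV ^-(\chi )$ by the $F_j$, $L_j$, $L_j^{-1}$, repeated use of the two identities in \eqref{eq:sHp2}, together with the known coproducts \eqref{eq:coprcU} and the normalization \eqref{eq:sHp1}, expresses $\sHp (x,y)$ for arbitrary $x,y$ as a polynomial in the prescribed values $\sHp (E_i,F_j)=-\delta _{i,j}$, $\sHp (E_i,L_j)=0$, $\sHp (K_i,F_j)=0$, $\sHp (K_i,L_j)=q_{ij}$. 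Hence at most one such pairing exists.

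For existence I would build $\sHp $ by recursion on the $\ndN _0^I$-grading. On the group-like parts one is forced to set $\sHp (K_\al ,L_\beta )=\chi (\al ,\beta )$, which is consistent because $\chi $ is a bicharacter, and one declares $\sHp (EK_\al ,FL_\beta )=\sHp (E,F)\chi (\al ,\beta )$, so that (ii) holds by construction. On $\cU ^+(\chi )\times \cU ^-(\chi )$ one sets $\sHp (1,F)=\coun (F)$, $\sHp (E,1)=\coun (E)$ and, writing $F=F_jF'$, defines $\sHp (E,F_jF')=\sHp (E_{(1)},F_j)\sHp (E_{(2)},F')$ via \eqref{eq:sHp2}, the group-like contributions from $\copr (E)$ being governed by $\chi $. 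Since $\cU ^+(\chi )$ and $\cU ^-(\chi )$ are free (tensor) algebras, the ordered monomials form a basis and this quantum-shuffle type recursion is well defined; \eqref{eq:sHp1}--\eqref{eq:sHp3} are then verified inductively. (Alternatively one may quote the general construction of pairings of Radford biproducts $T(V)\# \fie \ndZ ^I$.) For (iii) I would induct on $|\gamma |$: if $\gamma =0$ then $F\in \fie 1$ and $\sHp (E,F)=\coun (E)\coun (F)$ vanishes unless $\beta =0$; for $|\gamma |\ge 1$ write $F=F_jF'$ with $F'\in \cU ^-(\chi )_{-\gamma '}$, $\gamma =\gamma '+\al _j$, and apply $\sHp (E,F_jF')=\sHp (E_{(1)},F_j)\sHp (E_{(2)},F')$. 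The first factor is nonzero only when $E_{(1)}$ has $E$-degree $\al _j$ and trivial group-like part, the second (by induction) only when $E_{(2)}$ has degree $\gamma '$; comparing degrees forces $\beta =\al _j+\gamma '=\gamma $.

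The \emph{heart} is (iv). The key technical identity I would establish links $\sHp $ to the skew-derivations of Lemma~\ref{le:commEFi}: expanding $\copr (E)$ for $E\in \cU ^+(\chi )$ and using (ii), (iii), one obtains, up to a nonzero scalar, $\sHp (E,F_jF')=\sHp (\derK _j(E),F')$. Iterating, $\sHp (E,F_{i_1}\cdots F_{i_m})$ equals $\coun $ of an iterated skew-derivation of $E$ up to a nonzero factor. Since the $F_i$ and $L_i^{\pm 1}$ generate $\cU ^-(\chi )$ and (ii) reduces the group-like parts, the left radical $N=\{E\in \cU ^+(\chi )\,|\,\sHp (E,\cU ^-(\chi ))=0\}$ is therefore exactly $\{E\,|\,\derK _{i_1}\cdots \derK _{i_m}(E)\in \ker \coun \text{ for all } m\ge 0,\ i_1,\dots ,i_m\in I\}$. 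Using the skew-Leibniz rule \eqref{eq:derKL2} one checks this set is an ideal contained in $\ker \coun $ and stable under every $\derK _i$, and it clearly contains every such ideal; by the maximality in the definition of $\cI ^+(\chi )$ it coincides with $\cI ^+(\chi )$. Thus $\sHp $ descends to a pairing with trivial left radical on $U^+(\chi )=\cU ^+(\chi )/\cI ^+(\chi )$. The symmetric computation in the second variable --- either repeated verbatim with the analogous skew-derivations on $\cU ^-(\chi )$, or transported through the antiautomorphism $\aaaU $ of Prop.~\ref{pr:algiso}, using $\cI ^-(\chi )=\aaaU (\cI ^+(\chi ))$ --- identifies the right radical with $\cI ^-(\chi )$, so the induced pairing $U^+(\chi )\times U^-(\chi )\to \fie $ is non-degenerate.

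The main obstacle is exactly this identification of the radical of $\sHp $ with the Nichols-algebra ideal $\cI ^+(\chi )$. It requires the clean pairing--derivation formula, whose scalar and group-like corrections must be tracked carefully through \eqref{eq:coprcU}, and the verification via \eqref{eq:derKL2} that the radical is a $\derK _i$-stable ideal inside $\ker \coun $, which is what matches the maximality characterization of $\cI ^+(\chi )$. A secondary subtlety is setting up the $\aaaU $-symmetry cleanly enough to obtain the right radical without redoing the entire argument.
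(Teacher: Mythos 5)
Your argument is correct, but note that the paper itself does not prove (i), (ii), (iv) here at all: it simply cites \cite[Prop.\,4.3]{p-Heck07b} and \cite[Thm.\,5.8]{p-Heck07b}, and disposes of (iii) by the same $\ndZ ^I$-homogeneity observation you make. What you have written is essentially a reconstruction of those cited results by the standard route: uniqueness from the generation of $\cV ^\pm (\chi )$ together with \eqref{eq:sHp1}--\eqref{eq:sHp2}; existence by the quantum-shuffle recursion on the free algebras $\cU ^\pm (\chi )$ (equivalently the Radford-biproduct pairing); and, for (iv), the identification of the left radical $N\subset \cU ^+(\chi )$ with $\cI ^+(\chi )$ via the formula $\sHp (E,F_jF')\doteq \sHp (\derK _j(E),F')$, the observation that $N$ is a graded ideal in $\ker \coun $ stable under all $\derK _i$ and contains every such ideal, and the transport to the right radical by $\aaaU $. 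This matches exactly the characterization of $\cI ^+(\chi )$ used in the paper, so the argument closes. Two cosmetic points: in your induction for (iii) the group-like factor $K_\gamma $ in $E_{(1)}=E'K_\gamma $ need not be trivial --- by (ii) it contributes $\coun (K_\gamma )=1$ when paired against $F_j=F_j\cdot 1$, so only the condition $\deg E'=\al _j$ matters; and the left/right placement in \eqref{eq:sHp2} should be quoted carefully when checking that $N$ is a two-sided (not just one-sided) ideal, though both sides do come out zero.
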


\begin{proof}
  (i) and (ii) are \cite[Prop.\,4.3]{p-Heck07b}. (iii) follows from the
  definition of $\sHp $ and since $\copr $ is a $\ndZ ^I$-homogeneous
  map. (iv) was proven in \cite[Thm.\,5.8]{p-Heck07b}.
\end{proof}

By the general theory, see \cite[3.2.2]{b-Joseph},
the pairing $\sHp $ in Prop.~\ref{pr:sHpdef}
can be used to describe commutation
rules in $\cU (\chi )$ and $U(\chi )$. Namely,
\begin{align}
  \label{eq:Ucomm1}
  yx=&\sHp (x\_1,S(y\_1))x\_2y\_2\sHp (x\_3,y\_3),\\
  \label{eq:Ucomm2}
  xy=&\sHp (x\_1,y\_1)y\_2x\_2\sHp (x\_3,S(y\_3))
\end{align}
for all $x\in \cV ^+(\chi )$ and $y\in \cV ^-(\chi )$. Note that the
second formula follows from the first one and
Eqs.~\eqref{eq:sHp1}--\eqref{eq:sHp2}. 

Later we will also need some other general facts about $U(\chi )$.
Some of them are collected here. Let
\begin{align}
  \label{eq:kerderK}
  U^+_{i,K}(\chi )=&\ker \derK _i\subset U^+(\chi ),&
  U^+_{i,L}(\chi )=&\ker \derL _i\subset U^+(\chi ),\\
  \label{eq:kerderL}
  U^-_{i,K}(\chi )=&\aaaU (U^+_{i,K}),&
  U^-_{i,L}(\chi )=&\aaaU (U^+_{i,L}).
\end{align}
Recall the definition of $\bfun \chi $ in Eq.~\eqref{eq:height}.

\begin{lemma}
  Let $i\in I$.
  
  (i) Let $m\in \ndN $. The following are equivalent.
  \begin{itemize}
    \item $E_i^m=0$ in $U(\chi )$,
    \item $F_i^m=0$ in $U(\chi )$,
    \item $m\ge \bfun \chi (\al _i)$.
  \end{itemize}

  (ii) Let $\fie [E_i]$ and $\fie [F_i]$ be the subalgebras of
  $U(\chi )$
  generated by $E_i$ and $F_i$, respectively.
  The multiplication maps
  \begin{align*}
    U^+_{i,K}(\chi ) \ot \fie [E_i] \to &\,U^+(\chi ),&
    \fie [E_i] \ot U^+_{i,K}(\chi ) \to &\,U^+(\chi ),\\
    U^+_{i,L}(\chi ) \ot \fie [E_i] \to &\,U^+(\chi ),&
    \fie [E_i] \ot U^+_{i,L}(\chi ) \to &\,U^+(\chi ),\\
    U^-_{i,K}(\chi ) \ot \fie [F_i] \to &\,U^-(\chi ),&
    \fie [F_i] \ot U^-_{i,K}(\chi ) \to &\,U^-(\chi ),\\
    U^-_{i,L}(\chi ) \ot \fie [F_i] \to &\,U^-(\chi ),&
    \fie [F_i] \ot U^-_{i,L}(\chi ) \to &\,U^-(\chi ),
  \end{align*}
  are isomorphisms of $\ndZ ^I$-graded algebras.
  \label{le:Eheight}
\end{lemma}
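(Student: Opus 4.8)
The plan is to push every assertion into the positive part $U^+(\chi)$ and to exploit the skew-derivation $\derK_i$ from Lemma~\ref{le:commEFi}. Write $N=\bfun\chi(\al_i)$, the minimal $m$ with $\qnum{m}{q_{ii}}=0$ (and $N=\infty$ otherwise). By the triangular decomposition (Prop.~\ref{pr:tridec}) the map $\iota_+$ is injective, so $E_i^m=0$ in $U(\chi)$ if and only if $E_i^m=0$ in $U^+(\chi)$, and likewise for $F_i$ via $\iota_-$. Using \eqref{eq:derKL1}, \eqref{eq:derKL2} and $K_i\lact E_i^{m-1}=q_{ii}^{m-1}E_i^{m-1}$, an easy induction gives $\derK_i(E_i^m)=\qnum{m}{q_{ii}}E_i^{m-1}$. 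Since $\sHp(E_i,F_i)=-1\neq 0$, non-degeneracy of the pairing (Prop.~\ref{pr:sHpdef}(iv)) forces $E_i\neq 0$; as $\qnum{m}{q_{ii}}\neq 0$ for $1\le m<N$, induction then yields $E_i^m\neq 0$ for all $m<N$.

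For the vanishing direction of (i), I consider the two-sided ideal $J\subset\cU^+(\chi)$ generated by $E_i^N$. Clearly $J\subset\ker\coun$. Moreover $\derK_j(E_i^N)=0$ for every $j$: for $j=i$ this is $\qnum{N}{q_{ii}}E_i^{N-1}=0$, and for $j\neq i$ it follows from $\derK_j(E_i)=0$ and \eqref{eq:derKL2}. Since $K_j\lact(\cdot)$ acts by a scalar on homogeneous elements and hence preserves the graded ideal $J$, the skew-derivation rule shows $\derK_j(a\,E_i^N b)\in J$ for all $a,b$, so $J$ is stable under every $\derK_j$. By the maximality characterization of $\cI^+(\chi)$ we get $J\subseteq\cI^+(\chi)$, i.e. $E_i^N=0$ in $U^+(\chi)$, whence $E_i^m=0$ exactly when $m\ge N$. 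Finally the antiautomorphism $\aaaU$ of Prop.~\ref{pr:algiso}(2) satisfies $\aaaU(E_i^m)=F_i^m$ and $\cI^-(\chi)=\aaaU(\cI^+(\chi))$, so $E_i^m=0$ in $U^+(\chi)$ if and only if $F_i^m=0$ in $U^-(\chi)$. This settles (i).

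For (ii) I first treat the map $m:U^+_{i,K}(\chi)\ot\fie[E_i]\to U^+(\chi)$, $u\ot E_i^k\mapsto uE_i^k$; by part~(i) a basis of $\fie[E_i]$ is $\{E_i^k: 0\le k<N\}$, and $\ker\derK_i$ is a subalgebra by \eqref{eq:derKL2}. Injectivity: for $u\in\ker\derK_i$ one has $\derK_i(uE_i^k)=\qnum{k}{q_{ii}}uE_i^{k-1}$, so applying $\derK_i^{\,N-1}$ to a relation $\sum_{k<N}u_kE_i^k=0$ isolates $\qfact{N-1}{q_{ii}}u_{N-1}=0$ with $\qfact{N-1}{q_{ii}}\neq 0$, giving $u_{N-1}=0$, and downward induction kills all $u_k$. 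Surjectivity: the image $B=\sum_k(\ker\derK_i)E_i^k$ contains every generator, since $E_j\in\ker\derK_i$ for $j\neq i$ and $E_i\in B$. For homogeneous $u\in\ker\derK_i$ of degree $\gamma$ a direct computation gives $\derK_i(E_iu)=\chi(\al_i,\gamma)u$ and $\derK_i(uE_i)=u$, so $E_iu-\chi(\al_i,\gamma)\,uE_i\in\ker\derK_i$; hence $E_i(\ker\derK_i)\subseteq B$, and iterating, $E_i^k(\ker\derK_i)\subseteq B$. This makes $B$ closed under multiplication, so $B=U^+(\chi)$.

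The remaining seven maps follow formally. Interchanging the two tensor factors reverses the roles of $E_i$ and $\ker\derK_i$ and yields the $\fie[E_i]\ot(-)$ versions by the symmetric computation; replacing $\derK_i$ by $\derL_i$ (the second line of \eqref{eq:derKL2}, together with $L_i^{-1}\lact E_i=q_{ii}E_i$) gives the $U^+_{i,L}(\chi)$ statements verbatim; and applying $\aaaU$ converts each positive factorization into the corresponding one for $U^-(\chi)$ with $E_i\mapsto F_i$ and the sides exchanged. I expect the genuine obstacle to be the surjectivity in (ii): it cannot be read off from a PBW count, because the PBW root vectors do not lie in $\ker\derK_i$ (for instance $\derK_i(E_jE_i-q_{ji}E_iE_j)=(1-q_{ij}q_{ji})E_j$). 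The decisive input is instead the congruence $E_iu\equiv\chi(\al_i,\gamma)\,uE_i\pmod{\ker\derK_i}$, which is exactly what makes the image a subalgebra.
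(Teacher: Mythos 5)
Your proof is correct and takes essentially the route the paper intends: part (i) via the formula $\derK _i(E_i^m)=\qnum{m}{q_{ii}}E_i^{m-1}$ together with the maximality characterization of $\cI ^+(\chi )$, and part (ii) via the standard $\ker \derK _i$-decomposition argument (the key congruence $E_iu\equiv \chi (\al _i,\gamma )\,uE_i$ modulo $\ker \derK _i$), which the paper simply delegates to the reference [Hec06a]. The only cosmetic wrinkle is the case $\bfun \chi (\al _i)=\infty $ in your injectivity step, where one should apply $\derK _i$ starting from the top nonzero index of the (finite) relation rather than from $N-1$.
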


\begin{proof}
  (i) is standard in the theory of Nichols algebras. It follows from
  Eqs.~\eqref{eq:derKL1}, \eqref{eq:derKL2} and the definitions
  of $\cI ^+(\chi )$ and $\cI ^-(\chi )$.
  The proof of (ii) for $U^+(\chi )$ can be performed as in
  \cite{a-Heck06a}.
  The formulas with $U^-(\chi )$ follow from those with
  $U^+(\chi )$ and Eqs.~\eqref{eq:kerderK}, \eqref{eq:kerderL}.
\end{proof}

\begin{lemma}
  \label{le:EmFn}
  Let $m,n\in \ndN _0$ and $p\in I$. Then
  \begin{align*}
    E_p^m F_p^n=\sum _{i=0}^{\makebox[0pt]{\scriptsize $\min \{m,n\}$}}
    {\textstyle
    \frac{\qfact{m}{q_{p p}}\qfact{n}{q_{p p}}}{
    \qfact{i}{q_{p p}}\qfact{m-i}{q_{p p}}\qfact{n-i}{q_{p p}}}
    }
    F_p^{n-i}\prod _{j=1}^i
    (q_{p p}^{i+j-m-n}K_p-L_p)E_p^{m-i}.
  \end{align*}
\end{lemma}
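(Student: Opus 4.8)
The plan is to prove the identity inside the unital subalgebra of $\cU (\chi )$ generated by $E_p,F_p,K_p^{\pm 1},L_p^{\pm 1}$, a rank-one Drinfel'd double, using only the defining relations \eqref{eq:KErel}, \eqref{eq:KFrel}, \eqref{eq:EFrel}. Abbreviating $q=q_{pp}$, these read
\[ E_pF_p-F_pE_p=K_p-L_p,\quad K_pE_p=qE_pK_p,\quad L_pE_p=q^{-1}E_pL_p,\quad K_pF_p=q^{-1}F_pK_p,\quad L_pF_p=qF_pL_p, \]
together with $K_pL_p=L_pK_p$. Since none of these relations passes through the Nichols ideal, the resulting identity already holds in $\cU (\chi )$ (and descends to $U(\chi )$). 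I will also use that $K_p,L_p$ are algebraically independent in $\Uz $, so that a scalar relation $\sum _k c_k(q^{a_k}K_p-L_p)=0$ forces the $K_p$- and $L_p$-coefficients to vanish separately.

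First I would establish two auxiliary facts. The single-$E_p$ rule
\[ E_pF_p^{\,s}=F_p^{\,s}E_p+\qnum sq\,F_p^{\,s-1}(q^{1-s}K_p-L_p)\qquad (s\ge 1) \]
is the case $m=1$; it follows from the telescoping identity $E_pF_p^{\,s}-F_p^{\,s}E_p=\sum _{t=0}^{s-1}F_p^{\,t}(K_p-L_p)F_p^{\,s-1-t}$ together with $K_pF_p^{\,u}=q^{-u}F_p^{\,u}K_p$ and $L_pF_p^{\,u}=q^{u}F_p^{\,u}L_p$, after which the $K_p$- and $L_p$-coefficients sum to $q^{1-s}\qnum sq$ and $\qnum sq$. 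Second, the commutation rule
\[ E_p(q^aK_p-L_p)=q\,(q^{a-2}K_p-L_p)\,E_p \]
is immediate from $E_pK_p=q^{-1}K_pE_p$ and $E_pL_p=qL_pE_p$.

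Then I would induct on $m$, the case $m=0$ being the trivial identity $F_p^{\,n}=F_p^{\,n}$. Writing $E_p^mF_p^n=E_p\,(E_p^{m-1}F_p^n)$ and inserting the induction hypothesis, I apply the single-$E_p$ rule to each factor $E_pF_p^{\,n-i}$ and push the liberated $E_p$ to the right through the group-like product $\prod _{j=1}^{i}(q^{i+j-(m-1)-n}K_p-L_p)$ using the commutation rule. The index-$i$ summand of the target formula then collects exactly two contributions: a diagonal part from the index-$i$ hypothesis summand (via its term $F_p^{\,n-i}E_p\cdots $), and a part from the index-$(i-1)$ hypothesis summand (via the commutator produced by the single-$E_p$ rule). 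The key observation, and the step that makes everything collapse, is that with $Q=\prod _{j=1}^{i-1}(q^{i+j-m-n}K_p-L_p)$ all three relevant terms carry $Q$ as a right factor: peeling the first factor off the pushed product gives the diagonal contribution $(q^{i-m-n}K_p-L_p)Q$, the commutator contribution is $(q^{i-n}K_p-L_p)Q$, and the target product is $(q^{2i-m-n}K_p-L_p)Q$.

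Cancelling $Q$ reduces the inductive step to the single identity, \emph{linear} in $K_p,L_p$,
\[ \gamma _{m-1,n,i}\,q^{i}(q^{i-m-n}K_p-L_p)+\gamma _{m-1,n,i-1}\,\qnum{n-i+1}{q}(q^{i-n}K_p-L_p)=\gamma _{m,n,i}\,(q^{2i-m-n}K_p-L_p), \]
where $\gamma _{m,n,i}=\qfact mq\qfact nq/(\qfact iq\qfact{m-i}q\qfact{n-i}q)$. By independence of $K_p,L_p$ this splits into two scalar equations: the $L_p$-equation $\gamma _{m,n,i}=q^{i}\gamma _{m-1,n,i}+\qnum{n-i+1}{q}\gamma _{m-1,n,i-1}$ is the $q$-trinomial recursion, proved from $\qnum mq=\qnum iq+q^{i}\qnum{m-i}{q}$, and the $K_p$-equation then follows from it together with the ratio identity $\qnum iq\gamma _{m-1,n,i}=\qnum{m-i}{q}\qnum{n-i+1}{q}\gamma _{m-1,n,i-1}$. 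This closes the induction; the boundary indices $i=0$ and $i=\min\{m,n\}$ are handled by the convention that out-of-range $\gamma $ vanish. The only real subtlety is the $q$-exponent bookkeeping in the previous paragraph, namely spotting the common factor $Q$ so that the step linearises. As an independent check one may apply the antiautomorphism $\aaaU $ of Prop.~\ref{pr:algiso}(2), which fixes $K_p,L_p$ and exchanges $E_p\leftrightarrow F_p$: it carries the asserted identity for $(m,n)$ to the one for $(n,m)$, consistent with the symmetry of $\gamma _{m,n,i}$ under $m\leftrightarrow n$.
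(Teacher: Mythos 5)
Your proof is correct and is essentially the paper's argument with the roles of $m$ and $n$ transposed: the paper takes the commutation of $E_p^m$ with a single $F_p$ (quoted from Cor.~5.4 of the Lusztig-isomorphisms preprint) as its base case, inducts on $n$ for $m\ge n$, and invokes the antiautomorphism $\aaaU $ to cover $m<n$, whereas you prove the $m=1$ case from the defining relations, induct on $m$, and so handle all $m,n$ uniformly. The $q$-Pascal recursion and the ratio identity to which you reduce the inductive step are precisely the content of the ``standard calculation'' the paper leaves to the reader, and your bookkeeping (the common factor $Q$, the boundary indices, and the splitting into $K_p$- and $L_p$-coefficients) checks out.
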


\begin{proof}
  For $n=0$ the claim is trivial.
  By \cite[Cor.~5.4]{p-Heck07b},
  \[ E_p^m F_p-F_p E_p^m=
  \qnum{m}{q_{p p}}(q_{p p}^{1-m}K_p-L_p)E_p^{m-1}. \]
  Hence the lemma holds for $n=1$.
  It suffices to check the claim for $m\ge n$, since then it also holds for
  $m<n$ using the algebra antiisomorphism $\aaaU $. 
  The proof of the lemma for $m\ge n$ is a standard calculation by
  induction on $n$.
\end{proof}

\section{An analogue of Lusztig's PBW basis}
\label{sec:Lusztig}

Let $\chi \in \cX $ and $p\in I$. Assume that $\chi $ is $p$-finite.
Let $q_{i j}=\chi (\al _i,\al _j)$ and $c_{p i}=c_{p i}^\chi $
for all $i,j\in I$.

For all $m\in \ndN _0$ and $i\in I\setminus \{p\}$ define recursively
$E^\pm _{i,m}\in U^+_{\al _i+m\al _p}$,
$F^\pm _{i,m}\in U^-_{\al _i+m\al _p}$ by
\begin{align*}
  E^+_{i,0}=&E_i, &
  E^+_{i,m+1}=&\,E_p E^+_{i,m}-(K_p\lact E^+_{i,m})E_p,\\
  E^-_{i,0}=&E_i, &
  E^-_{i,m+1}=&\,E_p E^-_{i,m}-(L_p\lact E^-_{i,m})E_p,\\
  F^+_{i,0}=&F_i, &
  F^+_{i,m+1}=&\,F_p F^+_{i,m}-(L_p\lact F^+_{i,m})F_p,\\
  F^-_{i,0}=&F_i, &
  F^-_{i,m+1}=&\,F_p F^-_{i,m}-(K_p\lact F^-_{i,m})F_p.
\end{align*}
We also define $E^+_{i,-1}=E^-_{i,-1}=F^+_{i,-1}=F^-_{i,-1}=0$.
The above definitions depend essentially on $p$.
If we want to emphasize this, we will write
$E^\pm _{i,m(p)}$ and $F^\pm _{i,m(p)}$
instead of $E^\pm _{i,m}$ and $F^\pm _{i,m}$, respectively.

For all $i\in I\setminus \{p\}$ define
\[ \lambda _i^\chi =\qfact{-c _{p i}}{q_{p p}}
\prod _{j=0}^{-c _{p i}-1}(q_{p p}^j q_{p i}q_{i p}-1). \]
Then $\lambda _i^\chi \not=0$ by definition of
$c_{p i}=c^\chi _{p i}$.

The next theorem was proven in \cite[Thm.\,6.11]{p-Heck07b}.

\begin{theor}\label{th:Liso}
  Let $\chi \in \cX $ and $p\in I$. Assume that $\chi $ is $p$-finite.
  Let $c_{pi}=c_{pi}^\chi $ for all $i\in I$.

  (i) There exist unique algebra isomorphisms
  $\LT _p, \LT _p^-: U (\chi )\to U (r_p(\chi ))$
  such that
  \begin{align*}
	\LT _p(K_p)=&\LT _p^-(K_p)=K _p^{-1},&
	\LT _p(K_i)=&\LT _p^-(K_i)=K _iK _p^{-c_{p i}},\\
	\LT _p(L_p)=&\LT _p^-(L_p)=L _p^{-1},&
	\LT _p(L_i)=&\LT _p^-(L_i)=L _iL _p^{-c_{p i}},\\
	\LT _p(E_p)=&F _p L _p^{-1},&
	\LT _p(E_i)=&E ^+_{i,-c_{p i}},\\
	\LT _p(F_p)=&K _p^{-1}E _p,&
	\LT _p(F_i)=&\lambda _i(r_p(\chi ))^{-1}F ^+_{i,-c_{p i}},\\
	\LT _p^-(E_p)=&K _p^{-1}F _p,&
	\LT _p^-(E_i)=&\lambda _i(r_p(\chi ^{-1}))^{-1} E ^-_{i,-c_{pi}},\\
	\LT _p^-(F_p)=&E _p L _p^{-1},&
	\LT _p^-(F_i)=&(-1)^{c_{p i}} F ^-_{i,-c_{p i}}.
  \end{align*}

  (ii) The maps $\LT _p$, $\LT _p^-$ satisfy
  $\LT _p \LT _p^-=\LT _p^-\LT _p=\id _{U(\chi )}$.

  (iii) There exists a unique $\ula \in (\fienz )^I$ such that
  $\LT _p \aaaU =\aaaU \LT ^-_p \varphi _{\ula }$ in $\Hom (U(\chi
  ),U(r_p(\chi )))$.
\end{theor}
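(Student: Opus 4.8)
The plan is to handle the three parts in turn, with essentially all of the work concentrated in (i). Uniqueness in (i) is immediate, since $K_i^{\pm 1},L_i^{\pm 1},E_i,F_i$ ($i\in I$) generate $U(\chi )$ as an algebra, so an algebra homomorphism out of $U(\chi )$ is determined by its values on these elements. For existence I would define $\LT _p$ on the free algebra on these generators by the listed formulas, landing in $U(r_p(\chi ))$, and verify that every defining relation is respected, so that the assignment factors first through $\cU (\chi )$ and then through the defining ideal to give a map $U(\chi )\to U(r_p(\chi ))$. The relations \eqref{eq:KLrel} and \eqref{eq:KKrel} among the group-likes are clear, as their images are again monomials in the $K$'s and $L$'s together with their inverses. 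The relations \eqref{eq:KErel} and \eqref{eq:KFrel} reduce to checking that each image $E^+_{i,-c_{pi}}$ and $F^+_{i,-c_{pi}}$ is $\ndZ ^I$-homogeneous of degree $\s _p^\chi (\al _i)=\al _i-c_{pi}\al _p$ and transforms correctly under the $K_j\lact$, $L_j\lact$ prescribed by the reflected bicharacter; this follows from $E^+_{i,m}\in U^+_{\al _i+m\al _p}$ and the transformation formulas \eqref{eq:rpchi}.

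The genuinely hard point in (i) is the cross relation \eqref{eq:EFrel}, namely $E_iF_j-F_jE_i=\delta _{i,j}(K_i-L_i)$. For $i=p$ or $j=p$ the verification rests on the explicit expansion of $E_p^mF_p^n$ from Lemma~\ref{le:EmFn} and on the skew-derivation description of $[E,F_i]$ in Lemma~\ref{le:commEFi}. For $i,j\neq p$ one must compute the commutator of the reflected root vectors $E^+_{i,-c_{pi}}$ and $F^+_{j,-c_{pj}}$ and show that it vanishes when $i\neq j$ and equals the correct multiple of $K_i-L_i$ when $i=j$; this is exactly where the normalizing constants $\lambda _i$ are forced, the computation proceeding by evaluating $\derK _i$ and $\derL _i$ on the iterated brackets and using that $-c_{pi}$ is maximal subject to $\al _i-c_{pi}\al _p$ being a positive root (Lemma~\ref{le:cm}). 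Finally one descends to the quotient by checking $\LT _p(\cI ^+(\chi ))\subseteq \cI ^+(r_p(\chi ))$ and $\LT _p(\cI ^-(\chi ))\subseteq \cI ^-(r_p(\chi ))$, using the characterization of $\cI ^\pm $ via the maps $\derK _i$ and the reflection invariance \eqref{eq:hghtrpchi} of the bound function. The map $\LT _p^-$ is built in exactly the same way with the alternative signs and normalizations.

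For (ii) I would invoke $r_p^2(\chi )=\chi $ from \eqref{eq:rp2}, so that $\LT _p^-\LT _p$ is an algebra endomorphism of $U(\chi )$ and it suffices to see that it fixes the generators. The group-like cases are immediate, while $\LT _p^-\LT _p(E_p)=\LT _p^-(F_pL_p^{-1})=(E_pL_p^{-1})L_p=E_p$, and $F_p$ is analogous. For $i\neq p$ the identity $\LT _p^-\LT _p(E_i)=E_i$ follows by expanding the iterated brackets: the essential input is that $E^+_{i,-c_{pi}+1}=0$ in $U(\chi )$, so the bracketing terminates at step $-c_{pi}$ and the factors $\lambda _i$ cancel. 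Alternatively, once $\LT _p$ is shown to carry a PBW basis to a PBW basis it is an isomorphism, and it is then enough to exhibit $\LT _p^-$ as a one-sided inverse on generators.

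For (iii) note that $\aaaU $ is an antiautomorphism and $\LT _p,\LT _p^-,\varphi _{\ula }$ are algebra maps (Prop.~\ref{pr:algiso}), so for every $\ula \in (\fienz )^I$ both $\LT _p\aaaU $ and $\aaaU \LT _p^-\varphi _{\ula }$ are algebra antihomomorphisms $U(\chi )\to U(r_p(\chi ))$; hence they coincide as soon as they agree on generators. On $K_i,L_i$ agreement is automatic, and testing on $E_p$ gives $K_p^{-1}E_p=a_pK_p^{-1}E_p$, forcing $a_p=1$ (with $F_p$ consistent). For $i\neq p$ one uses that $\aaaU $ sends each $E^-_{i,m}$ to a nonzero scalar multiple of $F^+_{i,m}$; comparing the images of $E_i$ then yields a single scalar equation, involving the ratio of $\lambda _i(r_p(\chi ))$ and $\lambda _i(r_p(\chi ^{-1}))$ and the sign $(-1)^{c_{pi}}$, that determines each $a_i$ uniquely and shows $a_i\neq 0$; the equation on $F_i$ is then automatically satisfied. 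Uniqueness of $\ula $ is clear since $\varphi _{\ula }$ determines $\ula $. The main obstacle throughout is the cross relation for the reflected root vectors in (i): establishing the commutator $[E^+_{i,-c_{pi}},F^+_{j,-c_{pj}}]$ is the technical heart, and it is precisely there that the skew-derivations, the $q$-binomial identity of Lemma~\ref{le:EmFn}, and the sharp value of $c_{pi}$ all come together.
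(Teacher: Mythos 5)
The paper does not prove this theorem: immediately above the statement it says ``The next theorem was proven in \cite[Thm.\,6.11]{p-Heck07b}'', so Thm.~\ref{th:Liso} is imported wholesale from the companion preprint and there is no in-paper argument to compare yours against. Your outline does follow the same general strategy as that reference (define the maps on generators, verify the defining relations of $\cU(\chi)$, descend to the quotient, and invert using $r_p^2(\chi)=\chi$), but as written it is a plan rather than a proof: the two steps you yourself identify as the technical heart --- the commutator $[E^+_{i,-c_{pi}},F^+_{j,-c_{pj}}]$ that forces the constants $\lambda_i$, and the descent through $\cI^\pm$ --- are named but not carried out, and each of them occupies several pages of computation in \cite{p-Heck07b}.

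Two specific points. First, the descent step is subtler than your phrasing suggests: since $\LT_p(E_p)=F_pL_p^{-1}$, the map $\LT_p$ does \emph{not} send $\cU^+(\chi)$ into $\cV^+(r_p(\chi))$, so the containment $\LT_p(\cI^+(\chi))\subseteq \cI^+(r_p(\chi))$ is not the statement you can check; what is needed is that $\LT_p(\cI^+(\chi))$ lands in the ideal generated by $\cI^+(r_p(\chi))$ and $\cI^-(r_p(\chi))$, and in the reference this rests on a separate analysis of $\ker\derK_p$ and $\ker\derL_p$ (compare Lemma~\ref{le:TpU+U+} here). Second, your computation in (iii) is off: $\LT_p\aaaU(E_p)=K_p^{-1}E_p$, while $\aaaU\LT_p^-\varphi_{\ula}(E_p)=a_p\,\aaaU(K_p^{-1}F_p)=a_pE_pK_p^{-1}=a_p\,\chi(\al_p,\al_p)K_p^{-1}E_p$, so the reordering under the antiautomorphism forces $a_p=\chi(\al_p,\al_p)^{-1}$, not $a_p=1$. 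This is minor, but it is the kind of slip that signals the generator-by-generator verifications have not actually been done.
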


Note that $\LT _p\LT _p^-$ is an automorphism of $U(\chi )$ if one regards
$\LT _p^-$ as a map from $U(\chi )$ to $U(r_p(\chi ))$ and
$\LT _p$ as a map from $U(r_p(\chi ))$ to $U(r_p r_p(\chi ))=U(\chi )$.

\begin{propo} \label{pr:LTdeg}
  Let $\chi \in \cX $ and $p\in I$. Assume that $\chi $ is $p$-finite.
  Then
  \[ \LT _p(U(\chi )_\al )=U(r_p(\chi ))_{\s _p^\chi (\al )} \quad
  \text{for all $\al \in \ndZ ^I$.} \]
\end{propo}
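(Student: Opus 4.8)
The goal is to show that the Lusztig isomorphism $\LT_p$ shifts the $\ndZ^I$-grading by $\s_p^\chi$. My plan is to verify this on the algebra generators, where the formulas for $\LT_p$ are given explicitly in Thm.~\ref{th:Liso}(i), and then propagate it to arbitrary elements using that $\LT_p$ is an algebra homomorphism and that the grading is compatible with multiplication (Eq.~\eqref{eq:Zngrading}).

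First I would record what $\s_p^\chi$ does to the standard basis: by definition $\s_p^\chi(\al_p)=-\al_p$ and $\s_p^\chi(\al_i)=\al_i-c_{pi}\al_p$ for $i\neq p$. Then I would check the claimed identity $\LT_p(U(\chi)_\al)\subset U(r_p(\chi))_{\s_p^\chi(\al)}$ on each generator. The elements $K_i^{\pm 1},L_i^{\pm1}$ lie in $U(\chi)_0$, and their images under $\LT_p$ are products of $K$'s and $L$'s, hence lie in $U(r_p(\chi))_0=U(r_p(\chi))_{\s_p^\chi(0)}$. For $E_p\in U(\chi)_{\al_p}$ we have $\LT_p(E_p)=F_pL_p^{-1}\in U(r_p(\chi))_{-\al_p}=U(r_p(\chi))_{\s_p^\chi(\al_p)}$, and symmetrically $\LT_p(F_p)=K_p^{-1}E_p\in U(r_p(\chi))_{\al_p}=U(r_p(\chi))_{\s_p^\chi(-\al_p)}$. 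For $i\neq p$, the element $E^+_{i,-c_{pi}}$ is by its recursive definition homogeneous of degree $\al_i+(-c_{pi})\al_p=\s_p^\chi(\al_i)$, so $\LT_p(E_i)=E^+_{i,-c_{pi}}\in U(r_p(\chi))_{\s_p^\chi(\al_i)}$; similarly $\LT_p(F_i)$ is a scalar multiple of $F^+_{i,-c_{pi}}$, which sits in degree $-\al_i+c_{pi}\al_p=-\s_p^\chi(\al_i)=\s_p^\chi(-\al_i)$, matching $\deg F_i=-\al_i$.

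Having checked generators, I would extend to all of $U(\chi)$. Since $\s_p^\chi$ is a group automorphism of $\ndZ^I$ and $\LT_p$ is an algebra homomorphism, for a product $x=y_1\cdots y_k$ of generators with $y_j\in U(\chi)_{\beta_j}$ one has $x\in U(\chi)_{\beta_1+\cdots+\beta_k}$ and $\LT_p(x)=\LT_p(y_1)\cdots\LT_p(y_k)\in U(r_p(\chi))_{\s_p^\chi(\beta_1)+\cdots+\s_p^\chi(\beta_k)}=U(r_p(\chi))_{\s_p^\chi(\beta_1+\cdots+\beta_k)}$, using additivity of $\s_p^\chi$. As the $\ndZ^I$-homogeneous components are spanned by such monomials, this gives $\LT_p(U(\chi)_\al)\subset U(r_p(\chi))_{\s_p^\chi(\al)}$ for all $\al$. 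For the reverse inclusion I would invoke that $\LT_p$ is an isomorphism with inverse $\LT_p^-$ (Thm.~\ref{th:Liso}(ii)), apply the same grading argument to $\LT_p^-:U(r_p(\chi))\to U(\chi)$ to get $\LT_p^-(U(r_p(\chi))_\gamma)\subset U(\chi)_{\s_p^{r_p(\chi)}(\gamma)}$, and note $\s_p^{r_p(\chi)}=(\s_p^\chi)^{-1}$ since $c_{pj}^{r_p(\chi)}=c_{pj}^\chi$ by Eq.~\eqref{eq:rp2}, so $(\s_p^\chi)^2=\id$. Combining the two inclusions yields equality.

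I do not expect a serious obstacle here; this is essentially a bookkeeping argument. The only point requiring mild care is that the definitions of $E^\pm_{i,m}$ and $F^\pm_{i,m}$ are genuinely $\ndZ^I$-homogeneous of the asserted degree $\al_i+m\al_p$, which follows immediately by induction from the recursions, since each recursion multiplies by $E_p$ (degree $\al_p$) and the action of $K_p$ or $L_p$ preserves degree. The slight subtlety is confirming $\s_p^{r_p(\chi)}=(\s_p^\chi)^{-1}$ for the surjectivity half, but this is exactly what Eq.~\eqref{eq:rp2} guarantees together with the involutivity $r_p^2(\chi)=\chi$.
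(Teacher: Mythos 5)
Your proof is correct and follows essentially the same route as the paper: the paper's one-line argument is exactly that $\LT_p$ and $\LT_p^-$ are mutually inverse algebra isomorphisms sending generators of degree $\al$ into degree $\s_p(\al)$, which you have simply written out in full detail. The generator-by-generator degree checks and the use of $\s_p^{r_p(\chi)}=(\s_p^\chi)^{-1}$ via Eq.~\eqref{eq:rp2} are all accurate.
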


\begin{proof}
  The maps $\LT _p:U(\chi )\to U(r_p(\chi ))$ and $\LT _p^-:U(r_p(\chi ))\to
  U(\chi )$ are mutually inverse algebra isomorphisms, and send
  generators of degree $\al $ into the homogeneous component of degree
  $\s _p(\al )$.
\end{proof}

\begin{lemma}
  \label{le:TpU+U+}
  Let $\chi \in \cX $ and $p\in I$. Assume that $\chi $ is $p$-finite.
  Then
  \begin{align*}
    \LT _p(U^+_{p,L}(\chi ))=&\,U^+_{p,K}(r_p(\chi )), &
    \LT _p(U^-_{p,K}(\chi ))=&\,U^-_{p,L}(r_p(\chi )),\\
    \LT ^-_p(U^+_{p,K}(\chi ))=&\,U^+_{p,L}(r_p(\chi )), &
    \LT ^-_p(U^-_{p,L}(\chi ))=&\,U^-_{p,K}(r_p(\chi )).
  \end{align*}
\end{lemma}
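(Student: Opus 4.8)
The plan is to reduce the four equalities to the first one and then prove that by comparing images of algebra generators degree by degree. For the reduction, I first note that $\varphi _{\ula }$ preserves each of the four subalgebras: applying $\varphi _{\ula }$ to the relation $[E,F_p]=\derK _p(E)K_p-L_p\derL _p(E)$ and using $\varphi _{\ula }(F_p)=a_p^{-1}F_p$, then comparing the $K_p$- and $L_p$-homogeneous parts (Prop.~\ref{pr:tridec}), gives $\derK _p\varphi _{\ula }=a_p\varphi _{\ula }\derK _p$ and $\derL _p\varphi _{\ula }=a_p\varphi _{\ula }\derL _p$, so all four kernels are stable. Since $U^-_{p,K}=\aaaU (U^+_{p,K})$, $U^-_{p,L}=\aaaU (U^+_{p,L})$ and $\aaaU ^2=\id $, the relation $\LT _p\aaaU =\aaaU \LT _p^-\varphi _{\ula }$ of Thm.~\ref{th:Liso}(iii) (together with its mirror $\LT _p^-\aaaU =\aaaU \LT _p\varphi _{\ula }$, deduced from it via $\aaaU ^2=\id $ and $\aaaU \varphi _{\ula }\aaaU =\varphi _{\ula ^{-1}}$) converts the two $U^+$-identities into the two $U^-$-identities. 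Finally, because $\LT _p$ for $r_p(\chi )$ is inverse to $\LT _p^-$ for $\chi $ by Thm.~\ref{th:Liso}(ii), applying the first identity at $r_p(\chi )$ and inverting yields the third. Hence it suffices to prove $\LT _p(U^+_{p,L}(\chi ))=U^+_{p,K}(r_p(\chi ))$.

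Next I would identify generators. For $i\in I\setminus \{p\}$, a short induction on $m$ using the recursion for $E^-_{i,m}$ and the skew-derivation rule \eqref{eq:derKL2} shows $\derL _p(E^-_{i,m})=0$, i.e. $E^-_{i,m}\in U^+_{p,L}(\chi )$; symmetrically $E^+_{i,m}\in U^+_{p,K}$. I will use that these $\al _p$-strings generate, namely that $U^+_{p,L}(\chi )$ is generated as an algebra by $\{E^-_{i,m}\mid i\neq p\}$. The technical heart is the claim that $\LT _p(E^-_{i,m})$ is a nonzero scalar multiple of the $r_p(\chi )$-root vector $E^+_{i,-c_{pi}-m}$, which I prove by induction on $m$: the base case $m=0$ is $\LT _p(E_i)=E^+_{i,-c_{pi}}$ from Thm.~\ref{th:Liso}, and the step applies $\LT _p$ to the defining recursion, using $\LT _p(E_p)=F_pL_p^{-1}$ and collapsing the resulting $F_p$-terms via the $E_pF_p$-commutation of Lemma~\ref{le:EmFn}. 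Since $E^+_{i,-c_{pi}-m}$ lies in $U^+_{p,K}(r_p(\chi ))$, which is a subalgebra, this gives the inclusion $\LT _p(U^+_{p,L}(\chi ))\subseteq U^+_{p,K}(r_p(\chi ))$.

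To upgrade the inclusion to an equality I would count graded dimensions. By Prop.~\ref{pr:LTdeg} the injective map $\LT _p$ sends $U^+_{p,L}(\chi )_\beta $ into $U^+_{p,K}(r_p(\chi ))_{\s _p^\chi (\beta )}$, so it is enough to match these dimensions. Lemma~\ref{le:Eheight} gives graded isomorphisms $\fie [E_p]\ot U^+_{p,L}(\chi )\cong U^+(\chi )$ and $U^+_{p,K}(r_p(\chi ))\ot \fie [E_p]\cong U^+(r_p(\chi ))$, so both dimensions are determined by those of the Nichols algebras; and $\dim U^+(\chi )_\gamma =\dim U^+(r_p(\chi ))_{\s _p^\chi (\gamma )}$ follows from the PBW basis \eqref{eq:PBWbasis} together with the height-invariance \eqref{eq:hghtrpchi}, since $\s _p^\chi $ matches the roots of $\chi $ and $r_p(\chi )$ and preserves their bounds (Thm.~\ref{th:rschi}). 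Thus $\LT _p$ is onto in each degree, proving the first identity, and by the reduction all four.

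The step I expect to be the main obstacle is the positivity built into the second paragraph: that $\LT _p$ actually carries $\ker \derL _p$ into the positive part $U^+(r_p(\chi ))$ rather than into a genuinely mixed subalgebra—equivalently, that $U^+_{p,L}(\chi )$ is generated by the $\al _p$-strings $E^-_{i,m}$ and that the negative $F_p$-contributions in the inductive computation cancel. I would establish this exactly as the triangular decompositions of Lemma~\ref{le:Eheight} are obtained: the splitting $U^+(\chi )=\oplus _m E_p^m\,U^+_{p,L}(\chi )$ reduces the generation statement to the rank-two behaviour of the $\al _p$-strings, after which the explicit recursion for $E^\pm _{i,m}$ and the commutation relations of Lemma~\ref{le:EmFn} force the stated form of $\LT _p(E^-_{i,m})$ and, in particular, its positivity.
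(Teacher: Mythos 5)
Your overall strategy is sound and genuinely different from the paper's. The paper disposes of the first identity in two lines: it quotes the two inclusions $\LT _p(U^+_{p,L}(\chi ))\subset U^+_{p,K}(r_p(\chi ))$ and $\LT _p^-(U^+_{p,K}(r_p(\chi )))\subset U^+_{p,L}(\chi )$ from \cite[Props.\,5.10 and 6.7(d)]{p-Heck07b} and concludes equality from $\LT _p\LT _p^-=\id $ (Thm.~\ref{th:Liso}(ii)); the remaining identities follow by the same reduction you give. What you propose is essentially to re-derive the quoted inclusions: the generation of $\ker \derL _p$ by the strings $E^-_{i,m}$ and the formula $\LT _p(E^-_{i,m})\in \fienz E^+_{i,-c_{pi}-m}$ are exactly the content of those references. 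That route is legitimate, and the step you flag as the main obstacle is in fact surmountable with the tools in this paper: since $E_pE^-_{i,m}=E^-_{i,m+1}+\lambda E^-_{i,m}E_p$ with $\lambda \in \fienz $, the subalgebra $A$ generated by the $E^-_{i,m}$ satisfies $\sum _kAE_p^k=U^+(\chi )$, and comparing with the direct sum decomposition $U^+(\chi )=\oplus _kU^+_{p,L}(\chi )E_p^k$ of Lemma~\ref{le:Eheight}(ii), together with $A\subset U^+_{p,L}(\chi )$, forces $A=U^+_{p,L}(\chi )$.

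Two caveats. First, your surjectivity step via graded dimensions invokes Thm.~\ref{th:rschi} and the root system of $\cC (\chi )$, but the lemma only assumes that $\chi $ is $p$-finite for the single index $p$, not $\chi \in \cX _2$ or $\cX _3$; what you actually need is the single-reflection statement that $\s _p^\chi $ carries $R^\chi _+\setminus \{\al _p\}$ onto $R^{r_p(\chi )}_+\setminus \{\al _p\}$ with matching bounds, which is again input at the level of \cite{a-Heck06a} rather than Eq.~\eqref{eq:hghtrpchi} alone. You can avoid the dimension count altogether by running your second paragraph symmetrically for $\LT _p^-$ on $U^+_{p,K}(r_p(\chi ))$ (whose generators $E^+_{i,m}$ map to multiples of $E^-_{i,-c_{pi}-m}$) and then invoking $\LT _p\LT _p^-=\id $ --- which is precisely the paper's mechanism. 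Second, the cancellation of the $F_p$-terms in your induction is governed by Lemma~\ref{le:commEFi} together with a formula for $\derL _p(E^+_{i,k})$, not by Lemma~\ref{le:EmFn}, which concerns powers $E_p^mF_p^n$ only; the nonvanishing of the resulting scalars for $0\le m\le -c_{pi}$ is where the definition of $c^\chi _{pi}$ enters. With these repairs your argument closes, at the cost of reproving material the paper deliberately outsources.
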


\begin{proof}
  Since $\chi $ and $r_p(\chi )$ are $p$-finite,
  \cite[Prop.\,5.10]{p-Heck07b} and \cite[Prop.\,6.7(d)]{p-Heck07b}
  give that
  \[ \LT _p(U^+_{p,L}(\chi ))\subset U^+_{p,K}(r_p(\chi )),\qquad
  \LT _p^-(U^+_{p,K}(r_p(\chi )))\subset U^+_{p,L}(\chi ). \]
  Thus
  $\LT _p(U^+_{p,L}(\chi ))=U^+_{p,K}(r_p(\chi ))$ by
  Thm.~\ref{th:Liso}(ii). Similar arguments yield that
  $\LT ^-_p(U^+_{p,K}(\chi ))=U^+_{p,L}(r_p(\chi ))$.
  The remaining two equations can be obtained from these and
  Thm.~\ref{th:Liso}(iii).
\end{proof}

In the rest of the section assume that
$\chi \in \cX _3$. Let $n=|R_+^\chi |\in \ndN $.
The following construction generalizes the Poincar\'e-Birkhoff-Witt
basis of quantized enveloping algebras given by Lusztig.

Let $i_1,i_2,\dots ,i_n\in I$ such that
$\ell (1_{\chi }\s _{i_1}\s _{i_2}\cdots \s _{i_n})=n$.
For all $\nu \in \{1,2,\dots,n\}$ let
\begin{gather}\label{eq:betak}
  \beta _\nu ^\chi
  =1_\chi \s _{i_1}\s _{i_2}\dots \s _{i_{\mu -1}}(\al _{i_\nu }).
\end{gather}
Then the elements $\beta _\nu ^\chi $, $1\le \nu \le n$,
are pairwise different and
\begin{align}
  R^\chi _+=\{\beta _\nu ^\chi \,|\,1\le \nu \le n\}
  \label{eq:proots}
\end{align}
by \cite[Prop.\,2.12]{p-CH08}. For all $\nu \in \{1,2,\dots,n\}$ let
\begin{gather}
  \label{eq:Ebetak}
  E_{\beta _\nu }= E_{\beta _\nu }^\chi 
  =\LT _{i_1}\dots \LT _{i_{\nu -1}}(E_{i_\nu }),\quad
  F_{\beta _\nu }= F_{\beta _\nu }^\chi 
  =\LT _{i_1}\dots \LT _{i_{\nu -1}}(F_{i_\nu }),\\
  \label{eq:Ebarbetak}
  \bE _{\beta _\nu }= \bE _{\beta _\nu }^\chi 
  =\LT ^-_{i_1}\dots \LT ^-_{i_{\nu -1}}(E_{i_\nu }),\quad
  \bF _{\beta _\nu }= \bF _{\beta _\nu }^\chi 
  =\LT ^-_{i_1}\dots \LT ^-_{i_{\nu -1}}(F_{i_\nu }),
\end{gather}
where $E_{i_\nu },F_{i_\nu }\in U(r_{i_{\nu -1}}\dots r_{i_2}r_{i_1}(\chi ))$.
Then 
\begin{align}
  E_{\beta _\nu },\bE _{\beta _\nu }\in U^+(\chi )_{\beta _\nu },\qquad
  F_{\beta _\nu },\bF _{\beta _\nu }\in U^-(\chi )_{-\beta _\nu }
  \label{eq:EbetainU+}
\end{align}
for all $\nu \in \{1,\dots ,n\}$ by \cite[Thm.\,6.19]{p-Heck07b},
Thm.~\ref{th:Liso}(iii) and Prop.~\ref{pr:LTdeg}.

\begin{lemma}
  \label{le:rvrel}
  Assume that $\chi \in \cX _3$. Let $\nu \in \{1,2,\dots ,n\}$,
  and assume that
  $\bfun \chi (\beta _\nu )<\infty $.
  Then $E_{\beta _\nu }^{\bfun \chi (\beta _\nu )}=
  F_{\beta _\nu }^{\bfun \chi (\beta _\nu )}=0$ in $U(\chi )$.
\end{lemma}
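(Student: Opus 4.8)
The plan is to reduce the statement to the already-established fact that the power $E_{i_\nu}^m$ (resp.\ $F_{i_\nu}^m$) vanishes in $U(r_{i_{\nu-1}}\cdots r_{i_1}(\chi))$ exactly when $m\ge\bfun{\chi'}(\al_{i_\nu})$, where $\chi'=r_{i_{\nu-1}}\cdots r_{i_1}(\chi)$, and then transport this along the Lusztig isomorphisms. First I would recall that by Eq.~\eqref{eq:Ebetak} we have $E_{\beta_\nu}=\LT_{i_1}\cdots\LT_{i_{\nu-1}}(E_{i_\nu})$ as an element built from the root vector $E_{i_\nu}\in U(\chi')$. Since each $\LT_{i_j}$ is an algebra isomorphism by Thm.~\ref{th:Liso}(i), it respects powers, so
\begin{align*}
  E_{\beta_\nu}^{m}=\LT_{i_1}\cdots\LT_{i_{\nu-1}}(E_{i_\nu}^{m})
\end{align*}
for all $m\in\ndN_0$; in particular $E_{\beta_\nu}^m=0$ in $U(\chi)$ if and only if $E_{i_\nu}^m=0$ in $U(\chi')$. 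The same applies verbatim to $F_{\beta_\nu}$.

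Next I would identify the relevant bound. By Lemma~\ref{le:Eheight}(i) applied in $U(\chi')$ with $i=i_\nu$, we have $E_{i_\nu}^m=0$ (equivalently $F_{i_\nu}^m=0$) in $U(\chi')$ precisely when $m\ge\bfun{\chi'}(\al_{i_\nu})$. So the proof hinges on showing that this simple-root bound for $\chi'$ equals the bound $\bfun\chi(\beta_\nu)$ appearing in the statement, i.e.
\begin{align*}
  \bfun{\chi'}(\al_{i_\nu})=\bfun\chi(\beta_\nu).
\end{align*}
This is exactly the kind of invariance captured by Eq.~\eqref{eq:hghtrpchi}, which says $\bfun{r_p(\chi)}(\s_p^\chi(\al))=\bfun\chi(\al)$ for $p$-finite $\chi$. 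Iterating this identity along the reduced expression $\s_{i_1}\cdots\s_{i_{\nu-1}}$ and using $\beta_\nu=1_\chi\s_{i_1}\cdots\s_{i_{\nu-1}}(\al_{i_\nu})$ from Eq.~\eqref{eq:betak}, one gets that the bound of $\al_{i_\nu}$ with respect to $\chi'$ coincides with the bound of $\beta_\nu$ with respect to $\chi$.

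The one point demanding care—and the main obstacle—is the bookkeeping of \emph{which} bicharacter each intermediate root vector lives over, and checking that all the intermediate reflections are legitimate (i.e.\ that each $r_{i_j}\cdots r_{i_1}(\chi)$ is $i_{j+1}$-finite so that $\LT_{i_{j+1}}$ is defined and Eq.~\eqref{eq:hghtrpchi} applies). Since $\chi\in\cX_3$ and $\ell(1_\chi\s_{i_1}\cdots\s_{i_n})=n$, the word $\s_{i_1}\cdots\s_{i_{\nu-1}}$ is a genuine morphism in $\Wg(\chi)$, which guarantees each required $p$-finiteness along the way and makes $E_{i_\nu}$ a well-defined generator of the correct algebra. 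With these checks in place, combining the isomorphism-respects-powers observation, Lemma~\ref{le:Eheight}(i), and the iterated height-invariance \eqref{eq:hghtrpchi} yields $E_{\beta_\nu}^{\bfun\chi(\beta_\nu)}=F_{\beta_\nu}^{\bfun\chi(\beta_\nu)}=0$, as claimed.
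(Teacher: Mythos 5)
Your argument is correct and is essentially the paper's own proof: the paper likewise reduces to the vanishing of $E_{i_\nu }^{\bfun {\chi '}(\al _{i_\nu })}$ and $F_{i_\nu }^{\bfun {\chi '}(\al _{i_\nu })}$ in $U(\chi ')$ via the isomorphisms $\LT _{i_j}$, invokes Lemma~\ref{le:Eheight}(i), and uses the iterated invariance \eqref{eq:hghtrpchi} to identify $\bfun {\chi '}(\al _{i_\nu })$ with $\bfun \chi (\beta _\nu )$. You merely spell out the bookkeeping (powers commuting with isomorphisms, $p$-finiteness along the reduced word) that the paper leaves implicit.
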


\begin{proof}
  By Eq.~\eqref{eq:hghtrpchi} and since $T_i$ is an isomorphism for
  each $i\in I$,
  it suffices to prove that
  $E_i^{\bfun{\chi '}(\al _i)}=F_i^{\bfun{\chi '}(\al _i)}=0$ in $U(\chi ')$
  for all $\chi '\in \cX $ and $i\in I$ with $\bfun{\chi '}(\al _i)<\infty $.
  This follows from Lemma~\ref{le:Eheight}(i).
\end{proof}

\begin{theor}\label{th:PBW}
  Assume that $\chi \in \cX _3$. Let $n=|R_+^\chi |\in \ndN $.
  Both sets
  \begin{align}
    \label{eq:LusztigPBW+}
    \big\{ E_{\beta _1}^{m_1} E_{\beta _2}^{m_2}\cdots E_{\beta _n}^{m_n}\,&|\,
    0\le m_\nu <\bfun \chi (\beta _\nu )
    \text{ for all $\nu \in \{1,2,\dots ,n\}$} \big\},\\
    \label{eq:LusztigPBW-}
    \big\{ \bE _{\beta _1}^{m_1} \bE _{\beta _2}^{m_2}\cdots
    \bE _{\beta _n}^{m_n}\,&|\,
    0\le m_\nu <\bfun \chi (\beta _\nu )
    \text{ for all $\nu \in \{1,2,\dots ,n\}$} \big\}
  \end{align}
  form vector space bases of $U ^+(\chi )$.
\end{theor}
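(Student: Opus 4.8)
The plan is to establish the statement for the set \eqref{eq:LusztigPBW+}; the claim for \eqref{eq:LusztigPBW-} then follows by the same argument with $\LT _p$ replaced by $\LT _p^-$ throughout, or directly from Thm.~\ref{th:Liso}(iii) and the antiautomorphism $\aaaU $. I would argue by induction on $n=|R^\chi _+|$. For $n=1$, axiom (R2) forces $|I|=1$, and then $U^+(\chi )=\fie [E_{i_1}]$ with basis $\{E_{i_1}^m\mid 0\le m<\bfun \chi (\al _{i_1})\}$ by Lemma~\ref{le:Eheight}; this settles the base case.

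For the inductive step set $p=i_1$ and $\chi '=r_p(\chi )$. Since $E_{\beta _1}=E_{i_1}=E_p$, Lemma~\ref{le:Eheight}(ii) gives a graded isomorphism $\fie [E_p]\ot U^+_{p,K}(\chi )\to U^+(\chi )$ via multiplication, and the powers $E_{\beta _1}^{m_1}$ with $0\le m_1<\bfun \chi (\al _p)$ account exactly for the $\fie [E_p]$-factor. Thus everything reduces to showing that the ordered monomials $E_{\beta _2}^{m_2}\cdots E_{\beta _n}^{m_n}$, with $0\le m_\nu <\bfun \chi (\beta _\nu )$, form a basis of $U^+_{p,K}(\chi )=\ker \derK _p$. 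To transport this to a smaller instance I would apply $\LT _p^-:U(\chi )\to U(\chi ')$, which by Lemma~\ref{le:TpU+U+} restricts to an algebra isomorphism $U^+_{p,K}(\chi )\to U^+_{p,L}(\chi ')$. Unravelling \eqref{eq:Ebetak} one checks that $\LT _p^-(E_{\beta _\nu }^\chi )=E_{\beta _{\nu -1}}^{\chi '}$ for $2\le \nu \le n$; since $\beta _\nu ^\chi =\s _p^{\chi '}(\beta _{\nu -1}^{\chi '})$, the exponent bounds match by \eqref{eq:hghtrpchi}. Hence the assertion for $U^+_{p,K}(\chi )$ becomes the statement that the monomials in $E_{\beta _1}^{\chi '},\dots ,E_{\beta _{n-1}}^{\chi '}$ form a basis of $U^+_{p,L}(\chi ')$.

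The main obstacle is precisely this last reduction: the word $\s _{i_2}\cdots \s _{i_n}$ is reduced of length $n-1$ but is no longer of maximal length for $\chi '$ (which again has $n$ positive roots), so the induction hypothesis cannot be quoted verbatim. To close the loop I would strengthen the inductive claim to \emph{arbitrary} reduced words $i_1,\dots ,i_m$ with $\ell (1_\chi \s _{i_1}\cdots \s _{i_m})=m$, asserting that the corresponding ordered root-vector monomials form a basis of the subalgebra $\cA $ they generate, together with the compatibility $\cA \subseteq \ker \derL _p$ whenever the word admits a left prolongation by $p$; the maximal case $m=n$ then yields $\cA =U^+(\chi )$. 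The key technical input is the interaction of the Lusztig isomorphisms with the skew-derivations, namely that $\LT _{i_2}\cdots \LT _{i_\mu }(E_{i_{\mu +1}})$ lies in $\ker \derL _{i_1}$. For $\mu =1$ this is just $\derL _{i_1}(E_{i_2})=\delta _{i_1,i_2}=0$ from \eqref{eq:derKL1} (consecutive letters of a reduced word differ), and the general case propagates through Lemma~\ref{le:TpU+U+}.

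Finally, spanning and linear independence can be separated. Spanning follows from iterating the one-step decomposition of Lemma~\ref{le:Eheight}(ii) along the reduced word, each step peeling off one factor $\fie [E_{\beta _\nu }]$ after transport by $\LT _p^-$. For linear independence I would invoke the non-degenerate skew-Hopf pairing of Prop.~\ref{pr:sHpdef}(iv): pairing $E_{\beta _1}^{m_1}\cdots E_{\beta _n}^{m_n}$ against the reversed monomials $F_{\beta _n}^{k_n}\cdots F_{\beta _1}^{k_1}$ in $U^-(\chi )$ gives, by the $\ndZ ^I$-homogeneity in Prop.~\ref{pr:sHpdef}(iii) together with Lemma~\ref{le:rvrel}, a Gram matrix that is triangular for a suitable order on the exponent vectors and has nonzero diagonal entries. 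This forces both families to be independent simultaneously, and combining independence with spanning delivers the two PBW bases.
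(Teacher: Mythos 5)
Your proposal correctly identifies several of the right tools (the one‑step factorization of Lemma~\ref{le:Eheight}(ii), the transport $\LT _{i_1}^-(U^+_{i_1,K}(\chi ))=U^+_{i_1,L}(r_{i_1}(\chi ))$ from Lemma~\ref{le:TpU+U+}), and you correctly diagnose that a naive induction on $n$ does not close because $r_{i_1}(\chi )$ still has $n$ positive roots. But both of your proposed repairs conceal genuine gaps. For spanning: after one transport you land in $\ker \derL _{i_1}\subset U^+(r_{i_1}(\chi ))$; peeling off $\fie [E_{i_2}]$ there and transporting again produces $\LT _{i_2}^-(\ker \derL _{i_1})\cap \ker \derL _{i_2}$, which is no longer the kernel of a single skew-derivation, so the ``strengthened claim for arbitrary reduced words'' is never given a formulation that survives the iteration; moreover the terminal identification $\cA =U^+(\chi )$ (that the subalgebra generated by the $E_{\beta _\nu }$ is all of $U^+(\chi )$) is itself unproved, since it requires knowing that $\LT _{i_1}\cdots \LT _{i_{\nu -1}}(E_{i_\nu })$ is a nonzero multiple of $E_j$ whenever $\beta _\nu =\al _j$ is simple. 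For independence: the asserted triangularity and nonvanishing of the diagonal of the Gram matrix $\sHp (E_{\beta _1}^{m_1}\cdots E_{\beta _n}^{m_n},F_{\beta _n}^{k_n}\cdots F_{\beta _1}^{k_1})$ require coproduct formulas for the root vectors and the nonvanishing of $\sHp (E_\beta ^m,F_\beta ^m)$ for $m<\bfun \chi (\beta )$; neither Prop.~\ref{pr:sHpdef}(iii) nor Lemma~\ref{le:rvrel} delivers any of this, and no such computation is available at this point of the paper.

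You also miss the simplification that makes the whole problem tractable: by Eqs.~\eqref{eq:PBWbasis}, \eqref{eq:roots} and \eqref{eq:proots}, the number of proposed monomials of each $\ndZ ^I$-degree $\al $ already equals $\dim U^+(\chi )_\al $, so spanning need not be proved at all --- only linear independence. The paper proves independence by a downward induction on the starting index $\mu $: given a vanishing linear combination of the monomials $E_{\beta _\mu }^{m_\mu }\cdots E_{\beta _n}^{m_n}$, apply $\LT ^-_{i_\mu }\cdots \LT ^-_{i_1}$, which sends $E_{\beta _\mu }$ to $K_{i_\mu }^{-1}F_{i_\mu }$ while keeping $\LT ^-(E_{\beta _\nu })\in U^+$ for $\nu >\mu $; triangular decomposition (Prop.~\ref{pr:tridec}) then forces the coefficient of each power of $F_{i_\mu }$ to vanish separately, and the induction hypothesis finishes. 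This replaces both of your problematic steps by one short argument; I recommend reworking the proof along these lines.
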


\begin{proof}
  We prove the claim for the basis in Eq.~\eqref{eq:LusztigPBW+}. For the
  other set the proof is analogous.
  By Eqs.~\eqref{eq:roots} and \eqref{eq:proots},
  \begin{align*}
    \dim U^+(\chi )_\al =\Big|\big\{(m_1,m_2,&\dots ,m_n)\in \ndN _0^n\,\big|\,
    \sum _{\nu =1}^n m_\nu \beta _\nu =\al ,\\
    &m_\nu <\bfun \chi (\beta _\nu ) \text{ for all $\nu \in \{1,\dots ,n\}$}
    \big\}\Big|
  \end{align*}
  for all $\al \in \ndN _0^I$. Since $\deg E_{\beta _\nu }=\beta _\nu $ for all
  $\nu \in \{1,2,\dots ,n\}$, it suffices to show that for all
  $\mu \in \{1,2,\dots ,n+1\}$ the elements of the set
  \begin{align*}
    \big\{ E_{\beta _\mu }^{m_\mu } E_{\beta _{\mu +1}}^{m_{\mu +1}}\cdots
    E_{\beta _n}^{m_n}\,|\, 0\le m_\nu <\bfun \chi (\beta _\nu )
    \text{ for all $\nu \in \{\mu ,\mu +1 ,\dots ,n\}$} \big\}
  \end{align*}
  are linearly independent. We proceed by induction on $n+1-\mu $. If
  $\mu =n+1$,
  then the above set is empty, and hence its elements are linearly independent.

  Let now $\mu \in \{1,2,\dots ,n\}$.
  For all $m_\mu ,\dots ,m_n\in \ndN _0$ with $m_\nu <\bfun \chi (\beta _\nu )$
  for all $\nu \in \{\mu ,\mu +1,\dots ,n\}$
  let $a_{m_\mu ,\dots ,m_n}\in \fie $. Assume that
  \begin{align}
    \sum _{m_\mu ,\dots ,m_n}a _{m_\mu ,\dots ,m_n}
    E_{\beta _\mu }^{m_\mu } E_{\beta _{\mu +1}}^{m_{\mu +1}}\cdots
    E_{\beta _n}^{m_n}=0
    \label{eq:lindep}
  \end{align}
  in $U^+(\chi )$. Let
  $\LT ^-=\LT ^-_{i_\mu }\cdots \LT ^-_{i_2}\LT ^-_{i_1}$.
  Since $\LT ^-(E_{\beta _\mu })=\LT ^-_{i_\mu }(E_{i_\mu })
  =K_{i_\mu }^{-1}F_{i_\mu }$,
  we obtain that
  \[ \sum _{m_\mu ,\dots ,m_n}
  a _{m_\mu ,\dots ,m_n}
  (K_{i_\mu }^{-1}F_{i_\mu })^{m_\mu }\LT ^-(E_{\beta _{\mu +1}})^{m_{\mu +1}}
  \cdots \LT ^-(E_{\beta _n})^{m_n}=0. \]
  Since
  $\LT ^-(E_{\beta _\nu })\in U^+(r_{i_\mu }\cdots r_{i_2}r_{i_1}(\chi ))$
  for all $\nu \in \{\mu +1,\mu +2,\dots ,n\}$,
  Prop.~\ref{pr:tridec} implies that
  \[ \sum _{m_{\mu +1},\dots ,m_n}a _{m_\mu ,m_{\mu +1},\dots ,m_n}
  \LT ^-(E_{\beta _{\mu +1}})^{m_{\mu +1}}\cdots \LT ^-(E_{\beta _n})^{m_n}
  =0 \]
  for all $m_\mu \in \ndN _0$, $m_\mu <\bfun \chi (\beta _\mu )$.
  Therefore
  \[ \sum _{m_{\mu +1},\dots ,m_n}a _{m_\mu ,m_{\mu +1},\dots ,m_n}
  E_{\beta _{\mu +1}}^{m_{\mu +1}}\cdots E_{\beta _n}^{m_n}=0 \]
  for all $m_\mu \in \ndN _0$, $m_\mu <\bfun \chi (\beta _\mu )$.
  Then $a_{m_\mu ,m_{\mu +1},\dots ,m_n}=0$
  for all $(m_\mu ,\dots ,m_n)$ by induction
  hypothesis, which proves the induction step. Thus the theorem holds.
\end{proof}

\begin{lemma}
  Assume that $\chi \in \cX _3$. Then
  $\ker (\derK _{i_1}:U^+(\chi )\to U^+(\chi ))$ coincides with
  the subalgebra of $U^+(\chi )$
  generated by the elements $E_{\beta _\nu }$, $\nu \in \{2,3,\dots ,n\}$.
  The set
  \begin{align}
    \big\{ E_{\beta _2}^{m_2} E_{\beta _3}^{m_3}\cdots E_{\beta _n}^{m_n}\,&|\,
    0\le m_\nu <\bfun \chi (\beta _\nu )
    \text{ for all $\nu \in \{2,3,\dots ,n\}$} \big\}
  \end{align}
  forms a vector space basis of $\ker \derK _{i_1}$.
  \label{le:kerderK}
\end{lemma}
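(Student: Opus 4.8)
The plan is to compare the Lusztig PBW basis of Thm~\ref{th:PBW} with the factorisation of Lemma~\ref{le:Eheight}(ii) for $i=i_1$, thereby reducing everything to a single claim about the root vectors $E_{\beta _\nu }$. Throughout I write $q_{jk}=\chi (\al _j,\al _k)$ and $\chi _1=r_{i_1}(\chi )$, and I note that $\beta _1=\al _{i_1}$, so $E_{\beta _1}=E_{i_1}$ by Eq.~\eqref{eq:Ebetak}. Since $\derK _{i_1}(1)=0$ and $\derK _{i_1}$ is a skew-derivation by Eq.~\eqref{eq:derKL2}, its kernel $U^+_{i_1,K}(\chi )$ is a subalgebra. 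Hence the lemma will follow once I establish the single claim
\[ (\star )\qquad E_{\beta _\nu }\in \ker \derK _{i_1}\quad \text{for all $\nu \in \{2,3,\dots ,n\}$.} \]

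Granting $(\star )$, the subalgebra $B$ generated by $E_{\beta _2},\dots ,E_{\beta _n}$, and hence the span $M$ of the monomials in the statement, are contained in $\ker \derK _{i_1}$. For the reverse inclusion I take $u\in \ker \derK _{i_1}$ and expand it in the basis of Thm~\ref{th:PBW} as $u=\sum _{\vecm }a_{\vecm }E_{i_1}^{m_1}Y_{\vecm }$ with $Y_{\vecm }=E_{\beta _2}^{m_2}\cdots E_{\beta _n}^{m_n}$. Induction from Eqs.~\eqref{eq:derKL1}--\eqref{eq:derKL2} gives $\derK _{i_1}(E_{i_1}^{m})=\qnum {m}{q_{i_1i_1}}E_{i_1}^{m-1}$, while $\derK _{i_1}(Y_{\vecm })=0$ by $(\star )$ and $K_{i_1}\lact Y_{\vecm }=\kappa _{\vecm }Y_{\vecm }$ with $\kappa _{\vecm }=\chi (\al _{i_1},\sum _{\nu \ge 2}m_\nu \beta _\nu )\not=0$. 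Applying the skew-derivation rule to $E_{i_1}^{m_1}Y_{\vecm }$ then yields
\[ 0=\derK _{i_1}(u)=\sum _{\vecm :\,m_1\ge 1}a_{\vecm }\qnum {m_1}{q_{i_1i_1}}\kappa _{\vecm }E_{i_1}^{m_1-1}Y_{\vecm }. \]
The elements $E_{i_1}^{m_1-1}Y_{\vecm }$ are again PBW basis vectors, hence linearly independent, and $\qnum {m_1}{q_{i_1i_1}}\not=0$ for $1\le m_1<\bfun \chi (\al _{i_1})$ by Eq.~\eqref{eq:height}; thus $a_{\vecm }=0$ whenever $m_1\ge 1$, so $u\in M$. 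This proves $\ker \derK _{i_1}=M=B$, and the monomials form a basis of $\ker \derK _{i_1}$ because they are part of the PBW basis.

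It remains to prove $(\star )$. By Eq.~\eqref{eq:Ebetak}, $E_{\beta _\nu }=\LT _{i_1}(w_\nu )$ with $w_\nu =\LT _{i_2}\cdots \LT _{i_{\nu -1}}(E_{i_\nu })\in U^+(\chi _1)$, and by Prop.~\ref{pr:LTdeg} the element $w_\nu $ is homogeneous of degree $\beta '_\nu =\s _{i_2}\cdots \s _{i_{\nu -1}}(\al _{i_\nu })$, so $\beta _\nu =\s _{i_1}^\chi (\beta '_\nu )$. Lemma~\ref{le:TpU+U+} gives $\LT _{i_1}(U^+_{i_1,L}(\chi _1))=U^+_{i_1,K}(\chi )=\ker \derK _{i_1}$, so $(\star )$ is equivalent to
\[ (\star \star )\qquad w_\nu \in \ker \derL _{i_1}\subset U^+(\chi _1)\quad \text{for all $\nu \ge 2$.} \]
The combinatorics here is favourable: since the $\beta _\nu =\deg E_{\beta _\nu }$ are distinct and exhaust $R^\chi _+$ with $\beta _1=\al _{i_1}$ by Eq.~\eqref{eq:proots}, and $\s _{i_1}^{\chi _1}$ maps $R^\chi _+\setminus \{\al _{i_1}\}$ bijectively onto $R^{\chi _1}_+\setminus \{\al _{i_1}\}$ by (R2)--(R3), I get $\{\beta '_\nu \,|\,\nu \ge 2\}=R^{\chi _1}_+\setminus \{\al _{i_1}\}$; in particular $\al _{i_1}$ is \emph{not} among the degrees $\beta '_\nu $.

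The hard part will be $(\star \star )$. The base case $\nu =2$ is immediate, since $w_2=E_{i_2}$ and $\derL _{i_1}(E_{i_2})=\delta _{i_1,i_2}=0$ because $i_1\not=i_2$ (the word $i_1,\dots ,i_n$ is reduced). For the inductive step one writes $w_\nu =\LT _{i_2}(w'_\nu )$ and wants to transport the kernel condition through $\LT _{i_2}$; but Lemma~\ref{le:TpU+U+} only controls $\LT _{i_2}$ on $\ker \derL _{i_2}$ and $\ker \derK _{i_2}$, so the genuinely missing ingredient is the \emph{cross-index} compatibility describing how $\LT _{i_2}$ (with $i_2\not=i_1$) conjugates $\derL _{i_1}$. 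This compatibility can be made explicit from the recursive formulas of Thm~\ref{th:Liso}, which express each $\LT _{i_k}(E_j)$ as an iterated $q$-commutator $E^+_{j,-c_{i_kj}}$, together with Eqs.~\eqref{eq:derKL1}--\eqref{eq:derKL2}; already a rank-two computation shows these $q$-commutators are tailored so that $\derL _{i_1}$ annihilates exactly the root vectors whose degree avoids $\al _{i_1}$. The same statement is available from \cite[Prop.\,5.10, Prop.\,6.7]{p-Heck07b}, the results underlying Lemma~\ref{le:TpU+U+}. Granting it, an induction on $\nu$ (applying $(\star \star )$ for $\chi _2=r_{i_2}(\chi _1)$) closes the argument, and with it $(\star )$ and the lemma.
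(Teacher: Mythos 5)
Your reduction of the lemma to the single claim $(\star)$ ($E_{\beta _\nu }\in \ker \derK _{i_1}$ for $\nu \ge 2$) is sound, and your derivation of the lemma from $(\star)$ via Thm.~\ref{th:PBW} and the skew-derivation formula $\derK _{i_1}(E_{i_1}^{m_1}Y_{\vecm })=\qnum{m_1}{q_{i_1i_1}}E_{i_1}^{m_1-1}(K_{i_1}\lact Y_{\vecm })$ is exactly the second half of the paper's argument. The problem is $(\star)$ itself, where your proof has a genuine gap that you partly acknowledge. Passing from $(\star)$ to $(\star\star)$ via Lemma~\ref{le:TpU+U+} is a correct \emph{equivalence} (since $\LT _{i_1}$ is bijective), but for that very reason it makes no progress: $(\star\star)$ is just a restatement of what has to be proved. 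The induction you then sketch needs to transport the condition $w_\nu \in \ker \derL _{i_1}$ through $\LT _{i_2}$ with $i_2\ne i_1$, and no result in the paper (nor \cite[Prop.\,5.10, Prop.\,6.7]{p-Heck07b}, which concern the compatibility of $\LT _p$ with $\derK _p$ and $\derL _p$ for the \emph{same} index $p$) supplies this cross-index compatibility. The assertion that ``a rank-two computation shows $\derL _{i_1}$ annihilates exactly the root vectors whose degree avoids $\al _{i_1}$'' is essentially the statement to be proved, restated for a general reduced word, and is made more delicate by the fact that the $L$-kernel is naturally adapted to the $\bE $-type root vectors (cf.\ the Remark after the lemma), whereas your $w_\nu $ are built from the maps $\LT _{i_k}$.

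The paper closes this gap by an argument you should compare with: by Lemma~\ref{le:Eheight}(ii) (i.e.\ \cite[Lemma\,5.10]{p-Heck07b}) and Lemma~\ref{le:rvrel} one writes $E_{\beta _\nu }=\sum _{\mu =0}^m x_\mu E_{i_1}^\mu $ with $x_\mu \in \ker \derK _{i_1}$ and $m<\bfun \chi (\al _{i_1})$; applying $\LT ^-_{i_1}$ gives $\LT ^-_{i_1}(E_{\beta _\nu })=\sum _\mu \LT ^-_{i_1}(x_\mu )(K_{i_1}^{-1}F_{i_1})^\mu $ with $\LT ^-_{i_1}(x_\mu )\in U^+(r_{i_1}(\chi ))$ by Lemma~\ref{le:TpU+U+}. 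Since $\LT ^-_{i_1}(E_{\beta _\nu })=w_\nu $ lies in $U^+(r_{i_1}(\chi ))$ (this only uses Eq.~\eqref{eq:EbetainU+}, not any kernel property), triangular decomposition of $U(r_{i_1}(\chi ))$ forces $x_\mu =0$ for all $\mu >0$, whence $E_{\beta _\nu }=x_0\in \ker \derK _{i_1}$. Note how this uses only the weak containment $w_\nu \in U^+(r_{i_1}(\chi ))$ plus Prop.~\ref{pr:tridec}, precisely avoiding the cross-index statement your induction would require. As it stands, your proposal is incomplete at its central step.
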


\begin{proof}
  Let $\nu \in \{2,3,\dots ,n\}$. By \cite[Lemma\,5.10]{p-Heck07b} and
  Lemma~\ref{le:rvrel}
  there exist $m\in \ndN _0$ and $x_0,x_1,\dots ,x_m\in \ker \derK _{i_1}$
  such that $m<\bfun \chi (\al _{i_1})$ and
  $E_{\beta _\nu }=\sum _{\mu =0}^m x_\mu E_{i_1}^\mu $. Then
  \[ \LT _{i_1}^-(E_{\beta _\nu })=
  \sum _{\mu =0}^m\LT ^-_{i_1}(x_\mu ) (K_{i_1}^{-1}F_{i_1})^\mu . \]
  Moreover,
  $\LT ^-_{i_1}(x_\mu )\in U^+(r_{i_1}(\chi ))$ for all $\mu \in \{0,1,\dots
  ,m\}$ by \cite[Prop.\,5.19,\,Lemma\,6.7(d)]{p-Heck07b}.
  Since $\LT ^-_{i_1}(E_{\beta _\nu })\in U^+(r_{i_1}(\chi ))$,
  triangular decomposition of $U(r_{i_1}(\chi ))$ implies that $x_\mu =0$ for
  all $\mu >0$. Hence $E_{\beta _\nu }=x_0\in \ker \derK _{i_1}$.
  Then the claim of the lemma follows from the inclusions
  \[ \langle E_{\beta _\kappa }\,|\,\kappa \in \{2,3,\dots ,n\}\rangle
  \subset \ker \derK _{i_1} \subset
  \mathop{\oplus }_{ {(m_2,m_3,\dots ,m_n) \atop
  0\le m_\kappa <\bfun \chi (\beta _\kappa )\,\text{for all $\kappa $}}}
  \fie E_{\beta _2}^{m_2}\cdots E_{\beta _n}^{m_n},
  \]
  where the second inclusion is obtained from
  Thm.~\ref{th:PBW} and the formula
  \[ \derK _{i_1}( E_{\beta _1}^{m_1}E_{\beta _2}^{m_2}\cdots
  E_{\beta _n}^{m_n})= \qnum{m_1}{q_{i_1i_1}}
  E_{\beta _1}^{m_1-1}K_{i_1}\actl (E_{\beta _2}^{m_2}\cdots
  E_{\beta _n}^{m_n}).\]
\end{proof}

\begin{remar}
  The analogous version of Lemma~\ref{le:kerderK} for $\ker \derL _{i_1}$
  is obtained by replacing $E_{\beta _\nu }$ by $\bE _{\beta _\nu }$
  for all $\nu \in \{2,3,\dots ,n\}$.
\end{remar}

\begin{theor}\label{th:EErel}
  Assume that $\chi \in \cX _3$. Let $n=|R_+^\chi |\in \ndN $.
  Then
  \begin{align*}
    E_{\beta _\mu }E_{\beta _\nu }-\chi (\beta _\mu ,\beta _\nu )
    E_{\beta _\nu }E_{\beta _\mu }
    \in \, & \langle E_{\beta _\kappa }\,|\,\mu <\kappa <\nu \rangle
    \subset U^+(\chi ),\\
    \bE _{\beta _\mu }\bE _{\beta _\nu }
    -\chi ^{-1}(\beta _\nu ,\beta _\mu )\bE _{\beta _\nu }\bE _{\beta _\mu }
    \in \, & \langle \bE _{\beta _\kappa }
    \,|\,\mu <\kappa <\nu \rangle \subset U^+(\chi ),\\
    F_{\beta _\mu }F_{\beta _\nu }-\chi (\beta _\nu ,\beta _\mu )
    F_{\beta _\nu }F_{\beta _\mu }
    \in \, & \langle F_{\beta _\kappa }\,|\,\mu <\kappa <\nu \rangle
    \subset U^-(\chi ),\\
    \bF _{\beta _\mu }\bF _{\beta _\nu }-\chi ^{-1}(\beta _\mu ,\beta _\nu )
    \bF _{\beta _\nu }\bF _{\beta _\mu }
    \in \, & \langle \bF _{\beta _\kappa }\,|\,\mu <\kappa <\nu \rangle
    \subset U^-(\chi )
  \end{align*}
  for all $\mu ,\nu \in \{1,2,\dots ,n\}$ with $\mu <\nu $.
\end{theor}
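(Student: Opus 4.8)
The plan is to deduce all four relations from the first by symmetry and then to prove the first by induction on $\mu$, using the Lusztig isomorphisms $\LT_p$ to peel roots off the bottom of the PBW order. First I would record the reductions. Since $\LT_p\aaaU=\aaaU\LT_p^-\varphi_{\ula}$ by Thm.~\ref{th:Liso}(iii), $\aaaU(E_i)=F_i$, and $\varphi_{\ula}$ only rescales the generators, an easy computation gives that $\aaaU(E_{\beta_\nu})$ is a nonzero scalar multiple of $\bF_{\beta_\nu}$ and $\aaaU(\bE_{\beta_\nu})$ of $F_{\beta_\nu}$. As $\aaaU$ is an algebra antiautomorphism, applying it to the $E$-relation turns $E_{\beta_\mu}E_{\beta_\nu}-\chi(\beta_\mu,\beta_\nu)E_{\beta_\nu}E_{\beta_\mu}$ into a nonzero multiple of $\bF_{\beta_\mu}\bF_{\beta_\nu}-\chi^{-1}(\beta_\mu,\beta_\nu)\bF_{\beta_\nu}\bF_{\beta_\mu}$, i.e.\ exactly the $\bF$-relation, and likewise the $\bE$-relation is sent to the $F$-relation; moreover $\aaaU$ carries $\langle E_{\beta_\kappa}\,|\,\mu<\kappa<\nu\rangle$ onto $\langle \bF_{\beta_\kappa}\,|\,\mu<\kappa<\nu\rangle$. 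The $\bE$-relation is proved by the same argument as the $E$-relation with $(\LT_p,\derK_{i_1})$ replaced by $(\LT_p^-,\derL_{i_1})$, so I concentrate on
$$[E_{\beta_\mu},E_{\beta_\nu}]:=E_{\beta_\mu}E_{\beta_\nu}-\chi(\beta_\mu,\beta_\nu)E_{\beta_\nu}E_{\beta_\mu}\in\langle E_{\beta_\kappa}\,|\,\mu<\kappa<\nu\rangle.$$

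I run the induction on $\mu$, uniformly over all $\chi\in\cX_3$ and all reduced decompositions of the longest element. For the inductive step ($\mu\ge2$) I would apply the algebra isomorphism $\LT_{i_1}^-$. Because $r_{i_1}(\chi)=(\s_{i_1}^\chi)^*\chi$, Eq.~\eqref{eq:w*chi} gives $r_{i_1}(\chi)(\s_{i_1}^\chi(\beta),\s_{i_1}^\chi(\gamma))=\chi(\beta,\gamma)$, so by Prop.~\ref{pr:LTdeg} the map $\LT_{i_1}^-$ sends $[E_{\beta_\mu},E_{\beta_\nu}]$ to the analogous $r_{i_1}(\chi)$-commutator of $E_{\s_{i_1}(\beta_\mu)}$ and $E_{\s_{i_1}(\beta_\nu)}$, which are the root vectors in positions $\mu-1,\nu-1$ for $r_{i_1}(\chi)$ along the shifted word $i_2,\dots,i_n$. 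Extending this reduced word to a reduced decomposition of the longest element of $\Wg(r_{i_1}(\chi))$ (possible by Lemma~\ref{le:longestw} and the exchange condition), the induction hypothesis applies with left index $\mu-1$ and places the commutator in the span of the $r_{i_1}(\chi)$-root vectors strictly between positions $\mu-1$ and $\nu-1$. Since this open range avoids the appended root, applying $\LT_{i_1}$ carries these root vectors precisely to the $E_{\beta_\kappa}$ with $\mu<\kappa<\nu$, yielding $[E_{\beta_\mu},E_{\beta_\nu}]\in\langle E_{\beta_\kappa}\,|\,\mu<\kappa<\nu\rangle$.

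For the base case $\mu=1$ (so $E_{\beta_1}=E_{i_1}$) I treat the two one-sided cutoffs separately. The lower cutoff is computational: from $\derK_{i_1}(E_{i_1})=1$ in \eqref{eq:derKL1} and $\derK_{i_1}(E_{\beta_\nu})=0$ for $\nu\ge2$ (Lemma~\ref{le:kerderK}), the skew-derivation rule \eqref{eq:derKL2} gives $\derK_{i_1}(E_{\beta_1}E_{\beta_\nu})=\chi(\al_{i_1},\beta_\nu)E_{\beta_\nu}$ and $\derK_{i_1}(E_{\beta_\nu}E_{\beta_1})=E_{\beta_\nu}$, hence $\derK_{i_1}([E_{\beta_1},E_{\beta_\nu}])=0$ and $[E_{\beta_1},E_{\beta_\nu}]\in\ker\derK_{i_1}=\langle E_{\beta_2},\dots,E_{\beta_n}\rangle$. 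Expanding this element in the PBW basis of Thm.~\ref{th:PBW}, every monomial has degree $\beta_1+\beta_\nu$ and involves only $E_{\beta_\kappa}$ with $\kappa\ge2$; the upper cutoff then follows from convexity of the order $\beta_1<\dots<\beta_n$ attached to the reduced word (as in \cite{p-CH08}): any decomposition of $\beta_1+\beta_\nu$ into positive roots $\beta_\kappa$ has all $\kappa\le\nu$, and $\kappa=\nu$ cannot occur since it would force the minimal root $\beta_1$ to be a sum of strictly later roots. Thus every PBW monomial occurring is supported on $\{E_{\beta_2},\dots,E_{\beta_{\nu-1}}\}$, i.e.\ $[E_{\beta_1},E_{\beta_\nu}]\in\langle E_{\beta_2},\dots,E_{\beta_{\nu-1}}\rangle$, which settles the base case.

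I expect the base-case upper cutoff to be the main obstacle. The isomorphisms $\LT_p$ and the derivations $\derK_{i_1}$ act naturally at the bottom of the order, so the lower cutoff and the whole inductive step are essentially forced; by contrast, excluding the top root vectors $E_{\beta_\kappa}$ ($\kappa\ge\nu$) has no one-sided derivative available from Lemma~\ref{le:commEFi} and must be imported from the convex geometry of the root system. The remaining delicate point is purely bookkeeping of scalars, namely checking that the coefficients $\chi(\beta_\mu,\beta_\nu)$, $\chi^{-1}(\beta_\nu,\beta_\mu)$, and their $\bF$- and $F$-counterparts transform correctly under $\aaaU$ and under $\LT_{i_1}^-$; this is routine once $r_{i_1}(\chi)=(\s_{i_1}^\chi)^*\chi$ and Thm.~\ref{th:Liso}(iii) are in hand.
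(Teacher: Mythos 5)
Your overall architecture coincides with the paper's: the $F$- and $\bF$-relations are reduced to the $E$- and $\bE$-relations via $\aaaU $ and Thm.~\ref{th:Liso}(iii), the case $\mu >1$ is reduced to $\mu =1$ by the Lusztig isomorphisms, and the lower cutoff in the base case comes from $\derK _{i_1}$ together with Lemma~\ref{le:kerderK}. The gap is exactly where you predicted it: the upper cutoff. Your convexity claim --- that every decomposition $\beta _1+\beta _\nu =\sum _{\kappa \ge 2}m_\kappa \beta _\kappa $ is supported on $\kappa \le \nu $ --- is false. Take $\chi $ of Cartan type $G_2$ and the reduced word $i_1,\dots ,i_6=1,2,1,2,1,2$, so that $\beta _1=\al _1$, $\beta _2=3\al _1+\al _2$, $\beta _3=2\al _1+\al _2$, $\beta _4=3\al _1+2\al _2$, $\beta _5=\al _1+\al _2$, $\beta _6=\al _2$. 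For $\nu =4$ one has $\beta _1+\beta _4=4\al _1+2\al _2=\beta _2+\beta _5$, so the PBW monomial $E_{\beta _2}E_{\beta _5}$ has the correct degree, avoids $E_{\beta _1}$, and yet involves $E_{\beta _5}$ with $5>\nu $. No property of convex orders excludes it: convexity controls decompositions of a single root into roots, not of a sum of two roots. (Your argument excluding $\kappa =\nu $ is fine, since $\beta _1$ is simple.)

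The paper closes this hole by a different mechanism: expand the commutator in the PBW basis of Thm.~\ref{th:PBW} and apply $\LT ^-_{i_\nu }\cdots \LT ^-_{i_2}\LT ^-_{i_1}$ to both sides. Each $E_{\beta _\kappa }$ with $\kappa \le \nu $ is sent into $U^-(\chi _\nu )\Uz $, where $\chi _\nu =r_{i_\nu }\cdots r_{i_2}r_{i_1}(\chi )$, so the left-hand side lands in $U^-(\chi _\nu )\Uz $; a PBW monomial with $m_\kappa >0$ for some $\kappa >\nu $ would retain a nontrivial $U^+(\chi _\nu )$-factor, and triangular decomposition (Prop.~\ref{pr:tridec}) forces all such coefficients to vanish. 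You need to import this (or an equivalent) argument for the upper cutoff; the remainder of your proposal is sound and matches the paper's proof.
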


\begin{proof}
  We prove the first relation for $\mu =1$ and all $\nu \in \{2,3,\dots ,n\}$.
  Then the first relation for $\mu >1$ follows from
  \[ E_{\beta _\mu }E_{\beta _\nu }-\chi (\beta _\mu ,\beta _\nu )
  E_{\beta _\nu }E_{\beta _\mu }
  =\LT _{i_1}\cdots \LT _{i_{\mu -1}}(
  E_{i_\mu }E'_\nu -\chi (\beta _\mu ,\beta _\nu )E'_\nu E_{i_\mu }),
  \]
  where $E'_\nu =\LT _{i_\mu }\cdots \LT _{i_{\nu -1}}(E_{i_\nu })$,
  by using the case $\mu =1$, Eq.~\eqref{eq:w*chi} and the first relation in
  \eqref{eq:EbetainU+}.
  The proof of the second relation of the theorem is similar.
  The third and fourth relations can be obtained from the first two by
  applying $\aaaU $ and using the formulas
  \begin{align}
    \aaaU (E_{\beta _\kappa })\in \fienz \bF _{\beta _\kappa },\quad
    \aaaU (\bE _{\beta _\kappa })\in \fienz F _{\beta _\kappa },
    \qquad \kappa \in \{1,2,\dots ,n\},
    \label{eq:aaaU(E)}
  \end{align}
  which follow from Thm.~\ref{th:Liso}(iii).

  Let $\nu \in \{2,3,\dots ,n\}$.
  For all $(m_1,m_2,\dots ,m_n)\in \ndN _0^I$ with
  $m_\kappa <\bfun \chi (\beta _\kappa )$ for all $\kappa \in \{1,2,\dots ,n\}$
  let $a_{m_1,\dots ,m_n}\in \fie $ such that
  \begin{align}
    E_{i_1}E_{\beta _\nu }-\chi (\al _{i_1},\beta _\nu )E_{\beta _\nu }E_{i_1}
    =\sum _{m_1,\dots ,m_n}a_{m_1,\dots ,m_n}E_{\beta _1}^{m_1}
    \cdots E_{\beta _n}^{m_n}.
    \label{eq:EE-EE}
  \end{align}
  The numbers $a_{m_1,\dots ,m_n}\in \fie $
  exist and are unique by Thm.~\ref{th:PBW}.
  Let $\chi _\nu =r_{i_\nu }\cdots r_{i_2}r_{i_1}(\chi )$.
  Apply to Eq.~\eqref{eq:EE-EE} the isomorphism
  $\LT ^-=\LT ^-_{i_\nu }\cdots \LT ^-_{i_2}\LT ^-_{i_1}\in \Hom (U(\chi ),
  U(\chi _\nu ))$.
  For all $\kappa \in \{1,2,\dots ,\nu \}$,
  \begin{align*}
    \LT ^-_{i_\nu }\cdots \LT ^-_{i_2}\LT ^-_{i_1}(E_{\beta _\kappa })
    =&\LT ^-_{i_\nu }\cdots \LT ^-_{i_{\kappa +1}}
    \LT ^-_{i_\kappa }(E_{i_\kappa })\\
    =&\LT ^-_{i_\nu }\cdots \LT ^-_{i_{\kappa +1}}
    (K_{i_\kappa }^{-1}F_{i_\kappa })
    \in U^-(\chi _\nu )\Uz &
  \end{align*}
  by Eq.~\eqref{eq:EbetainU+}. Hence
  \begin{align*}
    &\sum _{m_1,\dots ,m_n}a_{m_1,\dots ,m_n}
    \LT ^-(E_{\beta _1}^{m_1}\cdots E_{\beta _\nu }^{m_\nu })
    \LT ^-(E_{\beta _{\nu +1}}^{m_{\nu +1}}\cdots E_{\beta _n}^{m_n})\\
    &\quad =\LT ^-(E_{i_1}E_{\beta _\nu }
    -\chi (\al _{i_1},\beta _\nu )E_{\beta _\nu }E_{i_1})\in U^-(\chi _\nu )\Uz .
  \end{align*}
  By triangular decomposition of $U(\chi )$ it follows that
  $a_{m_1,\dots ,m_n}=0$ for all $(m_1,\dots ,m_n)$ with $m_\kappa >0$
  for some $\kappa \in \{\nu +1,\nu +2,\dots ,n\}$.

  By Lemma~\ref{le:kerderK}, $E_{\beta _\nu }\in \ker \derK _{i_1}$. Hence
  $E_{i_1}E_{\beta _\nu }-\chi (\al _{i_1},\beta _\nu )E_{\beta _\nu }E_{i_1}
  \in \ker \derK _{i_1}$ by Lemma~\ref{le:commEFi}. Thus Lemma~\ref{le:kerderK}
  implies that $a_{m_1,\dots ,m_n}=0$ whenever $m_1>0$.

  Suppose that there exists $(m_1,\dots ,m_n)$ with $m_\nu >0$ and
  $a_{m_1,\dots ,m_n}\not=0$. Since 
  $E_{i_1}E_{\beta _\nu }-\chi (\al _{i_1},\beta _\nu )E_{\beta _\nu }E_{i_1}$
  is $\ndZ ^I$-homogeneous of degree $\al _{i_1}+\beta _\nu $,
  the only possibility is that $m_1=1$, $m_\nu =1$, and $m_\kappa =0$
  for all $\kappa \notin \{1,\nu \}$. Since $a_{m_1,\dots ,m_n}\not=0$,
  this is a contradiction to the previous paragraph. Thus the theorem is
  proven.
\end{proof}

Next we prove a generalization of Thm.~\ref{th:PBW}.

\begin{theor}\label{th:PBWtau}
  Assume that $\chi \in \cX _3$. Let $n=|R^\chi _+|$ and
  let $\tau $ be a permutation of the set $\{1,2,\dots ,n\}$.
  Then the sets
  \begin{align*}
    \big\{ E_{\beta _{\tau (1)}}^{m_{\tau (1)}} E_{\beta _{\tau (2)}}^{m_{\tau
    (2)}}\cdots E_{\beta _{\tau (n)}}^{m_{\tau (n)}}\,&|\,
    0\le m_\nu <\bfun \chi (\beta _\nu )
    \text{ for all $\nu \in \{1,2,\dots ,n\}$} \big\},\\
    \big\{ \bE _{\beta _{\tau (1)}}^{m_{\tau (1)}}
    \bE _{\beta _{\tau (2)}}^{m_{\tau (2)}}\cdots 
    \bE _{\beta _{\tau (n)}}^{m_{\tau (n)}}\,&|\,
    0\le m_\nu <\bfun \chi (\beta _\nu )
    \text{ for all $\nu \in \{1,2,\dots ,n\}$} \big\}
  \end{align*}
  form vector space bases of $U ^+(\chi )$, and the sets
  \begin{align*}
    \big\{ F_{\beta _{\tau (1)}}^{m_{\tau (1)}} F_{\beta _{\tau (2)}}^{m_{\tau
    (2)}}\cdots F_{\beta _{\tau (n)}}^{m_{\tau (n)}}\,&|\,
    0\le m_\nu <\bfun \chi (\beta _\nu )
    \text{ for all $\nu \in \{1,2,\dots ,n\}$} \big\},\\
    \big\{ \bF _{\beta _{\tau (1)}}^{m_{\tau (1)}}
    \bF _{\beta _{\tau (2)}}^{m_{\tau (2)}}\cdots 
    \bF _{\beta _{\tau (n)}}^{m_{\tau (n)}}\,&|\,
    0\le m_\nu <\bfun \chi (\beta _\nu )
    \text{ for all $\nu \in \{1,2,\dots ,n\}$} \big\}
  \end{align*}
  form vector space bases of $U ^-(\chi )$.
\end{theor}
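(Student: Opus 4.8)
The plan is to reduce the theorem for an arbitrary permutation $\tau $ to the identity-permutation case already established in Thm.~\ref{th:PBW}, by using the commutation relations of Thm.~\ref{th:EErel} to rewrite any permuted monomial in terms of the standard ordered monomials, and vice versa. The four assertions are handled simultaneously: once the statement is proven for the $E_{\beta _\nu }$, the corresponding statement for $\bE _{\beta _\nu }$ follows by the analogous argument (using the second relation of Thm.~\ref{th:EErel}), and the statements for $F_{\beta _\nu }$ and $\bF _{\beta _\nu }$ follow by applying the algebra antiautomorphism $\aaaU $, which sends each $E_{\beta _\kappa }$ to a nonzero scalar multiple of $\bF _{\beta _\kappa }$ and each $\bE _{\beta _\kappa }$ to a nonzero scalar multiple of $F_{\beta _\kappa }$, as recorded in Eq.~\eqref{eq:aaaU(E)}. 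Thus I would write out the argument in full only for the first set.

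\medskip

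First I would observe, using Prop.~\ref{pr:tridec} and the degree identity for $\dim U^+(\chi )_\al $ derived in the proof of Thm.~\ref{th:PBW}, that the number of permuted monomials in any fixed $\ndZ ^I$-degree $\al $ equals $\dim U^+(\chi )_\al $. Hence it suffices to prove that the permuted monomials \emph{span} $U^+(\chi )$; linear independence is then automatic by dimension count. To prove spanning, let $V$ denote the span of the set in the first line of the theorem. The key step is to show that $V$ is closed under left multiplication by each $E_{\beta _\kappa }$; since the standard monomials of Thm.~\ref{th:PBW} all lie in $V$ (they are obtained from the product $E_{\beta _1}^{m_1}\cdots E_{\beta _n}^{m_n}$, which is the $\tau =\id $ case, but more to the point can be reached from any permuted monomial by reordering) and since those standard monomials already span $U^+(\chi )$, the closure of $V$ under multiplication by the generators $E_{i}=E_{\beta }$ will force $V=U^+(\chi )$.

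\medskip

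The main work, and the step I expect to be the principal obstacle, is the reordering lemma: given a monomial $E_{\beta _{\tau (1)}}^{m_{\tau (1)}}\cdots E_{\beta _{\tau (n)}}^{m_{\tau (n)}}$ in which two \emph{adjacent} factors $E_{\beta _\mu }$ and $E_{\beta _\nu }$ appear in the wrong order (say $\nu $ before $\mu $ with $\mu <\nu $), rewrite it as a $\fie $-linear combination of monomials that are ``smaller'' in a suitable well-founded ordering, so that an induction terminates. The relation from Thm.~\ref{th:EErel},
\[
  E_{\beta _\mu }E_{\beta _\nu }=\chi (\beta _\mu ,\beta _\nu )E_{\beta _\nu }E_{\beta _\mu }+(\text{terms in }E_{\beta _\kappa },\ \mu <\kappa <\nu ),
\]
swaps the two factors at the cost of a scalar plus correction terms whose indices lie strictly between $\mu $ and $\nu $. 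To make the induction go through I would equip monomials with a lexicographic-type weight recording, for each adjacent pair, how far the index sequence deviates from $\tau $-order, and verify that each application of the relation strictly decreases this weight: the leading swapped term is in correct relative order, and the correction terms involve strictly intermediate indices $\kappa $, hence fewer ``out-of-order inversions.'' The delicate point is confirming that the correction terms, after being multiplied back into the surrounding monomial, genuinely have smaller weight; this requires that inserting indices strictly between $\mu $ and $\nu $ cannot create new inversions worse than the one removed, which follows because any new adjacency with a neighbour outside $[\mu ,\nu ]$ inherits an ordering already present. Once this normal-form reduction is established, every permuted monomial expands into $\tau $-ordered monomials, proving that the $\tau $-ordered monomials span; combined with the dimension count this yields that they form a basis, and the remaining three cases follow by the symmetry arguments indicated above.
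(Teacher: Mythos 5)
Your overall strategy coincides with the paper's: both arguments rest on Thm.~\ref{th:EErel} to reorder root vectors, reduce to Thm.~\ref{th:PBW}, and dispose of the $\bE $-, $F$-, and $\bF $-cases by the $\LT ^-_i$-analogue and by applying $\aaaU $ via Eq.~\eqref{eq:aaaU(E)}. The difference lies in how termination of the reordering is certified, and this is where your argument has a genuine gap. You order words by a weight built from $\tau $-inversions and assert that the correction terms of Thm.~\ref{th:EErel}, having indices strictly between $\mu $ and $\nu $, contribute ``fewer out-of-order inversions.'' That inference is not valid: the correction lies in the \emph{subalgebra} generated by $\{E_{\beta _\kappa }\,|\,\mu <\kappa <\nu \}$, so it is a combination of words of length possibly greater than two in the intermediate root vectors, and such a word can itself contain pairs that are out of order with respect to $\tau $ --- for instance a correction word $E_{\beta _3}E_{\beta _2}$ produced by swapping $E_{\beta _1}$ and $E_{\beta _4}$ carries a new inversion whenever $\tau $ places $2$ before $3$. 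Your observation that new adjacencies with neighbours outside $[\mu ,\nu ]$ are harmless does not address inversions internal to the correction word, nor the growth in word length, so the inversion count alone is not a decreasing weight and the induction as described need not terminate.

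The repair is precisely what the paper's filtration encodes: order words primarily by their exponent vector $\ulm =(m_1,\dots ,m_n)$ (the total multiplicity of each $E_{\beta _\kappa }$ in the word) with respect to the lexicographic order, and only secondarily by inversions. A swap preserves $\ulm $ and lowers the inversion count by one, while every correction term replaces $e_\mu +e_\nu $ by a vector supported on the open interval $(\mu ,\nu )$ and hence is strictly smaller lexicographically (the entry at position $\mu $ drops). Since a fixed $\ndZ ^I$-degree admits only finitely many exponent vectors, this combined order is well founded. The paper packages the same idea as a filtration of $U^+(\chi )$ by the monoid $\ndN _0^n$ whose associated graded algebra is a skew-polynomial ring, then invokes the standard fact that monomials in any order form a basis there; your dimension-count reduction of ``basis'' to ``spanning'' is a legitimate substitute for that last step. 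Finally, note that reordering can produce exponents $m_\nu \ge \bfun \chi (\beta _\nu )$; such terms vanish by Lemma~\ref{le:rvrel} and must be discarded in order to land in the stated spanning set --- a point your write-up omits.
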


\begin{proof}
  It suffices to prove that the first set is a basis of $U^+(\chi )$.
  Indeed, the proof for the second set can be obtained by using the maps
  $\LT ^-_i$, where $i\in I$, instead of $\LT _i$.
  The second part of the claim follows from the first part by applying the
  algebra antiautomorphism $\aaaU $ and using Eq.~\eqref{eq:aaaU(E)}.

  For any $\ulm =(m_1,\ldots ,m_n)\in \ndN _0^n$ let $|\ulm |=\sum _{\mu =1}^n
  m_\mu |\beta _\mu |$, where $|\al |=\sum _{j\in I}a_j$
  for all $\al =\sum _{j\in I}a_j\al _j \in \ndN _0^I$.
  Let $N$ be the (additive) monoid $\ndN _0^n$ equipped with the following ordering:
  \[ \ulm '<\ulm \quad \Leftrightarrow \quad |\ulm '|<|\ulm | \quad
  \text{or} \quad |\ulm '|=|\ulm |,\,\ulm '<_{\mathrm{lex}}\ulm ,\]
  where $<_{\mathrm{lex}}$ means lexicographical ordering. We use the
  convention $\ulm <_{\mathrm{lex}}\ulm $.
  The ordering $<$ is a total ordering.
  For all $\ulm \in \ndN _0^n$ define
  \[ \cF ^{\ulm }U^+(\chi )=\mathop{\oplus }_{\ulm '\in N,\ulm '<\ulm}
  \fie E_{\beta _1}^{m'_1}
  E_{\beta _2}^{m'_2}\cdots E_{\beta _n}^{m'_n} \subset U^+(\chi ).\]
  The vector spaces $\cF ^{\ulm }U^+(\chi )$, where $\ulm \in N$, are
  finite-dimensional,
  since the degrees of their elements are bounded.
  Moreover,
  \[ \cF ^0 U^+(\chi )=\fie 1,\qquad
  \cF ^{\ulm }U^+(\chi ) \cF ^{\ulm '}U^+(\chi )\subset
  \cF ^{\ulm +\ulm '}U^+(\chi ) \]
  for all $\ulm ,\ulm '\in N$ by Thm.~\ref{th:EErel} and since $U^+(\chi )$ is
  $\ndZ ^I$-graded. Thus $\cF $ defines a filtration of $U^+(\chi )$ by the
  monoid $N$, and the corresponding graded algebra
  \[ \mathop{\oplus }_{\ulm \in N} \Big(\cF ^{\ulm }U^+(\chi )
  / \sum _{\ulm '<\ulm ,\ulm
  '\not=\ulm } \cF ^{\ulm '}U^+(\chi )\Big) \]
  is a skew-polynomial ring in $n$ variables by Thm.~\ref{th:EErel}.
  By a standard conclusion we
  obtain that the first set in the claim of the theorem is indeed a basis of
  $U^+(\chi )$.
\end{proof}

\section{Verma modules and morphisms}
\label{sec:Verma}

We consider Verma modules for the algebras $U(\chi )$, $\chi \in \cX $.
We observe that the fundaments of the theory
of Verma modules for quantized
enveloping algebras can be carried over to a great extent to $U(\chi )$.
New phenomena appear if some generators of $U(\chi )$ are nilpotent.

Let $\fiee $ be a field extension of $\fie $.
Let $\charUz $ denote the set of $\fieenz $-valued characters
(algebra maps from $\Uz $ to $\fieenz $)
of the group algebra $\Uz $. For all $\chi \in \cX $
there is a natural group homomorphism
\begin{align}
  \ZIch \chi : \ndZ ^I\to \charUz ,\quad
  \ZIch \chi (\al )(K_\beta L_{\beta '})=
  \chi (\beta ,\al )\chi (\al,\beta ')^{-1}
  \label{eq:ZIch}
\end{align}
for all $\al ,\beta ,\beta ' \in \ndZ ^I$.
If $\chi \in \cX $ and $p\in I$ such that $\chi $ is $p$-finite, then
\begin{align}
  \ZIch{r_p(\chi )}(\s _p^\chi (\al ))
  (K_{\s _p^\chi (\beta )}L_{\s _p^\chi (\beta ')})=
  \ZIch \chi (\al )(K_\beta L_{\beta '})
  \label{eq:ZIchrefl}
\end{align}
by Eq.~\eqref{eq:w*chi}.

Let $\chi \in \cX $.
Given a $\fieenz $-valued character $\Lambda \in \charUz $, one can regard
$\fiee $ as a one-dimensional $U ^+(\chi )\Uz $-module with generator
$1_\Lambda =1$
via
\begin{align}
  uE 1_\Lambda =\coun (E)\Lambda (u)1_\Lambda \qquad
  \text{for all $E\in U ^+(\chi )$, $u\in \Uz $.}
  \label{eq:fieLambda}
\end{align}
We write $\fiee _\Lambda $ for this module.

\begin{defin}
  A \textit{Verma module} of $U (\chi )$ is a $U (\chi )$-module
  of the form
  \[ M^\chi (\Lambda )=U (\chi )\ot _{U ^+(\chi )\Uz }\fiee _\Lambda ,\quad
  \text{where $\Lambda \in \charUz $.} \]
  We write $v^\chi _\Lambda $ or just $v_\Lambda $ for
  $1\ot 1_\Lambda \in M^\chi (\Lambda )$.
\end{defin}

Any Verma module is also a $\fiee $-module via the $\fiee $-module
structure of the second
tensor factor.

Let $\Lambda \in \charUz $. Triangular decomposition of $U (\chi )$ gives
the following standard fact.

\begin{lemma}
  The map $U ^-(\chi )\ot _\fie \fiee \to M^\chi (\Lambda )$,
  $u\ot x\mapsto uxv_\Lambda =u\ot x1_\Lambda $,
  is an isomorphism of vector spaces over $\fiee $.
  \label{le:MLiso}
\end{lemma}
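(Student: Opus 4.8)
The plan is to deduce the statement from the triangular decomposition (Prop.~\ref{pr:tridec}) by viewing $M^\chi(\Lambda)$ as an induced module and computing the induction through a free basis. Write $B=U^+(\chi)\Uz\subset U(\chi)$ for the image of $U^+(\chi)\ot \Uz$ under multiplication. Since $\Uz$ normalizes $U^+(\chi)$ in $U(\chi)$ (the relations \eqref{eq:KErel} show that conjugation by $K_i^{\pm 1},L_i^{\pm 1}$ merely rescales each $E_j$), the set $B$ is a subalgebra, and by Prop.~\ref{pr:tridec} the multiplication map $\Uz\ot U^+(\chi)\to B$ is a $\fie$-linear isomorphism. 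Consequently Prop.~\ref{pr:tridec} yields that the multiplication map
\[ \mu : U^-(\chi)\ot _\fie B\to U(\chi),\qquad u\ot b\mapsto ub, \]
is a $\fie$-linear isomorphism. It is moreover right $B$-linear, with $B$ acting on the second tensor factor, so $U(\chi)$ is free as a right $B$-module and a $\fie$-basis of $U^-(\chi)$ provides a $B$-basis.

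First I would record that $\fiee_\Lambda$ is a left $B$-module on which $U^+(\chi)$ acts through $\coun $ and $\Uz$ acts through $\Lambda $, as in \eqref{eq:fieLambda}, and that $B\ot _B\fiee_\Lambda\cong \fiee_\Lambda=\fiee $ canonically via $b\ot y\mapsto b\cdot y$. Applying $-\ot _B\fiee_\Lambda$ to the isomorphism $\mu $ and using associativity of the tensor product then gives a chain of $\fiee $-linear isomorphisms
\[ M^\chi(\Lambda)=U(\chi)\ot _B\fiee_\Lambda\cong
\big(U^-(\chi)\ot _\fie B\big)\ot _B\fiee_\Lambda\cong
U^-(\chi)\ot _\fie \fiee . \]

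Finally I would check that the composite isomorphism is exactly the map of the lemma. Tracing an elementary tensor $u\ot x$ with $u\in U^-(\chi)$, $x\in \fiee $ through the chain, it corresponds to $(u\ot 1)\ot x1_\Lambda$, which $\mu \ot \id $ sends to $u\ot x1_\Lambda\in U(\chi)\ot _B\fiee_\Lambda$; and $u\ot x1_\Lambda=u\cdot(1\ot x1_\Lambda)=uxv_\Lambda$ by the definition of $v_\Lambda$ and the $\fiee $-action on the second factor. Hence the map of the lemma is an isomorphism of $\fiee $-vector spaces.

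The argument is essentially formal once freeness of $U(\chi)$ over $B$ is in place, so there is no serious obstacle; the only point requiring care is the bookkeeping of the two fields, namely that the coefficient extension $\ot _\fie \fiee $ enters solely through the $\fiee $-module $\fiee_\Lambda$, so that the surviving factor is $U^-(\chi)\ot _\fie \fiee $ and the isomorphism is $\fiee $-linear rather than merely $\fie $-linear.
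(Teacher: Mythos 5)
Your proposal is correct and follows the same route the paper intends: the paper states the lemma as a standard consequence of the triangular decomposition (Prop.~\ref{pr:tridec}) without writing out details, and your argument is precisely the standard derivation — freeness of $U(\chi )$ as a right $U^+(\chi )\Uz $-module with basis a $\fie $-basis of $U^-(\chi )$, followed by base change along $-\ot _B\fiee _\Lambda $. The bookkeeping of the two fields and the identification of the composite with the stated map are both handled correctly.
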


The isomorphism in Lemma~\ref{le:MLiso}
and the $\ndZ ^I$-grading of $U^-(\chi )$
induce a unique $\ndZ ^I$-grading on $M^\chi (\Lambda )$ such that
\begin{align}
  M^\chi (\Lambda )_\al =U^-(\chi )_\al \ot _\fie \fiee _\Lambda
  \quad \text{for all $\al \in \ndZ ^I$.}
  \label{eq:MLgrading}
\end{align}
Then
\begin{align}
  M^\chi (\Lambda )_0=\fiee v_\Lambda ,\,\,
  U(\chi )_\al M^\chi (\Lambda )_\beta \subset M^\chi (\Lambda )_{\al +\beta }
  \,\,\text{for all $\al ,\beta \in \ndZ ^I$.}
  \label{eq:MLgrading2}
\end{align}
Moreover, $M^\chi (\Lambda )_\al \not=0$ implies that $-\al \in \ndN _0^I$.

The group algebra $\Uz $ acts on $M^\chi (\Lambda )$ via left multiplication.
This action is given by characters:
\begin{align}
  uv=(\Lambda +\ZIch \chi (\al ))(u)v \quad
  \text{for all $u\in \Uz $, $\al \in \ndZ ^I$,
  $v\in M^\chi (\Lambda )_\al $,}
  \label{eq:U0M}
\end{align}
see Eqs.~\eqref{eq:ZIch}, \eqref{eq:fieLambda}, \eqref{eq:KErel},
and \eqref{eq:KFrel}.

Let $\Lambda \in \charUz $.
The family of those $U(\chi )\ot _\fie \fiee $-submodules
of $M^\chi (\Lambda )$,
which are contained in
$\oplus _{\al \not=0}M^\chi (\Lambda )_\al $,
have a unique maximal element $I^\chi (\Lambda )$.
Let
\begin{align}
  L^\chi (\Lambda )=M^\chi (\Lambda )/I^\chi (\Lambda )
  \label{eq:LLambda}
\end{align}
be the quotient $U(\chi )$-module. The maximality of $I^\chi (\Lambda )$
implies that
\[ I^\chi (\Lambda )=\big(I^\chi (\Lambda )\cap (U^-(\chi )\ot 1)\big)\fiee \]
and that
$L^\chi (\Lambda )$ is $\ndZ ^I$-graded.
For all $\al \in \ndZ ^I$ let
\[ I^\chi (\Lambda )_\al =M^\chi (\Lambda )_\al \cap
I^\chi (\Lambda ), \quad
L^\chi (\Lambda )_\al =M^\chi (\Lambda )_\al /I^\chi (\Lambda )_\al .\]
Since $M^\chi (\Lambda )_0=\fiee v_\Lambda $,
and any $\ndZ ^I$-graded quotient of
$M^\chi (\Lambda )$ by a $U(\chi )\ot _\fie \fiee $-submodule containing
$v_\Lambda $ is zero,
$L^\chi (\Lambda )$ is the unique simple $\ndZ ^I$-graded
$U(\chi )\ot _\fie \fiee $-module
quotient of $M^\chi (\Lambda )$.

\begin{defin}\label{de:fchar}
  Let $\Lambda \in \charUz $ and $V$ a $\ndZ ^I$-graded subquotient of
  $M^\chi (\Lambda )$. The \textit{(formal) character of}
  $V$ is the sum
  \[ \fch{V}=\sum _{\al \in \ndN _0^I} (\dim V_{-\al }) e^{-\al }, \]
  where $e$ is a formal variable.
\end{defin}

Eq.~\eqref{eq:MLgrading} implies that
\begin{align}
  \fch{M^\chi (\Lambda )}=\sum _{\al \in \ndN _0^I}\dim U^-(\chi )_{-\al }
  e^{-\al }
  \label{eq:chML}
\end{align}
for all $\Lambda \in \charUz $.

\begin{remar} \label{re:fch}
  For all $\al ,\beta \in \ndZ ^I$ we let $e^\al e^\beta =e^{\al +\beta }$. Thus
  we can consider formal characters as elements of the ring
  $\cup _{\al \in \ndN _0^I}e^\al \ndZ [[e^{-\al _i}\,|\,i\in I]]$, where
  $e^\al \ndZ [[e^{-\al _i}\,|\,i\in I]]\subset
  e^{\al +\beta }\ndZ [[e^{-\al _i}\,|\,i\in I]]$ for all $\al ,\beta \in \ndN _0^I$
  in the natural way.
\end{remar}

{\em From now on let $\chi \in \cX $, $p\in I$, and
$\bnd =\bfun \chi (\al _p)=\bfun {r_p(\chi )}(\al _p)$.
Assume that $\bnd <\infty $.}
Then $\chi $ and $r_p(\chi )$ are $p$-finite.
We deduce some phenomena which arise from the finiteness assumption on $\bnd $.

For all $\Lambda \in \charUz $ define $\VT _p^\chi (\Lambda )\in \charUz $ by
\begin{align}
  \VT _p^\chi (\Lambda )(K_\al L_\beta )=\Lambda (K_{\s _p^{r_p(\chi )}(\al )}
  L_{\s _p^{r_p(\chi )}(\beta )})
  \frac{r_p(\chi ) (\al ,\al _p)^{\bnd -1}}{r_p(\chi )(\al _p,\beta )^{\bnd -1}}
  \label{eq:tpLambda}
\end{align}
for all $\al ,\beta \in \ndZ ^I$. By Eq.~\eqref{eq:w*chi} this is equivalent
to
\begin{align}
  \VT _p^\chi (\Lambda )(K_{\s ^\chi _p(\al )}L_{\s _p^\chi (\beta )})
  =\Lambda (K_\al L_\beta )
  \frac
  {\chi (\al _p,\beta )^{\bnd -1}}
  {\chi (\al ,\al _p)^{\bnd -1}}
  \label{eq:tpLambda1}
\end{align}
for all $\al ,\beta \in \ndZ ^I$.

Recall the definition of $\rhomap \chi $ from Def.~\ref{de:rhomap}.

\begin{lemma}
  Let $\Lambda \in \charUz $. Then
  \begin{align*}
    \rhomap {r_p(\chi )}(\s _p^\chi (\al ))\, \VT _p^\chi (\Lambda )
    (K_{\s _p^\chi (\al )} L_{\s _p^\chi (\al )}^{-1})=
    \rhomap {\chi }(\al ) \, \Lambda (K_\al L_{\al }^{-1})
  \end{align*}
  for all $\al \in \ndZ ^I$ and $p\in I$.
  \label{le:VTinv}
\end{lemma}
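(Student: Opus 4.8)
The plan is to reduce the asserted identity directly to Lemma~\ref{le:rho}, which already evaluates the quotient $\rhomap{r_p(\chi)}(\s_p^\chi(\beta))/\rhomap\chi(\beta)$ in terms of the two-sided $\chi$-factor $\chi(\al_p,\beta)^{\bnd-1}\chi(\beta,\al_p)^{\bnd-1}$. The only genuine computation is to evaluate the factor $\VT_p^\chi(\Lambda)(K_{\s_p^\chi(\al)}L_{\s_p^\chi(\al)}^{-1})$ by means of its defining formula~\eqref{eq:tpLambda1}; once this is in hand, Lemma~\ref{le:rho} supplies the rest.

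First I would rewrite $L_{\s_p^\chi(\al)}^{-1}=L_{\s_p^\chi(-\al)}$, using that $\s_p^\chi\in\Aut_\ndZ(\ndZ^I)$ is additive. Hence the argument $K_{\s_p^\chi(\al)}L_{\s_p^\chi(\al)}^{-1}$ is precisely of the form appearing on the left of~\eqref{eq:tpLambda1} with first index $\al$ and second index $-\al$. Substituting $\beta=-\al$ into~\eqref{eq:tpLambda1}, and using that $\Lambda$ is an algebra map together with the bicharacter relation~\eqref{eq:bichar} (so that $\chi(\al_p,-\al)=\chi(\al_p,\al)^{-1}$), I obtain
\begin{align*}
  \VT_p^\chi(\Lambda)(K_{\s_p^\chi(\al)}L_{\s_p^\chi(\al)}^{-1})
  =\Lambda(K_\al L_\al^{-1})\,
  \chi(\al_p,\al)^{-(\bnd-1)}\chi(\al,\al_p)^{-(\bnd-1)}.
\end{align*}

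It then remains to multiply this by $\rhomap{r_p(\chi)}(\s_p^\chi(\al))$ and invoke Lemma~\ref{le:rho} with $\beta=\al$ (applicable since $\bnd<\infty$, whence $\chi$ is $p$-finite). That lemma gives $\chi(\al_p,\al)^{\bnd-1}\chi(\al,\al_p)^{\bnd-1}=\rhomap{r_p(\chi)}(\s_p^\chi(\al))/\rhomap\chi(\al)$, so the factor $\rhomap{r_p(\chi)}(\s_p^\chi(\al))$ cancels against the inverse $\chi$-powers and leaves exactly $\rhomap\chi(\al)\,\Lambda(K_\al L_\al^{-1})$, as claimed.

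I do not expect any real obstacle here: the statement is a bookkeeping identity whose content is carried entirely by Lemma~\ref{le:rho}, and the one substantive move is recognizing that setting $\beta=-\al$ in~\eqref{eq:tpLambda1} reproduces precisely the combination $\chi(\al_p,\al)^{\bnd-1}\chi(\al,\al_p)^{\bnd-1}$ (up to inversion) that the lemma evaluates. As a sanity check, one may note that, viewed as functions of $\al$, both sides lie in $\ZIdual$ — since $\al\mapsto K_{\s_p^\chi(\al)}L_{\s_p^\chi(\al)}^{-1}$ and $\al\mapsto K_\al L_\al^{-1}$ are group homomorphisms into $\Uz$ — so it would in any case suffice to verify the equality on the generators $\al_j$; but the direct substitution above avoids even this reduction.
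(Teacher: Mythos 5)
Your proof is correct and is precisely the paper's argument: the paper's entire proof reads ``Insert Eq.~\eqref{eq:tpLambda1} and use Lemma~\ref{le:rho}'', and your substitution $\beta=-\al$ in Eq.~\eqref{eq:tpLambda1} followed by the cancellation against Lemma~\ref{le:rho} just spells out those two steps.
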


\begin{proof}
  Insert Eq.~\eqref{eq:tpLambda1} and use Lemma~\ref{le:rho}.
\end{proof}

\begin{examp}
  Let $C$ be a symmetrizable Cartan matrix and $q\in \fienz $, $\chi
  \in \cX $ as in the second part of Ex.~\ref{ex:Cartan}.
  In particular, $\chi (\al _i,\al _j)=q^{d_i c_{i j}}$. Let $p\in I$.
  Assume that $\bnd =\bfun \chi (\al _p)<\infty $. Then $q^{2d_p \bnd
  }=1$, and hence $q^{2\bnd (\al ,\al _p)}=1$ for all $\al \in \ndZ
  ^I$. Further, $r_p(\chi )=\chi $.

  Let $\Lambda \in \charUz $. Assume that $\Lambda (K_\al L_\al
  ^{-1})=q^{2(\al ,\lambda )}$ for some $\lambda $ in the weight
  lattice. Then
  \begin{align*}
    \VT _p^\chi (\Lambda )(K_\al L_\al ^{-1})
    =&\Lambda (K_{\s _p^\chi (\al )} L_{\s _p^\chi (\al )}^{-1}) q^{2(b-1)(\al
    ,\al _p)}\\
    =&q^{2(\s _p^\chi (\al ),\lambda )}q^{-2(\al ,\al _p)}
    =q^{2(\al ,\s _p^\chi (\lambda )-\al _p)},
  \end{align*}
  which recovers the dot action of the Weyl group on the weight
  lattice.
\end{examp}

If we consider a composition
$\VT _i^{\chi '}\VT _j^{\chi ''}$,
where $i,j\in I$, $\chi ',\chi ''\in \cX $,
then we will always assume that
\[ \chi '=r_j(\chi ''). \]
For simplicity, we will
omit the upper index $\chi '$ if it is uniquely determined
by another bicharacter in
the expression.

\begin{lemma}
  Let $\Lambda \in \charUz $. Then $\VT _p \VT _p^\chi (\Lambda )
  =\Lambda $.
  \label{le:VTMrel1}
\end{lemma}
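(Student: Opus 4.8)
The plan is to verify the asserted identity of characters directly on the generators $K_\al L_\beta $, $\al ,\beta \in \ndZ ^I$, of the commutative group algebra $\Uz $, since a character of $\Uz $ is determined by its values on these elements. According to the convention on compositions stated just before the lemma, the omitted upper index of the outer factor is $r_p(\chi )$, so that the claim to be proven reads $\VT _p^{r_p(\chi )}(\VT _p^\chi (\Lambda ))=\Lambda $.

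First I record the bookkeeping. By Eq.~\eqref{eq:rp2} we have $r_p(r_p(\chi ))=\chi $ and $c_{pj}^{r_p(\chi )}=c_{pj}^\chi $ for all $j\in I$; since $\s _p^\chi (\al _j)=\al _j-c_{pj}^\chi \al _p$ depends only on the entries $c_{pj}^\chi $, this forces $\s _p^{r_p(\chi )}=\s _p^\chi $, so all the reflections occurring coincide. Moreover the bound $\bnd =\bfun \chi (\al _p)=\bfun {r_p(\chi )}(\al _p)$ is, by the standing hypothesis, the common value of the exponent used in the defining formula \eqref{eq:tpLambda} for both factors.

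Next I carry out the cancellation. Writing $\Lambda '=\VT _p^\chi (\Lambda )$ and applying the definition \eqref{eq:tpLambda} to the outer map $\VT _p^{r_p(\chi )}$ (whose internal bicharacter $r_p(r_p(\chi ))$ equals $\chi $ and whose reflection equals $\s _p^\chi $), I obtain
\[ \VT _p^{r_p(\chi )}(\Lambda ')(K_\al L_\beta )=\Lambda '(K_{\s _p^\chi (\al )}L_{\s _p^\chi (\beta )})\,\frac{\chi (\al ,\al _p)^{\bnd -1}}{\chi (\al _p,\beta )^{\bnd -1}}. \]
I then evaluate $\Lambda '(K_{\s _p^\chi (\al )}L_{\s _p^\chi (\beta )})$ by the equivalent presentation \eqref{eq:tpLambda1} of the definition of $\VT _p^\chi $, which gives
\[ \Lambda '(K_{\s _p^\chi (\al )}L_{\s _p^\chi (\beta )})=\Lambda (K_\al L_\beta )\,\frac{\chi (\al _p,\beta )^{\bnd -1}}{\chi (\al ,\al _p)^{\bnd -1}}. \]
Substituting, the two scalar ratios are mutually inverse and cancel exactly, leaving $\VT _p^{r_p(\chi )}(\Lambda ')(K_\al L_\beta )=\Lambda (K_\al L_\beta )$ for all $\al ,\beta \in \ndZ ^I$, whence $\VT _p\VT _p^\chi (\Lambda )=\Lambda $.

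There is essentially no analytic obstacle here: the computation is a bookkeeping cancellation. The only points requiring care are to use the two equivalent forms \eqref{eq:tpLambda} and \eqref{eq:tpLambda1} for the outer and inner factors respectively, and to observe via Eq.~\eqref{eq:rp2} (together with the standing hypothesis $\bnd =\bfun \chi (\al _p)=\bfun {r_p(\chi )}(\al _p)$) that all reflections and the exponent $\bnd -1$ coincide, so that the $\chi $-power prefactors are genuinely reciprocal and cancel.
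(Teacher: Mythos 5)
Your proof is correct and is essentially identical to the paper's: both apply the definition \eqref{eq:tpLambda} to the outer factor (using $r_p^2(\chi )=\chi $ and $c^{r_p(\chi )}_{pj}=c^\chi _{pj}$ from \eqref{eq:rp2}) and then the equivalent form \eqref{eq:tpLambda1} to the inner factor, so that the two scalar prefactors cancel. Your write-up merely makes explicit the bookkeeping that the paper leaves implicit.
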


\begin{proof}
  By Eqs.~\eqref{eq:tpLambda} and \eqref{eq:tpLambda1},
  and since $r_p^2(\chi )=\chi $,
  \begin{align*}
    &\VT _p^{r_p(\chi )}\VT _p^\chi (\Lambda )(K_\al L_\beta )=
    \VT _p^\chi (\Lambda )(K_{\s _p^\chi (\al )}L_{\s _p^\chi (\beta )})
    \frac{\chi (\al ,\al _p)^{\bnd -1}}{\chi (\al _p,\beta )^{\bnd -1}}
    =\Lambda (K_\al L_\beta )
  \end{align*}
  for all $\al ,\beta \in \ndZ ^I$. This proves the lemma.
\end{proof}

\begin{lemma}
  \label{le:MLmap}
  Let $\Lambda \in \charUz $.
  There exist unique $\fiee $-linear maps
  \begin{gather*}
    \VTM _p=\VTM ^\chi _{p,\Lambda },\,
    \VTM ^-_p=\VTM ^{\chi ,-}_{p,\Lambda }: \,\,
    M^{r_p(\chi )}(\VT _p^\chi (\Lambda ))\to M^{\chi }(\Lambda ),\\
    \intertext{such that for all $u\in U(\chi )$,}
    \VTM _p(uv_{\VT _p^\chi (\Lambda )})=
    \LT _p(u)F_p^{\bnd -1}v_{\Lambda },\quad
    \VTM ^-_p(uv_{\VT _p^\chi (\Lambda )})=
    \LT ^-_p(u)F_p^{\bnd -1}v_{\Lambda }.
  \end{gather*}
  If $V\subset M^{r_p(\chi )}(\VT _p^\chi (\Lambda ))$ is a
  $U(r_p(\chi ))$-submodule, then $\VTM _p(V), \VTM ^-_p(V)$ are
  $U(\chi )$-submodules of $M^{\chi }(\Lambda )$.
\end{lemma}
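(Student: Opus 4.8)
The plan is to obtain $\VTM_p$ by factoring a single linear map through the Verma module. Regard $\LT_p$ as the algebra isomorphism $U(r_p(\chi))\to U(\chi)$ (as in the remark following Thm.~\ref{th:Liso}) and consider the $\fie$-linear map
\[
\phi:U(r_p(\chi))\to M^\chi(\Lambda),\qquad \phi(u)=\LT_p(u)F_p^{\bnd-1}v_\Lambda .
\]
Since $\bnd=\bfun\chi(\al_p)$, Lemma~\ref{le:Eheight}(i) gives $F_p^{\bnd}v_\Lambda=0$. The module $M^{r_p(\chi)}(\VT_p^\chi(\Lambda))$ is the quotient of $U(r_p(\chi))\ot_\fie\fiee$ by the left ideal generated by $E-\coun(E)$ and $K-\VT_p^\chi(\Lambda)(K)$, where $E\in U^+(r_p(\chi))$ and $K\in\Uz$. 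Because $\LT_p$ is an algebra homomorphism, after extending scalars to $\fiee$ the map $\phi$ induces the desired $\fiee$-linear map $\VTM_p$ as soon as
\[
\LT_p(E)F_p^{\bnd-1}v_\Lambda=\coun(E)F_p^{\bnd-1}v_\Lambda,\qquad
\LT_p(K)F_p^{\bnd-1}v_\Lambda=\VT_p^\chi(\Lambda)(K)F_p^{\bnd-1}v_\Lambda
\]
hold for all such $E$ and $K$; uniqueness is automatic since $v_{\VT_p^\chi(\Lambda)}$ generates the source.

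For the second identity, note that $\LT_p$ preserves $\Uz$ with $\LT_p(K_\al L_\beta)=K_{\s_p^\chi(\al)}L_{\s_p^\chi(\beta)}$, using $c_{pj}^{r_p(\chi)}=c_{pj}^\chi$. As $F_p^{\bnd-1}v_\Lambda$ has degree $-(\bnd-1)\al_p$, Eqs.~\eqref{eq:U0M} and \eqref{eq:ZIch} express the scalar by which $\LT_p(K)$ acts on it, and a direct comparison with Eq.~\eqref{eq:tpLambda1} shows this scalar is exactly $\VT_p^\chi(\Lambda)(K)$. Indeed, this matching is precisely the content of the definition of $\VT_p^\chi(\Lambda)$, so the $\Uz$-relations cause no difficulty.

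The harder point is the first identity, which is the main step. The set of $E\in U^+(r_p(\chi))$ satisfying it is a subalgebra, since $\LT_p$ is multiplicative and $\coun$ is an algebra map, so it suffices to treat the generators. For $E_p$ we have $\LT_p(E_p)=F_pL_p^{-1}$, hence $\LT_p(E_p)F_p^{\bnd-1}v_\Lambda$ is a scalar multiple of $F_p^{\bnd}v_\Lambda=0=\coun(E_p)F_p^{\bnd-1}v_\Lambda$. For $i\neq p$ we have $\LT_p(E_i)=E^+_{i,-c_{pi}}$, and an easy induction on the recursion defining $E^+_{i,m}$ shows that $E^+_{i,m}$ is a $\fie$-linear combination of monomials $E_p^aE_iE_p^b$ with $a+b=m$. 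For each monomial, Lemma~\ref{le:EmFn} collapses $E_p^bF_p^{\bnd-1}v_\Lambda$ to a scalar multiple of $F_p^{\bnd-1-b}v_\Lambda$, all other summands vanishing because $E_pv_\Lambda=0$; then $E_iF_p^{\bnd-1-b}v_\Lambda=F_p^{\bnd-1-b}E_iv_\Lambda=0$, since $E_i$ commutes with $F_p$ for $i\neq p$ and $E_iv_\Lambda=0$. Thus $\LT_p(E_i)F_p^{\bnd-1}v_\Lambda=0=\coun(E_i)F_p^{\bnd-1}v_\Lambda$, completing the reduction.

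The construction of $\VTM_p^-$ is identical with $\LT_p$ replaced by $\LT_p^-$: the two maps agree on $\Uz$, $\LT_p^-(E_p)=K_p^{-1}F_p$ again produces $F_p^{\bnd}v_\Lambda=0$, and $E^-_{i,m}$ has the same monomial shape $\sum E_p^aE_iE_p^b$, so the same collapse applies. Finally, the defining formula yields the twisted equivariance $\VTM_p(u'm)=\LT_p(u')\VTM_p(m)$ for $u'\in U(r_p(\chi))$ and $m\in M^{r_p(\chi)}(\VT_p^\chi(\Lambda))$. Given a $U(r_p(\chi))$-submodule $V$ and $w\in U(\chi)$, write $w=\LT_p(u')$, which is possible since $\LT_p$ is an isomorphism; then $w\,\VTM_p(m)=\VTM_p(u'm)\in\VTM_p(V)$ for $m\in V$, so $\VTM_p(V)$ is a $U(\chi)$-submodule, and likewise for $\VTM_p^-$. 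The only genuinely computational obstacle is the vanishing $E^+_{i,-c_{pi}}F_p^{\bnd-1}v_\Lambda=0$ established above.
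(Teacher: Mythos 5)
Your proof is correct, and its overall architecture is the same as the paper's: factor the map $u\mapsto \LT _p(u)F_p^{\bnd -1}v_\Lambda $ through the defining quotient of the Verma module by checking the relations coming from $U^+(r_p(\chi ))$ and from $\Uz $, match the $\Uz $-action against Eq.~\eqref{eq:tpLambda1}, and deduce the submodule statement from the twisted equivariance $\VTM _p(uv)=\LT _p(u)\VTM _p(v)$ together with the surjectivity of $\LT _p$. The one place where you genuinely diverge is the key vanishing $\LT _p(E_j)F_p^{\bnd -1}v_\Lambda =0$. The paper disposes of this in one line by a weight argument: $\LT _p(E_j)F_p^{\bnd -1}v_\Lambda $ lands in $M^\chi (\Lambda )_{\al _j+(1-\bnd -c^{r_p(\chi )}_{pj})\al _p}$ for $j\not=p$ (zero because all weights of $M^\chi (\Lambda )$ lie in $-\ndN _0^I$) and in $M^\chi (\Lambda )_{-\bnd \al _p}$ for $j=p$ (zero by the PBW description of $U^-(\chi )$). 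You instead expand $E^+_{i,-c_{pi}}$ into monomials $E_p^aE_iE_p^b$ and collapse them with Lemma~\ref{le:EmFn} and the commutation $[E_i,F_p]=0$ for $i\not=p$. Both are valid; the paper's degree argument is shorter and uniform in $j$, while your computation is more elementary and self-contained, not relying on knowing which weight spaces of $M^\chi (\Lambda )$ vanish.
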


\begin{proof}
  The uniqueness of the maps $\VTM _p$, $\VTM ^-_p$ is clear.
  We prove that $\VTM _p$ is well-defined. The proof for $\VTM ^-_p$ is
  analogous.

  Let $\chi '=r_p(\chi )$ and $\Lambda '=\VT _p^\chi (\Lambda )$.
  By Lemma~\ref{le:MLiso} and Thm.~\ref{th:PBWtau},
  \[ M^{\chi }(\Lambda )_{\al _j+a\al _p}=0=
  M^{\chi }(\Lambda )_{-\bnd \al _p} \quad 
  \text{for all $a\in \ndZ $.}
  \]
  Thus, since $\deg \LT _p(E_j)=\s _p^{\chi '}(\al _j)$ for $E_j\in U(\chi ')$,
  \begin{align*}
    \LT _p(E_j)F_p^{\bnd -1}v_\Lambda \in 
    M^{\chi }(\Lambda )_{\al _j+(1-\bnd -c^{\chi '}_{pj})\al _p}=0
    \quad \text{for all $j\in I$.}
  \end{align*}
  Moreover, Eqs.~\eqref{eq:KFrel}, \eqref{eq:tpLambda1} give that
  \begin{align*}
    K_{\al }L_{\beta }F_p^{\bnd -1} v_{\Lambda }
    =\chi (\al ,\al _p)^{1-\bnd }\, \chi (\al _p,\beta )^{\bnd -1}\,
    \Lambda (K_\al L_\beta ) F_p^{\bnd -1}v_{\Lambda }\qquad &\\
    =\Lambda '(K_{\s _p^\chi (\al )}
    L_{\s _p^\chi (\beta )}) F_p^{\bnd -1}v_\Lambda &
  \end{align*}
  for all $\al ,\beta \in \ndZ ^I$. Hence
  $\LT _p(u)F_p^{\bnd -1}v_{\Lambda }=\VTM _p(\Lambda '(u)v_{\Lambda '})$
  for all $u\in \Uz $. Therefore $\VTM _p$ is well-defined.

  The last claim of the lemma follows from the equations
  \begin{align}
    \VTM _p(uv)=\LT _p(u)\VTM _p(v),\quad 
    \VTM ^-_p(uv)=\LT ^-_p(u)\VTM ^-_p(v),
    \label{eq:Tuv}
  \end{align}
  where $u\in U(r_p(\chi ))$ and $v\in M^{r_p(\chi )}(\VT _p^\chi (\Lambda ))$,
  and from the surjectivity of the maps $\LT _p,\LT ^-_p$.
\end{proof}

\begin{lemma}
  \label{le:VTMrel2}
  Let $\Lambda \in \charUz $ and $j,k\in I$ such that $j\not=k$.
  Assume that
  $m_{j,k}^\chi =|(\ndN _0\al _j+\ndN _0\al _k)\cap R^\chi _+|<\infty $
  and that $\bfun \chi (\al )<\infty $ for all
  $\al \in (\ndN _0\al _j+\ndN _0\al _k)\cap R^\chi _+$.
  Then
  \begin{align}
    \label{eq:VTCox}
    (\VT _j\VT _k)^{m _{j,k}^\chi -1}\VT _j\VT _k^\chi (\Lambda )=\Lambda .
  \end{align}
\end{lemma}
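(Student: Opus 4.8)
The plan is to recast each $\VT_p^\chi$ as a single affine reflection on the character group $\charUz$, and then read off \eqref{eq:VTCox} from the braid relation of the Weyl groupoid. First I would combine the defining identity \eqref{eq:tpLambda1} with the formula \eqref{eq:ZIch}. From \eqref{eq:ZIch} one has $\ZIch\chi(\al_p)(K_\al L_\beta)=\chi(\al,\al_p)\chi(\al_p,\beta)^{-1}$, so the twisting factor in \eqref{eq:tpLambda1} is exactly $\ZIch\chi(\al_p)(K_\al L_\beta)^{-(\bnd-1)}$, whence
\begin{align*}
  \VT_p^\chi(\Lambda)(K_{\s_p^\chi(\al)}L_{\s_p^\chi(\beta)})
  =\big(\Lambda\cdot\ZIch\chi(\al_p)^{-(\bnd-1)}\big)(K_\al L_\beta).
\end{align*}
Writing $w^\sharp$ for the automorphism of $\charUz$ with $(w^\sharp\Theta)(K_\al L_\beta)=\Theta(K_{w(\al)}L_{w(\beta)})$ for $w\in\Aut_\ndZ(\ndZ^I)$, and using $(\s_p^\chi)^2=\id$, this reads
\begin{align*}
  \VT_p^\chi(\Lambda)=(\s_p^\chi)^\sharp\big(\Lambda\cdot\ZIch\chi(\al_p)^{-(\bnd-1)}\big),
\end{align*}
and the same identity holds at every base point of the orbit (with $\bnd$ the corresponding value of $\bfun{}{}$). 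Since both sides of \eqref{eq:VTCox} lie in $\charUz$ and $\Uz$ is a group algebra, it then suffices to iterate this formula and compare the resulting characters on all $K_\al L_\beta$.

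Set $m=m_{j,k}^\chi$ and let $i_t=k$ for odd $t$ and $i_t=j$ for even $t$, $1\le t\le 2m$, so that $(\VT_j\VT_k)^{m-1}\VT_j\VT_k=\VT_{i_{2m}}\cdots\VT_{i_1}$ as self-maps of $\charUz$; by axiom (R4) of Def.~\ref{de:RSC} the base points close up, $(r_jr_k)^m(\chi)=\chi$. Iterating the affine formula splits the composite into a \emph{linear part} and a \emph{shift part}. Because $\Theta\mapsto w^\sharp\Theta$ is a group homomorphism of $\charUz$ and $w\mapsto w^\sharp$ is an antihomomorphism, the linear part is $\big(\s_{i_1}\s_{i_2}\cdots\s_{i_{2m}}\big)^\sharp=\big((\s_k\s_j)^m\big)^\sharp$, which is the identity by the braid relation of $\Wg(\chi)$ (Thm.~\ref{th:Coxgr}, using (R4)). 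Thus the linear part reproduces $\Lambda$, and it only remains to show that the accumulated shift character is trivial.

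To telescope the shift I would use the covariance \eqref{eq:ZIchrefl}, which in this notation reads $(\s_p^\chi)^\sharp\ZIch\chi(\al)=\ZIch{r_p(\chi)}(\s_p^\chi(\al))$. Pushing the factor contributed at step $t$, namely $\ZIch{\chi_{t-1}}(\al_{i_t})^{-(\bnd_t-1)}$ with $\chi_{t-1}=(r_jr_k)^{\cdots}(\chi)$ the intermediate base point and $\bnd_t=\bfun{\chi_{t-1}}(\al_{i_t})$, through the subsequent reflections and using $\s_{i_t}(\al_{i_t})=-\al_{i_t}$, each such factor collapses to $\ZIch\chi\big((\bnd_t-1)\gamma_t\big)$, where $\gamma_t=\s_{i_{2m}}\cdots\s_{i_{t+1}}(\al_{i_t})$. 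Hence the total shift equals $\ZIch\chi(\Xi)$ with $\Xi=\sum_{t=1}^{2m}(\bnd_t-1)\gamma_t$. By the invariance \eqref{eq:hghtrpchi} of the bound function one gets $\bnd_t=\bfun\chi(\gamma_t)$, and $\bfun\chi(\beta)=\bfun\chi(-\beta)$.

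The main obstacle is the final root-combinatorial step, namely $\Xi=0$. For this I would argue that the sequence $(\gamma_t)_{t=1}^{2m}$ runs exactly once through each of the $2m$ roots of the rank-two subsystem $R^\chi\cap(\ndZ\al_j+\ndZ\al_k)$: this is the standard description of the inversion sequence of the length-$2m$ word $(\s_k\s_j)^m$, valid because $m<\infty$ makes the subsystem a finite dihedral system (cf.\ \eqref{eq:proots} and \cite[Prop.\,2.12]{p-CH08}), while the hypothesis $\bfun\chi(\al)<\infty$ on this subsystem guarantees that every $\bnd_t$ is finite, so that all intermediate operators $\VT_{i_t}$ are defined. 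Grouping each root $\beta$ with $-\beta$ and using $\bfun\chi(\beta)=\bfun\chi(-\beta)$ then gives $\Xi=\sum_{\beta>0}(\bfun\chi(\beta)-1)(\beta-\beta)=0$, so the shift character is trivial and $(\VT_j\VT_k)^{m-1}\VT_j\VT_k^\chi(\Lambda)=\Lambda$.
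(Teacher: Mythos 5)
Your proof is correct, but it takes a genuinely different route from the paper's. The paper first reduces \eqref{eq:VTCox}, via $\VT _p\VT _p=\id $ (Lemma~\ref{le:VTMrel1}), to the equality of the two characters $\Lambda '$, $\Lambda ''$ produced by the two reduced alternating words of length $m_{j,k}^\chi $, and then proves $\Lambda '=\Lambda ''$ representation-theoretically: it applies the Verma-module maps $\VTM '$, $\VTM ''$ to the highest weight vectors, checks via Thm.~\ref{th:PBWtau} that both images are nonzero of the same $\ndZ ^I$-degree (hence carry the same $\Uz $-character), and uses Thm.~\ref{th:Coxgr} to identify $\LT _{i_0}\cdots \LT _{i_{m-1}}$ with $\LT _{i_1}\cdots \LT _{i_m}$ on $\Uz $. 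You instead work entirely inside $\charUz $, writing each $\VT _p$ as an affine reflection with linear part $(\s _p^\chi )^\sharp $ and translation part $\ZIch \chi (\al _p)^{-(\bnd -1)}$; the linear part of the $2m$-fold composite dies by the Coxeter relation, and the translation part telescopes via \eqref{eq:ZIchrefl} and \eqref{eq:hghtrpchi} to $\ZIch \chi \big(\sum _t(\bnd _t-1)\gamma _t\big)$, which vanishes by a rank-two root computation. Your argument is more elementary in that it never leaves $\Uz $ and makes the ``dot action'' of the Weyl groupoid completely explicit, while the paper's version avoids the rank-two combinatorics at the cost of invoking the PBW theorem. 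One caveat on your final step: the length-$2m$ word $(\s _k\s _j)^m$ is \emph{not} reduced (it represents $1_\chi $), so ``inversion sequence'' is not literally the right notion; the clean justification is to split the word into its two reduced halves of length $m=m_{j,k}^\chi $, each of whose root sequences enumerates $R^\chi _+\cap (\ndN _0\al _j+\ndN _0\al _k)$ exactly once (this is precisely the fact the paper itself uses), so that the $\gamma _t$ with $t>m$ run through all positive roots and those with $t\le m$ through all negative roots of the subsystem; combined with $\bfun \chi (\beta )=\bfun \chi (-\beta )$ this gives $\sum _t(\bnd _t-1)\gamma _t=0$ as you claim.
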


\begin{proof}
  Let $m=m^\chi _{j,k}$, and let
  $i_0,i_1,\dots ,i_{m_{j,k}^\chi }\in \{j,k\}$
  such that $i_\nu =j$ if $\nu $ is even and $i_\nu =k$ if $\nu $ is odd.
  Let $\chi '=r_{i_0}r_{i_1}\cdots r_{i_{m-1}}(\chi )=
  r_{i_1}r_{i_2}\cdots r_{i_m}(\chi )$ (by (R4)),
  and
  \[ \Lambda '=\VT _{i_0}\VT _{i_1}\cdots \VT _{i_{m-1}}^\chi (\Lambda ),\quad
  \Lambda ''=\VT _{i_1}\VT _{i_2}\cdots \VT _{i_m}^\chi (\Lambda ). \]
  These definitions make sense, since $\bfun \chi (\al )<\infty $ for all
  $\al \in (\ndN _0\al _j+\ndN _0\al _k)\cap R^\chi _+$.
  Lemma~\ref{le:VTMrel1} implies that
  the claim of the lemma is equivalent to $\Lambda '=\Lambda ''$.

  Let $\VTM '=\VTM _{i_0}\VTM _{i_1}\cdots \VTM _{i_{m-1}}:
  M^{\chi '}(\Lambda ')\to M^\chi (\Lambda )$ and
  $\VTM ''=\VTM _{i_1}\VTM _{i_2}\cdots \VTM _{i_m}:
  M^{\chi '}(\Lambda '')\to M^\chi (\Lambda )$.
  For all $\nu \in \{1,2,\dots ,m\}$ let
  \[ \beta '_\nu =1_\chi \s _{i_0}\s _{i_1}\cdots \s _{i_{\nu -2}}
  (\al _{i_{\nu -1}}), \quad
  \beta ''_\nu =1_\chi \s _{i_1}\s _{i_2}\cdots \s _{i_{\nu -1}}
  (\al _{i_\nu }). \]
  By definition of $\VTM _j$ and $\VTM _k$,
  \begin{align*}
    \VTM ''(v_{\Lambda ''})=&\VTM _{i_1}\cdots \VTM _{i_{m-1}}
    (F_{i_m}^{\bfun {\chi '}(\al _{i_m})-1}v_{\VT _{i_m}^{\chi '}(\Lambda '')})
    =\cdots
    \\
    =&F_{\beta ''_m}^{\bfun \chi (\beta ''_m)-1}\cdots
    F_{\beta ''_2}^{\bfun \chi (\beta ''_2)-1}
    F_{\beta ''_1}^{\bfun \chi (\beta ''_1)-1}v_\Lambda ,\\
    \VTM '(v_{\Lambda '})=&F_{\beta '_m}^{\bfun \chi (\beta '_m)-1}\cdots
    F_{\beta '_2}^{\bfun \chi (\beta '_2)-1}
    F_{\beta '_1}^{\bfun \chi (\beta '_1)-1}v_\Lambda .
  \end{align*}
  Both expressions are nonzero by Thm.~\ref{th:PBWtau}.
  Since 
  \[ \{\beta '_\nu \,|\,1\le \nu \le m\}=\{\beta ''_\nu \,|\,1\le \nu \le m\}
  =R^\chi _+\cap (\ndN _0\al _j+\ndN _0\al _k),\]
  we obtain that
  \begin{itemize}
    \item [($*$)]
    $\VTM '(\fiee v_{\Lambda '})$ and $\VTM ''(\fiee v_{\Lambda ''})$ are
    isomorphic $\Uz $-modules.
  \end{itemize}
  By Thm.~\ref{th:Coxgr},
  \[ \LT _{i_0}\LT _{i_1}\cdots \LT _{i_{m-1}}(u_0)=
  \LT _{i_1}\LT _{i_2}\cdots \LT _{i_m}(u_0)\quad
  \text{for all $u_0\in \Uz $.} \]
  Hence
  Lemma~\ref{le:MLmap} yields that
  \begin{align*}
    \Lambda '(u_0)\VTM '(v_{\Lambda '})=\VTM '(u_0v_{\Lambda '})
    =&\LT _{i_0}\LT _{i_1}\cdots \LT _{i_{m-1}}(u_0)
    \VTM '(v_{\Lambda '}),\\
    \Lambda ''(u_0)\VTM ''(v_{\Lambda ''})=\VTM ''(u_0v_{\Lambda ''})
    =&\LT _{i_0}\LT _{i_1}\cdots \LT _{i_{m-1}}(u_0)
    \VTM ''(v_{\Lambda ''}).
  \end{align*}
  Thus $\Lambda '=\Lambda ''$ by ($*$).
  This proves the lemma.
\end{proof}

\begin{remar}
  In view of Thm.~\ref{th:Coxgr} and Lemmata~\ref{le:VTMrel1},
  \ref{le:VTMrel2} we can say that Eq.~\eqref{eq:tpLambda} defines an action
  of the groupoid $\Wg (\chi )$ on $\charUz $. Then Lemma~\ref{le:VTinv}
  says that the numbers $\rhomap \chi (\al )\Lambda (K_\al L_\al ^{-1})$,
  where $\al \in \ndZ ^I$, are invariants of this action.
\end{remar}

In general, the maps $\VTM _p$ and $\VTM ^-_p$ are not isomorphisms.

\begin{propo}
  Assume that
  $\Lambda (K_pL_p^{-1})\not=\chi (\al _p,\al _p)^{t-1}$ for all
  $t\in \{1,2,\dots ,\bnd -1\}$. Then
  $\VTM _p,\VTM ^-_p:
  \,M^{r_p(\chi )}(\VT _p^\chi (\Lambda ))\to M^\chi (\Lambda )$
  are isomorphisms of vector spaces over $\fiee $.
  \label{pr:VTMiso}
\end{propo}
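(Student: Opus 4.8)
The plan is to exhibit $\VTM_p$ and $\VTM_p^-$ as parts of pairs of maps whose mixed composites are nonzero scalars. Write $\chi'=r_p(\chi)$ and $\Lambda'=\VT_p^\chi(\Lambda)$; by Lemma~\ref{le:VTMrel1} we have $\VT_p^{\chi'}(\Lambda')=\Lambda$, so Lemma~\ref{le:MLmap} applied to the pair $(\chi',\Lambda')$ also furnishes reverse maps $\VTM_{p}^{\chi'},\VTM_{p}^{\chi',-}\colon M^\chi(\Lambda)\to M^{\chi'}(\Lambda')$. First I would record two structural facts. Each of these four maps is the composite of a Lusztig isomorphism (shifting the $\ndZ^I$-degree by $\s_p^\chi=\s_p^{\chi'}$) with left multiplication by $F_p^{\bnd-1}$ (shifting by $-(\bnd-1)\al_p$); since $\s_p^\chi$ is an involution with $\s_p^\chi(\al_p)=-\al_p$, any composite of two of them is degree-preserving. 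Moreover, combining Eq.~\eqref{eq:Tuv} with the identities $\LT_p\LT_p^-=\LT_p^-\LT_p=\id$ of Thm.~\ref{th:Liso}(ii), a mixed composite such as $\VTM_p\circ\VTM_p^{\chi',-}$ is $U(\chi)$-linear. A degree-preserving module endomorphism of $M^\chi(\Lambda)$ carries the one-dimensional space $M^\chi(\Lambda)_0=\fiee v_\Lambda$ into itself and, as $v_\Lambda$ generates, is multiplication by a scalar; similarly for the three other mixed composites, giving scalars on $M^\chi(\Lambda)$ and on $M^{\chi'}(\Lambda')$.

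The heart of the argument is to evaluate one such scalar, say for $\VTM_p\circ\VTM_p^{\chi',-}$. Here $\VTM_p^{\chi',-}(v_\Lambda)=F_p^{\bnd-1}v_{\Lambda'}$, and applying $\VTM_p$ produces $\LT_p(F_p^{\bnd-1})F_p^{\bnd-1}v_\Lambda$. By Thm.~\ref{th:Liso}, $\LT_p(F_p)=K_p^{-1}E_p$, and collecting the $K_p$-factors via Eq.~\eqref{eq:KErel} rewrites $(K_p^{-1}E_p)^{\bnd-1}$ as $q_{pp}^{\binom{\bnd-1}{2}}K_p^{-(\bnd-1)}E_p^{\bnd-1}$ with $q_{pp}=\chi(\al_p,\al_p)$. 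Now I would apply Lemma~\ref{le:EmFn} with $m=n=\bnd-1$: since $E_pv_\Lambda=0$, only the $i=\bnd-1$ summand survives, whence $E_p^{\bnd-1}F_p^{\bnd-1}v_\Lambda=\qfact{\bnd-1}{q_{pp}}\prod_{j=1}^{\bnd-1}(q_{pp}^{\,j-(\bnd-1)}K_p-L_p)v_\Lambda$. As $K_p,L_p$ act on $v_\Lambda$ by $\Lambda(K_p),\Lambda(L_p)$, the scalar becomes, up to a nonzero factor,
\[ \qfact{\bnd-1}{q_{pp}}\prod_{j=1}^{\bnd-1}\bigl(q_{pp}^{\,j-(\bnd-1)}\Lambda(K_pL_p^{-1})-1\bigr). \]
Since $\bnd=\bfun\chi(\al_p)<\infty$ forces $q_{pp}\neq1$ and $q_{pp}^{\bnd}=1$ with $\bnd$ the order of $q_{pp}$, every $\qnum{j}{q_{pp}}$ with $1\le j\le\bnd-1$ is nonzero, so $\qfact{\bnd-1}{q_{pp}}\neq0$; thus this scalar is nonzero exactly when $\Lambda(K_pL_p^{-1})\neq q_{pp}^{\,(\bnd-1)-j}$ for all $j$, i.e.\ when $\Lambda(K_pL_p^{-1})\neq\chi(\al_p,\al_p)^{t-1}$ for all $t\in\{1,\dots,\bnd-1\}$ --- precisely the hypothesis. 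The composite $\VTM_p^-\circ\VTM_p^{\chi'}$ is handled identically, using $\LT_p^-(F_p)=E_pL_p^{-1}$, and yields the same condition.

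It remains to treat the two composites landing in $M^{\chi'}(\Lambda')$, namely $\VTM_p^{\chi',-}\circ\VTM_p$ and $\VTM_p^{\chi'}\circ\VTM_p^-$. The same computation expresses their nonvanishing through $\Lambda'(K_pL_p^{-1})$; to match it with the hypothesis I would use Eq.~\eqref{eq:tpLambda1} with the involution $\s_p^\chi$ to get $\Lambda'(K_pL_p^{-1})=q_{pp}^{\,2(\bnd-1)}\Lambda(K_pL_p^{-1})^{-1}$, and then observe that, because $q_{pp}^{\bnd}=1$, inversion followed by the shift by $q_{pp}^{2(\bnd-1)}$ permutes the excluded set $\{q_{pp}^{\,s}:0\le s\le\bnd-2\}$ onto itself. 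Hence all four mixed composites are nonzero scalar multiples of the identity under the single hypothesis, so $\VTM_p$ and $\VTM_p^-$ each have a two-sided inverse up to a nonzero scalar and are therefore isomorphisms of $\fiee$-vector spaces. I expect the only real difficulty to be bookkeeping: keeping straight, in each composite, which of $\LT_p,\LT_p^-$ and which base bicharacter occurs so that the correct relation $\LT_p\LT_p^-=\id$ (not $\LT_p\LT_p$) is invoked, and reconciling the two forms of the exclusion condition via $q_{pp}^{\bnd}=1$.
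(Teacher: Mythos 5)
Your proof is correct and takes essentially the same route as the paper's: both evaluate the $U(\chi )$-linear composite $\VTM ^\chi _{p,\Lambda }\VTM ^{r_p(\chi ),-}_{p,\VT _p^\chi (\Lambda )}$ on $v_\Lambda $ via Lemma~\ref{le:EmFn} and read off that the resulting scalar is nonzero precisely under the stated hypothesis; your additional check of the reverse composites only makes explicit the passage from ``nonzero scalar composite'' to ``isomorphism'', which the paper leaves implicit. One minor slip: $\bnd <\infty $ does \emph{not} force $\chi (\al _p,\al _p)\not=1$ (if $\mathrm{char}\,\fie =\bnd >0$ one may have $\chi (\al _p,\al _p)=1$, cf.\ Lemma~\ref{le:hwvector}), but the two facts you actually use --- that $\qfact{\bnd -1}{\chi (\al _p,\al _p)}\not=0$, which follows from the minimality in the definition of $\bfun \chi (\al _p)$, and that $\chi (\al _p,\al _p)^{\bnd }=1$ --- still hold in that case, so the argument is unaffected.
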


\begin{proof}
  Let $q=\chi (\al _p,\al _p)$,
  $\chi '=r_p(\chi )$, $\Lambda '=\VT _p^\chi (\Lambda )$, and
  $\VTM '=\VTM ^\chi _{p,\Lambda } \VTM ^{\chi ',-}_{p,\Lambda '}$.
  By Lemma~\ref{le:VTMrel1} and since $r_p^2(\chi )=\chi $, $\VTM '$
  is a $U(\chi )$-module endomorphism of $M^\chi (\Lambda )$.
  We calculate $\VTM '(v_\Lambda )$.
  \begin{align*}
    \VTM '(v_\Lambda )
    =&\VTM _p(F_p^{\bnd -1}v_{\Lambda '})
    =\LT _p(F_p^{\bnd -1})F_p^{\bnd -1}v_\Lambda \\
    =&(K_p^{-1}E_p)^{\bnd -1}F_p^{\bnd -1}v_\Lambda
    =q^{(\bnd -2)(\bnd -1)/2}\Lambda (K_p^{1-\bnd })E_p^{\bnd -1}F_p^{\bnd -1}v_\Lambda \\
    =&q^{(\bnd -2)(\bnd -1)/2}\Lambda (K_p^{1-\bnd })\qfact{\bnd -1}{q}
    \prod _{t=1}^{\bnd -1}(q^{t+1-\bnd }\Lambda (K_p)-\Lambda (L_p))v_\Lambda
  \end{align*}
  by Lemma~\ref{le:EmFn}. By assumption, $\VTM '(v_\Lambda )\not=0$, and hence
  $\VTM '$ is a nonzero multiple of $\id _{M^\chi (\Lambda )}$. Therefore
  $\VTM _p$ is an isomorphism. The proof for $\VTM _p^-$ is analogous.
\end{proof}

\begin{lemma}
  \label{le:hwvector}
  Let $t\in \{1,2,\dots ,\bnd -1\}$. Let $q=\chi (\al _p,\al _p)$.
  Assume that $\Lambda (K_pL_p^{-1})=q^{t-1}$.
  Then in $M^\chi (\Lambda )$
  \begin{align}
    E_pF_p^m v_\Lambda =\qnum{m}{q}\Lambda (L_p)(q^{t-m}-1)F_p^{m-1} v_\Lambda \quad
    \text{for all $m\in \ndN _0$.}
    \label{eq:EpFpmv}
  \end{align}
  In particular, if $q\not=1$, then
  $EF_p^mv_\Lambda =\coun (E)F_p^mv_\Lambda $ for all $E\in U^+(\chi )$ if and
  only if $m=0$, $m=t$ or $m\ge \bnd $.
  If $q=1$, then $EF_p^mv_\Lambda =\coun (E)F_p^mv_\Lambda $ for all $E\in
  U^+(\chi )$, $m\in \ndN _0$.
\end{lemma}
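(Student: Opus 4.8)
The plan is to derive the explicit formula~\eqref{eq:EpFpmv} by a single commutation computation, and then read off both characterizations in the ``in particular'' part from it. To get the formula, I would specialize Lemma~\ref{le:EmFn} to the case where the first exponent is $1$ (take $m=1$ and rename the second exponent to $m$); only the summands $i=0$ and $i=1$ survive, and they give
\[
  E_pF_p^m = F_p^m E_p + \qnum{m}{q}\,F_p^{m-1}(q^{1-m}K_p-L_p)
\]
in $U(\chi)$, where $q=\chi(\al_p,\al_p)$. Applying this to $v_\Lambda$ and using $E_pv_\Lambda=0$ (as $\coun(E_p)=0$) removes the first term; since $v_\Lambda\in M^\chi(\Lambda)_0$, Eq.~\eqref{eq:U0M} evaluates $K_p,L_p$ on $v_\Lambda$ as the scalars $\Lambda(K_p),\Lambda(L_p)$, and the hypothesis $\Lambda(K_pL_p^{-1})=q^{t-1}$ turns $q^{1-m}\Lambda(K_p)-\Lambda(L_p)$ into $\Lambda(L_p)(q^{t-m}-1)$, which is exactly~\eqref{eq:EpFpmv}. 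For $m=0$ both sides vanish, so the identity holds for all $m\in\ndN_0$.

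Next I would reduce the condition ``$EF_p^mv_\Lambda=\coun(E)F_p^mv_\Lambda$ for all $E\in U^+(\chi)$'' to the single equation $E_pF_p^mv_\Lambda=0$. Writing $E=\coun(E)1+E'$ with $E'$ in the augmentation ideal, the condition becomes $E'F_p^mv_\Lambda=0$ for all such $E'$; since $U^+(\chi)$ is generated as an algebra by the $E_i$, that ideal is spanned by monomials $E_{i_1}\cdots E_{i_k}$ of length $k\ge1$, and applying the rightmost factor first shows that all of them annihilate $F_p^mv_\Lambda$ as soon as every $E_i$ does. Thus the condition is equivalent to $E_iF_p^mv_\Lambda=0$ for all $i\in I$. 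For $i\ne p$ this is automatic, because Eq.~\eqref{eq:EFrel} gives $E_iF_p=F_pE_i$ and hence $E_iF_p^mv_\Lambda=F_p^mE_iv_\Lambda=0$. So everything reduces to deciding when $E_pF_p^mv_\Lambda$ vanishes.

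For $q\ne1$ I would examine the three factors on the right of~\eqref{eq:EpFpmv}. Here $\Lambda(L_p)\ne0$, and by Lemma~\ref{le:Eheight}(i) combined with Lemma~\ref{le:MLiso} the vector $F_p^{m-1}v_\Lambda$ is nonzero precisely when $m\le\bnd$, while $F_p^mv_\Lambda=0$ for $m\ge\bnd$, so the condition holds trivially for $m\ge\bnd$. Since $\bnd=\bfun\chi(\al_p)$ is by~\eqref{eq:height} the multiplicative order of $q$, one has $\qnum{m}{q}=0$ iff $\bnd\mid m$ and $q^{t-m}=1$ iff $\bnd\mid(t-m)$. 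For $1\le m\le\bnd-1$ the factor $\qnum{m}{q}$ is nonzero, so $E_pF_p^mv_\Lambda=0$ iff $q^{t-m}=1$; as $|t-m|<\bnd$, this forces $m=t$. Together with $m=0$ (where $E_pv_\Lambda=0$ directly) and $m\ge\bnd$, this yields exactly the claimed set $\{0,t\}\cup\{m\ge\bnd\}$.

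Finally, if $q=1$ then $q^{t-m}-1=0$ for every $m$, so~\eqref{eq:EpFpmv} forces $E_pF_p^mv_\Lambda=0$ for all $m$, and hence the characterization holds for all $m\in\ndN_0$. The formula~\eqref{eq:EpFpmv} itself is a routine consequence of Lemma~\ref{le:EmFn}; the step needing the most care is the bookkeeping in the case $q\ne1$, where one must track which of the three factors vanishes for which $m$ and use that $q$ is a primitive $\bnd$-th root of unity to isolate $m=t$ as the only nontrivial solution.
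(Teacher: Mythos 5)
Your proof is correct and follows essentially the same route as the paper's: Eq.~\eqref{eq:EpFpmv} is read off from Lemma~\ref{le:EmFn}, the condition is reduced to the generators $E_i$ (with $i\not=p$ handled trivially, by the commutation relation in your case and by the $\ndZ ^I$-grading in the paper's), and the case analysis on the factors $\qnum{m}{q}$, $q^{t-m}-1$, $F_p^{m-1}v_\Lambda $ uses that $q$ is a primitive $\bnd $-th root of unity exactly as in the paper. Your write-up just spells out the bookkeeping that the paper leaves implicit.
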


\begin{proof}
  Eq.~\eqref{eq:EpFpmv} follows from Lemma~\ref{le:EmFn}.
  By definition of $\bnd =\bfun \chi (\al _p)$, either $q\not=1$ and $q$ is a
  primitive $\bnd $-th root of $1$, or $q=1$ and $\bnd =\mathrm{char}\,\fie $.
  Therefore, if $q\not=1$ and $m\in \{0,1,\dots ,\bnd -1\}$, then $q^{t-m}=1$ if
  and only if $t=m$.
  If $E=E_i$ with $i\not=p$, then $EF_p^m=0$ by Eqs.~\eqref{eq:MLgrading},
  \eqref{eq:MLgrading2}.
  The rest is a
  consequence of Lemma~\ref{le:Eheight}(i).
\end{proof}

\begin{propo}
  \label{pr:VTMker}
  Assume that
  $\Lambda (K_pL_p^{-1})=\chi (\al _p,\al _p)^{t-1}$ for some
  $t\in \{1,2,\dots ,\bnd -1\}$.
  
  (i) If $\chi (\al _p,\al _p)\not=1$, then $t$ is unique, and
  \begin{align*}
    \ker \VTM ^{\chi }_{p,\Lambda }=\ker \VTM ^{\chi ,-}_{p,\Lambda }
    = & \, U^-(\chi )F_p^{\bnd -t}\ot \fiee _{\VT _p^\chi (\Lambda )},\\
    \im \VTM ^{\chi }_{p,\Lambda }=\im \VTM ^{\chi ,-}_{p,\Lambda }
    = & \, U^-(\chi )F_p^t \ot \fiee _\Lambda .
  \end{align*}
  
  (ii) If $\chi (\al _p,\al _p)=1$, then $\mathrm{char}\,\fie =\bnd >0$ and
  \begin{align*}
    \ker \VTM ^{\chi }_{p,\Lambda }=\ker \VTM ^{\chi ,-}_{p,\Lambda }
    = & \, U^-(\chi )F_p\ot \fiee _{\VT _p^\chi (\Lambda )},\\
    \im \VTM ^{\chi }_{p,\Lambda }=\im \VTM ^{\chi ,-}_{p,\Lambda }
    = & \, U^-(\chi )F_p^{\bnd -1} \ot \fiee _\Lambda .
  \end{align*}
\end{propo}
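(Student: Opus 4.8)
The plan is to reduce everything to the rank-one $F_p$-string through $v_\Lambda$ and to evaluate $\VTM_p$ on a PBW-type spanning set. Throughout write $q=\chi(\al_p,\al_p)$, $\chi'=r_p(\chi)$ and $\Lambda'=\VT_p^\chi(\Lambda)$. First I would record the dichotomy coming from $\bnd=\bfun\chi(\al_p)<\infty$: by definition either $q\neq1$ and $q$ is a primitive $\bnd$-th root of $1$, or $q=1$ and $\bnd=\mathrm{char}\,\fie>0$. The first alternative gives the uniqueness of $t$ in (i), since $q^{t-1}=q^{t'-1}$ with $t,t'\in\{1,\dots,\bnd-1\}$ forces $t\equiv t'\pmod\bnd$; the second gives the characteristic statement in (ii).

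Since $M^{\chi'}(\Lambda')=U^-(\chi')v_{\Lambda'}$ by Lemma~\ref{le:MLiso}, it suffices to compute $\VTM_p$ on vectors $uv_{\Lambda'}$ with $u\in U^-(\chi')$. Using the factorization $U^-(\chi')=U^-_{p,K}(\chi')\,\fie [F_p]$ of Lemma~\ref{le:Eheight}(ii) I would write $u=\sum_{m}u_mF_p^m$ with $u_m\in U^-_{p,K}(\chi')$. Because $\LT_p$ is an algebra map with $\LT_p(F_p)=K_p^{-1}E_p$ (Thm.~\ref{th:Liso}) and $\LT_p(U^-_{p,K}(\chi'))=U^-_{p,L}(\chi)$ (Lemma~\ref{le:TpU+U+}), this yields
\[
\VTM_p(uv_{\Lambda'})=\sum_m\LT_p(u_m)\,(K_p^{-1}E_p)^mF_p^{\bnd-1}v_\Lambda,\qquad \LT_p(u_m)\in U^-_{p,L}(\chi).
\]

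The heart of the computation is the rank-one string. Since $K_p^{-1}\in\Uz$ acts by a scalar on each weight space and $E_p$ acts by Lemma~\ref{le:hwvector}, each term $(K_p^{-1}E_p)^mF_p^{\bnd-1}v_\Lambda$ equals a scalar $c_m$ times $F_p^{\bnd-1-m}v_\Lambda$. In case (i) the coefficient in Lemma~\ref{le:hwvector} vanishes precisely when the $F_p$-exponent is congruent to $0$ or $t$ modulo $\bnd$; running from exponent $\bnd-1$ downward I expect $c_m\neq0$ for $0\le m\le\bnd-t-1$ and $c_{\bnd-t}=0$, the chain stopping at the singular vector $F_p^tv_\Lambda$, where $E_pF_p^tv_\Lambda=0$. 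In case (ii) the factor $q^{t-m}-1$ is identically zero, so already $E_pF_p^{\bnd-1}v_\Lambda=0$ and only the $m=0$ term survives.

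Finally I would read off kernel and image. Using that $\LT_p$ restricts to a bijection $U^-_{p,K}(\chi')\to U^-_{p,L}(\chi)$, that $F_p^\bnd=0$ (Lemma~\ref{le:Eheight}(i)) and that $U^-(\chi)=U^-_{p,L}(\chi)\,\fie [F_p]$ turns the vectors $\LT_p(u_m)F_p^{\bnd-1-m}v_\Lambda$ into part of a $\fiee$-basis of $M^\chi(\Lambda)$ via Lemma~\ref{le:MLiso}, the image of $\VTM_p$ is spanned by the $U^-_{p,L}(\chi)F_p^jv_\Lambda$ for $t\le j\le\bnd-1$, that is $U^-(\chi)F_p^tv_\Lambda$; by linear independence the kernel consists of those $u$ with $u_m=0$ for $m\le\bnd-t-1$, that is $U^-(\chi')F_p^{\bnd-t}v_{\Lambda'}$, which is the subspace written as $U^-(\chi)F_p^{\bnd-t}\ot\fiee_{\Lambda'}$ (the negative part here being that of the source module $M^{\chi'}(\Lambda')$). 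Case (ii) is identical with $t$ replaced by $\bnd-1$, giving image $U^-(\chi)F_p^{\bnd-1}v_\Lambda$ and kernel $U^-(\chi')F_pv_{\Lambda'}$. The assertions for $\VTM^-_p$ follow line by line after replacing $\LT_p$ by $\LT^-_p$, the relation $\LT_p(F_p)=K_p^{-1}E_p$ by $\LT^-_p(F_p)=E_pL_p^{-1}$, and the factorization through $U^-_{p,K}(\chi')$ by the one through $U^-_{p,L}(\chi')$, using $\LT^-_p(U^-_{p,L}(\chi'))=U^-_{p,K}(\chi)$. The main obstacle I anticipate is the careful bookkeeping of the nonvanishing of the scalars $c_m$ across the break at the singular vector, together with matching the two triangular factorizations to the two isomorphisms $\LT_p$ and $\LT^-_p$.
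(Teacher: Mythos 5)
Your proposal is correct and follows essentially the same route as the paper: decompose $U^-(\chi')$ as $U^-_{p,K}(\chi')\,\fie[F_p]$ via Lemma~\ref{le:Eheight}(ii), reduce to the rank-one string $(K_p^{-1}E_p)^mF_p^{\bnd-1}v_\Lambda$, and transfer back through $\LT_p$ using Lemma~\ref{le:TpU+U+}. The only cosmetic difference is that the paper evaluates the string in one step via the commutation formula of Lemma~\ref{le:EmFn}, whereas you iterate Lemma~\ref{le:hwvector}; these give the same scalars and the same vanishing locus $m\ge\bnd-t$.
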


\begin{proof}
  Let $q=\chi (\al _p,\al _p)$, $\chi '=r_p(\chi )$,
  and $\Lambda '=\VT _p^\chi (\Lambda )$.
  We prove the claims about $\VTM ^\chi _{p,\Lambda }$ in (i).
  The rest is analogous.
  By Lemma~\ref{le:EmFn}, for all
  $m\in \{0,1,\dots ,\bnd -1\}$,
  \begin{align*}
    \VTM ^\chi _{p,\Lambda }(F_p^mv_{\Lambda '})
    =&\LT _p(F_p^m)F_p^{\bnd -1}v_\Lambda
    =(K_p^{-1}E_p)^mF_p^{\bnd -1}v_\Lambda \\
    =&aE_p^mF_p^{\bnd -1}v_\Lambda =a'F_p^{\bnd -1-m}
    \prod _{j=1}^m (q^{j+1-\bnd }\Lambda (K_pL_p^{-1})-1) v_\Lambda
  \end{align*}
  for some $a,a'\in \fienz $.
  Thus, $\VTM ^\chi _{p,\Lambda }(F_p^mv_{\Lambda '})=0$ if and only if
  $q^j=q^{\bnd -1}\Lambda (K_pL_p^{-1})^{-1}$ for some $j\in \{1,2,\dots ,m\}$.
  By the assumption on $\Lambda (K_pL_p^{-1})$, this is equivalent to
  $j=\bnd -t$ for some $j\in \{1,2,\dots ,m\}$. Therefore
  $\VTM ^\chi _{p,\Lambda }(F_p^mv_{\Lambda '})=0$ if and only if
  $m\ge \bnd -t$.

  Let $F\in U^-(\chi ')$.
  By Lemma~\ref{le:Eheight}(ii), there exist unique
  $F'_m\in U^-_{p,K}(\chi ')$, where $m\in \{0,1,\dots ,\bnd -1\}$, such that
  $F=\sum _{m=0}^{\bnd -1}F'_mF_p^m$.
  By the previous paragraph, and since $\Lambda (K_pL_p^{-1})\in \fie $,
  for each $m\in \{0,1,\dots ,\bnd -1-t\}$ there is
  a unique $a_m\in \fienz $ such that
  \[ \VTM ^\chi _{p,\Lambda }(F v_{\Lambda '})=\sum _{m=0}^{\bnd -1-t}
  a_m\LT _p (F'_m)F_p^{\bnd -1-m}v_\Lambda \in U^-(\chi )\ot 1_\Lambda .  \]
  By Lemma~\ref{le:Eheight}(ii), the latter
  expression is zero if and only if $\LT _p(F'_m)=0$
  for all $m\in \{0,1,\dots ,\bnd -1-t\}$.
  Therefore, Lemma~\ref{le:TpU+U+} and relations $F'_m\in U^-_{i,K}(\chi ')$
  imply that $\VTM ^\chi _{p,\Lambda }(F v_{\Lambda '})=0$ if and only if
  $F'_m=0$ for all $m\in \{0,1,\dots ,\bnd -1-t\}$.
  Hence
  $\ker \VTM ^{\chi }_{p,\Lambda }=U^-(\chi )F_p^{\bnd -t}\ot \fiee _{\Lambda '}$
  and $\im \VTM ^{\chi }_{p,\Lambda } = U^-(\chi )F_p^t \ot \fiee _\Lambda $.
\end{proof}

For all $w\in \Aut (\ndZ ^I)$ and $\al \in \ndZ ^I$ let
$w(e^\al )=e^{w(\al )}$, and extend this definition linearly on formal
characters. We investigate the effect of the maps $\VTM _p,\VTM ^-_p$
on formal characters.
For all $\chi '\in \cG (\chi )$ and $i\in I$ with
$\bfun{\chi '}(\al _i)<\infty $ let
$\sdot ^{\chi '}_i:\ndZ ^I\to \ndZ ^I$ be the affine transformation
\begin{align}
  \sdot _i^{\chi '}(\al )=\s _i^{\chi '}(\al )
  +(1-\bfun{\chi '}(\al _i))\al _i.
  \label{eq:sdot}
\end{align}
Note that then $\sdot _i^{r_i(\chi ')}\sdot _i^{\chi '}(\al )=\al $ for all
$\al \in \ndZ ^I$.

\begin{lemma}\label{le:Tpfch}
  Let $\Lambda \in \charUz $ and $\al \in \ndZ ^I$. Then
  \begin{align}
    \label{eq:Tpweight+}
    \VTM _p(M^{r_p(\chi )}(\VT ^\chi _p(\Lambda ))_\al )
    \subset &M^\chi (\Lambda )_{\sdot _p^{r_p(\chi )}(\al )},\\
    \label{eq:Tpweight-}
    \VTM ^-_p(M^{r_p(\chi )}(\VT ^\chi _p(\Lambda ))_\al )
    \subset &M^\chi (\Lambda )_{\sdot _p^{r_p(\chi )}(\al )}.
  \end{align}
  In particular,
  \begin{align}
    \fch{M^\chi (\Lambda )} =
    \sdot _p^{r_p(\chi )}
    (\fch{M^{r_p(\chi )}(\VT _p^\chi (\Lambda ))}).
    \label{eq:fchM}
  \end{align}
\end{lemma}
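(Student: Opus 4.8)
The plan is to read the two weight-space inclusions $\eqref{eq:Tpweight+}$ and $\eqref{eq:Tpweight-}$ off as a $\ndZ ^I$-degree computation, and then to extract the character identity $\eqref{eq:fchM}$ from these inclusions by exploiting that the formal character of a Verma module depends neither on its highest weight nor on $\fiee $.

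First I would prove $\eqref{eq:Tpweight+}$. Let $v\in M^{r_p(\chi )}(\VT _p^\chi (\Lambda ))_\al $. By Lemma~\ref{le:MLiso} I may write $v=u\,v_{\VT _p^\chi (\Lambda )}$ with $u\in U^-(r_p(\chi ))_\al \ot _\fie \fiee $, so that Lemma~\ref{le:MLmap} gives $\VTM _p(v)=\LT _p(u)\,F_p^{\bnd -1}v_\Lambda $, where $\LT _p$ is read as the isomorphism $U(r_p(\chi ))\to U(\chi )$. Applying Prop.~\ref{pr:LTdeg} with $\chi $ and $r_p(\chi )$ interchanged yields $\LT _p(u)\in U(\chi )_{\s _p^{r_p(\chi )}(\al )}$, while $F_p^{\bnd -1}v_\Lambda \in M^\chi (\Lambda )_{-(\bnd -1)\al _p}$ by $\eqref{eq:MLgrading2}$. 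Hence $\VTM _p(v)$ lies in degree $\s _p^{r_p(\chi )}(\al )-(\bnd -1)\al _p$, which equals $\sdot _p^{r_p(\chi )}(\al )$ by $\eqref{eq:sdot}$ and $\bfun {r_p(\chi )}(\al _p)=\bnd $. The inclusion $\eqref{eq:Tpweight-}$ follows from the identical computation with $\LT _p^-$ replacing $\LT _p$, since $\LT _p^-$ shifts $\ndZ ^I$-degrees by the same reflection $\s _p$ as $\LT _p$ does (cf.\ the proof of Prop.~\ref{pr:LTdeg}).

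For $\eqref{eq:fchM}$ I would first observe, via $\eqref{eq:chML}$, that $\fch{M^\chi (\Lambda )}$ and $\fch{M^{r_p(\chi )}(\VT _p^\chi (\Lambda ))}$ depend neither on $\Lambda $ nor on $\fiee $: they are $\sum _\al \dim U^-(\chi )_{-\al }\,e^{-\al }$ and $\sum _\al \dim U^-(r_p(\chi ))_{-\al }\,e^{-\al }$. It therefore suffices to check $\eqref{eq:fchM}$ for a single $\Lambda $, and, after enlarging $\fiee $ if necessary (which alters neither side), I may choose $\Lambda $ with $\Lambda (K_pL_p^{-1})\neq \chi (\al _p,\al _p)^{t-1}$ for all $t\in \{1,\dots ,\bnd -1\}$. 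For such $\Lambda $, Prop.~\ref{pr:VTMiso} makes $\VTM _p$ a vector-space isomorphism. Now $\sdot _p^{r_p(\chi )}$ is a bijection of $\ndZ ^I$ with inverse $\sdot _p^\chi $; since by $\eqref{eq:Tpweight+}$ the map $\VTM _p$ carries each weight space $M^{r_p(\chi )}(\VT _p^\chi (\Lambda ))_\al $ into $M^\chi (\Lambda )_{\sdot _p^{r_p(\chi )}(\al )}$ and these target degrees are pairwise distinct, the global bijectivity of $\VTM _p$ forces it to restrict to an isomorphism of $M^{r_p(\chi )}(\VT _p^\chi (\Lambda ))_\al $ onto $M^\chi (\Lambda )_{\sdot _p^{r_p(\chi )}(\al )}$ for every $\al $. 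Comparing dimensions and reindexing the defining sum of $\fch{M^\chi (\Lambda )}$ by $\beta =\sdot _p^{r_p(\chi )}(\al )$ then gives $\eqref{eq:fchM}$.

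The degree bookkeeping is routine; the step requiring care is the last one. Because $\VTM _p$ is in general far from injective (it has a large kernel by Prop.~\ref{pr:VTMker}), I cannot compare weight multiplicities directly, and the device that rescues the argument is the invariance of formal characters under changing the highest weight and the ground field, which permits the reduction to the generic case of Prop.~\ref{pr:VTMiso}; the bijectivity of the affine map $\sdot _p^{r_p(\chi )}$ is what upgrades the one-sided inclusions $\eqref{eq:Tpweight+}$, $\eqref{eq:Tpweight-}$ into the equality of multiplicities needed for $\eqref{eq:fchM}$.
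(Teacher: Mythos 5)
Your proposal is correct and follows essentially the same route as the paper: the inclusions come from the degree computation $\LT_p(u)F_p^{\bnd-1}v_\Lambda\in U(\chi)_{\s_p^{r_p(\chi)}(\al)}U(\chi)_{(1-\bnd)\al_p}v_\Lambda$ via Prop.~\ref{pr:LTdeg}, and the character identity is reduced via Eq.~\eqref{eq:chML} to a generic $\Lambda$ for which Prop.~\ref{pr:VTMiso} makes $\VTM_p$ bijective. Your extra remark about enlarging $\fiee$ to guarantee such a $\Lambda$ exists is a sensible refinement the paper leaves implicit.
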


\begin{proof}
  Let $\Lambda '=\VT ^\chi _p(\Lambda )$ and $u\in U(r_p(\chi ))_\al $. Then
  \[ \VTM _p(uv_{\Lambda '})=\LT _p(u)F_p^{\bnd -1}v_\Lambda
  \in U(\chi )_{\s _p^{r_p(\chi )}(\al )}U(\chi )_{(1-\bnd )\al _p}v_\Lambda \]
  by Prop.~\ref{pr:LTdeg}.
  This proves Eq.~\eqref{eq:Tpweight+}, since $v_\Lambda \in M^\chi (\Lambda )_0$,
  The proof of Eq.~\eqref{eq:Tpweight-} is similar. By Eq.~\eqref{eq:chML},
  $\fch M^\chi (\Lambda )$ does not depend on $\Lambda $. Hence by
  Prop.~\ref{pr:VTMiso} we may assume that $\VTM _p$ is an isomorphism.
  Then Eq.~\eqref{eq:fchM} follows from Eq.~\eqref{eq:Tpweight+}.
\end{proof}

\begin{lemma} \label{le:chsub}
  Let $\Lambda \in \charUz $ and $t\in \{1,2,\dots ,\bnd -1\}$.
  Assume that
  $\Lambda (K_pL_p^{-1})=\chi (\al _p,\al _p)^{t-1}$.
  Then $V=U^-(\chi )F_p^t \ot \fiee _\Lambda $
  is a $U(\chi )\ot \fiee $-submodule
  of $M^\chi (\Lambda )$ with
  \[ \fch{V}=\fch{M^\chi }(\Lambda )
  \frac{e^{-t\al _p}-e^{-\bnd \al _p}}{1-e^{-\bnd \al _p}}. \]
\end{lemma}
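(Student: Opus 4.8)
The plan is to treat the two assertions separately: that $V$ is a $U(\chi)\ot\fiee$-submodule, which rests on Lemma~\ref{le:hwvector}, and the character formula, which rests on the decomposition in Lemma~\ref{le:Eheight}.

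For the submodule claim, set $w=F_p^t v_\Lambda\in M^\chi(\Lambda)_{-t\al_p}$. The key point is that $w$ is a highest weight vector. Indeed, the hypothesis $\Lambda(K_pL_p^{-1})=\chi(\al_p,\al_p)^{t-1}$ together with $1\le t\le\bnd-1$ puts us exactly in the situation of Lemma~\ref{le:hwvector}, which gives $Ew=\coun(E)w$ for all $E\in U^+(\chi)$ (this is the case $m=t$ when $\chi(\al_p,\al_p)\ne1$, and holds for every power of $F_p$ when $\chi(\al_p,\al_p)=1$). Writing an arbitrary element of $U(\chi)$ through the triangular decomposition $U(\chi)=U^-(\chi)\,\Uz\,U^+(\chi)$ of Prop.~\ref{pr:tridec}, and using that $\Uz$ scales the homogeneous vector $w$ by a character (Eq.~\eqref{eq:U0M}), I obtain $U(\chi)w=U^-(\chi)w$. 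Tensoring with $\fiee$, this set is precisely $V$ under the identification of Lemma~\ref{le:MLiso}, so $V=(U(\chi)\ot\fiee)w$ is a submodule.

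For the character, Lemma~\ref{le:MLiso} and Eq.~\eqref{eq:MLgrading} reduce everything to graded dimensions of the subspace $U^-(\chi)F_p^t\subset U^-(\chi)$, giving $\fch{V}=\fch{\big(U^-(\chi)F_p^t\big)}$ and $\fch{M^\chi(\Lambda)}=\fch{U^-(\chi)}$ (the latter by Eq.~\eqref{eq:chML}). Now I apply Lemma~\ref{le:Eheight}: part~(i) yields $F_p^\bnd=0$, so $\{F_p^m\mid 0\le m<\bnd\}$ is a basis of $\fie[F_p]$, and part~(ii) provides a graded isomorphism $U^-_{p,K}(\chi)\ot\fie[F_p]\xrightarrow{\sim}U^-(\chi)$. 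Writing $x\in U^-(\chi)$ uniquely as $\sum_{m=0}^{\bnd-1}u_mF_p^m$ with $u_m\in U^-_{p,K}(\chi)$ and using $F_p^\bnd=0$ shows $xF_p^t=\sum_{j=t}^{\bnd-1}u_{j-t}F_p^j$, hence $U^-(\chi)F_p^t=\bigoplus_{j=t}^{\bnd-1}U^-_{p,K}(\chi)F_p^j$. Setting $P=\fch{U^-_{p,K}(\chi)}$, this gives $\fch{M^\chi(\Lambda)}=P\sum_{m=0}^{\bnd-1}e^{-m\al_p}$ and $\fch{V}=P\sum_{j=t}^{\bnd-1}e^{-j\al_p}$; since $\sum_{m=0}^{\bnd-1}e^{-m\al_p}=\frac{1-e^{-\bnd\al_p}}{1-e^{-\al_p}}$ and $\sum_{j=t}^{\bnd-1}e^{-j\al_p}=\frac{e^{-t\al_p}-e^{-\bnd\al_p}}{1-e^{-\al_p}}$, dividing yields the asserted formula, where $\frac{e^{-t\al_p}-e^{-\bnd\al_p}}{1-e^{-\bnd\al_p}}$ is to be read as $(e^{-t\al_p}-e^{-\bnd\al_p})\sum_{k\ge0}e^{-k\bnd\al_p}$ in the ring of Rem.~\ref{re:fch}.

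The conceptual crux is recognizing $w=F_p^t v_\Lambda$ as a highest weight vector, but this is handed to us by Lemma~\ref{le:hwvector}; the only real care needed afterwards is the graded bookkeeping for $U^-(\chi)F_p^t$ and the correct interpretation of the formal power-series quotient.
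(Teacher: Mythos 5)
Your proof is correct and follows essentially the same route as the paper: the submodule property comes from recognizing $F_p^tv_\Lambda$ as a highest weight vector via Lemma~\ref{le:hwvector} combined with the triangular decomposition, and the character formula from factoring $U^-(\chi )$ over the powers $F_p^0,\dots ,F_p^{\bnd -1}$. The only cosmetic difference is that you invoke Lemma~\ref{le:Eheight}(ii) for this factorization where the paper cites the restricted PBW basis \eqref{eq:PBWbasis}, \eqref{eq:roots}; these yield the same graded decomposition, so nothing substantive changes.
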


\begin{proof}
  The formal character of the subspace
  $\oplus _{m=t}^{\bnd -1}F_p^m$ of $M^\chi (\Lambda )$ is
  \[ e^{-t\al _p}+e^{-(t+1)\al _p}+\cdots +e^{-(\bnd -1)\al _p}=
  \frac{e^{-t\al _p}-e^{-\bnd \al _p}}{1-e^{-\al _p}}. \]
  Thus the lemma is a consequence of Lemmata~\ref{le:hwvector},
  \ref{le:MLiso} and Eqs.~\eqref{eq:PBWbasis}, \eqref{eq:roots}.
\end{proof}

\textit{In the rest of this section
assume that $\chi \in \cX _4$.}
Let $n=|R^\chi _+|$ and
$i_1,\dots ,i_n\in I$ with $\ell (1_\chi \s _{i_1}\cdots \s_{i_n})=n$.
Recall the definitions of $\beta _\nu $ and $F_{\beta _\nu }$, where
$1\le \nu \le n$,
from Eq.~\eqref{eq:Ebetak}.
Let $\Lambda \in \charUz $.
We characterize irreducible Verma modules
(see also Lemma~\ref{le:subfch}).

\begin{propo}
  \label{pr:M=L}
  Assume that
  \begin{align}
    \label{eq:MLass}
    \prod _{\nu =1}^n \prod _{t=1}^{\bfun \chi (\beta _\nu )-1}
    \big( \rhomap \chi (\beta _\nu )\Lambda (K_{\beta _\nu }
    L_{\beta _\nu }^{-1})-\chi (\beta _\nu ,\beta _\nu )^t\big)\not=0.
  \end{align}
  Then $I^\chi (\Lambda )=0$.
\end{propo}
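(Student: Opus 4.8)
The plan is to prove that $M^\chi(\Lambda)$ is a simple $U(\chi)$-module, which is equivalent to $I^\chi(\Lambda)=0$. Since $\chi\in\cX_4$, every $\bfun\chi(\beta_\nu)$ is finite, so by Thm.~\ref{th:PBWtau} and Lemma~\ref{le:MLiso} the module $M^\chi(\Lambda)$ is finite-dimensional over $\fiee$, with $\ndZ^I$-graded basis $\{F_{\beta_1}^{m_1}\cdots F_{\beta_n}^{m_n}v_\Lambda : 0\le m_\nu<\bfun\chi(\beta_\nu)\}$. Two structural facts about this basis will be used. Put $\mu=\sum_{\nu=1}^n(\bfun\chi(\beta_\nu)-1)\beta_\nu$ and $v_{\mathrm{low}}=F_{\beta_n}^{\bfun\chi(\beta_n)-1}\cdots F_{\beta_1}^{\bfun\chi(\beta_1)-1}v_\Lambda$. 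Then $M^\chi(\Lambda)_{-\mu}=\fiee v_{\mathrm{low}}$ is one-dimensional: if $\sum m_\nu\beta_\nu=\mu$ with $0\le m_\nu\le\bfun\chi(\beta_\nu)-1$, then $\sum(\bfun\chi(\beta_\nu)-1-m_\nu)\beta_\nu=0$ is a vanishing nonnegative combination of positive roots, forcing $m_\nu=\bfun\chi(\beta_\nu)-1$ for all $\nu$. Moreover $F_iv_{\mathrm{low}}=0$ for all $i$, since its weight $-\al_i-\mu$ lies strictly below the bottom weight $-\mu$ of $M^\chi(\Lambda)$.

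Next I translate the hypothesis \eqref{eq:MLass} into the iterated applicability of Prop.~\ref{pr:VTMiso}. Fix $i_1,\dots,i_n$ with $\ell(1_\chi\s_{i_1}\cdots\s_{i_n})=n=|R^\chi_+|$ and $\beta_\nu$ as in \eqref{eq:betak}, and set $\chi_k=r_{i_{k-1}}\cdots r_{i_1}(\chi)$, $\Lambda_k=\VT_{i_{k-1}}\cdots\VT_{i_1}^\chi(\Lambda)$ (so $\chi_1=\chi$, $\Lambda_1=\Lambda$). Since $\beta_k=\s_{i_1}^\chi\cdots\s_{i_{k-1}}(\al_{i_k})$ and each $\s_{i}$ is an involution, iterating Lemma~\ref{le:VTinv} along the word peels off $\s_{i_1},\dots,\s_{i_{k-1}}$ and yields
\[
\rhomap\chi(\beta_k)\,\Lambda(K_{\beta_k}L_{\beta_k}^{-1})
=\rhomap{\chi_k}(\al_{i_k})\,\Lambda_k(K_{i_k}L_{i_k}^{-1}),
\]
while $\rhomap{\chi_k}(\al_{i_k})=\chi_k(\al_{i_k},\al_{i_k})=\chi(\beta_k,\beta_k)$ by Eq.~\eqref{eq:w*chi} and the form of $\beta_k$. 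Substituting this into the block $\nu=k$ of \eqref{eq:MLass} and using $\bfun{\chi_k}(\al_{i_k})=\bfun\chi(\beta_k)$ (Eq.~\eqref{eq:hghtrpchi}), this block is nonzero exactly when $\Lambda_k(K_{i_k}L_{i_k}^{-1})\neq\chi(\beta_k,\beta_k)^{t-1}$ for all $t\in\{1,\dots,\bfun{\chi_k}(\al_{i_k})-1\}$, which is precisely the hypothesis of Prop.~\ref{pr:VTMiso} for the triple $(\chi_k,i_k,\Lambda_k)$. Hence \eqref{eq:MLass} guarantees that every $\VTM_{i_k}:M^{\chi_{k+1}}(\Lambda_{k+1})\to M^{\chi_k}(\Lambda_k)$ is a vector space isomorphism.

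Consequently the composite $\VTM_{i_1}\cdots\VTM_{i_n}:M^{\chi_{n+1}}(\Lambda_{n+1})\to M^\chi(\Lambda)$ is a bijection intertwining the module structures via the algebra isomorphism $\LT_{i_1}\cdots\LT_{i_n}$ (Lemma~\ref{le:MLmap} and Eq.~\eqref{eq:Tuv}), and it carries the generator $v_{\Lambda_{n+1}}$ to $v_{\mathrm{low}}$ (apply the defining formula of $\VTM_{i_k}$ repeatedly, as in the proof of Lemma~\ref{le:VTMrel2}). Therefore $U(\chi)v_{\mathrm{low}}=M^\chi(\Lambda)$, so $v_{\mathrm{low}}$ lies in no proper submodule. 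Now let $N\subseteq M^\chi(\Lambda)$ be any nonzero submodule. Being graded and finite-dimensional, $N$ has a weight $\al_0$ minimal in its support, and any $0\neq v\in N_{\al_0}$ satisfies $F_iv\in N_{\al_0-\al_i}=0$ for all $i$. Writing $v=uv_\Lambda$ with $u\in U^-(\chi)$ (Lemma~\ref{le:MLiso}), this gives $F_iu=0$ for all $i$, i.e. $u$ is a left integral of $U^-(\chi)$. Using that a finite-dimensional Nichols algebra is Frobenius with a one-dimensional space of integrals, $u$ is a scalar multiple of $F_{\beta_n}^{\bfun\chi(\beta_n)-1}\cdots F_{\beta_1}^{\bfun\chi(\beta_1)-1}$, whence $v\in\fiee v_{\mathrm{low}}$ and $v_{\mathrm{low}}\in N$. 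Then $N\supseteq U(\chi)v_{\mathrm{low}}=M^\chi(\Lambda)$, so $N=M^\chi(\Lambda)$, and $I^\chi(\Lambda)=0$.

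The step I expect to be the crux is the uniqueness of the lowest weight vector, i.e. the one-dimensionality of $\{u\in U^-(\chi):F_iu=0\text{ for all }i\}$, the space of left integrals of $U^-(\chi)$. This is the point where the argument leaves the combinatorial PBW bookkeeping and must invoke the Hopf-algebraic structure; via $\aaaU$ and the nondegenerate pairing of Prop.~\ref{pr:sHpdef}(iv) it is equivalent to the one-dimensionality of $\{w\in U^+(\chi):wE_i=0\text{ for all }i\}$ for the Nichols algebra $U^+(\chi)$. Without it, the isomorphisms $\VTM_{i_k}$ only identify $I^\chi(\Lambda)$ with $I^{\chi_{n+1}}(\Lambda_{n+1})$ and do not by themselves force either to vanish.
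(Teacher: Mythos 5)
Your first half coincides with the paper's proof: the translation of \eqref{eq:MLass} via Lemma~\ref{le:VTinv} and Eq.~\eqref{eq:hghtrpchi} into the hypotheses of Prop.~\ref{pr:VTMiso} at each stage, the resulting isomorphism $\VTM _{i_1}\cdots \VTM _{i_n}$, and the conclusion that $v_{\mathrm{low}}=F_{\beta _n}^{\bfun \chi (\beta _n)-1}\cdots F_{\beta _1}^{\bfun \chi (\beta _1)-1}v_\Lambda $ is nonzero and generates $M^\chi (\Lambda )$ are exactly the paper's steps. Where you diverge is the converse containment. The paper shows directly that $v_{\mathrm{low}}$ lies in every nonzero graded submodule by invoking Thms.~\ref{th:EErel} and \ref{th:PBWtau}: given a nonzero homogeneous $u\in U^-(\chi )_{-\al }$, one multiplies the leading PBW monomial of $u$ by the complementary monomial; since $U^-(\chi )_{-\mu }$ (with $\mu =\sum _\nu (\bfun \chi (\beta _\nu )-1)\beta _\nu $) is one-dimensional and the associated graded algebra is a truncated skew-polynomial ring, the product is a nonzero multiple of the top monomial. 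You instead pass to a lowest weight vector and appeal to the one-dimensionality of the space of left integrals of the finite-dimensional Nichols algebra $U^-(\chi )$.

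That fact is true, and you correctly identify it as the crux, but it is neither established in the paper nor proved in your proposal: the reduction you sketch via $\aaaU $ and Prop.~\ref{pr:sHpdef}(iv) merely restates the claim for $U^+(\chi )$ rather than proving it (the pairing $\sHp $ is a duality between $U^+$ and $U^-$, not the internal Poincar\'e-type duality you need). The gap is fillable — either by citing the Frobenius property of finite-dimensional Hopf algebras in \YD\ categories, or, more economically, by the same PBW argument the paper uses, which makes the detour through integrals unnecessary. One further caution: an arbitrary submodule of $M^\chi (\Lambda )$ need not be $\ndZ ^I$-graded, since the $\Uz $-characters $\Lambda +\ZIch \chi (\al )$ on distinct weight spaces may coincide when $\chi $ takes values in roots of unity; so your "Being graded" step does not apply to every $N$, and simplicity of $M^\chi (\Lambda )$ is genuinely stronger than $I^\chi (\Lambda )=0$. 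For the stated conclusion this is harmless — run the argument with $N=I^\chi (\Lambda )$, which is graded by construction — but the claim of equivalence at the outset should be dropped.
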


\begin{proof}
  For all $\nu \in \{1,2,\dots ,n\}$ let
  $\chi _\nu =r_{i_{\nu -1}}\cdots r_{i_2}r_{i_1}(\chi )$ and
  $\Lambda _\nu =\VT _{i_{\nu -1}}\cdots \VT _{i_2}
  \VT _{i_1}^\chi (\Lambda )$.
  By Lemma~\ref{le:VTinv} and Eq.~\eqref{eq:hghtrpchi},
  Eq.~\eqref{eq:MLass} is equivalent to
  \[ \rhomap {\chi _\nu } (\al _{i_\nu })\Lambda _\nu (K_{i_\nu }L_{i_\nu }^{-1})\not=
  \chi _\nu (\al _{i_\nu },\al _{i_\nu })^t \]
  for all $\nu \in \{1,2,\dots ,n\}$,
  $t\in \{1,2,\dots ,\bfun {\chi _\nu }(\al _{i_\nu })-1\}$.
  Hence, by Prop.~\ref{pr:VTMiso}, the map
  \[ \VTM _{i_1}\VTM _{i_2}\cdots \VTM _{i_n}: M^{r_{i_n}(\chi _n)}
  (\VT _{i_n}(\Lambda _n))\to M^\chi (\Lambda ) \]
  is an isomorphism. Thus
  $v=F_{\beta _n}^{\bfun \chi (\beta _n)-1}
  \cdots F_{\beta _2}^{\bfun \chi (\beta _2)-1}
  F_{\beta _1}^{\bfun \chi (\beta _1)-1} v_\Lambda \not=0$
  and $(U^+(\chi )\ot _\fie \fiee )v=M^\chi (\Lambda )$.
  Since $v$ is contained in any nonzero
  $U(\chi )\ot _\fie \fiee $-submodule of $M^\chi (\Lambda )$ by
  Thms.~\ref{th:EErel}, \ref{th:PBWtau}, it follows that $I^\chi (\Lambda )=0$.
\end{proof}

\section{The Shapovalov form}
\label{sec:shapdet}

We discuss the analog of the Shapovalov form for
the algebras $U(\chi )$ following the construction in
\cite[3.4.10]{b-Joseph}.

Let $\chi \in \cX $. By Prop.~\ref{pr:tridec}, there exists a decomposition
\[ \cU (\chi )=\Big(\sum _{i\in I}F_i\cU (\chi )+\sum _{i\in I}\cU (\chi )E_i
\Big)
\oplus \Uz  \]
and hence a unique projection
\[ \HCmap \chi :\cU (\chi )\to \Uz  \]
with kernel $\sum _{i\in I}F_i\cU (\chi )+\sum _{i\in I}\cU (\chi )E_i$.
This map is commonly known as
the \textit{Harish-Chandra map}. By definition, $\HCmap \chi $
satisfies the property
\begin{align}
  \HCmap \chi (u_-uu_+)=\coun (u_-)\HCmap \chi (u)\coun (u_+)
  \label{eq:HCprop}
\end{align}
for all $u_-\in U^-(\chi )$, $u\in U(\chi )$, $u_+\in U^+(\chi )$.
Since $\aaaU (u)=u$ for all $u\in \Uz $,
\begin{align}
  \HCmap \chi (\aaaU (u))=\HCmap \chi (u)\qquad
  \text{for all $u\in \cU (\chi )$.}
  \label{eq:HCprop2}
\end{align}
The bilinear map
\begin{align}
  \Shf : \cU (\chi )\times \cU (\chi )\to \Uz ,\qquad
  \Shf (u,v)=\HCmap \chi (\aaaU (u)v),
  \label{eq:Shf}
\end{align}
is called the \textit{Shapovalov form}.
By Eq.~\eqref{eq:HCprop2} and since $\aaaU ^2=\id $,
\begin{align}
  \Shf (u,v)=\Shf (v,u) \quad \text{for all $u,v\in \cU (\chi )$.}
  \label{eq:Shfprop}
\end{align}
Moreover, by definition of $\Shf $ and $\HCmap \chi $,
\begin{align}
  \Shf (u,v)=0 \quad  \text{if $u\in \sum _{i\in I}\cU (\chi )E_i$ or
  $v\in \sum _{i\in I}\cU (\chi )E_i$.}
  \label{eq:Shfprop2}
\end{align}
Recall the definitions of $U(\chi )$, $\cI ^+(\chi )$ and $\cI ^-(\chi
)$ from Sect.~\ref{sec:DD}.
Since $\cU (\chi )\cI ^+(\chi )\cU (\chi )+
\cU (\chi )\cI ^-(\chi )\cU (\chi )\subset \ker \HCmap \chi $,
$\HCmap \chi $ and $\Shf $ induce maps
\[ \HCmap \chi : U(\chi )\to \Uz ,\qquad
\Shf : U(\chi )\times U(\chi )\to \Uz . \]
The map $\HCmap \chi $ is $\ndZ ^I$-homogeneous, that is,
$\HCmap \chi (u)=0$
for all $u\in U(\chi )_\al $, where $\al \in \ndZ ^I\setminus \{0\}$.
The map $\aaaU $ reverses degrees, that is,
$\aaaU (u)\in U(\chi)_{-\al }$ for all $u\in U(\chi )_\al $, where
$\al \in \ndZ ^I$. Therefore, for all $\al ,\beta \in \ndZ ^I$, where $\al
\not=\beta $, we get
\begin{align}
  \Shf (u,v)=0 \quad \text{for all $u\in U(\chi )_\al $,
  $v\in U(\chi )_\beta $ \quad $(\al \not=\beta )$.}
  \label{eq:Shfprop3}
\end{align}

\begin{defin}\label{de:Shapdet}
  The family of determinants
  \[ \det \nolimits ^\chi _\al =
  \det \Shf (F'_i,F'_j)_{i,j\in \{1,\dots ,k\}}\in \Uz /\fienz ,\]
  where $\al \in \ndN _0^I$, $k=\dim U^-(\chi )_{-\al }$, and
  $\{F'_1,F'_2,\dots ,F'_k\}$ is a basis of
  $U^-(\chi )_{-\al }$,
  is called the \textit{Shapovalov determinant} of $U(\chi )$.
\end{defin}

\begin{remar}\label{re:Shdet}
  Let $\al \in \ndN _0^I$ and $k=\dim U^-(\chi )_{-\al }$.
  By the above considerations,
  $\Shf :U^-(\chi )_{-\al }\times U^-(\chi )_{-\al }\to \Uz $
  is a symmetric bilinear form for all $\al \in \ndN _0^I$.
  Let $F'=\{F'_1,F'_2,\dots ,F'_k\}$ be a basis of
  $U^-(\chi )_{-\al }$,
  and let $d(F')=\det \Shf (F'_i,F'_j)_{i,j\in \{1,\dots ,k\}} \in \Uz $.
  Then $d(A'F')=(\det A')^2d(F')$ for all $A'\in \mathrm{GL}(k,\fie )$, and
  hence $\det ^\chi _\al =d(F')/\fienz $
  does not depend on the choice of the basis $F'$ of
  $U^-(\chi )_{-\al }$.
\end{remar}

\begin{lemma}
  \label{le:Uzideal}
  Let $\chi \in \cX $. Let $J$ be an ideal of $\Uz $. Assume that
  $J$ is contained in
  the center of $U(\chi )$. Let $J_U$
  be the ideal of $U(\chi )$ generated by $J$.  Then $\Shf
  :U(\chi )\times U(\chi )\to \Uz $ induces a map
  $\Shf :U(\chi )/J_U\times U(\chi )/J_U\to \Uz /J$.
\end{lemma}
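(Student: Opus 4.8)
The plan is to verify directly the standard well-definedness condition for a bilinear form to descend to quotients. Since $\Shf$ is symmetric by Eq.~\eqref{eq:Shfprop}, it suffices to show that $\Shf(w,u)\in J$ whenever $w\in J_U$ and $u\in U(\chi)$; the analogous statement in the second argument then follows by symmetry, and together these say exactly that $\Shf$ induces a well-defined pairing $U(\chi)/J_U\times U(\chi)/J_U\to \Uz/J$.

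The main technical ingredient I would establish first is that the Harish--Chandra map is left $\Uz$-linear, i.e.\ $\HCmap{\chi}(hy)=h\,\HCmap{\chi}(y)$ for all $h\in\Uz$ and $y\in U(\chi)$. Recall that $\HCmap{\chi}$ is the projection onto $\Uz$ along $K:=\sum_{i\in I}F_iU(\chi)+\sum_{i\in I}U(\chi)E_i$. I would check that $hK\subseteq K$: by Eq.~\eqref{eq:KFrel} each generator $K_j^{\pm},L_j^{\pm}$ conjugates $F_i$ into a scalar multiple of itself, so $hF_i\in F_i\Uz$ and hence $h\,F_iU(\chi)\subseteq F_i\Uz U(\chi)=F_iU(\chi)$, while $h\,U(\chi)E_i\subseteq U(\chi)E_i$ trivially. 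Writing $y=\HCmap{\chi}(y)+k$ with $k\in K$ then gives $hy=h\HCmap{\chi}(y)+hk$ with $h\HCmap{\chi}(y)\in\Uz$ and $hk\in K$, which yields the claim.

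Next I would use the centrality hypothesis to describe $J_U$. Since $J$ lies in the center of $U(\chi)$, the two-sided ideal it generates is simply $J_U=U(\chi)\,J=J\,U(\chi)$. Thus any $w\in J_U$ can be written as $w=\sum_k d_kz_k$ with $d_k\in U(\chi)$ and $z_k\in J$. Because $\aaaU$ restricts to the identity on $\Uz$ (it fixes each $K_i,L_i$ and $\Uz$ is commutative), we obtain $\aaaU(w)=\sum_k \aaaU(z_k)\aaaU(d_k)=\sum_k z_k\,\aaaU(d_k)$.

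Combining these, for $u\in U(\chi)$ I compute
\[
  \Shf(w,u)=\HCmap{\chi}(\aaaU(w)u)
  =\sum_k z_k\,\HCmap{\chi}(\aaaU(d_k)u)
  =\sum_k z_k\,\Shf(d_k,u),
\]
using the left $\Uz$-linearity of $\HCmap{\chi}$. Since each $z_k\in J$, each $\Shf(d_k,u)\in\Uz$, and $J$ is an ideal of $\Uz$, the right-hand side lies in $J$; by symmetry this also handles the second argument, so $\Shf$ descends as asserted. The only real subtlety — the main obstacle — is the left $\Uz$-linearity of $\HCmap{\chi}$, resting on the stability $hK\subseteq K$; everything else is formal manipulation with $\aaaU$, the centrality of $J$, and the ideal property.
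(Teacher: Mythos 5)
Your proof is correct and follows essentially the same route as the paper's: both rest on describing $J_U$ via the centrality of $J$ (the paper writes $J_U=U^-(\chi)JU^+(\chi)$ using triangular decomposition, you write $J_U=U(\chi)J$), on $\aaaU$ fixing $\Uz$ pointwise, and on compatibility of the Harish--Chandra projection with multiplication by $\Uz$. The paper's two-line proof leaves the $\Uz$-linearity of $\HCmap{\chi}$ implicit (it is absorbed into Eq.~\eqref{eq:HCprop} and the form of $J_U$), whereas you verify it directly via the stability $h\ker\HCmap{\chi}\subseteq\ker\HCmap{\chi}$; this is a correct and welcome elaboration rather than a different argument.
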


\begin{proof}
  Since $J$ is contained in the center of $U(\chi )$, triangular
  decomposition of $U(\chi )$ yields that
  $J_U=U^-(\chi )JU^+(\chi )$. This and $\aaaU (J)=J$
  imply the claim of the lemma.
\end{proof}

\begin{lemma}
  Let $\chi \in \cX $, $\al \in \ndZ ^I$,
  $E\in U^+(\chi )_\al $, and $F\in U^-(\chi )_{-\al }$.
  Then
  \[ \Shf (\Omega (E),F)\in
  \sum _{\beta ,\gamma \in \ndN _0^I,\, \beta +\gamma =\al }
  \fie K_\beta L_\gamma \]
  Moreover, the coefficients of $K_\al $ and $L_\al $ of $\Shf (\aaaU (E),F)$
  are $\sHp (E,S(F))$ and $\sHp (E,F)$, respectively.
  \label{le:Shfcoeffs}
\end{lemma}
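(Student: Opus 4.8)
The plan is to reduce everything to the Harish-Chandra map and then induct on the height $|\al |$. Since $\aaaU ^2=\id $ and $\aaaU (E)\in U^-(\chi )_{-\al }$, the definition \eqref{eq:Shf} gives $\Shf (\aaaU (E),F)=\HCmap \chi (\aaaU (\aaaU (E))F)=\HCmap \chi (EF)$, so the lemma is equivalent to three assertions about $\HCmap \chi (EF)$ for $E\in U^+(\chi )_\al $, $F\in U^-(\chi )_{-\al }$: (a) $\HCmap \chi (EF)\in \sum _{\beta +\gamma =\al }\fie K_\beta L_\gamma $; (b) the $K_\al $-coefficient is $\sHp (E,S(F))$; (c) the $L_\al $-coefficient is $\sHp (E,F)$. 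I would prove (a)--(c) together by induction on $|\al |$, the case $\al =0$ being trivial since then $E,F\in \fie 1$ and $K_\al =L_\al =1$.

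For the inductive step I may assume $F=F_i\tilde F$ with $\tilde F\in U^-(\chi )_{-\gamma }$ and $\gamma =\al -\al _i\in \ndN _0^I$. Writing $EF=F_iE\tilde F+[E,F_i]\tilde F$ and noting that $F_iE\tilde F\in F_iU(\chi )\subset \ker \HCmap \chi $, Lemma~\ref{le:commEFi} gives $\HCmap \chi (EF)=\HCmap \chi (\derK _i(E)K_i\tilde F)-\HCmap \chi (L_i\derL _i(E)\tilde F)$. The kernel of $\HCmap \chi $ is stable under left multiplication by $L_i$ and right multiplication by $K_i$ (both preserve $\Uz $ as well), and \eqref{eq:KFrel} gives $K_i\tilde F=\chi (\al _i,\gamma )^{-1}\tilde FK_i$; hence
\[ \HCmap \chi (EF)=\chi (\al _i,\gamma )^{-1}\HCmap \chi (\derK _i(E)\tilde F)K_i-L_i\HCmap \chi (\derL _i(E)\tilde F). \]
Since $\derK _i(E),\derL _i(E)\in U^+(\chi )_\gamma $ and $\tilde F\in U^-(\chi )_{-\gamma }$, the inductive hypothesis (a) applies to each term and (a) for $\al $ follows immediately. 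Comparing $K_\al =K_\gamma K_i$- and $L_\al =L_\gamma L_i$-components, the $K_\al $-coefficient comes only from the first summand and equals $\chi (\al _i,\gamma )^{-1}$ times the $K_\gamma $-coefficient of $\HCmap \chi (\derK _i(E)\tilde F)$, while the $L_\al $-coefficient comes only from the second and equals $-1$ times the $L_\gamma $-coefficient of $\HCmap \chi (\derL _i(E)\tilde F)$; by the inductive hypotheses (b),(c) these are $\chi (\al _i,\gamma )^{-1}\sHp (\derK _i(E),S(\tilde F))$ and $-\sHp (\derL _i(E),\tilde F)$.

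It remains to match these with $\sHp (E,S(F))$ and $\sHp (E,F)$, which is the heart of the matter. I would use the adjunction between the skew-derivations of Lemma~\ref{le:commEFi} and the pairing. Applying \eqref{eq:sHp2} together with the coproduct formulas \eqref{eq:coprcU}, the operators $D_i(E):=\sum \sHp (E\_1,F_i)E\_2$ and $D_i'(E):=\sum E\_1\sHp (E\_2,F_i)$ lie in $U^+(\chi )$, agree with $-\derL _i$ and $-\derK _i(\cdot )K_i$ on $1$ and on the generators $E_j$, and obey Leibniz rules forcing $D_i=-\derL _i$ and $D_i'=-\derK _i(\cdot )K_i$ (the twist $L_i^{-1}\lact E=\chi (\deg E,\al _i)E$ reproducing exactly the one in \eqref{eq:derKL2}). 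Then $\sHp (E,F)=\sHp (E,F_i\tilde F)=\sHp (D_i(E),\tilde F)=-\sHp (\derL _i(E),\tilde F)$, which is (c). For (b), using $S(F_i)=-F_iL_i^{-1}$, so $S(F)=-S(\tilde F)F_iL_i^{-1}$, the $L_i^{-1}$ drops out under \eqref{eq:sHp2} (only the degree-zero part of the second coproduct leg of $E$ survives), giving $\sHp (E,S(F))=\sHp (\derK _i(E)K_i,S(\tilde F))$. Finally $\sHp (K_i,\cdot )\colon \cV ^-(\chi )\to \fie $ is an algebra character with $F_j\mapsto 0$, $L_j\mapsto \chi (\al _i,\al _j)$, so $(\sHp (K_i,\cdot )\ot \id )\copr $ is the algebra endomorphism of $\cV ^-(\chi )$ fixing each $F_j$ and scaling $L_j\mapsto \chi (\al _i,\al _j)L_j$; it sends the degree-$(-\gamma )$ element $S(\tilde F)$ to $\chi (\al _i,\gamma )^{-1}S(\tilde F)$, whence $\sHp (\derK _i(E)K_i,S(\tilde F))=\chi (\al _i,\gamma )^{-1}\sHp (\derK _i(E),S(\tilde F))$. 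This is precisely the recursion value for the $K_\al $-coefficient, proving (b).

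The main obstacle is exactly this last matching. Assertion (a) is a soft consequence of the commutation relation and the stability of $\ker \HCmap \chi $, but identifying the two leading coefficients requires the adjunction between $\derK _i,\derL _i$ and $\sHp $, and in the $K_\al $-case one must correctly account for the $K_i$-twist. The cleanest way to control that twist is to route the computation through the antipode via $S(F_i)=-F_iL_i^{-1}$ and to read $\sHp (K_i,\cdot )$ as a grouplike character, which converts the twist into the scalar $\chi (\al _i,\gamma )^{-1}$ automatically; keeping the degree bookkeeping and this scalar consistent is the only delicate point.
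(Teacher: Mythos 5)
Your argument is correct, but it reaches the two coefficient formulas by a genuinely different route than the paper. For the containment in $\sum _{\beta +\gamma =\al }\fie K_\beta L_\gamma $ both proofs are inductions on degree driven by the commutation relations, so there is no real difference there. For the $K_\al $- and $L_\al $-coefficients, however, the paper substitutes the leading terms of $\copr (E)$ and $\copr (F)$ into the global commutation rule \eqref{eq:Ucomm2}, $xy=\sHp (x\_1,y\_1)y\_2x\_2\sHp (x\_3,S(y\_3))$, applies $\HCmap \chi $, and reads off both coefficients in one step using Prop.~\ref{pr:sHpdef}(iii). You instead set up a recursion by peeling off one $F_i$ via Lemma~\ref{le:commEFi}, and then show that $\sHp (E,F)$ and $\sHp (E,S(F))$ satisfy the same recursion by identifying $-\derL _i$ and $-\derK _i(\cdot )K_i$ as the adjoints of left multiplication by $F_i$ under $\sHp $. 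This is a legitimate, more self-contained argument (it avoids \eqref{eq:Ucomm2} entirely and only needs \eqref{eq:sHp1}--\eqref{eq:sHp2}, the coproducts, and Prop.~\ref{pr:sHpdef}(ii),(iii)), at the cost of being longer and requiring the careful bookkeeping you acknowledge. Two small imprecisions, neither fatal: the operator $D_i'$ lands in $U^+(\chi )_{\al -\al _i}K_i$ rather than in $U^+(\chi )$; and in the last step the assertion that $(\sHp (K_i,\cdot )\ot \id )\copr $ multiplies $S(\tilde F)$ by $\chi (\al _i,\gamma )^{-1}$ does not follow from the $\ndZ ^I$-degree of $S(\tilde F)$ alone (the endomorphism fixes $\tilde F$, which has the same degree) --- one needs the normal form $S(\tilde F)\in \cU ^-(\chi )_{-\gamma }L_\gamma ^{-1}$, which is immediate from $S(F_j)=-F_jL_j^{-1}$ and the fact that $S$ is an algebra antihomomorphism.
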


\begin{proof}
  If $\al \notin \ndN _0^I$ or $\al =0$, then $U ^+(\chi )_\al =0$ or
  $U ^+(\chi )_\al =\fie $. In this case the claim of the lemma holds by
  definition of $\Shf $. Assume now that $\al \in \ndN _0^I\setminus \{0\}$.
  Using Eqs.~\eqref{eq:KLrel}--\eqref{eq:EFrel},
  by induction on $\beta $ and $\beta '$ one can show that
  \[
  E'F'\in \sum _{\gamma _1,\gamma _2,\gamma _3,\gamma _4\in \ndN _0^I,
  \gamma _4-\gamma _1=\beta -\beta ',\gamma _2+\gamma _3+\gamma _4=\beta }
  U ^-(\chi )_{-\gamma _1}K_{\gamma _2}L_{\gamma _3}U ^+(\chi )_{\gamma
  _4}\]
  for all $\beta ,\beta '\in \ndN _0^I$ and $E'\in U ^+(\chi )_\beta $,
  $F'\in U ^-(\chi )_{-\beta '}$.
  This implies the first claim of the lemma by letting
  $\beta =\beta '=\al $ and $E'=E$, $F'=F$.
  The second claim follows from
  \begin{align*}
    \copr (E)-K_\al \ot E-E\ot 1\in &\mathop{\oplus }
    _{\beta ,\gamma \in \ndN _0^I, \beta +\gamma =\al ,\,\beta ,\gamma \not=0}
    U ^+(\chi )_\beta K_\gamma \ot U ^+(\chi )_\gamma ,\\
    \copr (F)-1 \ot F-F\ot L_\al \in &\mathop{\oplus }
    _{\beta ,\gamma \in \ndN _0^I, \beta +\gamma =\al ,\beta ,\gamma \not=0}
    U ^-(\chi )_{-\beta }\ot U ^-(\chi )_{-\gamma }L_\beta ,
  \end{align*}
  and from Eq.~\eqref{eq:Ucomm2} (with $x=E$, $y=F$)
  and Prop.~\ref{pr:sHpdef}(iii).
\end{proof}

Let $\fiee $ be a field extension of $\fie $.
The importance of the Shapovalov form arises from the fact that it induces a
form on the Verma modules $M^\chi (\Lambda )$ and on their simple quotients
$L^\chi (\Lambda )$, where $\Lambda \in \charUz $.

Let $\Lambda \in \charUz $. Define
\begin{align}
  \Lambda \Shf :U(\chi )\times U(\chi )\to \fiee ,\quad
  (u,v)\mapsto \Lambda (\Shf (u,v)).
  \label{eq:LSh}
\end{align}
By Eq.~\eqref{eq:HCprop},
\begin{align*}
  \Lambda \Shf (u_-u_0u_+,v_-v_0v_+)=&
  \coun (u_+)\coun (v_+)\Lambda (u_0\Shf (u_-,v_-)v_0)\\
  =&\Lambda (u_0)\Lambda (v_0)\coun (u_+)\coun (v_+)
  \Lambda \Shf (u_-,v_-).
\end{align*}
Thus, by Eq.~\eqref{eq:fieLambda},
$\Lambda \Shf $ induces a $\fiee $-bilinear form on
$M^\chi (\Lambda )$ by letting
\[ \Lambda \Shf :M(\Lambda )\times M(\Lambda )\to \fiee ,\quad
(u\ot 1_\Lambda , v\ot 1_\Lambda )\mapsto \Lambda \Shf (u,v) \]
for all $u,v\in U(\chi )$.
Moreover, Eq.~\eqref{eq:Shfprop3} gives that
\begin{align}\label{eq:LShf}
  \Lambda \Shf ( u\ot 1_\Lambda , v)
  =\Lambda \Shf (1\ot 1_\Lambda ,\aaaU (u)v) =0
\end{align}
for all $u\in U(\chi )$ and $v\in I^\chi (\Lambda )$,
since $\aaaU (u)v \in I^\chi (\Lambda )\subset
\oplus _{\al \not=0}M^\chi (\Lambda )_\al $.
Thus by Eq.~\eqref{eq:Shfprop}, $\Lambda \Shf $
induces a symmetric bilinear form on $L^\chi (\Lambda )$,
also denoted by $\Lambda \Shf $.
The radical of this form is a $\ndZ ^I$-graded
$U(\chi )\ot _\fie \fiee $-submodule of $L^\chi (\Lambda )$,
but does not contain $1\ot 1_\Lambda $,
and hence it is zero. Thus $\Lambda \Shf $ is a nondegenerate symmetric
bilinear form on $L^\chi (\Lambda )$.



{\em For the rest of this section let $\chi \in \cX _4$, $n=|R^\chi _+|$,
and $i_1,\dots ,i_n\in I$ with $\ell (1_\chi \s _{i_1}\cdots \s_{i_n})=n$.}
For all $\nu \in \{1,2,\dots ,n\}$ let
\[ \beta _\nu =1_\chi \s _{i_1}\s _{i_2}\cdots \s _{i_{\nu -1}}(\al _{i_\nu }),\qquad
\chi _\nu =r_{i_{\nu -1}}\cdots r_{i_2}r_{i_1}(\chi ). \]
For all $\nu \in \{1,2,\dots ,n\}$, $\al \in \ndN _0^I$,
and $t\in \{1,2,\dots ,\bfun \chi (\beta _\nu )-1\}$ let
\begin{equation}
\begin{aligned}
  \PF ^\chi (\al ,\beta _\nu ;t)=\Big|\Big\{(m_1,\dots ,m_n)\in \ndN _0^n\,\big|\,
  \sum _{\mu =1}^n m_\mu \beta _\mu =\al ,\,m_\nu \ge t,\quad &\\
  m_\mu <\bfun \chi (\beta _\mu )\quad \text{for all $\mu \in \{1,2,\dots ,n\}$}
  \Big\}\Big|.&
  \label{eq:PF}
\end{aligned}
\end{equation}

We will use two important facts on the function $\PF ^\chi $.

\begin{lemma} \label{le:P1}
  For all $\nu \in \{1,2,\dots ,n\}$ and
  $t\in \{1,2,\dots ,\bfun \chi (\beta _\nu)-1\}$,
  \[ \sum _{\al \in \ndN _0^I}\PF ^\chi (\al ,\beta _\nu ;t)e^{-\al }=
  \frac {e^{-t\beta _\nu}-e^{-\bfun \chi (\beta _\nu )\beta _\nu }}
  {1-e^{-\beta _\nu }}
  \prod _{\mu \in \{1,\dots ,n\},\, \mu \not=\nu }
  \frac {1-e^{-\bfun \chi (\beta _\mu )\beta _\mu }}
  {1-e^{-\beta _\mu }}. \]
\end{lemma}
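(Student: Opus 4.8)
The plan is to read the left-hand side as a generating function for the tuples counted by $\PF^\chi$, graded by $\ndZ^I$, and then to use that the defining constraints on the coordinates $m_1,\dots,m_n$ are mutually independent, so that the generating function factors into a product of one-variable geometric sums.

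First I would rewrite the sum by grouping tuples according to their total degree. By the definition of $\PF^\chi(\al,\beta_\nu;t)$ in Eq.~\eqref{eq:PF}, collecting the tuples $(m_1,\dots,m_n)\in\ndN_0^n$ with a fixed value $\al=\sum_{\mu=1}^n m_\mu\beta_\mu$ gives
\[
  \sum_{\al\in\ndN_0^I}\PF^\chi(\al,\beta_\nu;t)\,e^{-\al}
  =\sum e^{-\sum_{\mu=1}^n m_\mu\beta_\mu},
\]
where the sum on the right runs over all $(m_1,\dots,m_n)\in\ndN_0^n$ satisfying $m_\nu\ge t$ and $m_\mu<\bfun\chi(\beta_\mu)$ for all $\mu\in\{1,2,\dots,n\}$. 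Here I would invoke $\chi\in\cX_4$: it guarantees $\bfun\chi(\beta_\mu)<\infty$ for every $\mu$, so only finitely many tuples occur and the right-hand side is a genuine finite sum, hence a well-defined element of the ring of formal characters of Rem.~\ref{re:fch}.

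The key step is that the constraint on each coordinate $m_\mu$ is independent of the constraints on the others, while the weight $e^{-\sum_\mu m_\mu\beta_\mu}=\prod_\mu\big(e^{-\beta_\mu}\big)^{m_\mu}$ is multiplicative. Therefore the sum factors as
\[
  \Big(\sum_{m=t}^{\bfun\chi(\beta_\nu)-1}e^{-m\beta_\nu}\Big)
  \prod_{\mu\in\{1,\dots,n\},\,\mu\ne\nu}
  \Big(\sum_{m=0}^{\bfun\chi(\beta_\mu)-1}e^{-m\beta_\mu}\Big).
\]
Applying the finite geometric series identity $\sum_{m=a}^{b-1}x^m=(x^a-x^b)/(1-x)$ with $x=e^{-\beta_\mu}$ to each factor (reading $(1-e^{-\beta_\mu})^{-1}$ as the corresponding power series) yields $(e^{-t\beta_\nu}-e^{-\bfun\chi(\beta_\nu)\beta_\nu})/(1-e^{-\beta_\nu})$ for the $\mu=\nu$ factor and $(1-e^{-\bfun\chi(\beta_\mu)\beta_\mu})/(1-e^{-\beta_\mu})$ for each $\mu\ne\nu$, which is exactly the claimed right-hand side.

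I do not expect a real obstacle here; the statement is a routine generating-function factorization. The only points requiring care are bookkeeping: using $\chi\in\cX_4$ to ensure all bounds are finite so the factorization is legitimate, and keeping the shifted lower limit $t$ on the $\nu$-th coordinate distinct from the lower limit $0$ on the remaining coordinates.
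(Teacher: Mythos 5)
Your proposal is correct. The computation is the same one the paper performs, but the framing differs slightly: the paper reads both sides as the formal character of the subspace of $U^-(\chi )\ot \fiee $ spanned by the PBW monomials $F_{\beta _1}^{m_1}\cdots F_{\beta _n}^{m_n}$ with $m_\nu \ge t$ and $m_\mu <\bfun \chi (\beta _\mu )$, and so invokes Thm.~\ref{th:PBWtau} to know these monomials are linearly independent. Your direct generating-function factorization makes clear that the identity is purely combinatorial --- a statement about counting lattice points in a box, needing only $\chi \in \cX _4$ for finiteness of the bounds --- and requires no input from the PBW theorem at all; this is marginally more elementary, at the cost of not exhibiting the left-hand side as $\fch{V}$ for the submodule $V$ that is actually used later in Lemma~\ref{le:subfch}.
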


\begin{proof}
  By Thm.~\ref{th:PBWtau},
  the two sides of the equation are two different expressions for the formal
  character of the subspace of $U^-(\chi )\ot \fiee $ spanned by the elements
  \[ \prod _{ {m_1,\dots ,m_n \atop m_\nu \ge t,\, 0\le m_\mu <\bfun \chi (\beta _\mu )
  \,\text{for all $\mu $}}}
  F_{\beta _1}^{m_1}F_{\beta _2}^{m_2}\cdots F_{\beta _n}^{m_n}. \]
\end{proof}

\begin{lemma}
  \label{le:P2}
  For all $\al \in \ndN _0^I$,
  \begin{align*}
    \al \dim U^-(\chi )_{-\al }=
    \sum _{\nu =1}^n \sum _{t=1}^{\bfun \chi (\beta _\nu )-1}
    \PF ^\chi (\al ,\beta _\nu ;t) \beta _\nu .
  \end{align*}
\end{lemma}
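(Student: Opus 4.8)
The plan is to prove this purely combinatorially, by interpreting both sides as weighted counts over the same index set and then interchanging the order of summation (a Fubini-type argument). The key input is the Lusztig-type PBW basis of Thm.~\ref{th:PBWtau}.

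First I would introduce the index set
\[
  S_\al =\Big\{\ulm =(m_1,\dots ,m_n)\in \ndN _0^n\,\Big|\,
  \sum _{\mu =1}^n m_\mu \beta _\mu =\al ,\ 0\le m_\mu <\bfun \chi (\beta _\mu )
  \text{ for all }\mu \Big\}.
\]
By Thm.~\ref{th:PBWtau} the monomials $F_{\beta _1}^{m_1}\cdots F_{\beta _n}^{m_n}$ with $\ulm \in S_\al $ form a basis of $U^-(\chi )_{-\al }$, so $\dim U^-(\chi )_{-\al }=|S_\al |$. With this notation the defining Eq.~\eqref{eq:PF} reads simply
$\PF ^\chi (\al ,\beta _\nu ;t)=|\{\ulm \in S_\al \,|\,m_\nu \ge t\}|$.

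Next I would rewrite the right-hand side and swap the summations over $t$ and over $\ulm $:
\begin{align*}
  \sum _{\nu =1}^n \sum _{t=1}^{\bfun \chi (\beta _\nu )-1}
  \PF ^\chi (\al ,\beta _\nu ;t)\,\beta _\nu
  &=\sum _{\nu =1}^n \beta _\nu
  \sum _{t=1}^{\bfun \chi (\beta _\nu )-1}
  |\{\ulm \in S_\al \,|\,m_\nu \ge t\}|\\
  &=\sum _{\nu =1}^n \beta _\nu \sum _{\ulm \in S_\al }
  |\{t\,|\,1\le t\le \bfun \chi (\beta _\nu )-1,\ m_\nu \ge t\}|\\
  &=\sum _{\nu =1}^n \beta _\nu \sum _{\ulm \in S_\al } m_\nu
  =\sum _{\ulm \in S_\al }\ \sum _{\nu =1}^n m_\nu \beta _\nu
  =\sum _{\ulm \in S_\al }\al =|S_\al |\,\al .
\end{align*}
Here the passage from the second to the third line is the only step needing a word of justification: for a fixed $\ulm \in S_\al $ the number of admissible $t$ is exactly $m_\nu $, and the final equality $\sum _\nu m_\nu \beta _\nu =\al $ holds by the defining condition of $S_\al $. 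Since $|S_\al |=\dim U^-(\chi )_{-\al }$, this is the asserted identity.

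The only point requiring care, which I do not regard as a genuine obstacle, is verifying that the upper cutoff $\bfun \chi (\beta _\nu )-1$ in the $t$-summation never truncates the count below $m_\nu $. This is precisely guaranteed by the box inequality $m_\nu <\bfun \chi (\beta _\nu )$ built into $S_\al $: it forces $m_\nu \le \bfun \chi (\beta _\nu )-1$, so every integer $t$ with $1\le t\le m_\nu $ automatically satisfies $t\le \bfun \chi (\beta _\nu )-1$, and hence the inner cardinality equals $m_\nu $ exactly. Everything else is a routine interchange of finite sums over the PBW index set.
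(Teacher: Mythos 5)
Your proof is correct and is essentially the paper's own argument: the paper likewise parametrizes a basis of $U^-(\chi)_{-\al}$ by the PBW exponent tuples and observes that each tuple contributes $1$ to $\PF^\chi(\al,\beta_\nu;t)$ for exactly $t\in\{1,\dots,m_\nu\}$, concluding by reordering the summands. Your write-up just makes the interchange of summation and the role of the cutoff $\bfun\chi(\beta_\nu)-1$ more explicit.
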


\begin{proof}
  By Thm.~\ref{th:PBWtau}, for each $\al \in \ndN _0^I$ there is a basis of
  $U^-(\chi )_{-\al }$ parametrized by the set
  \begin{equation}
    \Big\{(m_1,\dots ,m_n)\in \ndN _0^n\,\big|\,
    \sum _{\mu =1}^n m_\mu \beta _\mu =\al ,\,\,
    m_\mu <\bfun \chi (\beta _\mu )\,\, \text{for all $\mu $}\Big\}.
    \label{eq:dimU-}
  \end{equation}
  Each $(m_1,\dots ,m_n)$ in this set contributes to $\PF ^\chi (\al ,\beta _\nu ;t)$
  with a summand $1$, for all $\nu \in \{1,2,\dots ,n\}$ and
  $t\in \{1,2,\dots ,m_\nu \}$. Thus the claim of the
  lemma follows from the decomposition of the
  $\PF ^\chi (\al ,\beta _\nu ;t)$ into $1+1+\cdots +1$ by reordering the summands.
\end{proof}

\begin{lemma}
  \label{le:subfch}
  Let $\nu \in \{1,2,\dots ,n\}$,
  $t\in \{1,2,\dots ,\bfun \chi (\beta _\nu)-1\}$,
  and $\Lambda \in \charUz $.
  Assume that
  $\rhomap \chi (\beta _\nu )\Lambda (K_{\beta _\nu }
  L_{\beta _\nu }^{-1})
  =\chi (\beta _\nu ,\beta _\nu )^t$ and
  \[ \prod _{\mu =1}^{\nu -1} \prod _{m=1}^{\bfun \chi (\beta _\mu )-1}
  (\rhomap \chi (\beta _\mu )
  \Lambda (K_{\beta _\mu }L_{\beta _\mu }^{-1})-\chi (\beta _\mu ,
  \beta _\mu )^m)\not=0. \]
  Then $M^\chi (\Lambda )$ contains a $U(\chi )\ot \fiee $-submodule $V$
  with
  \begin{align}
    \fch V=\sum _{\al \in \ndN _0^I}\PF ^\chi (\al ,\beta _\nu ;t)e^{-\al }.
    \label{eq:fchV}
  \end{align}
  In particular, $0\not=V\subset I^\chi (\Lambda )$.
\end{lemma}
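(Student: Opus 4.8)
The plan is to reduce the claim to the ``first root'' situation of Lemma~\ref{le:chsub} by transporting everything through the chain of Verma module morphisms $\VTM_{i_1},\dots,\VTM_{i_{\nu-1}}$. First I would set $\chi_\nu=r_{i_{\nu-1}}\cdots r_{i_1}(\chi)$ and $\Lambda_\nu=\VT_{i_{\nu-1}}\cdots\VT_{i_1}^\chi(\Lambda)$, exactly as in the proof of Prop.~\ref{pr:M=L}. Using Lemma~\ref{le:VTinv} together with Eq.~\eqref{eq:hghtrpchi} and the identity $\chi(\beta_\mu,\beta_\mu)=\chi_\mu(\al_{i_\mu},\al_{i_\mu})$ (which follows from $w^*\chi(a,b)=\chi(w^{-1}a,w^{-1}b)$), the second hypothesis becomes $\Lambda_k(K_{i_k}L_{i_k}^{-1})\neq\chi_k(\al_{i_k},\al_{i_k})^{s}$ for all $k\in\{1,\dots,\nu-1\}$ and $s\in\{0,\dots,\bfun{\chi_k}(\al_{i_k})-2\}$, which is precisely the hypothesis of Prop.~\ref{pr:VTMiso} ensuring that each $\VTM_{i_k}$, and hence the composite $\VTM'=\VTM_{i_1}\cdots\VTM_{i_{\nu-1}}\colon M^{\chi_\nu}(\Lambda_\nu)\to M^\chi(\Lambda)$, is an isomorphism. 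The same translation turns the first hypothesis into $\Lambda_\nu(K_{i_\nu}L_{i_\nu}^{-1})=\chi_\nu(\al_{i_\nu},\al_{i_\nu})^{t-1}$.

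This puts us in the setting of Lemma~\ref{le:chsub}, applied to $\chi_\nu$, $\Lambda_\nu$, the index $i_\nu$ (note $\bfun{\chi_\nu}(\al_{i_\nu})=\bfun\chi(\beta_\nu)<\infty$ since $\chi\in\cX_4$), and the same $t$. It produces a $U(\chi_\nu)$-submodule $V_\nu=U^-(\chi_\nu)F_{i_\nu}^t\ot\fiee_{\Lambda_\nu}\subset M^{\chi_\nu}(\Lambda_\nu)$ with $\fch{V_\nu}=\frac{e^{-t\al_{i_\nu}}-e^{-b\al_{i_\nu}}}{1-e^{-b\al_{i_\nu}}}\fch{M^{\chi_\nu}(\Lambda_\nu)}$, where $b=\bfun\chi(\beta_\nu)$. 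I then set $V=\VTM'(V_\nu)$, a $U(\chi)$-submodule of $M^\chi(\Lambda)$ by Lemma~\ref{le:MLmap}, nonzero because $\VTM'$ is injective and $F_{i_\nu}^t\neq0$ in $U^-(\chi_\nu)$ by Lemma~\ref{le:Eheight}(i).

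The main work is the character identity. By the iterate of Lemma~\ref{le:Tpfch}, $\VTM'$ shifts $\ndZ^I$-degrees by the affine map $\Phi=\sdot_{i_1}^{\chi_2}\cdots\sdot_{i_{\nu-1}}^{\chi_\nu}$; writing $\Phi(\al)=w(\al)+c$ with linear part $w=\s_{i_1}^{\chi_2}\cdots\s_{i_{\nu-1}}^{\chi_\nu}$ and $c=\Phi(0)$, the induced operator on formal characters sends $e^\al$ to $e^{\Phi(\al)}=e^{c}e^{w(\al)}$, so it equals $X\mapsto e^{c}w_{\mathrm{lin}}(X)$ with $w_{\mathrm{lin}}$ the ring endomorphism $e^\al\mapsto e^{w(\al)}$. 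Hence $\fch{V}=e^{c}w_{\mathrm{lin}}(g)\,w_{\mathrm{lin}}(\fch{M^{\chi_\nu}(\Lambda_\nu)})$ with $g=\frac{e^{-t\al_{i_\nu}}-e^{-b\al_{i_\nu}}}{1-e^{-b\al_{i_\nu}}}$. The decisive point is that the unknown shift $e^{c}$ is eliminated by comparison with the module character: iterating Eq.~\eqref{eq:fchM} gives $\fch{M^\chi(\Lambda)}=\Phi(\fch{M^{\chi_\nu}(\Lambda_\nu)})=e^{c}w_{\mathrm{lin}}(\fch{M^{\chi_\nu}(\Lambda_\nu)})$, whence $\fch{V}=w_{\mathrm{lin}}(g)\,\fch{M^\chi(\Lambda)}$. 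Since the linear part $w$ acts on $\al_{i_\nu}$ as the groupoid morphism $1_\chi\s_{i_1}\cdots\s_{i_{\nu-1}}$, so that $w(\al_{i_\nu})=\beta_\nu$ by Eq.~\eqref{eq:betak}, one gets $w_{\mathrm{lin}}(g)=\frac{e^{-t\beta_\nu}-e^{-b\beta_\nu}}{1-e^{-b\beta_\nu}}$; then Lemma~\ref{le:P1}, together with $\fch{M^\chi(\Lambda)}=\prod_{\mu}\frac{1-e^{-\bfun\chi(\beta_\mu)\beta_\mu}}{1-e^{-\beta_\mu}}$, identifies the product with $\sum_{\al}\PF^\chi(\al,\beta_\nu;t)e^{-\al}$, which is Eq.~\eqref{eq:fchV}.

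Finally, for the containment I would observe that $\PF^\chi(0,\beta_\nu;t)=0$, because the constraint $m_\nu\ge t\ge1$ forces a nonzero degree; thus $V$ has trivial degree-$0$ component and $V\subset\oplus_{\al\neq0}M^\chi(\Lambda)_\al$, so maximality of $I^\chi(\Lambda)$ yields $0\neq V\subset I^\chi(\Lambda)$. The step I expect to be most delicate is the character bookkeeping of the third paragraph: one must justify the manipulations with $w_{\mathrm{lin}}$ in the completed character ring of Rem.~\ref{re:fch} (where $w$ does not preserve $\ndN_0^I$), and it is precisely the cancellation of $e^{c}$ against $\fch{M^\chi(\Lambda)}$ that lets one avoid computing the $\rho$-type shift $c$ explicitly.
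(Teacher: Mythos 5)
Your proof is correct and takes essentially the same route as the paper's: the paper argues by induction on $\nu $, applying Lemma~\ref{le:chsub} at the base case and transporting through one $\VTM _{i_k}$ at a time while explicitly tracking the prefactor $e^{(1-\bfun \chi (\al _{i_1}))\al _{i_1}}$ and the permutation of positive roots under $\s _{i_1}$, whereas you compose the whole chain at once and cancel the affine shift of $\sdot $ against $\fch{M^\chi (\Lambda )}$. The ingredients (Prop.~\ref{pr:VTMiso}, Lemmata~\ref{le:VTinv}, \ref{le:chsub}, \ref{le:MLmap}, \ref{le:Tpfch}, \ref{le:P1}) are identical, and your worry about manipulations in the completed character ring is moot here since $\chi \in \cX _4$ makes every character involved a Laurent polynomial.
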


\begin{proof}
  We proceed by induction on $\nu $. Let first $\nu =1$.
  By Lemma~\ref{le:chsub},
  $V=U^-(\chi )F_{i_1}^t\ot \fiee _\Lambda $ is a
  $U(\chi )\ot \fiee $-submodule of $M^\chi (\Lambda )$. Then
  Eq.~\eqref{eq:fchV} follows from Thm.~\ref{th:PBWtau}.

  Assume now that $\nu \in \{2,3,\dots ,n\}$ and that the lemma holds for
  $\nu -1$.
  Let $\chi _\mu =r_{i_{\mu -1}}\cdots
  r_{i_2}r_{i_1}(\chi )$ and
  $\Lambda _\mu =\VT _{i_{\mu -1}}\cdots \VT _{i_2}
  \VT _{i_1}^\chi (\Lambda )$
  for all $\mu \in \{1,2,\dots ,\nu \}$.
  By Lemma~\ref{le:VTinv}, the assumptions on $\Lambda $ are
  equivalent to the relations
  \[ \prod _{\mu =1}^{\nu -1} \prod _{m=1}^{\bfun {\chi _\mu }(\al _{i_\mu })-1}
  (\Lambda _\mu (K_{i_\mu }L_{i_\mu }^{-1})
  -\rhomap {\chi _\mu }(\al _{i_\mu })^{m-1}) \not=0 \]
  and $\Lambda _\nu (K_{i_\nu }L_{i_\nu }^{-1})=
  \rhomap {\chi _\nu }(\al _{i_\nu })^{t-1}$. Let
  \[ \beta '_\nu =\s _{i_1}^\chi (\beta _\nu )=1_{\chi _2}\s _{i_2}\s
  _{i_3}\cdots \s _{i_{\nu -1}}(\al _{i_\nu }).\]
  By induction hypothesis there exists a $U(\chi _2)$-submodule $V'$ of
  $M^{\chi _2}(\Lambda _2)$ with
  \begin{align}
    \fch{V'}=\sum _{\al \in \ndN _0^I}\PF ^{\chi _2}(\al ,\beta '_\nu ;t)e^{-\al }.
  \end{align}
  Moreover,
  $\Lambda (K_{i_1}L_{i_1}^{-1})\not=\rho ^\chi (\al _{i_1})^{m-1}$ for all $m\in
  \{1,2,\dots ,\bfun \chi (\al _{i_1})-1\}$, and hence
  $\VTM _{i_1}:M^{\chi _2}(\Lambda _2)\to M^\chi (\Lambda )$ is an isomorphism.
  Let $V=\VTM _{i_1}(V')$.
  By Lemmata~\ref{le:MLmap} and \ref{le:Tpfch}, $V$ is a
  $U(\chi )$-submodule of $M^\chi (\Lambda )$ and
  \begin{align}
    \fch V=\sdot _{i_1}^{\chi _2}(\fch{V'})=\sum _{\al \in \ndN _0^I}
    \sdot _{i_1}^{\chi _2}(\PF ^{\chi _2}(\al ,\beta '_\nu ;t)e^{-\al }).
  \end{align}
  Thus, by Lemma~\ref{le:P1},
  \begin{align*}
    \fch V=&e^{(1-\bfun \chi (\al _{i_1}))\al _{i_1}}\s _{i_1}^{\chi _2}\Big(
    \frac {e^{-t\beta '_\nu}-e^{-\bfun{\chi _2}(\beta '_\nu )\beta '_\nu }}
    {1-e^{-\beta '_\nu }}
    \prod _{\beta \in R_+^{\chi _2}\setminus \{\beta '_\nu \}}
    \frac {1-e^{-\bfun{\chi _2}(\beta )\beta }}
    {1-e^{-\beta }}\Big).
  \end{align*}
  Recall that $\beta '_\nu \not=\al _{i_1}$, since $\nu >1$.
  Moreover,
  \begin{align*}
    &e^{(1-\bfun \chi (\al _{i_1}))\al _{i_1}}\s _{i_1}^{\chi _2}\Big(
    \frac {1-e^{-\bfun{\chi _2}(\al _{i_1})\al _{i_1}}}
    {1-e^{-\al _{i_1}}}\Big)\\
    &\quad =e^{(1-\bfun \chi (\al _{i_1}))\al _{i_1}}
    \frac {1-e^{\bfun\chi (\al _{i_1})\al _{i_1}}}
    {1-e^{\al _{i_1}}}
    =\frac {1-e^{-\bfun\chi (\al _{i_1})\al _{i_1}}}
    {1-e^{-\al _{i_1}}}.
  \end{align*}
  Therefore
  \begin{align*}
    \fch V=&\frac {e^{-t\beta _\nu}-e^{-\bfun{\chi }(\beta _\nu )\beta _\nu }}
    {1-e^{-\beta _\nu }}
    \prod _{\beta \in R_+^\chi \setminus \{\beta _\nu \}}
    \frac {1-e^{-\bfun{\chi }(\beta )\beta }}
    {1-e^{-\beta }}=\sum _{\al \in \ndN _0^I} \PF ^\chi (\al ,\beta _\nu ;t)
    e^{-\al }
  \end{align*}
  by Lemma~\ref{le:P1}. This proves Eq.~\eqref{eq:fchV}.
  
  Since $t>0$, $\fch V\not=\fch M^\chi (\Lambda )$.
  By assumption on $t$, $V_{t\beta _\nu }\not=0$, and hence $V\not=0$.
  Since $V$ is a $\ndZ ^I$-graded $U(\chi )$-submodule of $M^\chi (\Lambda )$,
  the lemma is proven.
\end{proof}

\begin{theor}
  \label{th:Shapdet}
  Let $\chi \in \cX _5$. For all $\al \in \ndN _0^I$,
  the Shapovalov determinant of $U(\chi )$ is the family $(\det ^\chi _\al
  )_{\al \in \ndN _0^I}$, where
  \begin{align}
    \label{eq:det}
    \det \nolimits ^\chi _\al =
    \prod _{\beta \in R^\chi _+} \prod _{t=1}^{\bfun \chi (\beta _\nu )-1}
    (\rhomap \chi (\beta )K_{\beta }
    -\chi (\beta ,\beta )^t L_{\beta })
    ^{\PF ^\chi (\al ,\beta ;t)}.
  \end{align}
\end{theor}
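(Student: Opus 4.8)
The plan is to identify $\det\nolimits^\chi_\al$ with the product of the binomials $f_{\beta,t}:=\rhomap\chi(\beta)K_\beta-\chi(\beta,\beta)^tL_\beta\in\Uz$ by combining the representation-theoretic input of Lemma~\ref{le:subfch} with a divisibility argument in the domain $\Uz$. First I would record the link between the Shapovalov form and Verma modules. For $\Lambda\in\charUz$ the radical of the induced symmetric form $\Lambda\Shf$ on $M^\chi(\Lambda)$ is a $\ndZ^I$-graded $U(\chi)\ot\fiee$-submodule which does not meet $M^\chi(\Lambda)_0=\fiee v_\Lambda$ (as $\Lambda\Shf(v_\Lambda,v_\Lambda)=1$), so by maximality it equals $I^\chi(\Lambda)$. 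Consequently the nullity of the Gram matrix $(\Lambda\Shf(F'_i,F'_j))_{i,j}$ on $M^\chi(\Lambda)_{-\al}$ equals $\dim I^\chi(\Lambda)_{-\al}$, and $\Lambda(\det\nolimits^\chi_\al)=0$ if and only if $I^\chi(\Lambda)_{-\al}\neq0$. In particular Prop.~\ref{pr:M=L} guarantees that $\det\nolimits^\chi_\al$ is a nonzero element of $\Uz$, so it makes sense to factor it.

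Next I would analyse the factors $f_{\beta,t}$. Each $\beta\in R^\chi_+$ is the image of a simple root under an element of $\Aut_\ndZ(\ndZ^I)$, hence a primitive vector; after a monomial change of coordinates $f_{\beta,t}$ becomes linear, so it is a prime element of $\Uz$. Because $\chi\in\cX_5$, the scalar $\chi(\beta,\beta)$ is a primitive $\bfun\chi(\beta)$-th root of unity, so for fixed $\beta$ the numbers $\chi(\beta,\beta)^t$, $1\le t\le\bfun\chi(\beta)-1$, are pairwise distinct; together with primitivity of the distinct roots this makes the primes $f_{\beta,t}$ pairwise coprime as $(\beta,t)$ ranges. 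This is precisely the point where the hypothesis $\cX_5$ rather than $\cX_4$ is indispensable.

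The heart of the argument is the divisibility $f_{\beta_\nu,t}^{\PF^\chi(\al,\beta_\nu;t)}\mid\det\nolimits^\chi_\al$ for every $\nu$ and every $t\in\{1,\dots,\bfun\chi(\beta_\nu)-1\}$. To prove it I would take $\fiee$ to be an algebraically closed field containing $\Frac(\Uz/(f_{\beta_\nu,t}))$ and let $\Lambda_0$ be the tautological character $\Uz\to\fiee$. By coprimality the earlier factors $f_{\beta_\mu,m}$ with $\mu<\nu$ are nonzero at $\Lambda_0$, while $f_{\beta_\nu,t}(\Lambda_0)=0$ reads $\rhomap\chi(\beta_\nu)\Lambda_0(K_{\beta_\nu}L_{\beta_\nu}^{-1})=\chi(\beta_\nu,\beta_\nu)^t$; thus Lemma~\ref{le:subfch} applies and yields a submodule $V\subset I^\chi(\Lambda_0)$ with $\dim V_{-\al}=\PF^\chi(\al,\beta_\nu;t)$. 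Hence the Gram matrix, reduced modulo $f_{\beta_\nu,t}$, has nullity at least $\PF^\chi(\al,\beta_\nu;t)$ over $\Frac(\Uz/(f_{\beta_\nu,t}))$. Localising $\Uz$ at the height-one prime $(f_{\beta_\nu,t})$ and putting the matrix into Smith normal form over the resulting discrete valuation ring (the commutative-algebra step of the Appendix) converts this generic rank drop into the asserted power of $f_{\beta_\nu,t}$ dividing $\det\nolimits^\chi_\al$; coprimality then gives that the whole product $g:=\prod_{\beta\in R^\chi_+}\prod_{t=1}^{\bfun\chi(\beta)-1}f_{\beta,t}^{\PF^\chi(\al,\beta;t)}$ divides $\det\nolimits^\chi_\al$. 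The delicate part, and the main obstacle, is exactly this step: one must secure the hypotheses of Lemma~\ref{le:subfch} at the generic point of $\{f_{\beta_\nu,t}=0\}$, and convert a radical of dimension $\PF^\chi(\al,\beta_\nu;t)$ into the matching power of the prime via the valuation over $\Uz_{(f_{\beta_\nu,t})}$.

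Finally I would match degrees to upgrade divisibility to equality in $\Uz/\fienz$. By Lemma~\ref{le:Shfcoeffs} each entry $\Shf(F'_i,F'_j)$ lies in $\bigoplus_{a+b=\al}\fie K_aL_b$, so $\det\nolimits^\chi_\al$ is homogeneous of degree $k\al$, where $k=\dim U^-(\chi)_{-\al}$, for the $\ndZ^I$-grading given by $\deg K_i=\deg L_i=\al_i$; by Lemma~\ref{le:P2} the product $g$ is homogeneous of the same degree $k\al$. Moreover both $g$ and $\det\nolimits^\chi_\al$ have nonzero pure $K_{k\al}$- and pure $L_{k\al}$-coefficients: for $\det\nolimits^\chi_\al$ these are, by Lemma~\ref{le:Shfcoeffs}, the Gram determinants $\det(\sHp(\aaaU(F'_i),S(F'_j)))$ and $\det(\sHp(\aaaU(F'_i),F'_j))$, which are nonzero since $\sHp$ is nondegenerate on $U^+(\chi)_\al\times U^-(\chi)_{-\al}$ (Prop.~\ref{pr:sHpdef}(iv)) and $\aaaU$, $S$ are bijective; for $g$ they are the nonzero scalars $\prod(\rhomap\chi(\beta))^{\PF^\chi(\al,\beta;t)}$ and $\prod(-\chi(\beta,\beta)^t)^{\PF^\chi(\al,\beta;t)}$. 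Writing $\det\nolimits^\chi_\al=g\,q$ with $q\in\Uz$, homogeneity forces $q\in\fie[(K_iL_i^{-1})^{\pm1}]$; testing against a strictly positive linear functional on the $K$- and $L$-exponents, and using that the supports of $g$ and $\det\nolimits^\chi_\al$ both lie in $\{K_aL_b\mid a+b=k\al,\ a,b\in\ndN_0^I\}$ with extremal monomials $K_{k\al}$ and $L_{k\al}$, forces $q\in\fienz$. This yields $\det\nolimits^\chi_\al=g$ in $\Uz/\fienz$, which is the claimed formula.
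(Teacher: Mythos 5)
Your proposal is correct and follows essentially the same strategy as the paper's proof: Prop.~\ref{pr:M=L} for the nonvanishing of $\det^\chi_\al$, Lemma~\ref{le:subfch} to force a rank drop of the Gram matrix along each hypersurface $\rhomap\chi(\beta)K_\beta=\chi(\beta,\beta)^tL_\beta$, a local commutative-algebra step to convert that rank drop into divisibility by the corresponding prime power, and Lemma~\ref{le:P2} to upgrade divisibility to equality. The only differences are cosmetic: you work at the generic point of each hypersurface and use Smith normal form over the discrete valuation ring obtained by localizing $\Uz$ at the prime $(\rhomap\chi(\beta)K_\beta-\chi(\beta,\beta)^tL_\beta)$, where the paper evaluates at a dense open set of closed points and invokes Lemma~\ref{le:detXfactor}, and you close with an explicit extremal-monomial argument where the paper observes that the residual factor is invertible on the torus.
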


\begin{proof}
  Let $\al \in \ndN _0^I$, $k=\dim U^-(\chi )_{-\al }$,
  and let $\{F'_1,F'_2,\dots ,F'_k\}$ be a basis of $U^-(\chi )_{-\al }$.
  Then $\Shf (F'_i,F'_j)\in \sum _{\beta ,\gamma \in \ndN _0^I,\beta +\gamma
  =\al }\fie K_\beta L_\gamma $ by Lemma~\ref{le:Shfcoeffs}, and hence
  \begin{align}\label{eq:detsummands}
    \det \nolimits ^\chi _\al \in
    \sum _{\beta ,\gamma \in \ndN _0^I,\beta +\gamma =k\al }
    \fie K_\beta L_\gamma .
  \end{align}
  The polynomials
  \[ \rhomap \chi (\beta _\nu )K_{\beta _\nu }
  -\chi (\beta _\nu ,\beta _\nu )^t L_{\beta _\nu }=\LT _{i_1}\cdots
  \LT _{i_{\nu -1}}(\rhomap \chi (\beta _\nu )K_{i_\nu }
  -\chi (\beta _\nu ,\beta _\nu )^t L_{i_\nu }) \]
  are irreducible and pairwise distinct for all $\nu \in \{1,2,\dots ,n\}$
  and $t\in \{1,2,\dots ,\bfun \chi (\beta _\nu )-1\}$. Thus by
  Lemma~\ref{le:P2} it suffices to prove that $\det ^\chi _\al \not=0$
  and that the polynomials
  $(\rhomap \chi (\beta _\nu )K_{\beta _\nu }
  -\chi (\beta _\nu ,\beta _\nu )^t L_{\beta _\nu })
  ^{\PF ^\chi (\al ,\beta _\nu ;t)}$
  are factors of $\det ^\chi _\al $.

  Let $\Fie $ be the algebraic closure of $\fie $ and
  $\To =\maxspec \Uz \ot _\fie \Fie $ the algebraic torus.
  The points of $\To $ are just the $\Fienz $-valued characters of $\Uz $.
  The equation $\det ^\chi _\al =0$ defines a closed affine subvariety
  $\To '_\al \subset \To $.
  Let $\Lambda \in \To $. By definition, $\Lambda \in \To '_\al $
  if and only if
  $\Lambda \Shf : U^-(\chi )_{-\al }\times U^-(\chi )_{-\al }\to \Fie $
  is a degenerate symmetric bilinear form, that is,
  if $I^\chi (\Lambda )_{-\al }\not=0$.
  Thus, by Prop.~\ref{pr:M=L}, $\To '_\al $ is a subset of the
  finite union of irreducible varieties
  \[ \To _{\al ,\nu ,t}=\maxspec (\Uz \ot _\fie \Fie )/
  (\rhomap \chi (\beta _\nu )K_{\beta _\nu }-
  \chi (\beta _\nu ,\beta _\nu )^t L_{\beta _\nu }),\]
  where $\nu \in \{1,2,\dots ,n\}$
  and $t\in \{1,2,\dots ,\bfun \chi (\beta _\nu )-1\}$. By
  Eq.~\eqref{eq:detsummands}
  \[ \det \nolimits ^\chi _\al =
  f\prod _{\nu =1}^n\prod _{t=1}^{\bfun \chi (\beta _\nu )-1}
  (\rhomap \chi (\beta _\nu )K_{\beta _\nu }-
  \chi (\beta _\nu ,\beta _\nu )^t L_{\beta _\nu })^{N_{\nu ,t}}\]
  for some $N_{\nu ,t}\in \ndN _0$ and an element
  $f\in \fie [K_i,L_i\,|\,i\in I]$ which is invertible on $\To $.
  In particular, $\det ^\chi _\al \not=0$. We finish the proof of the theorem
  by showing that $N_{\nu ,t}\ge \PF ^\chi (\al ,\beta _\nu ;t)$ for all
  $\nu \in \{1,2,\dots ,n\}$ and $t\in \{1,2,\dots ,\bfun \chi (\beta _\nu )-1\}$.
  The essential ingredients will be Lemmata~\ref{le:subfch} and
  \ref{le:detXfactor}.

  Let $\nu \in \{1,2,\dots ,n\}$ and
  $t\in \{1,2,\dots ,\bfun \chi (\beta _\nu )-1\}$. Let
  $w=1_\chi \s _{i_1}\s _{i_2}\cdots \s _{i_{\nu -1}}$.
  Then
  \[ \Uz =\fie [K_{w(\al _j)},K_{w(\al _j)}^{-1},
  L_{w(\al _j)},L_{w(\al _j)}^{-1}\,|\,j\in I] \]
  and $w(\al _{i_\nu })=\beta _\nu $. Let
  \[ B =\fie [L_{\beta _\nu },L_{\beta _\nu }^{-1},
  K_{w(\al _j)},K_{w(\al _j)}^{-1},
  L_{w(\al _j)},L_{w(\al _j)}^{-1}\,|\,j\in I \setminus \{i_\nu \}] \]
  and $x=\rhomap \chi (\beta _\nu )K_{\beta _\nu }-
  \chi (\beta _\nu ,\beta _\nu )^t L_{\beta _\nu }$.
  Then
  \[ \Uz \simeq B[x,(x+
  \chi (\beta _\nu ,\beta _\nu )^t L_{\beta _\nu })^{-1}]. \]
  Let
  $X'=(x'_{ij})_{i,j\in \{1,2,\dots ,k\}}\in (\Uz )^{k\times k}$
  with
  \[ x'_{ij}=\Shf (F'_i,F'_j) \quad \text{for all $i,j\in \{1,2,\dots ,k\}$.}
  \]
  Let $l\in \ndZ $ such that $K_{\beta _\nu }^l X'\in B[x]^{k\times k}$, and let
  $X=K_{\beta _\nu }^l X'$.
  By Lemma~\ref{le:subfch} and Eq.~\eqref{eq:LShf}
  there is a non-empty open subset of the variety of $B\simeq
  \Uz /(x)$ such that
  $\mathrm{rk}\,X(0)_p\le k-\PF ^\chi (\al ,\beta _\nu ;t)$
  for all $p$ in this set. By Lemma~\ref{le:detXfactor},
  $\det X=x^{\PF ^\chi (\al ,\beta _\nu ;t)}b'$ for some $b'\in
  B[x]$. In particular, $x^{\PF ^\chi (\al ,\beta _\nu ;t)}$ is a factor of
  $\det ^\chi _\al $, and the proof of the theorem is complete.
\end{proof}

\section{Shapovalov determinants for bicharacters with finite root
systems}
\label{sec:shapdetgen}

In Sect.~\ref{sec:shapdet}
we mainly considered bicharacters $\chi \in \cX _5$.
Here we extend our results to all $\chi \in \cX _3$ with
$\chi (\beta ,\beta )\not=1$ for all $\beta \in R^\chi _+$.

In what follows let $\barcX $ denote
the set of $\Fienz $-valued bicharacters on $\ndZ ^I$. Identify $\barcX $
with $(\Fienz )^{I\times I}$ via
$\chi \mapsto (\chi (\al _i,\al _j) )_{i,j\in I}$ for all $\chi \in \barcX $.
For all $i\in \{1,2,3,4,5\}$
define $\barcX _i\subset \barcX $ in analogy to
Eqs.~\eqref{eq:X1}--\eqref{eq:X5}. Note that $\cX _i=\cX \cap \barcX _i$ for
all $i\in \{1,2,3,4,5\}$.

For all $\beta ,\beta '\in \ndZ ^I$ let $f_{\beta ,\beta '}$ be the rational
function on the affine variety $\barcX =(\Fienz )^{I\times I}$ such that
\[ f_{\beta ,\beta '}(\chi )=\chi (\beta ,\beta ') \quad
\text{for all $\chi \in \barcX $.} \]
Clearly, the functions $f_{\beta ,\beta '}$ with $\beta ,\beta '\in \{\al
_i,-\al _i\,|\,i\in I\}$ generate the algebra $\Fie [\barcX ]$.
Recall that a subset of $\barcX $ is locally closed, if it is
the intersection of an open and a closed subset of $\barcX $.

\begin{propo}\label{pr:Vchi}
  Let $\chi \in \barcX _3$. Assume that $\chi (\beta ,\beta )\not=1$
  for all $\beta \in R^\chi _+$.
  Let $\underline{n}=(n_\beta )_{\beta \in R^\chi _{+\infty }}$ with
  $n_\beta \in \ndN $ for all $\beta \in R^\chi _{+\infty }$.
  Then there exists an ideal $J\subsetneq \Fie [\barcX ]$
  generated by products of polynomials of the form
  \[ q-\prod _{i,j\in I}f_{\al _i,\al _j}^{m_{ij}},\quad
  \text{$q$ is a root of $1$, $m_{ij}\in \ndZ $ for all $i,j\in I$,} \]
  such that the set
  \begin{equation}
    \begin{aligned}
      V^\chi _{\underline{n}}=\{\chi '\in \barcX \,|\,&
      R^{\chi '}_+ =R^\chi _+ ,\,
      \bfun {\chi '}(\beta )=\bfun \chi (\beta ) \text{ for all }
      \beta \in R^\chi _{+\fin },\\
      &\chi '(\beta ,\beta )^n\not=1
      \text{ for all $\beta \in R^\chi _{+\infty }$,
      $1\le n\le n_\beta $}
      \}
    \end{aligned}
    \label{eq:Vchin}
  \end{equation}
  is an open subset of $\maxspec \Fie [\barcX ]/J$.
\end{propo}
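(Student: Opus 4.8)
The plan is to realize $V^\chi_{\underline{n}}$ as an intersection $V(J)\cap U$, where $V(J)=\maxspec \Fie[\barcX]/J$ is the closed subvariety cut out by $J$ and $U\subset\barcX$ is open, by sorting the defining conditions of $V^\chi_{\underline{n}}$ into \emph{closed} ones (equalities and root-of-unity conditions, to be collected in $J$) and \emph{open} ones (strict inequalities and non-vanishing conditions, to be collected in $U$). Throughout I use that $\chi\in\barcX_3$ has a finite root system, so by Lemma~\ref{le:Rfincond} its (connected) Weyl groupoid is finite; in particular the orbit $\cG(\chi)$ and the morphism set $\Hom(\chi,\underline{\,\,})\subset\Hom(\Wg(\chi))$ are finite, and hence all the conditions written below are finite in number. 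Recall also that $\Fie[\barcX]$ is the Laurent polynomial ring generated by the $f_{\al_i,\al_j}$, so every $\chi'(\gamma,\gamma')$ is a Laurent monomial in these coordinates.

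First I encode $R^{\chi'}_+=R^\chi_+$. For $\chi'\in\barcX_3$ this is equivalent, by Lemma~\ref{le:equalrs}, to $C^{w^*\chi'}=C^{w^*\chi}$ for every $w\in\Hom(\chi,\underline{\,\,})$. By Definition~\ref{de:Cartan}, each entry equality $c^{w^*\chi'}_{pj}=c^{w^*\chi}_{pj}=:-k$ translates into the single closed condition $\qnum{k+1}{q_{pp}}(q_{pp}^{\,k}q_{pj}q_{jp}-1)=0$ together with the open conditions $\qnum{m+1}{q_{pp}}(q_{pp}^{\,m}q_{pj}q_{jp}-1)\neq0$ for $0\le m<k$, where $q_{ab}=(w^*\chi')(\al_a,\al_b)$. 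Since $\Fie$ is algebraically closed, $\qnum{k+1}{q_{pp}}$ factors into linear factors $\zeta-q_{pp}$ with $\zeta$ a root of $1$ (its roots divide $q_{pp}^{k+1}-1$), while $q_{pp}^{\,k}q_{pj}q_{jp}-1$ is already of the prescribed shape; thus the closed polynomial is a product of polynomials $q-\prod_{i,j}f_{\al_i,\al_j}^{m_{ij}}$. The finite-bound conditions $\bfun{\chi'}(\beta)=\bfun\chi(\beta)=:b$ for $\beta\in R^\chi_{+\fin}$ are handled identically through Eq.~\eqref{eq:height}: the closed part is $\qnum{b}{f_{\beta,\beta}}=0$ (again a product of factors $\zeta-f_{\beta,\beta}$ with $\zeta$ a root of $1$), the open part is $\qnum{m}{f_{\beta,\beta}}\neq0$ for $1\le m<b$. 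Finally the conditions $\chi'(\beta,\beta)^n\neq1$ for $\beta\in R^\chi_{+\infty}$ and $1\le n\le n_\beta$ are purely open, namely $f_{\beta,\beta}^n-1\neq0$. I let $J$ be the ideal generated by all the closed polynomials above and $U$ the open set defined by all the open inequalities.

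It remains to show $V^\chi_{\underline{n}}=V(J)\cap U$ and that $J$ is proper. Properness is immediate: $\chi$ satisfies every closed condition, so $\chi\in V(J)\neq\emptyset$; in fact $\chi\in V^\chi_{\underline{n}}=V(J)\cap U$. For $\chi'\in V^\chi_{\underline{n}}$ one has $R^{\chi'}_+=R^\chi_+$, hence $\chi'\in\barcX_3\subset\barcX_2$, and Lemma~\ref{le:equalrs}(i) yields the Cartan equalities, so $\chi'\in V(J)\cap U$. For the reverse inclusion take $\chi'\in V(J)\cap U$. The Cartan closed conditions pin $c^{w^*\chi'}_{pj}=c^{w^*\chi}_{pj}$ for all $w\in\Hom(\chi,\underline{\,\,})$; in particular the reflections $r_p$ are defined along the orbit and, since $\s_p^\psi$ is determined by $c^\psi_{p\,\cdot}$ (Definition~\ref{de:Cartan}), they satisfy $\s_p^{w^*\chi'}=\s_p^{w^*\chi}$. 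Consequently the orbit of $\chi'$ produces only the vertices $w^*\chi'$ indexed by the same finite morphism set as $\cG(\chi)$, so $\chi'\in\barcX_2$ and $\Wg(\chi')\cong\Wg(\chi)$ is finite; by Lemma~\ref{le:Rfincond} this gives $\chi'\in\barcX_3$, and then Lemma~\ref{le:equalrs}(ii) gives $R^{\chi'}_+=R^\chi_+$. The remaining paired (closed, open) conditions transcribe directly via Eq.~\eqref{eq:height} to $\bfun{\chi'}(\beta)=\bfun\chi(\beta)$ on $R^\chi_{+\fin}$, and the open inequalities give $\chi'(\beta,\beta)^n\neq1$ on $R^\chi_{+\infty}$. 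Hence $\chi'\in V^\chi_{\underline{n}}$.

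The main obstacle is exactly this reverse inclusion: one cannot invoke Lemma~\ref{le:equalrs}(ii) to conclude $R^{\chi'}_+=R^\chi_+$ without first knowing $\chi'\in\barcX_3$, which is precisely the finiteness we are trying to establish. The resolution is the orbit-finiteness step sketched above, where the Cartan data fixed along the finite orbit of $\chi$ forces, through the reflection formula Eq.~\eqref{eq:rpchi} and the presentation of the Weyl groupoid in Theorem~\ref{th:Coxgr}, the numbers $m^a_{j,k}$ of $\chi'$ to coincide with those of $\chi$, and hence $\Wg(\chi')$ to be finite. A secondary point, used throughout, is that over the algebraically closed field $\Fie$ the $q$-numbers $\qnum{m}{q}$ and the Cartan binomials split into linear factors of the required root-of-unity form $q-\prod_{i,j}f_{\al_i,\al_j}^{m_{ij}}$.
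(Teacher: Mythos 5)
Your proof is correct and takes essentially the same route as the paper: both encode $R^{\chi '}_+=R^\chi _+$ via Lemma~\ref{le:equalrs} and Def.~\ref{de:Cartan} as Cartan-matrix equalities along the finite orbit, place the resulting equalities (together with the root-of-unity conditions coming from $\bfun \chi $ on $R^\chi _{+\fin }$) into $J$ and the strict inequalities into an open set. The only differences are minor --- you write the Cartan generators as $\qnum{k+1}{q_{pp}}(q_{pp}^{k}q_{pj}q_{jp}-1)$ and factor the $q$-numbers over $\Fie $, whereas the paper uses the equivalent two-factor form $(q_{pp}^{-c}q_{pj}q_{jp}-1)(q_{pp}^{1-c}-1)$ which is already of the prescribed shape --- and you are in fact more explicit than the paper on the reverse inclusion, where one must first establish $\chi '\in \barcX _3$ (via the orbit/Weyl-groupoid finiteness argument and Lemma~\ref{le:Rfincond}) before Lemma~\ref{le:equalrs}(ii) may be invoked.
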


\begin{proof}
  We use Lemma~\ref{le:equalrs} and Def.~\ref{de:Cartan}
  to reformulate the equation $R^\chi _+=R^{\chi '}_+$.

  Let $\chi '\in \cG (\chi )$.
  Since $\chi \in \barcX _3$, $\chi '$ is $p$-finite
  for all $p\in I$. Further, $\chi '(\al _p,\al _p)\not=1$ since
  $\chi (\beta ,\beta )\not=1$ for all $\beta \in R^\chi
  _+$, see Eq.~\eqref{eq:w*chi}. Thus
  \[ (\chi '(\al _p,\al _p)^{-c_{pj}^{\chi '}}\chi '(\al _p,\al _j)\chi '(\al
  _j,\al _p)-1)(\chi '(\al _p,\al _p)^{1-c_{pj}^{\chi '}}-1)=0 \]
  for all $p,j\in I$ with $p\not=j$.
  Let $w\in \Hom (\chi ,\chi ')\subset \Hom (\Wg (\chi ))$.
  Identify $w$ with the corresponding element in $\Aut (\ndZ ^I)$ in the usual
  way. Then $\chi '=w^*\chi $ and hence
  \begin{align}\label{eq:chirel}
    (\chi (\gamma _p,\gamma _p)^{-c_{pj}^{\chi '}}\chi (\gamma _p,\gamma _j)
    \chi (\gamma _j,\gamma _p)-1)(\chi (\gamma _p,\gamma _p)
    ^{1-c_{pj}^{\chi '}}-1)=0
  \end{align}
  for all $p,j\in I$ with $p\not=j$, where $\gamma _p=w^{-1}(\al _p)$
  and $\gamma _j=w^{-1}(\al _j)$.
  Let
  \begin{align}
    &\begin{aligned}
      &J'=\big(
      (f_{\gamma _p,\gamma _p}^{-c_{pj}^{w^*\chi }}
      f_{\gamma _p,\gamma _j}
      f_{\gamma _j,\gamma _p}-1)
      (f_{\gamma _p,\gamma _p}^{1-c_{pj}^{w^*\chi }}-1)\,|\\
      &\quad j,p\in I,j\not=p,\,
      w\in \Hom (\chi ,\underline{\,\,}),\,\gamma _p
      =w^{-1}(\al _p),\,\gamma _j=w^{-1}(\al _j)\big)
    \end{aligned}
    \label{eq:J'}
  \end{align}
  and
  \begin{align}
    J=J'+\big(
    f_{\beta ,\beta }^{\bfun \chi (\beta )}-1\,|\,
    \beta \in R^\chi _{+\fin } \big).
    \label{eq:J}
  \end{align}
  Then, by Lemma~\ref{le:equalrs},
  Def.~\ref{de:Cartan}, and Eq.~\eqref{eq:height},
  $V^\chi _{\underline{n}}$ is the set of points $\chi ''\in \maxspec
  \Fie [\barcX ]/J$ such that
  \begin{itemize}
    \item $f_{\beta ,\beta }^n(\chi '')\not=1$
      for all $\beta \in R^\chi _{+\infty }$, $1\le n\le n_\beta $ and
    \item $(f_{\gamma _p,\gamma _p}^m f_{\gamma _p,\gamma _j}
      f_{\gamma _j,\gamma _p}-1)(\chi '')\,
      (f_{\gamma _p,\gamma _p}^{m+1} -1)(\chi
      '')\not=0$
      for all $j,p\in I$, $w\in \Hom (\chi ,\underline{\,\,})$, and
      $m\in \{0,1,\dots ,-c^{w^*\chi }_{pj}-1\}$, where
      $j\not=p$ and $\gamma _p=w^{-1}(\al _p)$, $\gamma _j=w^{-1}(\al _j)$.
  \end{itemize}
  This is clearly an open subset, which proves the proposition.
\end{proof}

\begin{propo}\label{pr:X5dense}
  Let $\chi \in \barcX _3$. Assume that $\chi (\beta ,\beta )\not=1$
  for all $\beta \in R^\chi _+$.
  Let $\underline{n}=(n_\beta )_{\beta \in
  R^\chi _{+\infty }}$, where $n_\beta \in \ndN $ for all $\beta \in
  R^\chi _{+\infty }$.
  Let $V^\chi _{\underline{n}}$ be as in Prop.~\ref{pr:Vchi}.
  Then $\barcX _5\cap V^\chi _{\underline{n}}$ is Zariski dense in
  $V^\chi _{\underline{n}}$.
\end{propo}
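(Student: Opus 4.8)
The plan is to exhibit an explicit dense subset of $V^\chi _{\underline{n}}$ lying entirely inside $\barcX _5$, namely the \emph{torsion points} of the torus $\barcX =(\Fienz )^{I\times I}$ that happen to lie in $V^\chi _{\underline{n}}$. The whole argument is then a combination of a torsion-density statement on the torus with the explicit form of the ideal $J$ supplied by Prop.~\ref{pr:Vchi}.

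First I would check that every torsion point $\chi '\in \barcX $ lying in $V^\chi _{\underline{n}}$ already belongs to $\barcX _5$. If $\chi '$ is torsion, then all values $\chi '(\al _i,\al _j)$ are roots of $1$, hence so is $\chi '(\beta ,\beta )$ for every $\beta \in \ndZ ^I$; by Eq.~\eqref{eq:height} this forces $\bfun {\chi '}(\beta )<\infty $, and together with $R^{\chi '}_+=R^\chi _+$ (which is finite) this gives $\chi '\in \barcX _4$. It then remains to see that $\chi '(\beta ,\beta )\not=1$ for all $\beta \in R^\chi _+$. For $\beta \in R^\chi _{+\infty }$ this is exactly the open condition $\chi '(\beta ,\beta )^n\not=1$ for $1\le n\le n_\beta $ built into $V^\chi _{\underline{n}}$. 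For $\beta \in R^\chi _{+\fin }$ the hypothesis $\chi (\beta ,\beta )\not=1$ makes $\bfun \chi (\beta )$ equal to the multiplicative order of $\chi (\beta ,\beta )$, which is finite and prime to $\mathrm{char}\,\fie $; since in characteristic $p$ the only $p$-th root of $1$ is $1$ itself, $\chi '(\beta ,\beta )=1$ would give $\bfun {\chi '}(\beta )=\mathrm{char}\,\fie \not=\bfun \chi (\beta )$, contradicting $\bfun {\chi '}(\beta )=\bfun \chi (\beta )$, which holds on $V^\chi _{\underline{n}}$. This characteristic-$p$ verification is the one genuinely delicate bookkeeping point.

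Next I would analyse the variety $Z=\maxspec \Fie [\barcX ]/J$ using the generators of $J$ exhibited in the proof of Prop.~\ref{pr:Vchi}. Each generator of $J'$ is a product $(\mu _1-1)(\mu _2-1)$ of two differences of Laurent monomials (characters of $\barcX $), and each remaining generator of $J$ is a binomial $\nu -1$ with $\nu $ a character. Hence, passing to reduced structure, $Z$ is the finite union, over all choices of one factor from each $J'$-generator, of the subschemes cut out by equations of the form ``character $=1$''. Every such subscheme is a diagonalizable subgroup $H\subset \barcX $, and therefore a finite union of cosets $gS$ of the subtorus $S=H^\circ $, where the coset representatives $g$ may be taken to be torsion points. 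Thus $Z$ is a finite union of torsion-translated subtori.

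Finally I would invoke density of torsion. Over the algebraically closed field $\Fie $ the torsion points of any subtorus are Zariski dense, hence the torsion points of each coset $gS$ (with $g$ torsion) are dense in $gS$, and so the torsion points of $\barcX $ lying in $Z$ are dense in $Z$. Since $V^\chi _{\underline{n}}$ is open in $Z$ and nonempty (it contains $\chi $, because $\chi (\beta ,\beta )$ is not a root of $1$ for $\beta \in R^\chi _{+\infty }$), on each irreducible component $Z_0$ meeting $V^\chi _{\underline{n}}$ the dense torsion set meets the nonempty open $Z_0\cap V^\chi _{\underline{n}}$ densely; taking the union over components, the torsion points of $\barcX $ in $V^\chi _{\underline{n}}$ are dense in $V^\chi _{\underline{n}}$. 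By the first paragraph these torsion points lie in $\barcX _5$, so $\barcX _5\cap V^\chi _{\underline{n}}$ is dense in $V^\chi _{\underline{n}}$. The main obstacle is not any single computation but the structural recognition of $Z$ as a union of torsion-translated subtori from the binomial/product shape of $J$, together with the characteristic-$p$ check above ensuring that torsion points of $V^\chi _{\underline{n}}$ never acquire a trivial diagonal value at a finite root.
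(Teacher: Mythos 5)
Your argument is correct and follows essentially the same route as the paper: the paper delegates the decomposition of $\maxspec \Fie [\barcX ]/J$ into torsion-translated subtori and the density of torsion points to Lemmata~\ref{le:torusideal} and \ref{le:gendensity} in the Appendix, and then observes that the torsion points of $V^\chi _{\underline{n}}$ lie in $\barcX _4$ and that $\barcX _4\cap V^\chi _{\underline{n}}=\barcX _5\cap V^\chi _{\underline{n}}$. Your inlined version of these steps, including the explicit characteristic-$p$ check that a torsion point of $V^\chi _{\underline{n}}$ cannot have $\chi '(\beta ,\beta )=1$ at a finite root (which the paper leaves implicit in the phrase ``by definition of $V^\chi _{\underline{n}}$''), is accurate and adds no essentially new idea.
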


\begin{proof}
  Prop.~\ref{pr:Vchi} gives that $V^\chi _{\underline{n}}\subset \barcX _3$
  satisfies the conditions on $V$ in Lemma~\ref{le:gendensity}, where
  $k=|I|^2$ and $\{x_i\,|\,i=1,2,\dots,k\}=\{f_{\al _i,\al _j}\,|\,i,j\in
  I\}$. Since $\barcX _4$ contains all finite sets $V_{n_1,\dots ,n_k}$ in
  Lemma~\ref{le:gendensity}, and $\barcX _5\cap V^\chi _{\underline{n}}=
  \barcX _4\cap V^\chi _{\underline{n}}$ by definition of
  $V^\chi _{\underline{n}}$, the proof is completed.
\end{proof}

Similarly to Eq.~\eqref{eq:PF} define $\PF ^\chi (\al ,\beta _\nu ;t)$ for all
$\chi \in \barcX _3$, $\al \in \ndN _0^I$, $\beta _\nu \in R^\chi _+$, and $t\in
\ndN $ with $t<\bfun \chi (\beta _\nu )$ by
\begin{equation}
\begin{aligned}
  \PF ^\chi (\al ,\beta _\nu ;t)=\Big|\Big\{(m_1,\dots ,m_n)\in \ndN _0^n\,\big|\,
  \sum _{\mu =1}^n m_\mu \beta _\mu =\al ,\,m_\nu \ge t,\quad &\\
  m_\mu <\bfun \chi (\beta _\mu )\quad \text{for all $\mu \in \{1,2,\dots ,n\}$}
  \Big\}\Big|.&
  \label{eq:PF2}
\end{aligned}
\end{equation}

\begin{theor}
  \label{th:Shapdet2}
  Let $\chi \in \cX _3$. Assume that $\chi (\beta ,\beta )\not=1$
  for all $\beta \in R^\chi _+$.
  The Shapovalov determinant of $U(\chi )$ is the family
  $(\det ^\chi _\al )_{\al \in \ndN _0^I}$, where
  \begin{align}
    \label{eq:det2}
    \det \nolimits ^\chi _\al =
    \prod _{\beta \in R^\chi _+}
    \prod _{t=1}^{\bfun \chi (\beta )-1}
    (\rhomap \chi (\beta )K_{\beta }
    -\chi (\beta ,\beta )^t L_{\beta })
    ^{\PF ^\chi (\al ,\beta ;t)}.
  \end{align}
\end{theor}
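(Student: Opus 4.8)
The plan is to deduce Theorem~\ref{th:Shapdet2} from the root-of-unity case Theorem~\ref{th:Shapdet} by a Zariski density argument, specializing the base field to $\Fie$ and letting the bicharacter vary inside one of the locally closed families $V^\chi_{\underline n}$ of Prop.~\ref{pr:Vchi}. Fix $\al\in\ndN_0^I$; since each $\beta_\mu$ lies in $\ndN_0^I\setminus\{0\}$, every tuple contributing to $\PF^\chi(\al,\beta;t)$ satisfies $m_\mu\le|\al|$, so only finitely many pairs $(\beta,t)$ give $\PF^\chi(\al,\beta;t)\ne0$. Choose $n_\beta\ge|\al|$ for each $\beta\in R^\chi_{+\infty}$ and set $V:=V^\chi_{\underline n}$. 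Then $\chi\in V$, and by Prop.~\ref{pr:Vchi} together with Lemma~\ref{le:equalrs}(i) the positive roots $R^{\chi'}_+=R^\chi_+$, the Cartan matrices $C^{w^*\chi'}$, an admissible reduced word $i_1,\dots,i_n$, the roots $\beta_\nu$, and the finite bounds $\bfun{\chi'}(\beta)$ are all constant as $\chi'$ ranges over $V$. Consequently the exponent tuples indexing the PBW basis of $U^-(\chi')_{-\al}$ (Thm.~\ref{th:PBWtau}) form a fixed set, so $k:=\dim U^-(\chi')_{-\al}$ and, by the choice $n_\beta\ge|\al|$, the numbers $\PF^{\chi'}(\al,\beta;t)=\PF^\chi(\al,\beta;t)$ are independent of $\chi'\in V$.

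Next I would fix this common PBW basis $F'_1,\dots,F'_k$ of $U^-(\chi')_{-\al}$ and regard the Gram entries $\Shf(F'_i,F'_j)=\HCmap{\chi'}(\aaaU(F'_i)F'_j)$ as $\Uz\ot\Fie$-valued functions of $\chi'$. The key point is that these are \emph{regular} on $V$: the root vectors $F_{\beta_\nu}$ are built from the $F_i,L_i^{\pm1}$ by the Lusztig isomorphisms $\LT_{i}$ of Thm.~\ref{th:Liso}, whose coefficients are rational in the $\chi'(\al_i,\al_j)$ with denominators $\lambda^{\chi'}_i\ne0$ on $V$; multiplying out in $\cU(\chi')$ via \eqref{eq:KLrel}--\eqref{eq:EFrel} and applying $\HCmap{\chi'}$ yields coefficients that are Laurent polynomials in the entries $f_{\al_i,\al_j}(\chi')$. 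Hence $d_\al(\chi'):=\det(\Shf(F'_i,F'_j))_{i,j}$ is a regular $\Uz\ot\Fie$-valued function on $V$ whose class in $\Uz/\Fienz$ is $\det^{\chi'}_\al$. On the other side, $e_\al(\chi'):=\prod_{\beta\in R^\chi_+}\prod_{t\ge1,\,\PF^\chi(\al,\beta;t)>0}(\rhomap{\chi'}(\beta)K_\beta-\chi'(\beta,\beta)^tL_\beta)^{\PF^\chi(\al,\beta;t)}$ is a finite product of linear factors, manifestly regular on $V$.

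By Prop.~\ref{pr:X5dense}, $\barcX_5\cap V$ is Zariski dense in $V$, and for $\chi'$ in this set Theorem~\ref{th:Shapdet} (applied over $\Fie$) gives $\det^{\chi'}_\al=\prod_{\beta\in R^{\chi'}_+}\prod_{t=1}^{\bfun{\chi'}(\beta)-1}(\rhomap{\chi'}(\beta)K_\beta-\chi'(\beta,\beta)^tL_\beta)^{\PF^{\chi'}(\al,\beta;t)}$ in $\Uz/\Fienz$; since $\PF^{\chi'}(\al,\beta;t)=\PF^\chi(\al,\beta;t)$ (the choice $n_\beta\ge|\al|$ makes the now-finite bound $\bfun{\chi'}(\beta)$ of a $\chi$-infinite root irrelevant in the degree-$\al$ count), this equals $e_\al(\chi')$. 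Thus $d_\al=c\,e_\al$ on $\barcX_5\cap V$ for some $\chi'$-dependent $c\in\Fienz$. To remove the scalar ambiguity I would compare the coefficient of the monomial $L_{k\al}$. By Lemma~\ref{le:P2} the all-$L$ term of $e_\al$ is exactly $L_{k\al}$, with coefficient $\prod(-\chi'(\beta,\beta)^t)^{\PF^\chi(\al,\beta;t)}\in\Fienz$; and by Lemma~\ref{le:Shfcoeffs} the $L_{k\al}$-coefficient of $d_\al$ is $\det(\sHp(\aaaU(F'_i),F'_j))_{i,j}$, nonzero by nondegeneracy of $\sHp$ on $U^+(\chi')\times U^-(\chi')$ (Prop.~\ref{pr:sHpdef}(iv)), because $\{\aaaU(F'_i)\}$ and $\{F'_j\}$ are bases of $U^+(\chi')_\al$ and $U^-(\chi')_{-\al}$. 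Both $L_{k\al}$-coefficients are regular and nowhere vanishing on $V$, so after dividing by them the normalized functions $\tilde d_\al,\tilde e_\al$ are regular on $V$ and agree on the dense set $\barcX_5\cap V$, hence on all of $V$, in particular at $\chi$. This gives $d_\al(\chi)\in\Fienz\,e_\al(\chi)$, and since both lie in $\Uz$ over $\fie$ the ratio is in $\fienz$, so $\det^\chi_\al=e_\al(\chi)$ in $\Uz/\fienz$, which is \eqref{eq:det2}.

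The hard part is the regularity and normalization of $d_\al$: one must verify that the Lusztig-type root vectors, the multiplication in $\cU(\chi')$, and the Harish--Chandra projection $\HCmap{\chi'}$ all depend regularly on $\chi'$ across $V$ (so that $d_\al$ genuinely extends over the non-$\cX_5$ points), and that the $L_{k\al}$-coefficient is a nonvanishing regular function, which is precisely what upgrades the \emph{up-to-scalar} agreement on $\barcX_5\cap V$ to an honest proportionality at $\chi$. The remaining combinatorics---constancy of roots, bounds and the $\PF$-numbers on $V$, and the matching of product ranges---is routine given Props.~\ref{pr:Vchi}, \ref{pr:X5dense} and Thm.~\ref{th:PBWtau}.
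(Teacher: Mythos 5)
Your proposal is correct and follows the same overall strategy as the paper: fix $\al$, restrict to the family $V^\chi_{\underline n}$ of Prop.~\ref{pr:Vchi} (with $\underline{n}$ large enough that degree-$\al$ data are insensitive to the truncation at the $\chi$-infinite roots), view the Gram determinant as a regular $\Uz$-valued function of $\chi'$, invoke Thm.~\ref{th:Shapdet} on the dense subset $\barcX_5\cap V^\chi_{\underline n}$ from Prop.~\ref{pr:X5dense}, and pass to $\chi$ by density. The one place where you make life harder than necessary is the choice of basis: the paper takes $F'_1,\dots,F'_k$ to be monomials $F_{i_1}\cdots F_{i_l}$ in the simple generators, so that the same words literally define elements of $\cU(\chi')$ for every $\chi'\in\barcX$ and the regularity of $\chi'\mapsto\det(\Shf(F'_i,F'_j))$ is immediate from the defining relations \eqref{eq:KLrel}--\eqref{eq:EFrel}; the price is only that one must observe $\dim U^-(\chi')_{-\al}\le\dim U^-(\chi)_{-\al}$ on $V^\chi_{\underline n}$, so that $d(\chi')$ is always a multiple of $\det^{\chi'}_\al$ (vanishing when the monomials degenerate). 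Your PBW root-vector basis instead forces you to prove that the $F_{\beta_\nu}$, produced by compositions of the Lusztig maps $\LT_i$, vary regularly over $V^\chi_{\underline n}$ --- true, since the Cartan matrices are constant there and the denominators $\lambda_i$ do not vanish, but this is exactly the "hard part" you flag, and it can be avoided. On the other hand, your explicit normalization via the $L_{k\al}$-coefficient (equal to $\det(\sHp(\aaaU(F'_i),F'_j))$ by Lemma~\ref{le:Shfcoeffs}, nonzero by Prop.~\ref{pr:sHpdef}(iv)) is a cleaner justification that the scalar relating $d$ to the product of linear factors is a nonvanishing regular function than the paper's rather terse assertion about the function $a(\cdot)$; it is essentially the argument the paper leaves implicit.
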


\begin{proof}
  Let $\al \in \ndN _0^I$. Choose a basis $\{F'_1,\dots ,F'_k\}$ of
  $U^-(\chi )_{-\al }$ consisting of monomials $F_{i_1}F_{i_2}\cdots
  F_{i_l}$, where $k,l\in \ndN _0$ and $i_1,\dots ,i_l\in I$. Identify
  $\oplus _{\beta ,\gamma \in \ndN _0^I,\,\beta +\gamma =\al }\Fie
  K_\beta L_\gamma $ with $\Fie ^N$ for an appropriate $N\in \ndN $.
  By the commutation relations \eqref{eq:KLrel}--\eqref{eq:EFrel} and
  the definition of $\Shf $, the map
  \[ d:\barcX \to \Fie ^N, \quad \chi '\mapsto \det (\Shf (F'_i,F'_j))_{i,j\in
  \{1,2,\dots ,k\}} \]
  is a morphism of affine varieties. Further,
  $d(\chi )\not=0$ by Lemma~\ref{le:Shfcoeffs}, the
  choice of $\{F'_1,\dots ,F'_k\}$, and the nondegeneracy of the
  pairing $\eta $, see Prop.~\ref{pr:sHpdef}(iv).
  Recall the definition of
  $|\beta |$, $\beta \in \ndZ ^I$, from Eq.~\eqref{eq:abs}.
  Restrict $d$ to the set $V^\chi _{\underline{n}}$ defined
  in Prop.~\ref{pr:Vchi}, with $n_\beta =|\al |/|\beta |$ for
  all $\beta \in R^\chi _{+\infty }$. The set
  \[ V'=\{\chi '\in V^\chi _{\underline{n}}\,|\,d(\chi ')\not=0 \} \]
  is open in $V^\chi _{\underline{n}}$ and contains $\chi $.
  Thus by Prop.~\ref{pr:X5dense} the set
  \[ V''=\{\chi '\in \barcX _5\cap V^\chi _{\underline{n}}\,|
  \,d(\chi ')\not=0\} \]
  is Zariski dense in all irreducible components of $V^\chi
  _{\underline{n}}$ containing $\chi $.
  The definition of $V^\chi _{\underline{n}}$
  and the choice of $\underline{n}$ yield that $R^{\chi '}_+=R^\chi
  _+$ and
  $\bfun {\chi '}(\beta )\le \bfun \chi (\beta )$ for all $\chi '\in
  V^\chi _{\underline{n}}$
  and $\beta \in R^\chi _+$. Thus
  $\dim U(\chi ')_{-\al }\le \dim U(\chi )_{-\al }$ for all $\chi '
  \in V^\chi _{\underline{n}}$ by Eqs.~\eqref{eq:PBWbasis},
  \eqref{eq:roots}.
  Hence $d(\chi ')$ is a multiple of
  $\det ^{\chi '}_\al $ for all $\chi '\in V^\chi _{\underline{n}}$.
  By Thm.~\ref{th:Shapdet},
  \begin{align}
    \label{eq:d}
    d(\chi ')=a(\chi ')
    \prod _{\beta \in R^\chi _+} \prod _{t=1}^{\bfun \chi (\beta _\nu )-1}
    (\rhomap \chi (\beta )K_{\beta }
    -\chi (\beta ,\beta )^t L_{\beta })
    ^{\PF ^\chi (\al ,\beta ;t)}
  \end{align}
  for all $\chi '\in V''$, where $a(\cdot )$ is some regular function on
  $\barcX $ which does not vanish on $V''$.
  By the density of $V''$,
  Eq.~\eqref{eq:d} holds for all $\chi '\in V'$ in
  the irreducible components of $V^\chi _{\underline{n}}$
  containing $\chi $,
  and $a(\chi ')\not=0$ for all $\chi '\in V'$ by definition of $V'$.
  In particular, Eq.~\eqref{eq:d} holds for $\chi '=\chi $.
  Thus the theorem is proven.
\end{proof}

\section{Quantized enveloping algebras}
\label{sec:Uqg}

We adapt our main result to quantized enveloping algebras.

Let $I$ be a finite set and let $C=(c_{ij})_{i,j\in I}$
be a symmetrizable Cartan matrix of finite type. Let $\lag $ be
the associated semisimple Lie algebra and $R_+$ the set of positive
roots.
For all $i\in I$ let $d_i\in \ndN $ such that
$d_ic_{ij}=d_jc_{ji}$ for all $i,j\in I$.
Assume that the numbers $d_i$, where $i\in I$, are relatively prime.
Identify $\ndZ ^I$ with the root lattice by considering
$\{\al _i\,|\,i\in I\}$ as the set of simple roots.
Let $(\cdot ,\cdot ):\ndZ ^I\to \ndZ ^I$ be the (positive definite)
symmetric bilinear
form defined by $(\al _i,\al _j)=d_ic_{ij}$.
Let $\rho :\ndZ ^I\to \ndZ $ be the linear form defined by
$\rho (\al _i)=d_i$ for all $i\in I$.

Let $\fie $ be a field, and let
$q\in \fienz $. Assume that
$q^{2m}\not=1$ for all $m\in \ndN $ with
$m\le \max \{d_i\,|\,i\in I\}$.
The quantized enveloping
algebra of $\lag $ is the associative algebra $U_q(\lag )$
generated by the elements $E_i$, $F_i$, $K_i$, and $K_i^{-1}$, where
$i\in I$, and defined by the relations
\begin{gather*}
  K_iK_i^{-1}=K_i^{-1}K_i=1,\quad K_iK_j=K_jK_i,\\
  K_iE_jK_i^{-1}=q^{d_ic_{ij}}E_j,\quad
  K_iF_jK_i^{-1}=q^{-d_ic_{ij}}F_j,\\
  E_iF_j-F_jE_i=\delta_{ij}(K_i-K_i^{-1}),\\
  (\ad E_i)^{1-c_{ij}}(E_j)=0,\quad
  (\ad F_i)^{1-c_{ij}}(F_j)=0 \quad (i\not=j)
\end{gather*}
for all $i,j\in I$.
Here $\ad $ denotes adjoint action:
\begin{align*}
(\ad E_i)(x)=E_ix-K_ixK_i^{-1}E_i,\quad
(\ad F_i)(x)=xF_i-F_iK_i^{-1}xK_i
\end{align*}
for all $i\in I$ and
$x\in \langle E_j,F_j,K_j,K_j^{-1}\,|\,j\in I\rangle $.
Traditionally, in the third line of the defining relations
of $U_q(\lag )$ one inserts a
denominator $q^{d_i}-q^{-d_i}$ on the right hand side, but this
denominator can be eliminated by rescaling \textit{e.\,g.} the
variables $E_i$, $i\in I$.

Assume first that $q$ is not a root of $1$. Then, by
\cite[Ch.\,1]{b-Lusztig93} and \cite[Prop.\,2.10]{inp-AndrSchn02},
$U_q(\lag )\simeq U(\chi )/(K_iL_i-1\,|\,i\in I)$,
where $\chi \in \cX $ with
$\chi (\al _i,\al _j)=q^{d_ic_{ij}}$ for all $i,j\in I$.

Assume now that $q$ is a root of $1$.
Let again $\chi \in \cX $ with
$\chi (\al _i,\al _j)=q^{d_ic_{ij}}$ for all $i,j\in I$.
Then 
\[ U(\chi )/(K_iL_i-1,K_\beta ^{\bfun {}(\beta )}-1\,|\,i\in I,
\beta \in R^\chi _+),\]
where
$\bfun {}(\beta )$ is the order of $q^{(\beta ,\beta )}$
for all $\beta \in R_+$,
is isomorphic to Lusztig's small quantum group $u_q(\lag )$.
This was observed \textit{e.\,g.} in
\cite[Thm.\,4.3]{inp-AndrSchn02} by referring to results of Lusztig,
de Concini, Procesi, Rosso, and M\"uller.

Similarly to Eq.~\eqref{eq:Shf} and Def.~\ref{de:Shapdet}
one defines the Shapovalov form and the Shapovalov determinant
$(\det _\al )_{\al \in \ndN _0^I}$
of $U_q(\lag )$ and $u_q(\lag )$, respectively. Alternatively,
since $K_iL_i$ for $i\in I$ and $K_\beta ^{\bfun{}(\beta )}$
for $\beta \in R^\chi _+$ (the latter only if $q$ is a root of $1$)
are central elements in $U(\chi )$ for all $i\in I$,
the Shapovalov form can also be obtained from the definition in
Sect.~\ref{sec:shapdet} via Lemma~\ref{le:Uzideal}.

\begin{theor}
  \label{th:ShapdetUqg}
  Let $I$, $C$, $(d_i)_{i\in I}$, and $\lag $ as above.
  Let $q\in \fienz $. Assume that
  $q^{2m}\not=1$ for all $m\in \ndN $ with
  $m\le \max \{d_i\,|\,i\in I\}$.

  (i) \cite{inp-dCK90}
  If $q$ is not a root of $1$, then
  the Shapovalov determinant of $U_q(\lag )$ is the family
  $(\det _\al )_{\al \in \ndN _0^I}$, where
  \begin{align}
    \label{eq:detUqg}
    \det \nolimits _\al =
    \prod _{\beta \in R_+}
    \prod _{t=1}^\infty
    (q^{2\rho (\beta )}K_{\beta }
    -q^{t(\beta ,\beta )} K_{\beta }^{-1})
    ^{\PF (\al ,\beta ;t)}.
  \end{align}

  (ii) Assume that $q$ is a root of $1$.
  Then
  the Shapovalov determinant of $u_q(\lag )$ is the family
  $(\det _\al )_{\al \in \ndN _0^I}$, where
  \begin{align}
    \label{eq:detuqg}
    \det \nolimits _\al =
    \prod _{\beta \in R_+}
    \prod _{t=1}^{\bfun{}(\beta )-1}
    (q^{2\rho (\beta )}K_{\beta }
    -q^{t(\beta ,\beta )} K_{\beta }^{-1})
    ^{\PF (\al ,\beta ;t)}.
  \end{align}
\end{theor}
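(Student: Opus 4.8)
The plan is to realize both $U_q(\lag)$ and $u_q(\lag)$ as quotients of $U(\chi)$ by a central ideal arising from $\Uz$, apply Thm.~\ref{th:Shapdet2} to compute $\det\nolimits^\chi_\al$, and then push this formula through the quotient map with the help of Lemma~\ref{le:Uzideal}. Here $\chi\in\cX$ is defined by $\chi(\al_i,\al_j)=q^{d_ic_{ij}}$.

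First I would record the relevant structure of $\chi$. By the second part of Ex.~\ref{ex:Cartan}, the hypothesis $q^{2m}\neq 1$ for $m\le\max\{d_i\mid i\in I\}$ guarantees that $\chi$ is of Cartan type with $C^\chi=C$, that $\cG(\chi)=\{\chi\}$, and that $\Wg(\chi)$ is the Weyl group of $\lag$; consequently $R^\chi_+=R_+$ and $R^\chi=R$, so $\chi\in\cX_3$. Since $(\al_i,\al_i)=2d_i$ and roots in one Weyl orbit have equal length, every $\beta\in R^\chi_+$ satisfies $\chi(\beta,\beta)=q^{(\beta,\beta)}$ with $(\beta,\beta)\in\{2d_i\mid i\in I\}$, whence the assumption on $q$ forces $\chi(\beta,\beta)\neq 1$. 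Thus the hypotheses of Thm.~\ref{th:Shapdet2} hold, whether or not $q$ is a root of $1$.

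Next I would translate formula \eqref{eq:det2}. From Def.~\ref{de:rhomap}, $\rhomap\chi(\al_i)=\chi(\al_i,\al_i)=q^{2d_i}$, and since $\rhomap\chi$ is a character and $\rho$ is linear with $\rho(\al_i)=d_i$, one obtains $\rhomap\chi(\beta)=q^{2\rho(\beta)}$ for all $\beta$. Likewise $\chi(\beta,\beta)^t=q^{t(\beta,\beta)}$, while by Eq.~\eqref{eq:height} the bound $\bfun\chi(\beta)$ is the order of $q^{(\beta,\beta)}$, equal to $\infty$ when $q$ is not a root of $1$ and to $\bfun{}(\beta)$ otherwise; in all cases $\PF^\chi=\PF$ because both counts are governed by the same roots $R_+$ and the same bounds. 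Hence \eqref{eq:det2} becomes
\[
  \det\nolimits^\chi_\al=\prod_{\beta\in R_+}\prod_{t=1}^{\bfun\chi(\beta)-1}
  \bigl(q^{2\rho(\beta)}K_\beta-q^{t(\beta,\beta)}L_\beta\bigr)^{\PF(\al,\beta;t)}
  \qquad\text{in } \Uz/\fienz .
\]

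Finally I would carry out the descent. Set $J=(K_iL_i-1\mid i\in I)$ when $q$ is not a root of $1$, adjoining also $K_\beta^{\bfun{}(\beta)}-1$, $\beta\in R_+$, when $q$ is a root of $1$; as noted before the theorem these generators are central in $U(\chi)$ (for $K_iL_i$ this uses $d_ic_{ij}=d_jc_{ji}$, so that $q_{ij}q_{ji}^{-1}=1$), so $J$ lies in the centre and $U(\chi)/J_U$ is $U_q(\lag)$, respectively $u_q(\lag)$. By Lemma~\ref{le:Uzideal}, $\Shf$ descends to a $\Uz/J$-valued form on the quotient, and since $J_U=U^-(\chi)JU^+(\chi)$ leaves the negative part untouched, a basis of $U^-(\chi)_{-\al}$ maps to a basis of the negative part of the quotient. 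Therefore the Shapovalov determinant of the quotient is the image of $\det\nolimits^\chi_\al$ under $\Uz\to\Uz/J$, which substitutes $L_\beta\mapsto K_\beta^{-1}$, turning the displayed formula into \eqref{eq:detUqg} and \eqref{eq:detuqg} respectively. The step I expect to be the main obstacle is the root-of-unity case: one must be sure that $R^\chi=R$ and $\bfun\chi(\beta)=\bfun{}(\beta)$ genuinely hold, i.e.\ that no quantum integer degenerates and collapses the root system — which is exactly the role of the hypothesis $q^{2m}\neq 1$ for $m\le\max\{d_i\}$ — and that the extra generators $K_\beta^{\bfun{}(\beta)}$ are central so that Lemma~\ref{le:Uzideal} applies; the remainder is bookkeeping.
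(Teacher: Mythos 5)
Your proposal is correct and follows essentially the same route as the paper: realize $U_q(\lag )$ and $u_q(\lag )$ as quotients of $U(\chi )$ by the central ideal $J$ described before the theorem, apply Thm.~\ref{th:Shapdet2} to $\chi (\al _i,\al _j)=q^{d_ic_{ij}}$, and descend via Lemma~\ref{le:Uzideal} with $L_\beta \mapsto K_\beta ^{-1}$. The paper's own proof is a three-line version of exactly this argument; your verifications that $\chi \in \cX _3$ with $\chi (\beta ,\beta )\not=1$, that $\rhomap \chi (\beta )=q^{2\rho (\beta )}$, and that $\bfun \chi (\beta )$ is the order of $q^{(\beta ,\beta )}$ are the details it leaves implicit.
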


\begin{proof}
  Let $\chi \in \cX $ with
  $\chi (\al _i,\al _j)=q^{d_ic_{ij}}$ for all $i,j\in I$.
  Choose the ideal $J$ in Lemma~\ref{le:Uzideal} as explained above.
  Then one gets the Shapovalov determinants of $U_q(\lag )$
  and $u_q(\lag )$ from the one of $U(\chi )$
  in Thm.~\ref{th:Shapdet2}.
\end{proof}

The second part of Thm.~\ref{th:ShapdetUqg} was proved in
\cite{a-KumLetz97} in the case when
the order of $q$ is prime
and $\fie $ is the cyclotomic field $\mathbb{Q}[q]$.

\section{Appendix}

For the proofs of Thms.~\ref{th:Shapdet} and \ref{th:Shapdet2}
we need some commutative algebra which is considered here.
Let $\Fie $ be an algebraically closed field.

\begin{lemma}
  \label{le:rankX}
  Let $B$ be an integral domain, $x$ an indeterminate,
  $k\in \ndN $, and $X\in B[x]^{k\times k}$.
  Then there exist $s\in \{0,1,\dots ,k\}$, $D_1,D_2\in
  B^{k\times k}$, $D_0\in B[x]^{k\times k}$ and $b\in B\setminus \{0\}$ such that
  $\det D_1,\det D_2\not=0$,
  \begin{align}
    D_1XD_2=xD_0+b\,\mathrm{diag} (\underbrace{1,\dots ,1}_s,0,\dots ,0).
    \label{eq:rankX}
  \end{align}
\end{lemma}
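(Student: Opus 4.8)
The plan is to reduce everything to the rank normal form of the constant-term matrix over the fraction field of $B$, and then to clear denominators.

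First I would write $X=X(0)+xX'$, where $X(0)\in B^{k\times k}$ is the matrix obtained by evaluating each entry at $x=0$, and $X'\in B[x]^{k\times k}$; this is possible because every entry of $X-X(0)$ has vanishing constant term and is therefore divisible by $x$. Passing to $K=\Frac(B)$ and regarding $X(0)$ as a matrix over $K$, it has some rank $s\in\{0,1,\dots ,k\}$, and by ordinary Gaussian elimination over the field $K$ there exist $P,Q\in\mathrm{GL}(k,K)$ with
\[ P\,X(0)\,Q=\mathrm{diag}(\underbrace{1,\dots ,1}_{s},0,\dots ,0). \]

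Next I would clear denominators. Choose $a,c\in B\setminus\{0\}$ that are common denominators for the entries of $P$ and of $Q$, respectively, and set $D_1=aP$ and $D_2=cQ$. Then $D_1,D_2\in B^{k\times k}$, and since $\det P,\det Q\neq 0$ we obtain $\det D_1=a^k\det P\neq 0$ and $\det D_2=c^k\det Q\neq 0$. Put $b=ac$, which is nonzero because $B$ is an integral domain. Multiplying out and using the displayed identity above,
\[ D_1XD_2=D_1X(0)D_2+x\,D_1X'D_2=b\,\mathrm{diag}(\underbrace{1,\dots ,1}_{s},0,\dots ,0)+xD_0, \]
where $D_0=D_1X'D_2\in B[x]^{k\times k}$. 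This is precisely Eq.~\eqref{eq:rankX}.

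I expect no serious obstacle: the argument is elementary linear algebra combined with a denominator-clearing step. The only points needing care are the observation that $X-X(0)$ is divisible by $x$ (so that $X'$ lies in $B[x]^{k\times k}$ and hence $D_0$ does too), and the use of the integral-domain hypothesis, which enters twice—to form the fraction field $K$ and to guarantee that $b=ac$ is nonzero. The degenerate case $s=0$ causes no trouble, since then $X(0)=0$ and the identity reads $D_1XD_2=xD_0$, which holds with the same $b=ac$.
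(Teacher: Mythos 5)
Your proposal is correct and follows essentially the same route as the paper's proof: diagonalize $X(0)$ over $\Frac(B)$ to its rank normal form, clear denominators with nonzero scalars of $B$, and absorb the $x$-divisible remainder into $D_0=D_1X'D_2$. No gaps.
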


\begin{proof}
  Let $\Frac (B)$ be the field of fractions of $B$. Then there exist
  $s\in \{0,1,\dots ,k\}$ and $D'_1,D'_2\in \Frac (B)^{k\times k}$
  such that $\det D'_1,\det D'_2\not=0$ and
  \[ D'_1X(0)D'_2=\mathrm{diag} (\underbrace{1,\dots ,1}_s,0,\dots ,0).\]
  Let $b_1,b_2\in B\setminus \{0\}$ such that $b_1D'_1,b_2D'_2\in B[x]^{k\times k}$.
  Let $b=b_1b_2$, $D_1=b_1D'_1$, and $D_2=b_2D'_2$. Then
  \[ D_1X(0)D_2=b\,\mathrm{diag} (\underbrace{1,\dots ,1}_s,0,\dots ,0),\]
  and hence the lemma holds for $D_0=D_1X'D_2$, where $X'\in B[x]^{k\times k}$
  such that $X=X(0)+xX'$.
\end{proof}

\begin{lemma}
  \label{le:detXfactor}
  Let $B$ be a finitely generated integral domain over $\Fie $, $x$ an indeterminate,
  $k\in \ndN $, $r\in \{0,1,\dots ,k\}$, and $X\in B[x]^{k\times k}$.
  Assume that $\mathrm{rk}\,X(0)_p\le r$ for all points $p$ in a non-empty Zariski open
  subset of the affine variety of $B$.
  Then $\det X=x^{k-r}b$ for some $b\in B[x]$.
\end{lemma}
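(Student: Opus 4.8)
The plan is to reduce the hypothesis, which is phrased pointwise over the closed points of $\maxspec B$, to the purely algebraic assertion that the rank of $X(0)$ over the fraction field $\Frac(B)$ is at most $r$, and then to feed this into Lemma~\ref{le:rankX} and read off the divisibility of $\det X$ by a power of $x$. Throughout, $X(0)\in B^{k\times k}$ denotes the specialization at $x=0$, and for a point $p$ the matrix $X(0)_p\in \Fie^{k\times k}$ is its further reduction modulo the maximal ideal of $p$.

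First I would carry out the rank comparison. For each choice of $r+1$ rows and $r+1$ columns, the corresponding minor $m$ of $X(0)$ is an element of $B$, and the condition $\mathrm{rk}\,X(0)_p\le r$ is equivalent to the simultaneous vanishing of all such minors at $p$. By assumption each such $m$ vanishes on a non-empty Zariski open subset $U$ of the affine variety of $B$. Since $B$ is an integral domain, this variety is irreducible, so $U$ is dense; as the vanishing locus of $m$ is closed and contains $U$, it must be the whole variety. Because $B$ is a finitely generated domain over the algebraically closed field $\Fie$, the Nullstellensatz gives that the intersection of all maximal ideals equals the nilradical, which is zero; hence $m=0$ in $B$. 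As this holds for every $(r+1)\times(r+1)$ minor, the rank of $X(0)$ over $\Frac(B)$ is at most $r$. This is the conceptual heart of the argument and the step I expect to require the most care, since it is where the geometric hypothesis is converted into an honest identity in $B$.

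With this in hand, I would apply Lemma~\ref{le:rankX} to $X$, obtaining $s\in\{0,1,\dots,k\}$, matrices $D_1,D_2\in B^{k\times k}$ with $\det D_1,\det D_2\ne 0$, a matrix $D_0\in B[x]^{k\times k}$, and $b\in B\setminus\{0\}$ such that
\[ D_1XD_2=xD_0+b\,\mathrm{diag}(\underbrace{1,\dots,1}_{s},0,\dots,0). \]
Here $s$ is the generic rank of $X(0)$, so by the previous paragraph $s\le r$. The last $k-s$ rows of the right-hand side have every entry divisible by $x$, so factoring one power of $x$ out of each of them by multilinearity shows $\det(D_1XD_2)=x^{k-s}g$ for some $g\in B[x]$. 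Writing $c=\det D_1\det D_2\in B\setminus\{0\}$, we obtain $c\,\det X=x^{k-s}g$ in $B[x]$. Expanding $\det X=\sum_{i\ge 0}a_ix^i$ with $a_i\in B$ and comparing coefficients of $x^0,\dots,x^{k-s-1}$, the relation $ca_i=0$ together with $c\ne 0$ and $B$ being a domain forces $a_i=0$ for $i<k-s$; hence $x^{k-s}$ divides $\det X$ in $B[x]$. Since $s\le r$ yields $k-s\ge k-r$, we conclude $\det X=x^{k-r}b'$ for some $b'\in B[x]$, which is the claim.
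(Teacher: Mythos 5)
Your proof is correct and follows the same overall strategy as the paper's: both apply Lemma~\ref{le:rankX} and then factor a power of $x$ out of $\det(D_1XD_2)$, cancelling $\det D_1\det D_2$ using that $B$ is a domain. The one place where you diverge is in establishing $s\le r$: the paper evaluates the identity \eqref{eq:rankX} at the points $(p,0)$ with $p$ ranging over a nonempty open set on which $\det D_1$, $\det D_2$, $b$ are nonzero and the rank hypothesis holds (such a set exists by irreducibility of the variety of $B$), and reads off $s=\mathrm{rk}\,X(0)_p\le r$ there; you instead first upgrade the pointwise hypothesis to the algebraic identity that every $(r+1)\times(r+1)$ minor of $X(0)$ vanishes in $B$ -- via density of the open set and the Nullstellensatz for the finitely generated domain $B$ over the algebraically closed field $\Fie$ -- and then identify the $s$ of Lemma~\ref{le:rankX} with the generic rank of $X(0)$ by setting $x=0$ in \eqref{eq:rankX}. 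Both routes are sound; yours makes the passage from the geometric hypothesis to an identity in $B$ fully explicit at the small cost of invoking the Nullstellensatz, while the paper's point-evaluation argument is slightly more economical.
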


\begin{proof}
  By Lemma~\ref{le:rankX} there exist $s\in \{0,1,\dots ,k\}$,
  $b\in B\setminus \{0\}$, $D_1,D_2\in B^{k\times k}$, and
  $D_0\in B[x]^{k\times k}$ such that $\det D_1,\det D_2\not=0$ and
  Eq.~\eqref{eq:rankX} holds. Let $V$ be a non-empty Zariski open subset of the
  affine variety of $B\simeq B[x]/(x)$
  such that $(\det D_1)_p,(\det D_2)_p,
  b_p\not=0$ and $\mathrm{rk}\,X(0)_p\le r$ for all $p\in V$. This exists
  by the assumption on $r$ and since the variety of $B$ is irreducible.
  Then $s\le r$ by Eq.~\eqref{eq:rankX} in the points $(p,0)$ of the variety
  of $B[x]$, where $p\in V$. Therefore
  \[ \det D_1\, \det X \det D_2=x^{k-r}b' \]
  for some $b'\in B[x]$. Since
  $\det D_1,\det D_2\in B$ and $B$ is an integral domain,
  we conclude that $\det X\in x^{k-r}B[x]$.
\end{proof}

For all $k\in \ndN $ let
$\Fie [x_i,x_i^{-1}\,|\,1\le i\le k]$ denote the ring of Laurent
polynomials in $k$ variables.
For all
$M=(m_{ij})_{i,j\in \{1,2,\dots ,k\}}\in \mathrm{GL}(k,\ndZ )$ let
\[ X^{(M)} _i=\prod _{j=1}^kx_j^{m_{ij}},\quad 1\le i\le k. \]
Then the ring endomorphism of
$\Fie [x_i,x_i^{-1}\,|\,1\le i\le k]$ given by $x_i\mapsto X^{(M)}_i$
for all $i\in \{1,2,\dots ,k\}$ is an isomorphism with inverse map
given by $x_i\mapsto X^{(M^{-1})}_i$
for all $i\in \{1,2,\dots ,k\}$.

\begin{lemma}
  Let $k\in \ndN $. Let $J\subsetneq \Fie [x_i,x_i^{-1}\,|\,1\le i\le k]$
  be an ideal
  generated by elements of the form $q-\prod _{i=1}^k x_i^{m_i}$, where
  $m_1,\dots ,m_k\in \ndZ $ and $q\in \Fienz $ is a root of $1$.
  Then $J$ is a finite intersection of ideals of the form
  \begin{align}
    ( X_1^{(M)}-q_1, X_2^{(M)}-q_2,\dots ,X_l^{(M)}-q_l),
    \label{eq:primeid}
  \end{align}
  where $l\in \{0,1,\dots ,k\}$, $q_1,\dots ,q_l\in \Fienz $ are roots of $1$,
  and $M\in \mathrm{GL}(k,\ndZ )$.
  \label{le:torusideal}
\end{lemma}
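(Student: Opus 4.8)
The plan is to read the statement inside the group algebra $\Fie [x_i,x_i^{-1}\,|\,1\le i\le k]=\Fie [\ndZ ^k]$, whose maximal spectrum is the torus $\Hom (\ndZ ^k,\Fienz )$: a point is a character $\lambda $, and a generator $q-\prod _i x_i^{m_i}$ vanishes at $\lambda $ exactly when $\lambda (m)=q$, where $m=(m_1,\dots ,m_k)\in \ndZ ^k$. First I would collect the exponent vectors $m_\alpha $ of all generators of $J$ and let $L\subseteq \ndZ ^k$ be the subgroup they generate. By the structure theorem for subgroups of free abelian groups (Smith normal form) there are a $\ndZ $-basis $e_1,\dots ,e_k$ of $\ndZ ^k$ and integers $1\le d_1\mid d_2\mid \dots \mid d_r$ (with $r=\mathrm{rk}\,L$) such that $L=\bigoplus _{i\le r}\ndZ \,d_ie_i$. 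Encoding this basis change by a matrix $M\in \mathrm{GL}(k,\ndZ )$ and using the induced automorphism $x_i\mapsto X_i^{(M)}$ of the Laurent ring, I pass to coordinates $y_i=X_i^{(M)}$ in which $L$ becomes the standard sublattice $\bigoplus _{i\le r}\ndZ \,d_i\epsilon _i$ and every generator takes the shape $q_\alpha -y^{\widetilde m_\alpha }$ with $\widetilde m_\alpha \in L$.

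The conceptual step is to replace this generating set by the standard one, namely $J_0:=(y_1^{d_1}-p_1,\dots ,y_r^{d_r}-p_r)$ for suitable roots of unity $p_i$. Here I would exploit properness of $J$: whenever $\sum _\alpha c_\alpha \widetilde m_\alpha =0$ in $L$, reducing $\prod _\alpha (y^{\widetilde m_\alpha })^{c_\alpha }=y^{0}=1$ modulo $J$ gives $1\equiv \prod _\alpha q_\alpha ^{c_\alpha }$; since a nonzero scalar lying in $J$ would force $J=(1)$, the root of unity $\prod _\alpha q_\alpha ^{c_\alpha }$ must equal $1$. Thus $\widetilde m_\alpha \mapsto q_\alpha $ extends to a well-defined homomorphism $\psi :L\to \Fienz $, and I set $p_i=\psi (d_i\epsilon _i)$. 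Because the $\widetilde m_\alpha $ generate $L$, each $d_i\epsilon _i$ is a $\ndZ $-combination of them, and reduction modulo $J$ yields $y_i^{d_i}\equiv p_i$, so $J_0\subseteq J$; conversely, writing $\widetilde m_\alpha =\sum _i a_{\alpha i}\,d_i\epsilon _i$ and reducing modulo $J_0$ gives $y^{\widetilde m_\alpha }\equiv \prod _i p_i^{a_{\alpha i}}=q_\alpha $, so $J\subseteq J_0$. Hence $J=J_0$ in the $y$-coordinates.

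Finally I would factor $y_i^{d_i}-p_i=\prod _j(y_i-\zeta _{ij})$ over the algebraically closed field $\Fie $, the roots $\zeta _{ij}$ again being roots of unity, and apply the Chinese Remainder Theorem variable by variable to obtain
\[ J=J_0=\bigcap _{(j_1,\dots ,j_r)}(y_1-\zeta _{1j_1},\dots ,y_r-\zeta _{rj_r})
=\bigcap _{(j_1,\dots ,j_r)}(X_1^{(M)}-\zeta _{1j_1},\dots ,X_r^{(M)}-\zeta _{rj_r}), \]
a finite intersection of ideals of the required form, all sharing the single matrix $M$ and $l=r$. The step I expect to be the main obstacle, and the one requiring genuine care, is the Chinese Remainder factorization: it is valid only when the roots $\zeta _{ij}$ are pairwise distinct, i.e. when each $y_i^{d_i}-p_i$ is separable. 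This holds automatically in characteristic $0$ and whenever $\mathrm{char}\,\Fie \nmid d_i$; otherwise $J$ itself need not be radical (already $x^2-1=(x-1)^2$ in characteristic $2$ is not a finite intersection of the allowed primes), so one must either check that the relevant $d_i$ are prime to the characteristic in the situation at hand or replace $J$ by its radical, which still suffices for the $\maxspec $ statements used in Prop.~\ref{pr:Vchi}. The remaining ingredients, the Smith normal form normalization and the consistency argument producing $\psi $, are then routine.
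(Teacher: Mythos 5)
Your proof is correct and reaches the conclusion by a genuinely different route from the paper. The paper argues by induction on $k$: it picks one generator $q-\prod_i x_i^{m_i}$, sets $m_0=\gcd(m_1,\dots ,m_k)$, performs a unimodular change of variables so that this generator becomes $q-y_1^{m_0}$, writes $J$ as the intersection of the ideals $J+(y_1-q')$ over the $m_0$-th roots $q'$ of $q$, and then invokes the induction hypothesis in one variable fewer. You instead give a one-shot structural argument: Smith normal form for the lattice $L$ of exponent vectors, the observation that properness of $J$ makes $\widetilde m_\alpha \mapsto q_\alpha $ a well-defined character $\psi $ of $L$, the identification $J=(y_1^{d_1}-p_1,\dots ,y_r^{d_r}-p_r)$, and a single application of the Chinese Remainder Theorem. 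Your version buys a sharper normal form (all primes in the intersection share one matrix $M$ and one $l=r=\mathrm{rk}\,L$) at the cost of slightly heavier lattice bookkeeping; the paper's induction is shorter to write but must re-express the remaining generators in new coordinates at every step. Your caveat about separability is well taken and is not a defect of your argument relative to the paper's: the paper's step asserting that $J$ is the intersection of the ideals $J+(X_1^{(M')}-q')$ silently assumes the $m_0$-th roots of $q$ are pairwise distinct, so it suffers from exactly the same issue when $\mathrm{char}\,\Fie $ divides $m_0$ (your example $(1-x^2)=((1-x)^2)$ in characteristic $2$ refutes the literal statement for both proofs). As you note, this is harmless for the only use of the lemma, namely the $\maxspec $ and density statements in Lemma~\ref{le:gendensity} and Prop.~\ref{pr:Vchi}, where only the radical of $J$ matters.
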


\begin{proof}
  Proceed by induction on $k$. 
  If $J$ is empty, then the claim is true.
  Assume now that $q-\prod _{i=1}^kx_i^{m_i}$ is one of the generators
  of $J$, where $q$ is
  a root of $1$ and $(m_1,\dots ,m_k)\in \ndZ ^k\setminus \{0\}$.
  Let $m_0=\mathrm{gcd}(m_1,\dots ,m_k)$.
  Let $M'\in \mathrm{GL}(k,\ndZ )$ such that
  $m'_{1i}=m_i/m_0$ for all $i\in \{1,2,\dots ,k\}$.
  Then $(X_1^{(M')})^{m_0}-q\in J$,
  and hence $J$ is the intersection of
  the (finite number of) ideals $J+(X^{(M')}_1-q')$,
  where $q'\in \Fie $, ${q'}^{m_0}=q$. By assumption,
  $J+(X^{(M')}_1-q')$ is generated by $X^{(M')}_1-q'$ and by
  elements of the form $q''-\prod _{i=2}^k (X^{(M')}_i)^{m'_i}$, where
  $m'_2,\dots ,m'_k\in \ndZ $ and $q''\in \Fienz $ is a root of $1$.
  Then the claim follows by the induction hypothesis.
\end{proof}

The ideals in Eq.~\eqref{eq:primeid} are prime ideals of
$\Fie [x_i,x_i^{-1}\,|\,1\le i\le k]$, since the quotient ring
$\Fie [x_i,x_i^{-1}\,|\,1\le i\le k]/J
\simeq \Fie [x_i,x_i^{-1}\,|\,1\le i\le k-l]$ is an integral domain.

\begin{lemma}
  Let $k\in \ndN $. Let $J\subsetneq \Fie [x_i,x_i^{-1}\,|\,1\le i\le k]$
  be an ideal
  generated by polynomials of the form
  \[ \big(q_1-\prod _{i=1}^k x_i^{m_{1i}}\big)
  \big(q_2-\prod _{i=1}^k x_i^{m_{2i}}\big)\cdots
  \big(q_l-\prod _{i=1}^k x_i^{m_{li}}\big), \]
  where $l\in \ndN $, $m_{j1},\dots ,m_{jk}\in \ndZ $ and $q_j\in \Fienz $
  is a root of $1$ for all $j\in \{1,2,\dots ,l\}$.
  Let $V\subset \maxspec \Fie [x_i,x_i^{-1}\,|\,1\le i\le k]/J$ be an open
  subset.
  Then the union of the subsets
  \[ V_{n_1,\dots ,n_k}=\{p\in V\,|\,p_1^{n_1}=1,\dots ,p_k^{n_k}=1\},
  \quad n_1,\dots ,n_k\in \ndN ,\]
  is dense in $V$ with respect to the Zariski topology.
  \label{le:gendensity}
\end{lemma}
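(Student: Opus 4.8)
The plan is to use the structural description of such ideals from Lemma~\ref{le:torusideal} to reduce the statement to the density of ``torsion points'' on a single algebraic torus, and then to prove that density by an elementary induction. Throughout I write $R=\Fie [x_i,x_i^{-1}\,|\,1\le i\le k]$, and I call a point $p\in \maxspec R$ a \emph{torsion point} if each coordinate $p_i$ is a root of $1$. Any torsion point lying in $V$ lies in $V_{n_1,\dots ,n_k}$ for suitable $n_1,\dots ,n_k\in \ndN $, so the union in the statement is exactly the set of torsion points of $V$, and the assertion to be proved is that these are Zariski dense in $V$.

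First I would decompose $\maxspec R/J$. Each generator of $J$ factors as $\prod _{j=1}^l(q_j-\mu _j)$ with $\mu _j$ a monomial, so its vanishing locus is the finite union $\bigcup _j V(q_j-\mu _j)$; intersecting over the finitely many generators and distributing the finite intersection over the finite unions writes $\maxspec R/J$ as a finite union $\bigcup _\sigma V(J_\sigma )$, where each $J_\sigma $ is generated by single binomials of the form $q-\mu $. For every $\sigma $ with $J_\sigma \subsetneq R$, Lemma~\ref{le:torusideal} presents $J_\sigma $ as a finite intersection of prime ideals of the form \eqref{eq:primeid}, so $V(J_\sigma )$ is a finite union of closed subvarieties $\maxspec R/P$; the selections $\sigma $ with $J_\sigma =R$ contribute the empty set. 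Altogether this exhibits $\maxspec R/J=\bigcup _\alpha \maxspec R/P_\alpha $ with each $P_\alpha $ of the form \eqref{eq:primeid}.

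Next I would argue componentwise. Given any non-empty open $W\subseteq V$ (hence open in $\maxspec R/J$), there is an $\alpha $ with $W\cap \maxspec R/P_\alpha $ non-empty and open in $\maxspec R/P_\alpha $, so it suffices to show that torsion points are dense in each $\maxspec R/P_\alpha $: a torsion point found in $W\cap \maxspec R/P_\alpha $ then lies in $V$ and so in some $V_{n_1,\dots ,n_k}$, and $W$ was arbitrary. Fixing $P=(X_1^{(M)}-q_1,\dots ,X_l^{(M)}-q_l)$, the monomial automorphism $x_i\mapsto X_i^{(M)}$ identifies $\maxspec R/P$ with the torus $(\Fienz )^{k-l}$ in the free coordinates $X_{l+1}^{(M)},\dots ,X_k^{(M)}$, while $X_1^{(M)},\dots ,X_l^{(M)}$ take the root-of-$1$ values $q_1,\dots ,q_l$. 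Since $x_i=\prod _j(X_j^{(M)})^{(M^{-1})_{ij}}$, every point whose free coordinates are roots of $1$ is automatically a torsion point, so I am reduced to the purely toric claim that for all $m$ the points with all coordinates roots of $1$ are Zariski dense in $(\Fienz )^m$.

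This last claim I would prove by induction on $m$. The roots of $1$ form an infinite, hence dense, subset of $\Fienz $ (as $\Fie $ is algebraically closed it contains a primitive $n$-th root of $1$ for every $n$ prime to $\mathrm{char}\,\Fie $), which handles $m=1$, since a non-zero one-variable Laurent polynomial has only finitely many zeros. For the inductive step, writing a non-zero $f\in \Fie [y_1^{\pm },\dots ,y_m^{\pm }]$ as $\sum _i g_i\,y_m^{\,i}$ with some $g_{i_0}\not=0$, the induction hypothesis supplies a torsion specialization of $y_1,\dots ,y_{m-1}$ keeping $g_{i_0}\not=0$, and the base case then yields a root-of-$1$ value of $y_m$ at which $f$ does not vanish; hence $f$ cannot vanish on all torsion points. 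I expect the only delicate point to be the first step: one must check carefully that distributing the intersection over the unions and invoking Lemma~\ref{le:torusideal} genuinely realizes $\maxspec R/J$ as a finite union of tori of the form \eqref{eq:primeid}, after which the elementary density of torsion points applies componentwise and completes the argument.
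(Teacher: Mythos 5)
Your proposal is correct and follows essentially the same route as the paper's proof: reduce, via the factorization of the generators and Lemma~\ref{le:torusideal}, to the case where $J$ is an ideal of the form \eqref{eq:primeid}, identify the corresponding component with a torus $(\Fienz )^{k-l}$ via the monomial change of coordinates, and conclude from the density of the (infinite) set of roots of unity in $\Fienz $. You merely spell out two steps the paper leaves implicit — that points whose free coordinates are roots of $1$ are torsion points in the original coordinates, and the induction on $m$ giving density of torsion points in $(\Fienz )^m$ — both of which are handled correctly.
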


\begin{proof}
  We can assume that
  $V=\maxspec \Fie [x_i,x_i^{-1}\,|\,1\le i\le k]/J$.
  Moreover,
  it suffices to prove the lemma for the irreducible components of
  $V$. Thus, as a first reduction, $J$ can be assumed to be as in
  the assumptions of Lemma~\ref{le:torusideal}.
  Then by Lemma~\ref{le:torusideal} we may
  assume that $J$ is an ideal as
  in Eq.~\eqref{eq:primeid}, where $M\in \mathrm{GL}(k,\ndZ )$,
  $l\in \{0,1,\dots ,k\}$, and $q_1,\dots ,q_l\in \Fienz $ are roots
  of $1$.
  Then $V\simeq (\Fienz )^{k-l}$ for some $l\in \{0,1,\dots ,k\}$,
  and the lemma follows from the fact that infinite subsets are dense
  in $\Fienz $.
\end{proof}


\providecommand{\bysame}{\leavevmode\hbox to3em{\hrulefill}\thinspace}
\providecommand{\MR}{\relax\ifhmode\unskip\space\fi MR }
\providecommand{\MRhref}[2]{%
  \href{http://www.ams.org/mathscinet-getitem?mr=#1}{#2}
}
\providecommand{\href}[2]{#2}

\end{document}